\newcommand{\subjclass}[2][2010]{%
  \let\@oldtitle\@title%
  \gdef\@title{\@oldtitle\footnotetext{#1 {Mathematics subject classification:} #2}}%
}
\newcommand\blfootnote[1]{%
  \begingroup
  \renewcommand\thefootnote{}\footnote{#1}%
  \addtocounter{footnote}{-1}%
  \endgroup
}
\newtheorem{theorem}{Theorem}[section]
\newtheorem*{rep@theorem}{\rep@title}
\newcommand{\newreptheorem}[2]{%
\newenvironment{rep#1}[1]{%
 \def\rep@title{#2 \ref{##1}}%
 \begin{rep@theorem}}%
 {\end{rep@theorem}}}
\newtheorem{lemma}[theorem]{Lemma}
\newtheorem{pro}[theorem]{Proposition}
\theoremstyle{definition}
\newtheorem{mydef}[theorem]{Definition}
\newtheorem{rem}[theorem]{Remark}
\newtheorem{exa}[theorem]{Example}
\newtheorem{cor}[theorem]{Corollary}
\title{{\bf{A generalization of the Newton-Puiseux algorithm for coverings of semistable models}}} 
\author{\large{Paul Alexander Helminck}}
\affil{Durham University\\ 
\vspace{0.3cm}
Department of Mathematics}%
\subjclass{14H30, 14Q05, 11G30, 14G22, 14T05.}
\date{}
\begin{document}
\maketitle
\definecolor{qqqqff}{rgb}{0,0,1}
\begin{abstract}

In this paper we give an algorithm that calculates the skeleton of a tame covering of curves over a complete discretely valued field. 
The algorithm relies on the {{tame simultaneous semistable reduction theorem}}, for which we give a short proof. To use this theorem in practice, we show that we can find extensions of chains of prime ideals in normalizations using compatible power series.
This allows us to reconstruct the skeleton of the covering. In studying the connections between power series and extensions of prime ideals, we obtain generalizations of classical theorems from number theory such as the Kummer-Dedekind theorem and Dedekind's theorem for cycles in Galois groups.

\end{abstract}

\blfootnote{The author was funded by the UKRI grant "{\it{Computational tropical geometry and its applications}}" with reference number MR/S034463/1.}

\section{\large{Introduction}}

Let $f(x,y)\in\mathbf{Q}[x,y]$ be a non-constant polynomial. The classical Newton-Puiseux algorithm calculates local parametrizations of the branches of the curve defined by $f(x,y)=0$ using Newton polygons and residual approximations. The goal of this paper is to give a generalization of this algorithm for coverings of semistable models. The input is a polynomial $f(x,y)$ defined over %
a complete discretely valued field $K$  %
together with a tame morphism from the curve $X$ defined by $f(x,y)=0$ to %
 the projective line $Y=\mathbf{P}^{1}_{K}$. The reader can think of $K=\mathbf{Q}((t))$ and the map $(x,y)\mapsto{x}$ here. The branch locus $D$ of this morphism
  gives rise to a natural semistable model %
$\mathcal{Y}$ of $\mathbf{P}^{1}_{K}$ whose dual intersection graph $\Sigma(\mathcal{Y})$ is the tropical tree associated to $D$. In Theorem \ref{MainThm1}, we show that  
the morphism $X\to{Y}$ extends to a morphism of semistable models $\mathcal{X}\to\mathcal{Y}$ after a finite tame extension of $K$. This induces a morphism of dual intersection graphs $\Sigma(\mathcal{X})\to\Sigma(\mathcal{Y})$ which we call the skeleton or tropicalization of $\mathcal{X}\to\mathcal{Y}$. The output of our algorithm is then exactly this skeleton. 
To find it, 
we first calculate power series expansions of the roots of $f$ %
at the edges and vertices of $\Sigma(\mathcal{Y})$  %
using generalizations %
of the %
Newton-Puiseux algorithm. We then show that we can reconstruct $\Sigma(\mathcal{X})$ by patching these local expansions over adjacent edges and vertices.
This in particular implies that we do not 
need to calculate the full normalization of $\mathcal{Y}$ in $K(X)$ in order  
to find $\Sigma(\mathcal{X})$.

The main theoretical tool behind the algorithm is a generalization of the tame simultaneous semistable reduction theorem given in \cite[Theorem 2.3]{liu1}. %
\begin{theorem}\label{MainThm1}{\bf{[Tame simultaneous semistable reduction theorem]}}
Let $\phi:(X,D_{X})\rightarrow{(Y,D_{Y})}$ be a finite separable morphism of smooth proper geometrically connected marked curves %
and let $\mathcal{Y}/R$ be a semistable model for $(Y,D_{Y})$. %
Suppose that the induced morphism of normalizations $\mathcal{X}\to\mathcal{Y}$ is tame in codimension one. There is then a finite Kummer extension $R\subset{R'}$ such that the normalized base change of $\mathcal{X}$ with respect to $R'$ %
is semistable. %
If $\mathcal{Y}$ is strongly semistable, then the normalized base change of $\mathcal{X}$ with respect to $R'$ %
is also strongly semistable.
\end{theorem}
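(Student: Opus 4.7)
The plan is to reduce the statement to a local analysis at the codimension-one and codimension-two points of the special fiber $\mathcal{Y}_s$, and to apply the corresponding versions of Abhyankar's lemma. Since $\mathcal{Y}/R$ is semistable, the singular locus of $\mathcal{Y}_s$ consists only of nodes, whose strictly henselian local rings have the form $R^{sh}[[x,y]]/(xy-\pi^{n_P})$, while the local ring of $\mathcal{Y}$ at any generic point of an irreducible component of $\mathcal{Y}_s$ is a discrete valuation ring.

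First I would handle the codimension-one points. Let $N$ be the least common multiple of the ramification indices of $\mathcal{X}\to\mathcal{Y}$ at points lying above generic points of the components of $\mathcal{Y}_s$ and above the points of $D_Y$, together with all the integers $n_P$ attached to nodes $P$. Tameness in codimension one guarantees that $\gcd(N,p)=1$, where $p$ is the residue characteristic. Set $R':=R[\pi^{1/N}]$, a Kummer extension of $R$. By the classical Abhyankar lemma for DVRs, after base change to $R'$ and normalization, the morphism $\mathcal{X}'\to\mathcal{Y}'$ is unramified at every generic point of the special fiber of $\mathcal{Y}'$.

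Next I would treat the nodes. Working étale-locally at a node $P$, the completed local ring of $\mathcal{Y}$ is $R[[x,y]]/(xy-\pi^{n_P})$, and the Abhyankar lemma for tame covers of strict normal crossings identifies the normalization of a tame extension of its fraction field, after adjoining $N$-th roots of $x$, $y$, and $\pi$, as a finite disjoint union of rings of the form $R'[[u,v]]/(uv-\pi'^{m})$ with $\pi'$ a uniformizer of $R'$. Because the codimension-one ramification indices and the multiplicities $n_P$ all divide $N$, every point of $\mathcal{X}'$ lying above $P$ is analytically either smooth or a node. Combined with the codimension-one step, this shows that $\mathcal{X}'$ is semistable.

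For the strongly semistable addendum, I would note that strong semistability requires the irreducible components of $\mathcal{Y}_s$ to be geometrically integral and to meet transversally. Since $R'/R$ induces a trivial residue extension, the local descriptions above pass through base change without splitting any component, so the components of $\mathcal{X}'_s$ remain geometrically integral and $\mathcal{X}'$ is strongly semistable. The main technical obstacle is the analysis at the nodes: one has to verify that the normalization of $R[[x,y]]/(xy-\pi^n)$ inside a tame extension of its total ring of fractions really does become a product of nodal rings after the Kummer extension. This is exactly the content of the Abhyankar lemma for strict normal crossings, and the tameness hypothesis is precisely what makes that lemma applicable.
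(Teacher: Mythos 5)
Your overall strategy (reduce to local analysis at generic points and closed points of the special fiber, kill vertical ramification with a Kummer extension $R[\pi^{1/N}]$ via Abhyankar, then identify the local covers of the nodal rings) is in the same spirit as the paper's proof, but there is a genuine gap: you never use the hypothesis that $\mathcal{Y}$ is a semistable model for the \emph{marked} curve $(Y,D_Y)$, and without it the local statements you invoke are false. The branch locus of $\mathcal{X}\to\mathcal{Y}$ has a horizontal part, namely the closures of the branch points of $\phi$, and these meet the special fiber at smooth closed points; adjoining $\pi^{1/N}$ does nothing to this horizontal ramification, so including the indices over $D_Y$ in $N$ buys you nothing. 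At a node, the Abhyankar-type statement you cite only identifies the normalization as a union of nodal rings when the cover is ramified along the two branches of the special fiber \emph{only}; this requires knowing that no branch point specializes to the node, which is exactly what the marked hypothesis gives (together with an argument like the paper's Lemma \ref{Separability} to transfer \'{e}taleness from the scheme to the height-one primes of the completed local ring). More seriously, you never analyze the closed points of $\mathcal{X}'$ lying above the smooth closed points of $\mathcal{Y}'_s$ to which the branch points reduce: semistability requires these to be smooth points or nodes, and this fails in general when two or more branch points reduce to the same smooth point (e.g.\ $y^{2}=x(x-\pi)(x-2\pi)$ above the origin). The paper handles this via the injectivity of $\mathrm{red}_{\mathcal{Y}|D_Y}$ into the smooth locus, which guarantees at most one horizontal branch divisor through each smooth closed point and none through the nodes, and then Lemma \ref{LemmaSmooth} (a tame cover of $R[[u]]$ ramified along a single codimension-one prime besides the special fiber is a Kummer cover, hence again smooth). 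Your proposal has no counterpart to either step.

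In addition, your argument for the strongly semistable addendum addresses the wrong condition: what must be shown is that each irreducible component of $\mathcal{X}'_s$ is smooth, i.e.\ that at every node of $\mathcal{X}'_s$ the two branches lie on distinct components, not that the components remain geometrically integral after a residue-trivial base change (the residue field is algebraically closed, so that is automatic). The paper obtains this by noting that a node of $\mathcal{X}'$ with only one codimension-one generization in the special fiber would map to a node of $\mathcal{Y}$ with the same defect, contradicting strong semistability of $\mathcal{Y}$; some argument of this kind is still needed in your approach.
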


Theorem \ref{MainThm1} generalizes \cite[Theorem 2.3]{liu1} in two ways. First, the covering is not assumed to be Galois. Moreover, if we write $G$ for the Galois group of the covering, then our tameness assumption is less strict than $p\nmid{|G|}$. %
A closer analogue of Theorem \ref{MainThm1} can be found in \cite[Theorem 3.1]{DecompFund1}, which proves the same result but in the context of Berkovich spaces. To see the resemblance, we invite the reader to compare the notion of residual tameness used in \cite{DecompFund1} to the tameness condition imposed here. %

As a first step towards proving Theorem \ref{MainThm1}, we establish a semistable generalization of the classical {{Newton-Puiseux}} theorem, which says that the field of Puiseux series $\mathcal{P}$ over $\mathbf{C}$ is algebraically closed. Our generalization works with the ring $A=R[[u,v]]/(uv-\pi^{n})$, which is the completed local ring of an ordinary double point on a semistable model. By adjoining the $m$-th roots of $u$ and $v$ to $K(A)$ for all $m$ coprime to the residue characteristic of $R$, we then obtain the tame Kummer field $K(W^{\mathrm{tame}})$. 
This is the analogue of the field of Puiseux series for $K(A)$. The corresponding Newton-Puiseux theorem is then as follows.  

\begin{theorem}\label{NPTheorem}{\bf{[Semistable Newton-Puiseux theorem]}}
Let $K(W^{\mathrm{tame}})$ be the tame Kummer field over $K(A)$. 
Then $K(W^{\mathrm{tame}})$ is the composite of all extensions of $K(A)$ that are at most tamely ramified over the special fiber and \'{e}tale over all generic primes. %

\end{theorem}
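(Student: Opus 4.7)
The statement has two halves: an inclusion asserting that $K(W^{\mathrm{tame}})$ itself is tame over the special fiber and \'{e}tale over the generic primes, and a maximality statement asserting that any other such extension is contained in it. For the first inclusion I would reduce to a finite subextension $K(A)\subset K(A)(u^{1/m},v^{1/m})$ with $\gcd(m,p)=1$, and analyze the integral closure $B_{m}$ of $A$ explicitly. After adjoining a primitive $m$-th root of unity and, if helpful, an $m$-th root of $\pi$, the relation $(u^{1/m})(v^{1/m})=\zeta\,\pi^{n/m}$ lets me identify the localization of $B_{m}$ at each prime over $(u,\pi)$ (resp.\ over $(v,\pi)$) as a totally tamely ramified extension of $A_{(u,\pi)}$ (resp.\ $A_{(v,\pi)}$) of degree dividing $m$. \'{E}taleness over the generic primes is immediate from the fact that $u$ and $v$ are units there. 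Passing to the union over all admissible $m$ yields the first inclusion.

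For the converse, let $L/K(A)$ be a finite separable extension that is tame at $(u,\pi)$ and $(v,\pi)$ and \'{e}tale at the generic primes; replacing $L$ by its Galois closure I may assume $L/K(A)$ is Galois. Let $e_{u}$ and $e_{v}$ be the ramification indices of $L$ over $(u,\pi)$ and $(v,\pi)$; by the tameness hypothesis both are prime to the residue characteristic $p$. Set $m=\operatorname{lcm}(e_{u},e_{v})$ and form the compositum $L'=L\cdot K(A)(u^{1/m},v^{1/m})$ inside a fixed separable closure of $K(A)$. The key input is \emph{Abhyankar's lemma}: after pulling back along the tame Kummer cover $K(A)(u^{1/m},v^{1/m})/K(A)$, the extension $L'/K(A)(u^{1/m},v^{1/m})$ becomes unramified at every prime lying over $(u,\pi)$ or $(v,\pi)$. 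Together with the \'{e}taleness of $L/K(A)$ on the generic fiber (which is preserved by the Kummer pullback), $L'$ corresponds to a cover of $\operatorname{Spec}(A'_{m})$ that is \'{e}tale at every height-one prime, where $A'_{m}$ denotes the integral closure of $A$ in $K(A)(u^{1/m},v^{1/m})$.

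I would then invoke Zariski--Nagata purity of the branch locus. After possibly adjoining a further $m$-th root of $\pi$ and absorbing the resulting root of unity into the choice of variables, $A'_{m}$ takes the form $R'[[u',v']]/(u'v'-\pi')$, which is a complete regular $2$-dimensional local ring. Purity upgrades the codimension-one \'{e}tale cover into a finite \'{e}tale cover of $\operatorname{Spec}(A'_{m})$; by Hensel's lemma its connected components correspond to finite separable extensions of the residue field, which in the present setup are already absorbed into the construction of $K(W^{\mathrm{tame}})$. Hence $L'$ is contained in $K(W^{\mathrm{tame}})$, so $L$ itself is contained in $K(W^{\mathrm{tame}})$, proving the maximality.

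\textbf{Main obstacle.} The delicate step is the reduction to a regular base on which purity applies: for $n\geq 2$ the ring $A$ is singular at the closed point, and one has to choose the Kummer cover carefully (simultaneously extracting roots of $u$, $v$, and $\pi$) so that $A'_{m}$ is actually regular. A related but more bookkeeping-style issue is that Abhyankar's lemma must be applied at each prime over $(u,\pi)$ and $(v,\pi)$ simultaneously, and one must check that the compatibility between these two primes through the relation $uv=\pi^{n}$ does not force additional ramification; this is precisely what the exponent $n/m$ (after extracting an $m$-th root of $\pi$) is designed to record.
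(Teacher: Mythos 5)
Your overall toolbox (tame Kummer pullback, Abhyankar's Lemma, then purity over a regular strictly Henselian base) is the same as the paper's, but the specific auxiliary cover you chose makes the crucial Abhyankar step fail whenever $n\geq 2$. At the special-fiber prime $(u,\pi)$ the local ring of $A$ is a discrete valuation ring with uniformizer $\pi$ in which $u=\pi^{n}\cdot(\mathrm{unit})$; hence the ramification index of $E:=K(A)(u^{1/m},v^{1/m})$ over $(u,\pi)$ is only $m/\gcd(m,n)$ (adjoining $v^{1/m}$ there contributes a separable residue extension, not ramification), and symmetrically at $(v,\pi)$. Lemma \ref{Abhyankar} needs $e(w_{L}/v)\mid e(w_{E}/v)$, and with your choice $m=\operatorname{lcm}(e_{u},e_{v})$ this divisibility can fail. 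Concretely, take $n=2$, residue characteristic different from $2$, and $L=K(A)(\pi^{1/2})$: this is tame with $e_{u}=e_{v}=2$ and \'{e}tale over the generic primes, so $m=2$; but $u^{1/2}=\pm\pi v^{-1/2}$ shows that $E=K(A)(u^{1/2},v^{1/2})$ is \emph{unramified} over both special-fiber primes, so $L'=E(\pi^{1/2})$ is ramified of index $2$ over $E$ there, the opposite of your claim that the pullback kills all special-fiber ramification. So the maximality half is not proved as written. (The flaw does not concern regularity, which is the only issue you flag: in this example $E$ already has a regular normalization.)

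The paper sidesteps this by adjoining $\pi^{1/m}$ instead, which lies in $K(W^{\mathrm{tame}})$ by the convention $\pi^{1/m}=u^{1/nm}v^{1/nm}$ of Remark \ref{CompatibleRootsDouble}: the ramification index of $K(A)(\pi^{1/m})$ at \emph{both} $(u,\pi)$ and $(v,\pi)$ is exactly $m$, so Abhyankar applies with no interference from $n$. The price is that the normalization of $A$ in $K(A)(\pi^{1/m})$ is again a generally non-regular double point $R'[[u,v]]/(uv-(\pi^{1/m})^{mn})$, and the paper then concludes with Lemma \ref{LemmaKummer33} (any extension of such a double point that is \'{e}tale in codimension one is Kummer), whose proof is exactly where your regularization-plus-purity argument lives. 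Your route can be repaired by taking the Kummer exponent divisible by $n\operatorname{lcm}(e_{u},e_{v})$, or simply by extracting roots of $\pi$ as the paper does. Two smaller points: passing to the Galois closure is harmless but needs the remark that tameness and generic \'{e}taleness are preserved under compositum; and in your easy direction the localization at a prime over $(u,\pi)$ is not ``totally tamely ramified of degree dividing $m$'' for the same valuation-theoretic reason (the index is $m/\gcd(m,n)$ and there is a residue extension), although the conclusion of that direction is unaffected.
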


We give a proof here using well-known results from commutative algebra. This simplifies some of the underlying principles of the proof given in \cite[Page 316, Corollaire 5.3]{SGA1}. We also prove an analogous theorem for $A=R[[u]]$, which can be viewed as the completed local ring of a smooth point on a model of a curve. By combining these two results we then quickly obtain a proof of Theorem \ref{MainThm1}.  

In the second part of this paper, we put Theorems \ref{MainThm1} and \ref{NPTheorem} into practice in the form of an algorithm. %
We assume for simplicity that we are given a plane curve determined by a bivariate polynomial $f(x,y)$ and that the covering of curves in Theorem \ref{MainThm1} is given by the normalization of the birational map $(x,y)\mapsto{x}$.  %
We write %
$K(x)\subset{K(x)[y]/(f)}=K(X)$ for the extension of function fields associated to this covering. %
If we take a strongly semistable model $\mathcal{Y}$ of $(\mathbf{P}^{1}_{K},D)$ where $D$ is the branch locus of $\phi$, then by Theorem \ref{MainThm1} the morphism $\phi$ lifts to a morphism of semistable models $\mathcal{X}\to\mathcal{Y}$ after a finite Kummer extension of $K$.  
For explicit local versions of the models $\mathcal{Y}$, we refer the reader to Section \ref{P1Models}. The morphism $\mathcal{X}\to\mathcal{Y}$ gives rise to a morphism $\Sigma(\mathcal{X})\to\Sigma(\mathcal{Y})$ of dual intersection graphs and our goal is to
reconstruct $\Sigma(\mathcal{X})$ from $\Sigma(\mathcal{Y})$. To that end, let $(v_{1},e,v_{2})$ be a triple consisting of an open edge $e\in{E(\Sigma(\mathcal{Y}))}$ together with its two endpoints $v_{1}$ and $v_{2}$. In terms of schemes, these correspond to the points $\eta_{v_{i}}$ and $\eta_{e}$ in $\mathcal{Y}$.  The main procedures used in the algorithm are then as follows. %
\begin{center}
{\bf{[Main procedures]}}
\end{center}

\begin{itemize}
\item Calculate the $\eta_{v_{i}}$-adic power series expansions of the roots of $f$. %
\item Calculate the $\eta_{e}$-adic power series expansions of the $\eta_{v_{i}}$-adic coefficients to obtain the $\eta_{e}$-adic power series of the roots of $f$. 
\item Connect the $\eta_{e}$-adic power series for the two pairs $(v_{1},e)$ and $(e,v_{2})$.  
\end{itemize}

We calculate these expansions using various generalizations of the original Newton-Puiseux method. %
By repeating these procedures for all edges $e\in{E(\Sigma(\mathcal{Y}))}$, %
we are %
then able to reconstruct $\Sigma(\mathcal{X})$.

\subsection{An overview}

We now give a short overview of the contents of this paper. In Section \ref{ExpansionsExtensions}, 
we review power series expansions for regular local rings. In Sections 
\ref{FiniteNormalExtensions} to \ref{sec:ChainPrime}, we study various connections between roots of polynomials and extensions of prime ideals. More precisely, 
let $A$ be a Noetherian normal domain with fraction field $K(A)$ and let $K(A)\subset{K(B)}$ be a finite separable extension of fields given by an irreducible polynomial $f\in{K(A)[x]}$. We write 
$\alpha_{i}$ for the roots of $f$ in an algebraic closure of $K(A)$. Let $B$ be the normalization of $A$ in $K(B)$. We are interested in the possible extensions of a prime ideal $\mathfrak{m}_{A}$ in $A$ to prime ideals in $B$. 
In Section \ref{GaloisSection}, we find that these are classified by the orbits of the $\alpha_{i}$ under the action of the absolute decomposition group $D_{\mathfrak{m}_{A}}$. This is one of the main facts used in the algorithm. In studying these connections, we are also naturally led to higher-dimensional generalizations
of classical theorems from number theory such as the Kummer-Dedekind theorem and Dedekind's theorem on Galois groups, see Sections \ref{FiniteNormalExtensions} and \ref{GaloisSection}. %

In Section \ref{SectionMainTheorem}, we prove Theorems \ref{MainThm1} and \ref{NPTheorem}. We start by giving a description of the spectra of the complete local rings corresponding to ordinary double points and smooth points on a semistable model, see Section \ref{ModelsCurves}. In Section \ref{KummerExtensions} we study tame Kummer extensions of these points and prove Theorem \ref{NPTheorem}. We then use these results in Section \ref{ProofSemistable} to prove Theorem \ref{MainThm1}.

In Section \ref{Algorithm}, we give the algorithms that allow us to calculate the covering $\Sigma(\mathcal{X})\to\Sigma(\mathcal{Y})$ of dual intersection graphs associated to a covering of semistable models $\mathcal{X}\to\mathcal{Y}$. As mentioned earlier, these algorithms calculate $\eta_{v}$-adic and $\eta_{e}$-adic power series expansions of $f$. %
For the $\eta_{v}$, we use a generalization of the original discrete Newton-Puiseux method for tame polynomials over a discrete valuation ring.   %
This is given in Section \ref{NewtonDiscreteSection}. Here we need an algorithm that can factorize polynomials over finite extensions of the residue field of the discrete valuation ring. For us, the most complicated residue fields will be finite extensions of ${\mathbf{F}}_{p}(x)$ or ${\mathbf{Q}}(x)$. %
 In Section \ref{MixedNP1}, we give the mixed Newton-Puiseux algorithm. This algorithm can roughly be seen as two consecutive applications of the generalized discrete algorithm. %
 It starts by calculating the  %
$\eta_{v}$-adic power series of the roots of $f$ for a vertex $v$ that is adjacent to $e$. %
It then applies the generalized discrete algorithm to the $\eta_{v}$-adic coefficients to obtain the %
$\eta_{e}$-adic power series of the roots. In order for this to work, we need the lifts of the coefficients from the residue field of $\eta_{v}$ to split $\eta_{e}$-adically. %
In residue characteristic zero there is a canonical way to lift these $\eta_{v}$-adic  coefficients using a splitting of the reduction map $R\to{k}$. In residue characteristic $p>0$, we use a different approach that also gives the desired lifts. %
Using a comparison theorem for Galois actions (see Theorem \ref{Dedekind3}), we then show that we can combine these $\eta_{e}$-adic and $\eta_{v}$-adic power series to deduce the global structure of the covering $\Sigma(\mathcal{X})\to\Sigma(\mathcal{Y})$. 

\subsection{Organization of the paper}

This paper is subdivided into two main parts:
\begin{itemize}
\item A {\it{theoretic}} part on power series, the Kummer-Dedekind theorem, Dedekind's theorem on cycles in Galois groups, Kummer extensions and the tame simultaneous semistable reduction theorem. This theoretic part is subdivided into two sections, namely Sections \ref{Preliminaries} and \ref{SectionMainTheorem}. 
\item An {\it{algorithmic}} part on generalizations of the Newton-Puiseux algorithm for discrete valuation rings and ordinary double points. This part consists of only one section, namely Section \ref{Algorithm}. 
\end{itemize}

\begin{figure}[h]
\centering
\includegraphics[width=10cm]{{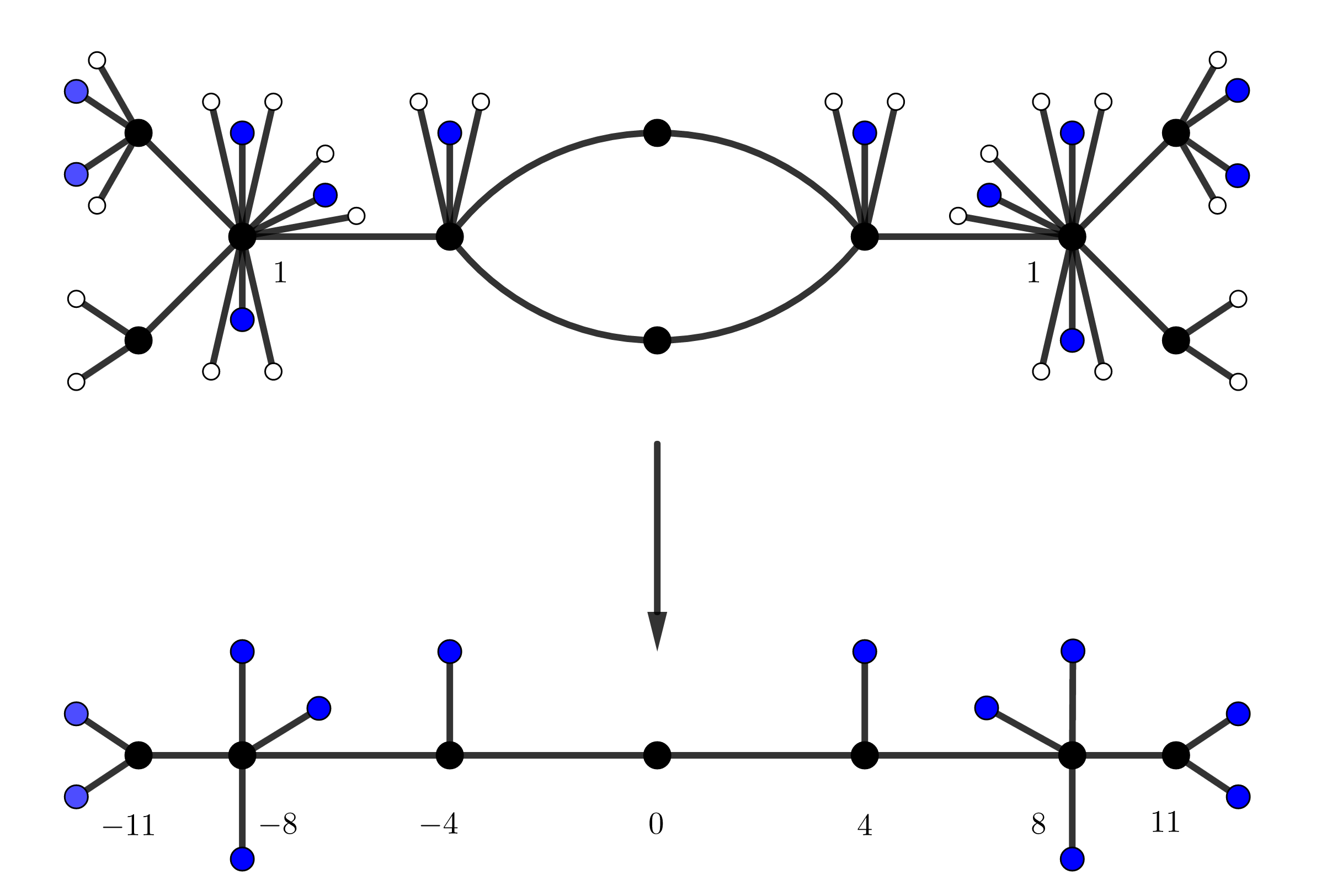}}
\caption{\label{Eindresultaat} {{The tropicalization of the $4:1$ covering in Example  %
\ref{MainExample}. The blue vertices in the top graph are the ramification points and the blue vertices in the bottom graph are the branch points. The $1$'s in the top graph represent the genera of the corresponding vertices. For the integers next to the vertices in the tree, see Example \ref{SeparatingMainExample}.}}} %
\end{figure}

For the algorithmic part, we will focus on the following example communicated to us by Prof. Bernd Sturmfels. This example illustrates many of the subtleties that go into the generalized Newton-Puiseux algorithms we present in Section \ref{Algorithm}, and as a bonus it also illustrates our generalized Dedekind theorem on Galois groups for finite extensions.   

\newpage
\begin{exa}\label{MainExample}{\bf{[Main Example]}}
Let $K=\mathbf{Q}((t))$ be the field of formal Laurent series over ${\mathbf{Q}}$ and consider the projective curve $X\subset{\mathbf{P}^{2}_{K}}$ defined by the homogeneous degree four polynomial
\begin{align*}
f &= t^{22}X^4+t^{14}X^3Y+t^{8}X^3Z+t^{10}X^{2}Y^{2}+t^{16}X^{2}YZ \\
   & +(1+t^6)X^{2}Z^{2}+t^{4}XY^{3}+(t^{12}-2)XY^{2}Z+t^{16}XYZ^{2} \\
    &+t^8XZ^3+(t^{2}+1)Y^4+t^{4}Y^3Z+t^{10}Y^2Z^2+t^{14}YZ^3+t^{22}Z^4.
\end{align*}
By the Jacobi criterion we find that $f$ defines a smooth plane quartic and thus $g(X)=3$. We note that this plane quartic specializes to a double conic, so we are in the situation described in \cite[Page 133]{HarrisMorrison}.

We are now interested in the %
dual intersection graph of a semistable model of $X$ together with the lengths of the edges and the genera of the vertices. %
The answer turns out to be as in Figure \ref{Eindresultaat}. %
To find this graph, we use the degree four map of smooth curves %
$\phi:X\rightarrow{\mathbf{P}^{1}_{K}}$ induced by the rational map %
\begin{equation}
[X:Y:Z]\mapsto{[X:Z]}.
\end{equation}
We start by calculating a separating tree for the branch locus $D$ of $\phi$. This determines a semistable model $\mathcal{Y}$ for $(\mathbf{P}^{1}_{K},D)$ and using Theorem \ref{MainThm1}, we obtain a lift of $\phi$ to a morphism of semistable models %
$\mathcal{X}\rightarrow{\mathcal{Y}}$. %
By applying the Newton-Puiseux algorithms in Section \ref{Algorithm}, we then obtain the intersection graph of $\mathcal{X}$.

\end{exa}

We also apply the algorithm to an example provided by Prof. Hannah Markwig, see \cite{Hahn2019} for more examples. In this example we find a curve of genus $3$ with a degree four covering $X\rightarrow{\mathbf{P}^{1}_{K}}$ such that the local covering data for $\Sigma(\mathcal{X})\to\Sigma(\mathcal{Y})$ does not determine the global graph-theoretical structure of $\Sigma(\mathcal{X})$.  %

\begin{exa}
\label{HannahExampleIntro}
Consider the plane quartic $X/\mathbf{Q}((t))$ given by the degree four homogeneous polynomial 
\begin{align*}
f:=&t^{24}X^4-X^2Y^2+t^8XY^3+t^{18}Y^4-2X^2YZ+t^3XY^2+t^{12}Y^3Z-(t^4-1)X^2Z^2+\\
&XYZ^2+t^8Y^2Z^2+t^6XZ^3+t^{11}YZ^3+t^{18}Z^3.
\end{align*}
As in Example \ref{MainExample}, this defines a smooth plane quartic, so its genus is three. We again consider the covering given by the rational map $[X:Y:Z]\mapsto[X:Z]$. Applying the techniques in this paper, we find that the dual intersection graph is as in Figure %
\ref{EndResultHannah2}. %
The details are in Example \ref{HannahExample1}. 
\begin{figure}[h]
\begin{center}
\includegraphics[height=6cm]{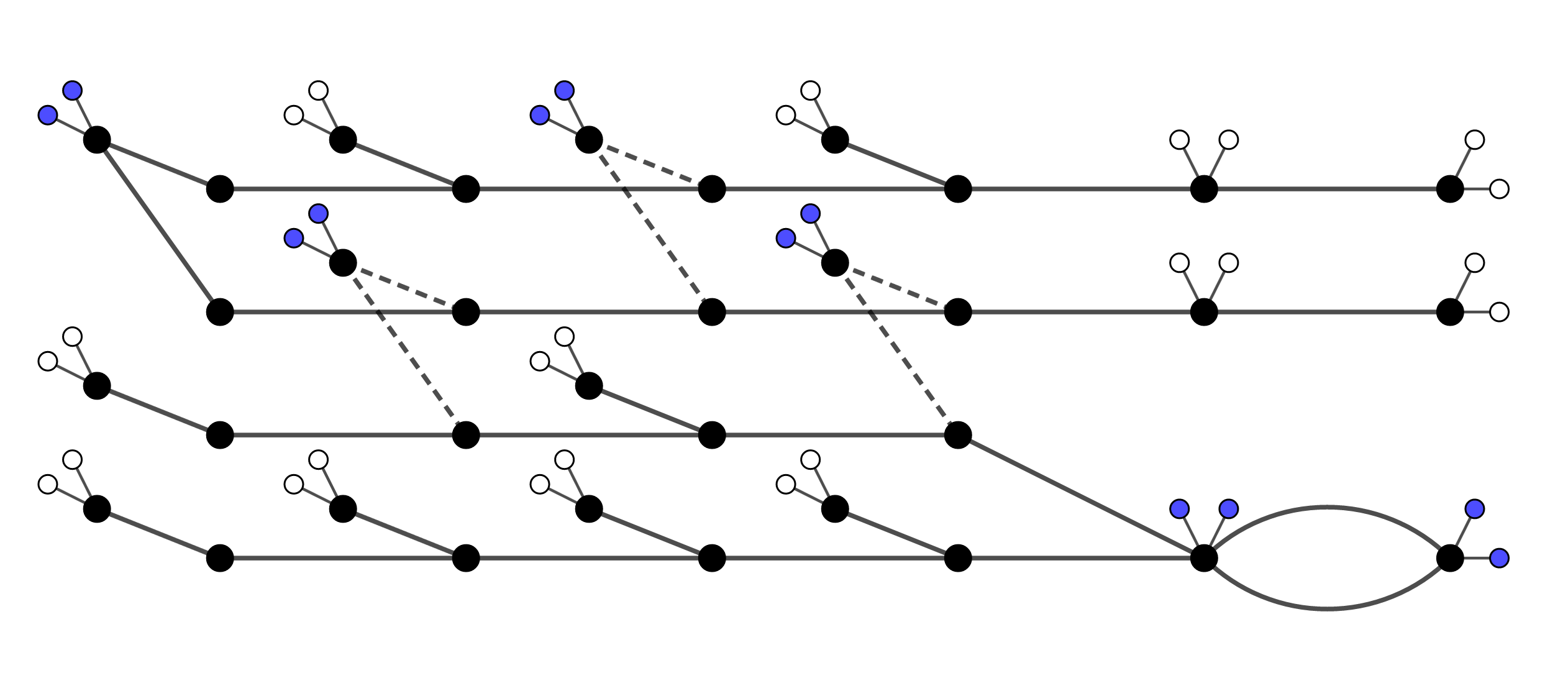}
\caption{\label{EndResultHannah2} The final graph for the curve in Example \ref{HannahExampleIntro}. This admits a degree four covering to a tree (see Figure %
\ref{SeparatingTreeHannah1}) and the blue vertices correspond to the ramification points for this covering. }%
\end{center}
\end{figure}

\end{exa}

\subsection{Connections to the existing literature}

We now point out connections between this paper and the existing literature. The part on Kummer extensions is a simplification of some of the results in \cite{SGA1} and \cite{Grothendieck1971} in terms of fields and normalizations. %
In applying this field-theoretic approach, we also find generalizations of results that are well-known in the one-dimensional case, but seemingly unknown in higher dimensions. For instance, we are able to generalize the {\it{Kummer-Dedekind}} theorem from algebraic number theory, which finds the factorization of a prime number in a number field for primes that do not divide the discriminant of the number field. Another well-known theorem we were able to generalize is Dedekind's theorem on deducing elements of Galois groups of monic irreducible $f\in\mathbf{Z}[x]$ using factorizations over $\mathbf{F}_{p}[x]$, again for primes $p$ that do not divide the discriminant. The original versions can be found in \cite[Theorem 3.1 and Corollary 8.11]{Ste3}, \cite[Chapter 1, Proposition 8.3]{neu} and \cite[Theorem 13.4.5]{coxgalois}. The generalized versions can be found in Theorems \ref{KummerDedekind}, \ref{Dedekind1} and \ref{Dedekind2a}.

Using the theorems on Kummer extensions, we then deduce our main theorem: the tame simultaneous semistable reduction theorem, see Theorem \ref{MainThm1}. Various (tame) simultaneous semistable reduction theorems can be found in the literature, but ours is most closely related to \cite[Theorem 2.3]{liu1} in terms of semistable models, and \cite[Theorem 3.1]{DecompFund1} in terms of Berkovich spaces. In \cite{Liu2006} and \cite{ABBR1} simultaneous semistable reduction theorems are proved that work without any tameness assumptions. This unfortunately doesn't give a direct way of determining semistable models using coverings, since the covering of semistable models $\mathcal{X}\rightarrow{\mathcal{Y}}$ for $X\rightarrow{Y}$ is induced from a semistable model for $X$. A great deal of progress has been made in the meantime on tropicalizations of wild coverings, %
see \cite{Cohen2016} and \cite{Brezner2019} for results in this direction.

The discrete Newton-Puiseux algorithm we give here is a direct generalization of the one given in \cite{Duval}. %
The factorization problems encountered there will also be of importance in this paper, as we will see when we try to determine the vertices of the skeleton of a curve, see Remark \ref{ExtensionsMinimalPolynomialsAlgorithm}. In \cite{BeringerJung2003} and \cite{MCDONALD1995} a multi-variate generalization of the Newton-Puiseux algorithm is given. This method takes a polynomial $f\in{\mathbf{C}[x_{1},...,x_{r}][y]}$ of degree $d$ and returns a set of $d$ power series over some generalized fractional power series ring $\mathbf{C}((C_{\mathbf{Q}}))$, where $C_{\mathbf{Q}}$ is a strongly convex rational polyhedral cone in $\mathbf{R}^{n}$, see \cite{MCDONALD1995} for the exact definition. It is at present not clear to the author if there are any direct connections between this multi-variate algorithm and the one in our paper, since we work with more general regular local rings which are not necessarily defined over a field. The formulation in terms of fields of Theorem \ref{NPTheorem} does however point at some interesting possible generalizations of these multi-variate algorithms. %

Lastly, we mention that there are many algorithms in the literature that calculate semistable models for curves in specific cases, for instance when the curve has a Galois morphism to the projective line or when the curve is embedded in a torus in a suitable way. 
We refer the interested reader to the following papers %
for examples of these algorithms: \cite{TDokchitser2018}, \cite{bouw_wewers_2017}, \cite{CuetoMarkwig2016}, \cite{tropabelian}, \cite{supertrop}, \cite{InvariantsSuper} and \cite{troptamethree}.

\begin{rem}
{\bf{[Algorithms in this paper]}}
Some of the steps in the algorithms we give in this paper contain some intentional ambiguities and omissions in order to convey the idea of the algorithm in a more conceptual way. For instance, in Algorithm \ref{DiscreteNPAlgorithm} we will say that the power series have to be updated with the data that was just calculated. To make this into a line of code, we would have to introduce markers and containers to make sure that the data is transported to the right location. We think that this would detract from the underlying idea, so we instead opt for this style, leaving some room for programmers to transform the ideas into code in their own way.
\end{rem}

\tableofcontents

\subsection{Preliminaries}\label{IntroPrelim}

Throughout this paper, $R$ will be a complete discrete valuation ring with field of fractions $K$, maximal ideal $\mathfrak{m}$, algebraically closed residue field $k$\footnote{This assumption is made for theoretical reasons. It has one nontrivial practical consequence which will be discussed in Remark \ref{RemarkGeometricallyIrreducible}.} and uniformizer $\pi$. The (normalized) valuation will be denoted by $v(\cdot{}):K^{*}\rightarrow{\mathbf{Z}}$. A point $z_{1}$ in a scheme $Z$ is said to be a generization of $z_{2}$ if $z_{2}\in\overline{\{z_{1}\}}$. We will write this as $z_{1}\subset{z_{2}}$. The point $z_{2}$ is said to be a specialization of $z_{1}$.   %
A curve $X/K$ is %
an algebraic variety over $K$ (see \cite[Chapter 3, Definition 3.47]{liu2}) whose irreducible components are all of dimension one. Unless mentioned otherwise, we will also assume that curves are proper, smooth and geometrically irreducible.  We then have the following notion of a model $\mathcal{X}/R$ for a curve $X/K$.

\begin{mydef}{\bf{[Models of curves]}}\label{Models2}
Let $X/K$ be a curve over a complete discretely valued field $K$ with valuation ring $R$. A model $\mathcal{X}$ for $X$ is an integral scheme $\mathcal{X}$ with a flat and proper morphism $\mathcal{X}\rightarrow{\mathrm{Spec}(R)}$ %
and an isomorphism $\mathcal{X}_{\eta}\rightarrow{X}$. Here $\mathcal{X}_{\eta}$ is the generic fiber of $\mathcal{X}$.   %
\end{mydef}

\begin{mydef}{\bf{[Smooth and ordinary double points]}}\label{PointsCompletions1}
Let $\mathcal{X}$ be a model for a curve $X$. A smooth closed point on the special fiber is a closed point $\eta\in\mathcal{X}$ %
such that 
\begin{equation}
\widehat{\mathcal{O}}_{\mathcal{X},\eta}\simeq{R[[u]]}.
\end{equation}

An ordinary double point is a closed point $\eta\in\mathcal{X}$ %
such that 
\begin{equation}
\widehat{\mathcal{O}}_{\mathcal{X},\eta}\simeq{R[[u,v]]/(uv-\pi^{n})}
\end{equation} 
for some integer $n$. The integer $n$ is called the {\it{thickness}} or {\it{length}} of the ordinary double point. It is independent of the isomorphism chosen by \cite[Chapter 10, Corollary 3.22(c)]{liu2}.   %
\end{mydef}

\begin{rem}
We note that the definition of a smooth point given here coincides with the usual one for points in the special fiber by  
\cite[Proposition 17.5.3]{EGA4}. %
\end{rem}

\begin{mydef}
\label{Semistable1}
{\bf{[Semistable and permanent models]}}
Let $\mathcal{X}\rightarrow{\mathrm{Spec}(R)}$ be a model for a proper curve $X/K$. We say that $\mathcal{X}$ is semistable if the special fiber $\mathcal{X}_{s}$ is reduced and every closed point $\eta\in{\mathcal{X}}$ is either a smooth point or an ordinary double point. It is strongly semistable if additionally every irreducible component of the special fiber is smooth over $k$. %
If the special fiber of a model $\mathcal{X}$ is reduced, then we say that $\mathcal{X}$ is a {\it{permanent}} model.
\end{mydef}

\begin{mydef}
{\bf{[Normalized base change]}} Let $\mathcal{X}\to\mathrm{Spec}(R)$ be a model for a curve $X/K$ and let $R\subset{R'}$ be a finite map of discrete valuation rings. The normalized base change of $\mathcal{X}$ is the normalization of $\mathcal{X}$ in $K'(X)$. Here $K'(X)$ is the function field of $X\otimes_{K}{K'}$. \end{mydef}

Let $\mathcal{X}/R$ be a model for a curve $X/K$. We have a natural reduction map 
\begin{equation}\label{ReductionMap}
\mathrm{red}_{\mathcal{X}}:X(K)\to{\mathcal{X}(k)}.
\end{equation}
That is, starting with a point $P_{K}\in{X(K)=\mathcal{X}(K)}$, we find using the valuative criterion of properness that there is a unique morphism ${P_{R}}:\mathrm{Spec}(R)\to\mathcal{X}$ such that the composition of $P_{R}$ with $\mathrm{Spec}(K)\to\mathrm{Spec}(R)$ gives $P_{K}:\mathrm{Spec}(K)\to\mathcal{X}$. By composing $P_{R}$ with the natural map $\mathrm{Spec}(k)\to\mathrm{Spec}(R)$, we then obtain the desired point $\mathrm{red}_{\mathcal{X}}(P_{K})$ in $\mathcal{X}(k)$.

\begin{mydef}\label{SemStaMarkedCurves}
{\bf{[Semistable models for marked curves]}} Let $D\subset{X(K)}$ be a finite set of closed points on a curve. We call the pair $(X,D)$ a marked curve. A semistable model for $(X,D)$ is a semistable model $\mathcal{X}$ for $X$ such that the restriction of the reduction map $\mathrm{red}_{\mathcal{X}}(\cdot{})$ to $D$ gives an injective map
\begin{equation}
\mathrm{red}_{\mathcal{X}|D}:D\to\mathcal{X}^{\mathrm{sm}}_{s}(k)
\end{equation} to the $k$-points of the smooth locus $\mathcal{X}^{\mathrm{sm}}_{s}\subset{\mathcal{X}_{s}}$ of the special fiber. We similarly define a strongly semistable model for a marked curve.   
\end{mydef}

\begin{mydef}
{\bf{[Dual intersection graphs and skeleta]}}
Let $\mathcal{X}/R$ be a strongly semistable model for a curve $X/K$. Consider the set $V(\mathcal{X})$ of irreducible components in the special fiber $\mathcal{X}_{s}$ and the set $E(\mathcal{X})$ of intersection points of these components. The dual intersection graph $\Sigma(\mathcal{X})$ of $\mathcal{X}$ is the undirected graph with vertex set $V(\mathcal{X})$ and edge set $E(\mathcal{X})$, where the endpoints of the edges are the vertices corresponding to the irreducible components the intersection point lies on. We will also call this the {\it{skeleton}} of $\mathcal{X}$. 

 For any vertex $v\in{V}(\mathcal{X})$ corresponding to a component $\Gamma_{v}$, we write $g(\Gamma_{v})$ for the genus of this curve $\Gamma$ over $k$. For any edge $e\in{E(\mathcal{X})}$ corresponding to an ordinary double point $x_{e}\in\mathcal{X}$, we write $\delta_{e}$ for the thickness of $x_{e}$. We now turn the dual intersection graph into a weighted metric graph $(\Sigma(\mathcal{X}),\ell(\cdot{}),w(\cdot{}))$ by defining the functions
\begin{align*}
\ell:E(\mathcal{X})&\rightarrow{\mathbf{Z}_{\geq{0}}},\\
e&\mapsto{\delta_{e}},\\
w:V(\mathcal{X})&\rightarrow{\mathbf{Z}_{\geq{0}}},\\
v&\mapsto{g(\Gamma_{v})}.
\end{align*}

For a marked curve $(X,D)$ and a strongly semistable model $\mathcal{X}$ for $(X,D)$, we add extra leaves at the vertices that the points in $D$ reduce to. 
\end{mydef}

\begin{mydef}\label{TropicalizationMorphism}
{\bf{[Tropicalization of a finite morphism]}}
Let $\mathcal{X}$ and $\mathcal{Y}$ be strongly semistable models for curves $X$ and $Y$ respectively and let $\phi:X\to{Y}$ be a finite morphism. Suppose that there is a finite $R$-morphism $\phi_{R}:\mathcal{X}\to\mathcal{Y}$ lifting $\phi$ such that %
every non-smooth point in $\mathcal{X}_{s}$ is sent to a non-smooth point of $\mathcal{Y}_{s}$ by $\phi_{R}$. We then have an induced morphism of intersection graphs %
\begin{equation}
\Sigma(\mathcal{X})\to\Sigma(\mathcal{Y}),
\end{equation}
which we call the {\it{tropicalization}} or {\it{skeleton}} of $\phi_{R}$.
\end{mydef}
\begin{rem}
The condition on the non-smooth points ensures that we have maps $E(\Sigma(\mathcal{X}))\to{E(\Sigma(\mathcal{Y}))}$ next to the canonical maps $V(\Sigma(\mathcal{X}))\to{V(\Sigma(\mathcal{Y}))}$. We will see in the proof of Theorem \ref{MainThm1} %
that the morphisms in this paper all satisfy this extra condition. For an example of a finite morphism of semistable models that does not satisfy this condition, consider the %
semistable model $\mathcal{X}$ given locally by 
\begin{equation}
y^2=x(x-\pi)(x+1)(x+1-\pi).
\end{equation}
The morphism $(x,y)\mapsto{x}$ then induces a finite morphism of semistable models $\mathcal{X}\to\mathbf{P}^{1}_{R}$ that sends the non-smooth points $(x,y,\pi)$ and $(x+1,y,\pi)$ (written as prime ideals) to smooth points. The reason for this is that %
there are branch points that reduce to the same point on the special fiber, for instance $(x)$ and $(x-\pi)$.  
\end{rem}

\subsubsection{Models of $\mathbf{P}^{1}_{K}$}\label{P1Models}

We now introduce the models for $\mathbf{P}^{1}_{K}$ that we will be using throughout this paper. %
We also introduce notation for various affine subsets in these models. This notation resembles the Berkovich point of view of giving (formal) semistable models through semistable vertex sets, see \cite[Theorem 5.8]{ABBR1}. %
We also invite the reader to review the picture of an infinite $\mathbf{R}$-tree associated to the Berkovich analytification $\mathbf{P}^{1,\mathrm{an}}_{K}$, see \cite[Section 2.3]{baker2010potential}. %

We start by introducing notation for certain discrete valuations of the function field $K(\mathbf{P}^{1}_{K})=K(x)$. Let $x_{0}\in{K}$ and $k\in\mathbf{Z}_{\geq{0}}$. Consider the sets 
\begin{eqnarray*}
\mathbf{B}^{+}_{k}(x_{0})=\{x\in{K}:v(x-x_{0})\geqslant{}k\},\\
\mathbf{B}^{-}_{k}(x_{0})=\{x\in{K}:v(x-x_{0})\leqslant{}k\}.
\end{eqnarray*}
We refer to these as the positively oriented and negatively oriented closed disks of radius $k$ centered around $x_{0}$. %
If the orientation is clear, we also just call them closed disks. For these disks, we define injective ring homomorphisms %
$\phi_{\pm}:R[u_{1}]\to{K(x)}$ by %
\begin{align*}
\phi_{+}&:u_{1}\mapsto{\dfrac{x-x_{0}}{\pi^{k}}},\\
\phi_{-}&:u_{1}\mapsto{\dfrac{\pi^{k}}{x-x_{0}}}.
\end{align*}
We denote the image of such an embedding in $K(x)$ by $\mathbf{A}_{\mathbf{B}^{\pm}_{k}(x_{0})}$. %
We will sometimes identify $R[u_{1}]$ with its image $\mathbf{A}_{\mathbf{B}^{\pm}_{k}(x_{0})}\subset{K(x)}$. Since the ideal $(\pi)$ is prime in $\mathbf{A}_{\mathbf{B}^{\pm}_{k}(u_{0})}$ and regular of codimension one, we find that it defines a discrete valuation of $K(x)$.

\begin{mydef}
{\bf{[Discrete valuation assigned to a closed disk]}}\label{DefinitionValuationClosedBall}
Consider the subring $\mathbf{A}_{\mathbf{B}_{k}(x_{0})}$ of $K(x)$. %
We define the valuation of $K(x)$ associated to $\mathbf{B}^{\pm}_{k}(x_{0})$ to be the discrete valuation corresponding to $(\pi)$ in the algebra $\mathbf{A}_{\mathbf{B}^{\pm}_{k}(x_{0})}$. 
\end{mydef}

\begin{rem}
In terms of Berkovich spaces, the discrete valuation we introduced here corresponds to the type-$2$ point associated to the closed disk $\mathbf{B}^{\pm}_{k}(x_{0})$. This is independent of the orientation chosen.  
\end{rem}

\begin{exa}
Even though $\pi$ is a uniformizer for all of these discrete valuations of $K(x)$, they are different in general. As an example, consider the valuations $v_{\mathfrak{p}_{1}}$ and $v_{\mathfrak{p}_{2}}$ corresponding to $\mathbf{B}^{+}_{0}(0)$ and $\mathbf{B}^{+}_{1}(0)$. We have $v_{\mathfrak{p}_{1}}(x)=0$, but $v_{\mathfrak{p}_{2}}(x)=1$, since $x=\pi\cdot{}\dfrac{x}{\pi}$. 
\end{exa}

We now introduce notation for closed and open annuli. Consider the set 
\begin{equation}
\mathbf{S}_{a,b}(x_{0})=\{x\in{K}:a\leqslant{}v(x-x_{0})\leqslant{}b\},
\end{equation}
where $a$ and $b$ are any two integers with $a<b$ and $x_{0}\in{K}$. We refer to this as the closed annulus of inner radius $a$ and outer radius $b$ around $x_{0}$. Similarly, we have the open annulus of inner radius $a$ and outer radius $b$
\begin{equation}
\mathbf{S}^{+}_{a,b}(x_{0})=\{x\in{K}:a<v(x-x_{0})<b\}.
\end{equation} 

We now attach an algebra to these two sets. Define $c:=b-a$. Consider the algebra 
$R[u_{1},v_{1}]/(u_{1}v_{1}-\pi^{c})$.
We embed this ring into $K(x)$ by sending
\begin{align*}
u_{1}&\mapsto{\dfrac{x-x_{0}}{\pi^{a}}},\\
v_{1}&\mapsto{\dfrac{\pi^{b}}{x-x_{0}}}.
\end{align*}
We write $\mathbf{A}_{\mathbf{S}_{a,b}(x_{0})}$ for the image of $R[u_{1},v_{1}]/(u_{1}v_{1}-\pi^{c})$ under this embedding. As before, we view $R[u_{1},v_{1}]/(u_{1}v_{1}-\pi^{c})$ as a subring of $K(x)$. 
\begin{mydef}
{\bf{[Maximal ideal corresponding to an open annulus]}}\label{MaximalIdealAnnulus}
Let $\mathbf{A}_{\mathbf{S}_{a,b}(x_{0})}$ be the algebra associated to the annuli $\mathbf{S}_{a,b}(x_{0})$ and $\mathbf{S}^{+}_{a,b}(x_{0})$. Let $\mathfrak{m}=(u_{1},v_{1},\pi)$. We call $\mathfrak{m}$ the maximal ideal associated to the open annulus $\mathbf{S}^{+}_{a,b}(x_{0})$. 
\end{mydef} 
\begin{rem}
The algebra $\mathbf{A}_{\mathbf{S}_{a,b}(x_{0})}$ has two codimension one primes in the special fiber: $\mathfrak{p}_{1}=(u_{1},\pi)$ and $\mathfrak{p}_{2}=(v_{1},\pi)$. Their associated valuations are the same as the ones induced by $\mathbf{B}_{b}(x_{0})$ and $\mathbf{B}_{a}(x_{0})$ respectively (note the change here), see Definition \ref{DefinitionValuationClosedBall}. We will also write these as the valuations corresponding to the sets
\begin{align}\label{Endpoint1}
\partial_{1}\mathbf{S}_{a,b}(x_{0})&=\{x\in{K}:v(x-x_{0})=b\},\\
\partial_{2}\mathbf{S}_{a,b}(x_{0})&=\{x\in{K}:v(x-x_{0})=a\}.\label{Endpoint2}
\end{align} 

We refer to these sets as the endpoints of $\mathbf{S}_{a,b}(x_{0})$. 

\end{rem}

\begin{mydef}
\label{SeparatingModel}{\bf{[Separating models and trees]}}
Let $\mathcal{Y}$ be a strongly semistable model for $\mathbf{P}^{1}_{K}$ and let $D=\{P_{1},...,P_{r}\}$ be a finite set of $K$-rational points of $\mathbf{P}^{1}_{K}$. We say that $\mathcal{Y}$ is separating for $D$ if $\mathcal{Y}$ is a strongly semistable model for the marked curve $(\mathbf{P}^{1}_{K},S)$ in the sense of Definition \ref{SemStaMarkedCurves}. %
If $\mathcal{Y}$ is separating for $D$, then we call the dual intersection graph $\Sigma(\mathcal{Y})$ of $\mathcal{Y}$ a {\it{separating tree}} for $D$. We highlight the reductions of the points $P_{i}\in{D}$ by adding leaves to $T$. 
\end{mydef} 

These separating models are easily created using the Berkovich point of view. %
We will use the language of semistable vertex sets, see %
\cite{ABBR1}. Let $D\subset{\mathbf{P}^{1}_{K}(K)}$ be a finite set of $K$-rational points and assume that the Euler characteristic $\chi(\mathbf{P}^{1}_{K},D)=2-2g-|D|=2-|D|$ is strictly negative. There is then a canonical minimal semistable vertex set $V$ of $\mathbf{P}^{1}_{K}$. This gives a finite set of closed annuli and disks inside $K$ by the following procedure. We first assume that $|V|>1$. For an open annulus in the decomposition afforded by the definition of a semistable vertex set, we take the natural closed annulus associated to it. If $v\in{V}$ is a {\it{leaf vertex}}, then there is exactly one open annulus represented by $a<v(x-x_{0})<b$ adjacent to $v$. Here we assume without loss of generality that $v$ corresponds to $a$. We define the closed disk associated to $v$ to be $\mathbf{B}^{-}_{a}(x_{0})$. The strongly semistable model $\mathcal{Y}=\mathcal{Y}_{V}$ associated to $V$ by \cite[Theorem 5.8]{ABBR1} is then obtained by gluing these closed disks and annuli along the obvious transition maps. If $|V|=1$, then $v$ corresponds to $\mathbf{B}^{\pm}_{k}(x_{0})$ for some $k$ and $x_{0}$. We then just glue the two rings $\mathbf{A}_{\mathbf{B}^{\pm}_{k}(x_{0})}$ along the obvious transition map. For $k=0$ and $x_{0}=0$ this construction gives $\mathbf{P}^{1}_{R}$.

\begin{exa}\label{SeparatingMainExample}
{\bf{[Main Example]}} 
Consider the curve $X$ in Example \ref{MainExample}. The covering of smooth curves $\psi:X\rightarrow{\mathbf{P}^{1}_{K}}$ induced by the rational map $[X:Y:Z]\mapsto{[X:Z]}$ has a finite {{branch locus}} which we denote by $D$. We determine a separating model $\mathcal{Y}$ for $D$. To do this, we first calculate the discriminant of the covering. Over the affine chart $Z\neq{0}$ with $x=X/Z$, this gives a polynomial $\Delta(x)$ of degree $12$. %
A Newton polygon calculation shows that the roots $\alpha_{i}$ of the discriminant $\Delta$ are distributed as in the following table. 

\begin{center}
 \begin{tabular}{|c | c|}
 \hline
$\#\{\alpha_{i}\}$ & $v({\alpha_{i}})$ \\
 \hline
$2$ & $11$ \\
 $3$ & $8$ \\
 $1$ & $4$\\
 $1$ & $-4$\\
 $3$ & $-8$\\
 $2$ & $-11$\\
 \hline
 \end{tabular}
 \end{center}

 Further calculations then show that the $t$-adic coefficients of these roots only coincide up to these orders. For instance, write $\alpha_{1}$ and $\alpha_{2}$ for the roots with $v(\alpha_{i})=11$. We then calculate $1/t^{54}\cdot{}\Delta(t^{11}x)$, which reduces to the separable polynomial $-27x^2-4$ modulo $(t)$. In other words, the power series expansions of these roots (see Section \ref{ExpansionsExtensions}) start with
 \begin{align*}
 \alpha_{1}=\delta_{1}t^{11}+\mathcal{O}(t^{12}),\\
 \alpha_{2}=\delta_{2}t^{11}+\mathcal{O}(t^{12}),
 \end{align*}
 where the $\delta_{i}$ are the two roots of $-27x^2-4$ in $\overline{\mathbf{Q}}$. Doing similar calculations for the other roots then yields the separating tree as in Figure \ref{SeparatingTree}. This tree yields six closed annuli and their algebras from left to right are %
\begin{equation}
\mathbf{A}_{\mathbf{S}_{-11,-8}(0)},\,\,\mathbf{A}_{\mathbf{S}_{-8,-4}(0)},\,\,\mathbf{A}_{\mathbf{S}_{-4,0}(0)},\,\,\mathbf{A}_{\mathbf{S}_{0,4}(0)},\,\,\mathbf{A}_{\mathbf{S}_{4,8}(0)},\,\,\mathbf{A}_{\mathbf{S}_{8,11}(0)}.
\end{equation}
\begin{figure}[h!]
\centering
\includegraphics[height=3cm, width=10cm]{{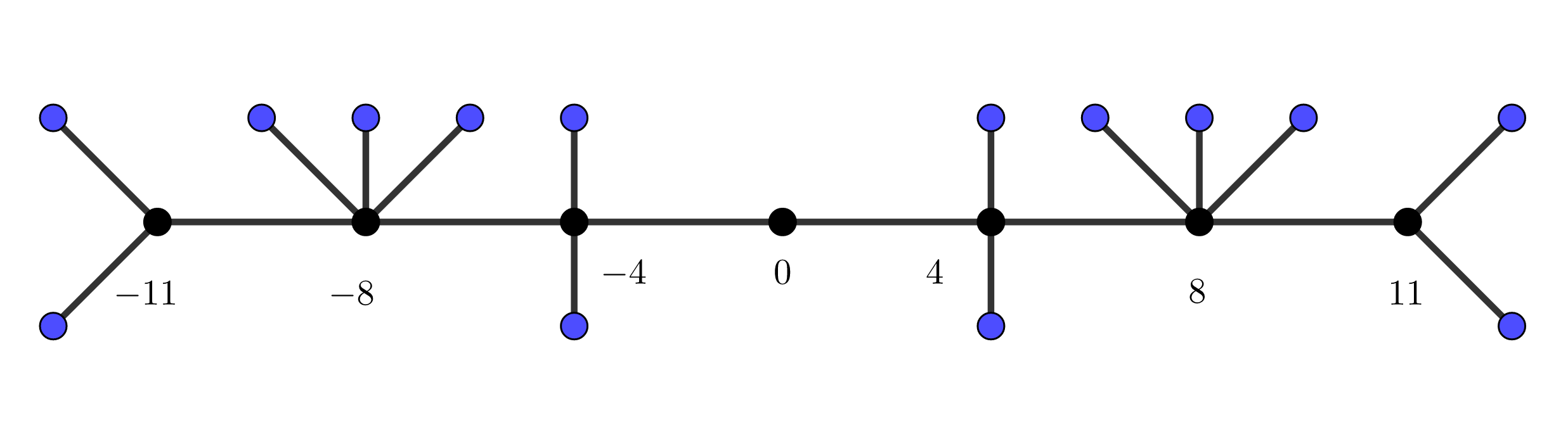}}
\caption{\label{SeparatingTree} {\small{The separating tree for the covering in Example \ref{MainExample}. The integers under the vertices correspond to the closed disks %
$\mathbf{B}_{k}(0)$, as in Definition \ref{DefinitionValuationClosedBall}.    }}} 
\end{figure}

In Section \ref{SectionMainTheorem}, we show that the normalization $\mathcal{X}$ of the model $\mathcal{Y}$ in $K(X)$ is again a strongly semistable model after a finite extension of $K=\overline{\mathbf{Q}}((t))$. This then induces a morphism of intersection graphs $\Sigma(\mathcal{X})\rightarrow{\Sigma(\mathcal{Y})}$ and we show in Section \ref{Algorithm} how we can calculate $\Sigma(\mathcal{X})$ using compatible power series expansions. %

\end{exa}

\section{Finite extensions and power series}
\label{Preliminaries}

In this section we discuss several generalities concerning finite extensions and power series expansions.  %
We start with the uniqueness of %
adic expansions with respect to a monomial basis in a regular local ring. %
This point of view %
forms a natural context for the Newton-Puiseux algorithms in Sections \ref{NewtonDiscreteSection} and \ref{MixedNP1}. %
 After this, we consider finite normal extensions of normal domains and their Galois counterparts. We prove an analogue of \cite[Theorem 3.8]{Ste2}, which says that in a finite extension of number fields one can find the primes lying above a prime $\mathfrak{p}$ by factoring a certain polynomial over the completion or the Henselization at $\mathfrak{p}$. This in turn is equivalent to finding the orbits of the roots under the absolute decomposition group, a fact that lies at the basis of the algorithms we present here. %
 We then generalize a classical theorem by Dedekind on %
cycles in Galois groups, see \cite[Theorem 13.4.5]{coxgalois} and \cite[Corollary 8.11]{Ste3} for instance. Stated somewhat imprecisely, the generalized theorem tells us that we can find shadows of Galois groups by doing computations over the completion or the Henselization of a local ring. %

\subsection{Expansions for regular local rings}\label{ExpansionsExtensions}

Let $(A,\mathfrak{m})$ be a regular local Noetherian ring with residue field $k:=A/\mathfrak{m}$. %
Suppose that we have a set of elements $x_{1},...,x_{r}\in\mathfrak{m}$ whose images in the $k$-vector space %
$\mathfrak{m}/\mathfrak{m}^{2}$ are a basis of that vector space (we will also say that the $x_{i}$ themselves form a basis of this vector space). Here $r=\mathrm{dim}(A)$ by regularity. This basis can then be used to give an isomorphism between the graded ring $\mathrm{gr}_{\mathfrak{m}}(A)$ and the polynomial ring over $k$ in $r$ variables, see \cite[\href{https://stacks.math.columbia.edu/tag/00NO}{Lemma 00NO}]{stacks-project} and \cite[Theorem 13.4]{Kemper2011}. We will interpret this result using the concept of monomials. %
\begin{mydef}\label{MonomialElements}{\bf{[Monomials]}}
Let $(A,\mathfrak{m})$ be a regular local Noetherian ring and let $\{x_{1},...,x_{r}\}$ be a set of generators for $\mathfrak{m}$. %
An element $m\in{A}$ of the form
\begin{equation}
m=x_{1}^{i_{1}}\cdots{x_{r}^{i_{r}}}
\end{equation}
for $i_{1},...,i_{r}\in\mathbf{N}$ 
is a {\it{monomial}} in the $x_{i}$ of total degree $\mathrm{deg}(m):=i_{1}+...+i_{r}$. We will also think of these monomials as being represented by vectors in $\mathbf{N}^{r}$. That is, every monomial $x_{1}^{i_{1}}\cdots{x_{r}^{i_{r}}}$ corresponds to the vector $(i_{1},...,i_{r})$. %
\end{mydef}
The set of all monomials in $A$ is denoted by $M$. For every $j\in\mathbf{N}$, we can then consider the set of monomials of total degree $j$: $M_{j}=\{m\in{M}:\mathrm{deg}(m)=j\}$. These monomials are elements of the ideal $\mathfrak{m}^{j}$. In fact, they form a basis: %
\begin{lemma}\label{BasisVectorSpaces}
Consider the set $M_{j}$ of all monomials in the $x_{i}$ of total degree $j$ and let $\overline{M}_{j}$ be the image of this set in the $k$-vector space $\mathfrak{m}^{j}/\mathfrak{m}^{j+1}$. Then $\overline{M}_{j}$ is a basis for this vector space.  %
\end{lemma}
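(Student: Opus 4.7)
The plan is to read off the statement from the already-quoted graded-ring isomorphism $\mathrm{gr}_{\mathfrak{m}}(A) \simeq k[X_{1},\ldots,X_{r}]$, so the proof reduces to a two-step unpacking: spanning, then linear independence.

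First I would show that $\overline{M}_{j}$ spans $\mathfrak{m}^{j}/\mathfrak{m}^{j+1}$. Since $x_{1},\ldots,x_{r}$ generate $\mathfrak{m}$, the ideal $\mathfrak{m}^{j}$ is generated as an $A$-module by $j$-fold products of the $x_{i}$, i.e.\ by elements of $M_{j}$. Thus any $z\in\mathfrak{m}^{j}$ can be written $z=\sum_{\alpha}a_{\alpha}m_{\alpha}$ with $m_{\alpha}\in M_{j}$ and $a_{\alpha}\in A$. Writing $a_{\alpha}=\lambda_{\alpha}+b_{\alpha}$ with $\lambda_{\alpha}\in A$ a lift of the residue class $\overline{a}_{\alpha}\in k$ and $b_{\alpha}\in\mathfrak{m}$, each $b_{\alpha}m_{\alpha}$ lies in $\mathfrak{m}^{j+1}$, so modulo $\mathfrak{m}^{j+1}$ we get $\overline{z}=\sum_{\alpha}\overline{a}_{\alpha}\,\overline{m}_{\alpha}$ with scalars in $k$. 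This is exactly the spanning property.

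For linear independence, I would invoke the isomorphism of graded $k$-algebras $\mathrm{gr}_{\mathfrak{m}}(A)\simeq k[X_{1},\ldots,X_{r}]$ cited from the Stacks Project and Kemper, under which $\overline{x}_{i}\mapsto X_{i}$. Restricting to degree $j$, the $k$-vector space $\mathfrak{m}^{j}/\mathfrak{m}^{j+1}$ is identified with the space of homogeneous polynomials of degree $j$ in $X_{1},\ldots,X_{r}$, and each class $\overline{x_{1}^{i_{1}}\cdots x_{r}^{i_{r}}}\in\overline{M}_{j}$ is sent to the monomial $X_{1}^{i_{1}}\cdots X_{r}^{i_{r}}$. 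Since distinct monomials of degree $j$ in a polynomial ring are $k$-linearly independent, the same is true of $\overline{M}_{j}$, and combined with the spanning statement above this yields the basis claim.

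There is no real obstacle here once the graded-ring isomorphism is granted; the lemma is a direct translation of the degree-$j$ component of that isomorphism into monomial language. The one thing to watch is the implicit bijection between $M_{j}$ and $\overline{M}_{j}$: because the images of $x_{1},\ldots,x_{r}$ in $\mathfrak{m}/\mathfrak{m}^{2}$ form a basis, the induced map on monomials is injective, so no two distinct degree-$j$ monomials collapse in $\mathfrak{m}^{j}/\mathfrak{m}^{j+1}$ — a point worth stating explicitly so the ``basis'' is indexed honestly by $M_{j}$ itself rather than by $\overline{M}_{j}$ with possible repetitions.
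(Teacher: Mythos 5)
Your proof is correct and follows essentially the same route as the paper, which simply cites the graded-ring isomorphism $\mathrm{gr}_{\mathfrak{m}}(A)\simeq k[X_{1},\ldots,X_{r}]$ from \cite[Theorem 13.4]{Kemper2011}; you merely spell out the spanning step and the degree-$j$ comparison that the paper leaves implicit.
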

\begin{proof}
This follows directly from %
\cite[Theorem 13.4]{Kemper2011}. %
\end{proof}%

For the upcoming proposition, let us recall that a set of representatives $S$ for the residue field $k:=A/\mathfrak{m}$ is a set of elements in $A$ such that the restriction of the quotient map $A\rightarrow{A/\mathfrak{m}}$ to $S$ yields a bijection.  %
In other words, for every element of the residue field, we have chosen exactly one representative in $A$. We will assume that $0\in{S}$, so that it represents $\overline{0}$. Recall furthermore that for any regular local Noetherian ring $(A,\mathfrak{m})$, we have the natural $\mathfrak{m}$-adic topology at our disposal, which means that we can write down $\mathfrak{m}$-adic power series as follows. Fix a set of generating monomials $\{x_{1},...,x_{r}\}$ for $A$. %
We write %
\begin{equation}\label{RegularPowerSeriesExample}
z=\sum_{\mathbf{i}\in\mathbf{N}^{r}}c_{\mathbf{i}}\cdot{}x_{1}^{i_{1}}\cdots{x_{r}^{i_{r}}}
\end{equation}
for $z\in{A}$, $\mathbf{i}=(i_{1},...,i_{r})\in\mathbf{N}^{r}$ and $c_{\mathbf{i}}\in{S}$ if the sequence %
$(z_{n})$ defined by taking all terms of total degree less than $n$ converges to $z$. For every $z\in{A}$, we can find a unique power expansion of this form by the following proposition. %

\begin{pro}\label{RegularExpansions}
Let $(A,\mathfrak{m})$ be a regular Noetherian local ring. %
 Let $x_{1},...,x_{r}\in{\mathfrak{m}}$ be elements that map to a basis of the $k$-vector space $\mathfrak{m}/\mathfrak{m}^{2}$ and let $S\subset{A}$ be a set of representatives of the residue field $A/\mathfrak{m}$. Then every element $z\in{A}$ can be written in a {\it{unique}} way as
\begin{equation}
z=\sum_{\mathbf{i}\in\mathbf{N}^{r}}c_{\mathbf{i}}\cdot{}x_{1}^{i_{1}}\cdots{x_{r}^{i_{r}}},
\end{equation}
where $\mathbf{i}=(i_{1},...,i_{r})$ and $c_{\mathbf{i}}\in{S}$. %
\end{pro}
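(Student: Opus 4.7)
The plan is to build the expansion degree-by-degree, reading off the degree-$n$ coefficients from the image of the remainder in the $k$-vector space $\mathfrak{m}^{n}/\mathfrak{m}^{n+1}$, whose basis Lemma~\ref{BasisVectorSpaces} tells us is precisely the set $\overline{M}_{n}$ of monomials of total degree $n$. The bijection $S \to A/\mathfrak{m}$ then converts these abstract $k$-coefficients into elements of $S$ in a canonical way. Uniqueness will fall out of the same two facts (the basis property of $\overline{M}_{n}$ and the bijection $S\to k$) applied to the difference of two purported expansions.

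\textbf{Existence.} I would construct the partial sums $z_{n} := \sum_{|\mathbf{i}|<n} c_{\mathbf{i}}\, x_{1}^{i_{1}}\cdots x_{r}^{i_{r}}$ by induction on $n$, maintaining the invariant $z - z_{n} \in \mathfrak{m}^{n}$. For $n=0$ set $z_{0}=0$, trivially in $\mathfrak{m}^{0}=A$. Given $z_{n}$ with $z-z_{n}\in\mathfrak{m}^{n}$, look at $\overline{z-z_{n}}\in \mathfrak{m}^{n}/\mathfrak{m}^{n+1}$. By Lemma~\ref{BasisVectorSpaces} this $k$-vector space has $\overline{M}_{n}$ as a basis, so there are unique scalars $\overline{c}_{\mathbf{i}}\in k$ (indexed by $\mathbf{i}$ with $|\mathbf{i}|=n$) such that
\begin{equation*}
\overline{z-z_{n}}=\sum_{|\mathbf{i}|=n}\overline{c}_{\mathbf{i}}\cdot \overline{x_{1}^{i_{1}}\cdots x_{r}^{i_{r}}}.
\end{equation*}
Lift each $\overline{c}_{\mathbf{i}}$ to the unique $c_{\mathbf{i}}\in S$ via the bijection $S\to A/\mathfrak{m}$ and define $z_{n+1} := z_{n} + \sum_{|\mathbf{i}|=n} c_{\mathbf{i}}\, x_{1}^{i_{1}}\cdots x_{r}^{i_{r}}$. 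The equality above, read in $\mathfrak{m}^{n}/\mathfrak{m}^{n+1}$, gives $z-z_{n+1}\in\mathfrak{m}^{n+1}$, closing the induction. Since $z-z_{n}\in\mathfrak{m}^{n}$ for every $n$, the partial sums $z_{n}$ converge to $z$ in the $\mathfrak{m}$-adic topology, so the infinite series in the statement represents $z$ in the sense made precise in \eqref{RegularPowerSeriesExample}.

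\textbf{Uniqueness.} Suppose $\sum c_{\mathbf{i}}\, x^{\mathbf{i}} = \sum c'_{\mathbf{i}}\, x^{\mathbf{i}}$ are two expansions of the same $z$ with coefficients in $S$ and assume, for contradiction, that they differ. Let $n$ be the least integer such that $c_{\mathbf{i}} \neq c'_{\mathbf{i}}$ for some $\mathbf{i}$ with $|\mathbf{i}|=n$. Subtracting the partial sums of degree $<n$ (which agree) and reducing modulo $\mathfrak{m}^{n+1}$ yields
\begin{equation*}
\sum_{|\mathbf{i}|=n}(\overline{c}_{\mathbf{i}}-\overline{c'}_{\mathbf{i}})\cdot \overline{x_{1}^{i_{1}}\cdots x_{r}^{i_{r}}}=0
\end{equation*}
in $\mathfrak{m}^{n}/\mathfrak{m}^{n+1}$. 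By Lemma~\ref{BasisVectorSpaces} every coefficient $\overline{c}_{\mathbf{i}}-\overline{c'}_{\mathbf{i}}\in k$ vanishes, and because the restriction of $A\to k$ to $S$ is injective, $c_{\mathbf{i}}=c'_{\mathbf{i}}$ for all $|\mathbf{i}|=n$, contradicting minimality.

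\textbf{Expected obstacle.} There is essentially none of a serious nature: the whole argument rests on Lemma~\ref{BasisVectorSpaces} (equivalently, the isomorphism $\mathrm{gr}_{\mathfrak{m}}(A)\simeq k[X_{1},\ldots,X_{r}]$) combined with the bijection $S\to k$. The only subtlety worth flagging is that $A$ is not assumed to be $\mathfrak{m}$-adically complete; one should therefore read the statement correctly as a statement about partial sums converging to an already-given element $z\in A$, rather than about summing an arbitrary formal series. Krull's intersection theorem ensures the $\mathfrak{m}$-adic topology on $A$ is Hausdorff, which is what pins down the coefficients and secures uniqueness of the limit.
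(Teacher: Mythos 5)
Your proposal is correct and follows essentially the same route as the paper: the paper's proof simply says to split the exact sequences $0\to\mathfrak{m}^{n+1}\to\mathfrak{m}^{n}\to\mathfrak{m}^{n}/\mathfrak{m}^{n+1}\to 0$ set-theoretically using Lemma \ref{BasisVectorSpaces}, and your degree-by-degree construction with coefficients read off in $\mathfrak{m}^{n}/\mathfrak{m}^{n+1}$ and lifted through the bijection $S\to A/\mathfrak{m}$ is exactly the detailed version of that splitting. Your uniqueness argument and the remark on interpreting the series as convergence of partial sums (with Krull's intersection theorem guaranteeing Hausdorffness) supply precisely the details the paper leaves to the reader.
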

\begin{proof}
The proof follows from Lemma \ref{BasisVectorSpaces} by set-theoretically splitting the exact sequences 
\begin{equation}
0\rightarrow{\mathfrak{m}^{i+1}}\to\mathfrak{m}^{i}\to\mathfrak{m}^{i}/\mathfrak{m}^{i+1}\rightarrow{0}.
\end{equation}
 Here we set $\mathfrak{m}^{0}=A$. We leave the details to the reader. 

\end{proof}

\begin{rem}
Note that this proposition does not require $A$ to be complete. %
\end{rem}

\begin{rem}\label{LocalRingsPowerSeries}
Proposition \ref{RegularExpansions} can be generalized to local Noetherian rings $(A,\mathfrak{m})$ as follows. As before, we fix a set of representatives $S$. Every $k$-vector space %
$\mathfrak{m}^{k}/\mathfrak{m}^{k+1}$ is then finite-dimensional and we can find a basis for each of these vector spaces. We then use lifts of these generators to again obtain %
unique power expansions. %
For regular local Noetherian rings, the generators of the maximal ideal automatically give a set of generators of $\mathfrak{m}^{k}/\mathfrak{m}^{k+1}$ by Lemma \ref{BasisVectorSpaces}, so the theorem becomes more convenient in this case. %
\end{rem}

\begin{cor}\label{CorollaryRegularRingODP}
Let $A:=R[[u,v]]/(uv-\pi)$, where $R$ is a complete discrete valuation ring and $S\subset{R}$ is a set of representatives of the residue field $k$ of $R$. Then every element $z\in{A}$ can be written uniquely as 
\begin{equation}
z=\sum_{(i,j)\in\mathbf{N}^2}c_{i,j}u^{i}v^{j}, 
\end{equation}
where $c_{i,j}\in{S}$. %
\end{cor}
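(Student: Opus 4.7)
The plan is to deduce the corollary directly from Proposition \ref{RegularExpansions}. For this it suffices to show that $A = R[[u,v]]/(uv-\pi)$ is a regular local Noetherian ring of dimension $2$, with $u$ and $v$ mapping to a basis of $\mathfrak{m}_A/\mathfrak{m}_A^2$ and with residue field $k$.

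To verify this, I would first observe that $R[[u,v]]$ is a regular local ring of dimension $3$ with maximal ideal $\mathfrak{n} = (\pi, u, v)$, and that the element $uv - \pi$ maps to $-\pi \neq 0$ in $\mathfrak{n}/\mathfrak{n}^2$. Hence $uv-\pi$ can be extended to a regular system of parameters for $R[[u,v]]$, so the quotient $A$ is regular local of dimension $2$. Since $\pi = uv$ inside $A$, the maximal ideal is $\mathfrak{m}_A = (u,v)$ and the residue field is $A/\mathfrak{m}_A = R/(\pi) = k$; in particular $S$ is already a set of representatives for the residue field of $A$. Regularity forces $\dim_k(\mathfrak{m}_A/\mathfrak{m}_A^2) = 2$, so the two generators $u,v$ of $\mathfrak{m}_A$ automatically project to a $k$-basis of $\mathfrak{m}_A/\mathfrak{m}_A^2$.

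Applying Proposition \ref{RegularExpansions} with $r = 2$, $x_1 = u$, $x_2 = v$, and the given set of representatives $S$ then produces exactly the claimed unique expansion $z = \sum_{(i,j)\in\mathbf{N}^2} c_{i,j}\, u^i v^j$ with $c_{i,j}\in S$. The main (and essentially only) obstacle is verifying regularity, which crucially uses the exponent $1$ on $\pi$: for $R[[u,v]]/(uv-\pi^n)$ with $n\geq 2$, the element $uv-\pi^n$ lies in $\mathfrak{n}^2$ and the quotient becomes singular, so neither Proposition \ref{RegularExpansions} nor the uniqueness statement of the corollary would apply directly in that case.
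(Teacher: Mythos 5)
Your proof is correct and follows essentially the same route as the paper: identify the maximal ideal of $A$ as $(u,v)$ with residue field $k$ and apply Proposition \ref{RegularExpansions} with $x_{1}=u$, $x_{2}=v$. The only difference is that you spell out the regularity of $A$ (via the image of $uv-\pi$ in $\mathfrak{n}/\mathfrak{n}^{2}$), a detail the paper leaves implicit.
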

\begin{proof}
The maximal ideal of $A$ is given by $(u,v,\pi)=(u,v)$. The rest now follows from %
Proposition \ref{RegularExpansions}. %

\end{proof}

By Proposition \ref{RegularExpansions}, we see that after choosing a basis of $\mathfrak{m}/\mathfrak{m}^{2}$ we have a natural injection of the regular local ring $A$ into the set $\mathcal{S}_{A}$ of functions
\begin{equation}
\mathbf{N}^{r}\rightarrow{S},
\end{equation}
where $S$ is a set of representatives of the residue field and $r=\mathrm{dim}(A)$. This map is not necessarily surjective, but  we can consider the {\it{completion}} $\hat{A}$ of $A$ with respect to its $\mathfrak{m}$-adic topology. This is again a regular local ring of the same dimension and for this ring (and any basis of the cotangent space $\mathfrak{m}/\mathfrak{m}^{2}$ of $\hat{A}$) the map $\hat{A}\rightarrow{\mathcal{S}_{\hat{A}}}$ is automatically bijective. %

 \begin{mydef}{\bf{[Separating ideals]}}\label{SeparatingMonomialsRegular}
Let $T$ be a finite set of pairwise distinct elements in $A$.   %
 An ideal $I$ is said to be {\it{separating}} for $T$ if the restriction of the quotient map $A\to{A/I}$ to $T$ is injective. %
 \end{mydef}

 \begin{rem}{\bf{[$\mathcal{O}$-notation]}}
 The notation
 $$x=y+\mathcal{O}(I)$$
will be used to denote $x-y\in{I}$. %
 \end{rem}

\subsection{Finite extensions and the Kummer-Dedekind theorem}\label{FiniteNormalExtensions}

In this section we study extensions of prime ideals in normal local Noetherian domains in terms of completions. We prove a generalization of the Kummer-Dedekind theorem, which says that we can find extensions of prime ideals by factoring polynomials over the completion. This fact lies at the basis of our algorithms for calculating skeleta, as it implies that we can work with normalizations using power series, see Sections \ref{GaloisSection} and \ref{HenselizationsSection}.

\begin{pro}\label{CompletionDecomposition}
Let $(A,\mathfrak{m})$ be a Noetherian local ring and let $B$ be a finite $A$-algebra. Then every maximal ideal of $B$ lies over $\mathfrak{m}$ and there are only finitely many of these. Denoting them by $\mathfrak{m}_{i}$, we have that
\begin{equation}
   B\otimes_{A}{\hat{A}}\simeq{\prod{(B_{\mathfrak{m}_{i}})^{\hat{}}}},
\end{equation}
where every completion of a local ring is with respect to the natural maximal ideal of that local ring. %
\end{pro}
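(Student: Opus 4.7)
The statement is a standard structure theorem for finite algebras over a Noetherian local ring, and the plan is to verify the three claims (lying-over, finiteness, decomposition) in order, reducing the decomposition to a Chinese Remainder Theorem argument after identifying the base change with a completion.

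First I would argue that every maximal ideal of $B$ lies over $\mathfrak{m}$. Since $B$ is finite over $A$, it is integral over $A$, so the contraction of any maximal ideal of $B$ is a maximal ideal of $A$ by the standard incomparability/lying-over package; but $A$ is local, so the only maximal ideal is $\mathfrak{m}$. For finiteness, I would pass to $\overline{B}:=B/\mathfrak{m}B$. As $B$ is finite over $A$, the quotient $\overline{B}$ is finite-dimensional over $k=A/\mathfrak{m}$, hence Artinian; its prime ideals are finite in number and all maximal, and they correspond bijectively to the maximal ideals of $B$ containing $\mathfrak{m}B$, which, by the previous step, are all the maximal ideals of $B$.

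Next I would identify $B\otimes_{A}\hat{A}$ with the $\mathfrak{m}$-adic completion $\hat{B}$ of $B$. Because $B$ is finitely generated as an $A$-module, the natural map $B\otimes_{A}\hat{A}\to \hat{B}$ is an isomorphism (this is the standard fact that for finite modules over a Noetherian ring, completion is the same as tensoring with the completion). So the problem reduces to decomposing $\hat{B}$ itself as a product of completions at the $\mathfrak{m}_i$.

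The final step is the decomposition. Let $J=\bigcap_{i}\mathfrak{m}_i$ be the Jacobson radical of the semilocal ring $B$. Since $B/\mathfrak{m}B$ is Artinian with maximal ideals the images of the $\mathfrak{m}_i$, the ideal $\mathfrak{m}B$ contains a power of $J$, and conversely $\mathfrak{m}B\subset J$. Hence the $\mathfrak{m}B$-adic and $J$-adic topologies on $B$ coincide, so $\hat{B}$ may be computed $J$-adically. The ideals $\mathfrak{m}_i^n$ (for fixed $n$) are pairwise comaximal because the $\mathfrak{m}_i$ are, so by the Chinese Remainder Theorem
\begin{equation*}
B/J^{n}\;=\;B\big/\textstyle\bigcap_{i}\mathfrak{m}_i^{n}\;\simeq\;\prod_{i} B/\mathfrak{m}_i^{n}\;\simeq\;\prod_{i} B_{\mathfrak{m}_i}/(\mathfrak{m}_iB_{\mathfrak{m}_i})^{n},
\end{equation*}
where the last isomorphism uses that localization at $\mathfrak{m}_i$ is an isomorphism on the Artinian quotient $B/\mathfrak{m}_i^{n}$ (whose only associated prime is $\mathfrak{m}_i$). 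Passing to the inverse limit over $n$ and using that inverse limits commute with finite products yields
\begin{equation*}
\hat{B}\;\simeq\;\prod_{i}(B_{\mathfrak{m}_i})\widehat{\phantom{m}},
\end{equation*}
which is the required isomorphism.

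The hard or most delicate step is verifying the comaximality and the localization identification that underlie the Chinese Remainder step, i.e.\ making precise that $B/\mathfrak{m}_i^{n}\simeq B_{\mathfrak{m}_i}/(\mathfrak{m}_iB_{\mathfrak{m}_i})^{n}$ so that the product of local completions really appears on the right; everything else is bookkeeping. The identification $B\otimes_{A}\hat{A}\cong\hat{B}$ is itself a well-known application of the Artin--Rees style argument for finite modules, and I would either cite it from the Stacks project or prove it directly by choosing a presentation $A^{m}\to A^{n}\to B\to 0$ and tensoring.
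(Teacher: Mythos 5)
Your proof is correct. The paper itself gives no argument here — it simply cites \cite[\href{https://stacks.math.columbia.edu/tag/07N9}{Lemma 07N9}]{stacks-project} — and what you have written is essentially the standard proof underlying that reference: lying over plus the Artinian fiber ring $B/\mathfrak{m}B$ for the first two claims, the identification $B\otimes_{A}\hat{A}\simeq\hat{B}$ for finite modules, and then the Chinese Remainder argument (using that the $\mathfrak{m}B$-adic and $J$-adic topologies agree and that $J^{n}=\bigcap_{i}\mathfrak{m}_{i}^{n}$ by comaximality) to split the completion into the product of the $(B_{\mathfrak{m}_{i}})^{\hat{}}$.
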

\begin{proof}
See \cite[\href{https://stacks.math.columbia.edu/tag/07N9}{Lemma 07N9}]{stacks-project}.

\end{proof}

\begin{lemma}\label{LemmaDimension}
Let $(A,\mathfrak{m})$ be a local Noetherian domain with field of fractions $K$ and let $L$ be a finite extension of $K$ of degree $n$. Let $B$ be a finite $A$-subalgebra of $L$ such that the quotient field of $B$ is equal to $L$. %
Then the base change $K(\hat{A})\otimes_{A}{B}$ has dimension $n$ as a $K(\hat{A})$-vector space. %
\end{lemma}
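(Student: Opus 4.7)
The plan is to reduce the computation to the well-known fact that extension of scalars along a field extension preserves vector space dimension, once we identify $B\otimes_{A}K$ with $L$.

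First I would verify that the canonical map $B\otimes_{A}K\to L$ induced by the inclusion $B\hookrightarrow L$ is an isomorphism. Surjectivity is the nontrivial direction: since $B$ is a finite $A$-module, every $b\in B$ is integral over $A$, so any nonzero $b_{2}\in B$ satisfies a monic relation $b_{2}^{n}+a_{n-1}b_{2}^{n-1}+\cdots+a_{0}=0$ with $a_{i}\in A$, and by taking the minimal such relation (which makes sense because $B\subset L$ is a domain) we may assume $a_{0}\neq 0$. Rearranging gives
\[
\frac{1}{b_{2}}=-\frac{1}{a_{0}}\bigl(b_{2}^{n-1}+a_{n-1}b_{2}^{n-2}+\cdots+a_{1}\bigr)\in (A\setminus\{0\})^{-1}B,
\]
so every $b_{1}/b_{2}\in L$ lies in the image. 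Injectivity follows because $B$ is torsion-free as an $A$-module (it sits inside the field $L$). Thus $B\otimes_{A}K\cong L$, a $K$-vector space of dimension $n$.

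Next I would use that the natural map $A\to\hat{A}$ is injective, by Krull's intersection theorem applied to the local Noetherian ring $A$, so $\hat{A}$ contains $A$ and therefore $K(\hat{A})$ is canonically a $K$-algebra. Combining the associativity of tensor product with Step~1 yields
\[
B\otimes_{A}K(\hat{A})\;\cong\;\bigl(B\otimes_{A}K\bigr)\otimes_{K}K(\hat{A})\;\cong\;L\otimes_{K}K(\hat{A}).
\]
Since extension of scalars from $K$ to $K(\hat{A})$ along the field extension $K\subset K(\hat{A})$ preserves dimension, the right-hand side is a free $K(\hat{A})$-module of rank $\dim_{K}L=n$, which is the claim.

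The only conceptual obstacle is the identification $B\otimes_{A}K=L$; everything else is formal. One could instead phrase the argument through Proposition~\ref{CompletionDecomposition}, writing $B\otimes_{A}\hat{A}\cong\prod_{i}\widehat{B_{\mathfrak{m}_{i}}}$ and noting that flat base change $A\to\hat{A}$ preserves the generic rank of the finitely generated $A$-module $B$, so that the total $\hat{A}$-rank is $n$ and tensoring with $K(\hat{A})$ yields an $n$-dimensional vector space. This alternative highlights the local decomposition along maximal ideals of $B$, which will be the viewpoint exploited in later sections, but for the present lemma the direct argument above is shorter.
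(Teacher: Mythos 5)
Your proof is correct, and it takes a genuinely different (and more formal) route than the paper. The paper chooses $n$ elements of $B$ forming a $K$-basis of $L$, builds the exact sequence $0\to A^{n}\to B\to N\to 0$ with $N$ finitely generated torsion, hence killed by some $s\in A\setminus\{0\}$, base-changes along the flat maps $A\to\hat{A}$ and then to $K(\hat{A})$, and uses Krull's intersection theorem to see that $s$ stays nonzero in $\hat{A}$, becomes a unit in $K(\hat{A})$, and therefore kills $N\otimes_{A}K(\hat{A})$. You instead establish $B\otimes_{A}K\cong L$ directly (your integrality argument for surjectivity is fine, and injectivity amounts to $B\otimes_{A}K\cong (A\setminus\{0\})^{-1}B$ injecting into $\mathrm{Frac}(B)=L$), and then conclude by pure transitivity of base change, $B\otimes_{A}K(\hat{A})\cong (B\otimes_{A}K)\otimes_{K}K(\hat{A})\cong L\otimes_{K}K(\hat{A})$, which is free of rank $n$; no flatness or exact-sequence bookkeeping is needed. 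The one hypothesis you should make explicit is the same one the paper uses tacitly: for $K(\hat{A})$ to be a $K$-algebra you need every nonzero element of $A$ to become a unit in $K(\hat{A})$, which follows from the injectivity $A\hookrightarrow\hat{A}$ (Krull, as you note) \emph{together with} the standing assumption of Remark \ref{AssumptionsGalois} that $\hat{A}$ is a domain; without it, $K(\hat{A})$ is only a total ring of fractions and a nonzero $s\in A$ could be a zero-divisor there, an issue that would equally break the paper's step $N\otimes_{A}K(\hat{A})=0$, so this is not a gap relative to the paper. Your sketched alternative via Proposition \ref{CompletionDecomposition} and invariance of generic rank under flat base change is the variant closest to the paper's viewpoint and to how the lemma feeds into Theorem \ref{KummerDedekind}; the direct argument you give is shorter and more elementary, at the cost of not displaying the product decomposition over the maximal ideals of $B$, which the paper's later arguments obtain separately anyway.
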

\begin{proof}

Since the fraction field of $B$ is $L$, we can find a set of $n$ elements $x_{i}$ in $B$ that generate $B\otimes_{A}{K}=L$. We write $M$ for the $A$-module generated by the $x_{i}$. We then have an exact sequence %
\begin{equation}
 0\rightarrow{}T\rightarrow{}A^{n}\rightarrow{B}\rightarrow{B/M}\rightarrow{0}
\end{equation}
of $A$-modules. Here $T=0$ since the $x_{i}$ form a linearly independent set over $K(A)$. %
Furthermore, the module $N:=B/M$ %
is torsion in the sense that %
$N\otimes_{A}{K(A)}=0$. %
Since $N$ is finitely generated, we can find an $s\in{A\backslash\{0\}}$ such that $sN=0$. %

The completion $\hat{A}$ is (faithfully) flat over $A$. Using \cite[Corollary 3.14, page 21]{liu2}, we thus obtain an exact sequence %
\begin{equation}
     0\rightarrow{}\hat{A}^{n}\rightarrow{\hat{B}}\rightarrow{\hat{N}}\rightarrow{0}%
\end{equation}
of $\hat{A}$-modules. %
Since localizations are flat, we also obtain an exact sequence
\begin{equation}
     0\rightarrow{}K(\hat{A})^{n}\rightarrow{B\otimes_{A}{K(\hat{A})}}\rightarrow{{N}\otimes_{A}{K(\hat{A})}}\rightarrow{0}
\end{equation}
of $K(\hat{A})$-vector spaces\footnote{Throughout the arguments given here, we freely use the equalities $\hat{M}\otimes_{\hat{A}}K(\hat{A})=(M\otimes_{A}\hat{A})\otimes_{\hat{A}}K(\hat{A})=M\otimes_{A}K(\hat{A})$, see \cite[Chapter 1, Corollary 1.11 and Corollary 3.14]{liu2}.}. We claim that the vector space on the right is $0$. In fact, the $s$ we found earlier maps to a nonzero element in $\hat{A}$ by Krull's theorem, and it annihilates $\hat{N}=N\otimes_{A}{\hat{A}}$, %
so indeed ${N}\otimes_{A}{K(\hat{A})}=0$. %
This shows that $B\otimes_{A}{K(\hat{A})}$ is isomorphic to $K(\hat{A})^{n}$, as desired. 
\end{proof}

\begin{rem}\label{AssumptionsGalois}
For the remainder of Sections \ref{FiniteNormalExtensions} and \ref{GaloisSection}, we will assume that the completion of any local Noetherian domain is a domain. If $A$ is normal and {\it{excellent}} (see \cite[Section 8.2.3]{liu2}), then by \cite[Chapter 8, Proposition 2.41]{liu2} the completion $\hat{A}$ is again normal and thus a domain. In this paper all coordinate rings $A$ are excellent, since they are of finite type over a complete discrete valuation ring $R$. We thus see that this assumption holds for the rings under consideration here. 
In Section \ref{HenselizationsSection} we will point out how to remove this assumption by working with the %
 Henselization of $A$ instead of its completion.  

\end{rem}

\begin{theorem}\label{PropIrrDivisors}\label{KummerDedekind}
{\bf{[Kummer-Dedekind]}}
Let $A$ be a Noetherian local normal domain with fraction field $K$ and consider a finite separable extension $L$ of $K$ with normalization $B$. We write $\mathfrak{m}_{i}$ for the maximal ideals in $B$ lying over $\mathfrak{m}$ and %
$B_{i}$ for the completions of $B$ at the $\mathfrak{m}_{i}$. %
Let $\alpha\in{B}$ be a generator of $L/K$ and let $f$ be its minimal polynomial over $A$. Write %
$f=\prod{f_{i}}$ for its factorization into monic irreducibles in $K(\hat{A})[x]$. There is then a bijection
\begin{equation}
   \{\mathfrak{m}_{i}\supset{\mathfrak{m}}\}\rightarrow{\{f_{i}\}}
\end{equation}
between the set of extensions of $\mathfrak{m}$ to $B$ and the set of irreducible factors of $f$ inside $K(\hat{A})[x]$. This bijection is determined by the $K(\hat{A})$-isomorphism %
\begin{equation}
    K(B_{i})\simeq{}K(\hat{A})[x]/(f_{i}). 
\end{equation}

\end{theorem}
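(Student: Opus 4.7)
The plan is to compute the finite-dimensional $K(\hat{A})$-algebra $B \otimes_A K(\hat{A})$ in two independent ways and match up the two resulting decompositions into products of fields. Lemma \ref{LemmaDimension} already tells us this algebra has dimension $n = [L:K]$ over $K(\hat{A})$; the point of the proof is that both computations produce a canonical product-of-fields description whose factors must be identified.

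First, since $A$ is normal and $\alpha \in B$ is integral over $A$, the minimal polynomial $f$ lies in $A[x]$, and the subring $A[\alpha] \cong A[x]/(f) \subset B$ has fraction field $L$. Using $B \otimes_A K(A) = L = A[\alpha] \otimes_A K(A)$, extension of scalars to $K(\hat{A})$ yields
$$B \otimes_A K(\hat{A}) \;=\; L \otimes_K K(\hat{A}) \;=\; K(\hat{A})[x]/(f) \;\cong\; \prod_i K(\hat{A})[x]/(f_i),$$
where the last isomorphism is the Chinese Remainder Theorem. The factors $f_i$ are pairwise coprime because $L/K$ separable forces $f$ to be separable, and each $K(\hat{A})[x]/(f_i)$ is a field because $f_i$ is irreducible over $K(\hat{A})$.

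Second, Proposition \ref{CompletionDecomposition} gives $B \otimes_A \hat{A} \cong \prod_i B_i$, and because $B$ is finite over $A$ each $B_i$ is finite over $\hat{A}$. Under the standing assumption of Remark \ref{AssumptionsGalois}, each $B_i$ is moreover a local domain. Localizing at $\hat{A} \setminus \{0\}$: since $B_i$ is an integral extension of the domain $\hat{A}$, the only prime of $B_i$ contracting to $(0) \subset \hat{A}$ is $(0)$ itself, so $B_i \otimes_{\hat{A}} K(\hat{A})$ is a local domain with a single prime, namely the field $K(B_i)$. Hence
$$B \otimes_A K(\hat{A}) \;\cong\; \prod_i K(B_i).$$

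Comparing the two decompositions, we obtain an isomorphism $\prod_i K(B_i) \cong \prod_j K(\hat{A})[x]/(f_j)$ of finite products of fields over $K(\hat{A})$. By uniqueness of the decomposition of a finite-dimensional commutative ring into a product of fields (the factors correspond to the primitive idempotents, equivalently the minimal primes), this isomorphism induces a bijection of index sets together with matching $K(\hat{A})$-isomorphisms $K(B_i) \cong K(\hat{A})[x]/(f_i)$, exactly as claimed. The main technical points to verify are that $B$ is finite over $A$ (needed to invoke Proposition \ref{CompletionDecomposition}) and that each $B_i$ is a domain (needed to identify $B_i \otimes_{\hat{A}} K(\hat{A})$ with $K(B_i)$); both are arranged by the excellence framework recorded in Remark \ref{AssumptionsGalois}.
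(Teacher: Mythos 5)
Your proposal is correct, and its overall architecture is the same as the paper's: compute $B\otimes_{A}K(\hat{A})$ once via the minimal polynomial and the Chinese Remainder Theorem, once via Proposition \ref{CompletionDecomposition}, and match the field factors through the primitive idempotents. Where you genuinely diverge is in how the first decomposition is obtained: the paper introduces the subring $C=A[x]/(f)\subset B$, base-changes the injection $C\to B$ along the flat maps $A\to\hat{A}\to K(\hat{A})$, and then invokes the dimension count of Lemma \ref{LemmaDimension} (applied to both $C$ and $B$) to force the resulting injection $K(\hat{A})[x]/(f)\to K(\hat{A})\otimes_{A}B$ to be an isomorphism; you instead use $B\otimes_{A}K(A)=L$ and extension of scalars in stages to write $B\otimes_{A}K(\hat{A})\cong L\otimes_{K}K(\hat{A})\cong K(\hat{A})[x]/(f)$ directly, which makes Lemma \ref{LemmaDimension} unnecessary. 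Two further small differences: you deduce that $f$ is squarefree in $K(\hat{A})[x]$ from separability of $L/K$, whereas the paper deduces $r_{i}=1$ a posteriori from reducedness of $\prod K(B_{i})$; and because both of your decompositions live inside the single $K(\hat{A})$-algebra $B\otimes_{A}K(\hat{A})$, the $K(\hat{A})$-linearity of the isomorphisms $K(B_{i})\simeq K(\hat{A})[x]/(f_{i})$ is automatic, while the paper verifies it with a commutative diagram through $\hat{C}$. The step you share with the paper, namely $K(B_{i})=B_{i}\otimes_{\hat{A}}K(\hat{A})$, is justified in both texts at the same level of detail (integral equations over the domain $\hat{A}$, using the standing assumption of Remark \ref{AssumptionsGalois} that the $B_{i}$ are domains), so there is no gap there; your route is, if anything, a slight streamlining of the published argument.
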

\begin{proof}
Consider the subring $C:=A[x]/(f)\subset{B}$. Since $\alpha$ generates the extension $L/K$, we have that $L=C\otimes_{A}{K(A)}=B\otimes_{A}{K(A)}$. Let us write $\hat{B}$ and $\hat{C}$ for the completions of $B$ and $C$ with respect to $\mathfrak{m}$. If we tensor the injection $C\to{B}$ with the flat $A$-module $\hat{A}$, then we obtain an injection %
\begin{equation}
  \hat{C}=C\otimes_{A}{\hat{A}}\rightarrow{\hat{B}=B\otimes_{A}{\hat{A}}}.
\end{equation}
Tensoring with $K(\hat{A})$ over $A$ (and using again that any localization is flat), we then obtain the injection $K(\hat{A})\otimes_{A}{\hat{C}}\rightarrow{K(\hat{A})\otimes_{A}{\hat{B}}}$.  %
By Lemma \ref{LemmaDimension}, the dimensions of these $K(\hat{A})$-modules are equal, so we must have  $K(\hat{A})[x]/(f)=K(\hat{A})\otimes_{A}{B}$. We can give more explicit descriptions of these rings using Proposition \ref{CompletionDecomposition} and the Chinese Remainder Theorem. The Chinese Remainder Theorem gives us
\begin{equation}
    K(\hat{A})[x]/(f)\simeq{}\prod{K(\hat{A})[x]/(f_{i}^{r_{i}})},
\end{equation}
where the $f_{i}$ are the monic irreducible factors of $f$. On the other hand, by Proposition \ref{CompletionDecomposition} we know that $\hat{B}\simeq{\prod{B_{i}}}$, so that
\begin{equation}
    \hat{B}\otimes_{\hat{A}}{K(\hat{A})}\simeq{\prod{(K(\hat{A})\otimes_{\hat{A}}{B_{i}})}}.
\end{equation}
By assumption, the $B_{i}$ are domains. Furthermore, they are finite over $\hat{A}$, since the completion $\hat{B}$ is finite over $\hat{A}$. An easy check using integral equations then shows that %
$K(B_{i})=K(\hat{A})\otimes_{\hat{A}}{B_{i}}$. %
 We now compare idempotent factors in $\prod{}K(\hat{A})[x]/(f_{i}^{r_{i}})$ and $\prod{(K(\hat{A})\otimes_{\hat{A}}{B_{i}})}\simeq\prod{K(B_{i})}$ and find that $r_{i}=1$ for every $i$ and $K(\hat{A})[x]/(f_{i})\simeq{K(B_{i})}$. To show that this is a $K(\hat{A})$-isomorphism, we note that it arises from the commutative diagram %
  \begin{center}
\begin{tikzcd}
\hat{A}\arrow{r} \arrow{dr}&\hat{C}\arrow{d}\\
& \hat{B}
\end{tikzcd}
\end{center}
Tensoring this with $K(\hat{A})$ then gives the desired commutativity. %

\end{proof}

\begin{rem}
If the primes $\mathfrak{m}_{i}$ in Theorem \ref{KummerDedekind} are of height one, then the corresponding completed rings are discrete valuation rings. %
Using this, we directly obtain \cite[Theorem 3.8]{Ste2}. 
In this paper we will also be interested in finite extensions of the ring $A=R[u,v]/(uv-\pi^{n})$ for $R$ a complete discrete valuation ring. %
The corresponding algebras are excellent by the considerations in Remark \ref{AssumptionsGalois}, so we see that %
conditions of Theorem \ref{KummerDedekind} are always satisfied here.
\end{rem}

\subsection{Galois extensions and a generalization of Dedekind's theorem}\label{GaloisSection}

In this section, we compare the extensions from Section \ref{FiniteNormalExtensions} to their Galois counterparts. In doing this, we find that we can compute the extensions of a prime ideal using the orbits of the roots of a polynomial under the corresponding absolute decomposition group. A careful study of this action then also gives a generalization of  
Dedekind's theorem on cycles in Galois groups. We give several examples to show how this generalization can be used in practice. %

Let %
$(A,\mathfrak{m})$ be a local Noetherian normal domain with field of fractions $K$ and let $L$ be a finite separable extension of $K$. %
The normalization $B$ of $A$ inside $L$ is then a finite $A$-module by \cite[Chapter 4, Proposition 1.25]{liu2}. %
\begin{mydef}{\bf{[Integrally closed finite extensions]}}\label{IntegrallyClosedExtension}
Let $A$ and $B$ be as above. We say that $B$ is an integrally closed finite extension of $A$. If $L$ is furthermore Galois, then we say that $B$ is an integrally closed finite Galois extension. If $f\in{K[x]}$ is an irreducible polynomial with associated field extensions $K\subset{}L=K[x]/(f)\subset{\overline{L}}$ and normalization $N(A,\overline{L})=\overline{B}$, then we say that $\overline{B}\supset{A}$ is the %
{\it{integrally closed finite Galois extension}} associated to $f$ over $A$. %
\end{mydef} 

Let $(A,\mathfrak{m})$ be a local Noetherian domain as above and consider an integrally closed finite extension $A\subset{B}$ with Galois closure $\overline{B}$ and Galois group $G$. The corresponding fields are denoted by $K\subset{L}\subset{\overline{L}}$. Let $\mathfrak{m}_{i}$ be a prime in $\mathrm{Spec}(B)$ lying above $\mathfrak{m}$ as in Proposition \ref{CompletionDecomposition}, and let $\overline{\mathfrak{m}}_{i}$ be a prime in $\mathrm{Spec}(\overline{B})$ lying over $\mathfrak{m}_{i}$. Consider the decomposition group $D_{\overline{\mathfrak{m}}_{i}}=\{\sigma\in{G}:\sigma(\overline{\mathfrak{m}}_{i})=\overline{\mathfrak{m}}_{i}\}$. This group acts on the completed ring $\overline{B}_{\overline{\mathfrak{m}}_{i}}$ and we have the equality $(\overline{B}_{\overline{\mathfrak{m}}_{i}})^{D_{\overline{\mathfrak{m}}_{i}}}=A_{\mathfrak{m}}$ by \cite[Chapter 4, Exercise 3.18]{liu2}. %
Note that for every $\sigma\in{D_{\overline{\mathfrak{m}}_{i}}}$, there is a commutative diagram %
 \begin{center}
\begin{tikzcd}
\overline{B}\arrow[r, "\sigma"] \arrow{d}& \overline{B} \arrow{d} \\ %
\overline{B}_{\overline{\mathfrak{m}}_{i}} \arrow[r,"\sigma"] & \overline{B}_{\overline{\mathfrak{m}}_{i}},
\end{tikzcd}
\end{center}
which arises from the corresponding diagram with $\overline{B}/\overline{\mathfrak{m}}^{k}_{i}$. %

\vspace{0.1cm}

By our assumption in \ref{AssumptionsGalois}, the completion of $\overline{B}$ at $\overline{\mathfrak{m}}_{i}$ is again a domain. We can thus consider its field of fractions, which we denote by $\overline{L}_{\overline{\mathfrak{m}}_{i}}$. We similarly write $K_{\mathfrak{m}}$ for the field of fractions of the completion of $A$ at $\mathfrak{m}$. The decomposition group $D_{\overline{\mathfrak{m}}}$ also acts on $\overline{L}_{\overline{\mathfrak{m}}_{i}}$ through its action on $\overline{B}_{\overline{\mathfrak{m}}_{i}}$. We now let $f$ be a monic irreducible polynomial in $K[x]$ that gives rise to the extension $K\subset{L}$ and we consider the splitting field $\overline{L}_{f,\mathfrak{m}}$ of $f$ over $K_{\mathfrak{m}}$. %

\begin{lemma}\label{LemmaGaloisGroup}
Let $\overline{L}_{f,\mathfrak{m}}$ and $\overline{L}_{\overline{\mathfrak{m}}_{i}}$ be as above. We then have a $K_{\mathfrak{m}}$-isomorphism $\overline{L}_{f,\mathfrak{m}}\simeq{}\overline{L}_{\overline{\mathfrak{m}}_{i}}$, giving a Galois extension over %
$K_{\mathfrak{m}}$ with Galois group $D_{\overline{\mathfrak{m}}_{i}}$.   %
\end{lemma}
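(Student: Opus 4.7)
The plan is to identify $\overline{L}_{\overline{\mathfrak{m}}_{i}}$ with the $i$-th factor in a product decomposition of $\overline{L}\otimes_{K}K_{\mathfrak{m}}$ and then read off both the isomorphism with $\overline{L}_{f,\mathfrak{m}}$ and the Galois group by a dimension count using the orbit-stabilizer theorem.

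First, I would apply Proposition \ref{CompletionDecomposition} to the finite $A$-algebra $\overline{B}$ to obtain $\overline{B}\otimes_{A}\hat{A}\simeq \prod_{j}\overline{B}_{\overline{\mathfrak{m}}_{j}}$, where $\overline{\mathfrak{m}}_{j}$ ranges over the primes of $\overline{B}$ lying over $\mathfrak{m}$. Tensoring with $K_{\mathfrak{m}}=K(\hat{A})$ over $\hat{A}$ and using the identification $\overline{B}_{\overline{\mathfrak{m}}_{j}}\otimes_{\hat{A}}K_{\mathfrak{m}}=K(\overline{B}_{\overline{\mathfrak{m}}_{j}})=\overline{L}_{\overline{\mathfrak{m}}_{j}}$ (as in the proof of Theorem \ref{KummerDedekind}), this yields the key identity
\begin{equation*}
\overline{L}\otimes_{K}K_{\mathfrak{m}}\;\simeq\;\prod_{j}\overline{L}_{\overline{\mathfrak{m}}_{j}}.
\end{equation*}
The Galois group $G=\mathrm{Gal}(\overline{L}/K)$ acts transitively on the set $\{\overline{\mathfrak{m}}_{j}\}$ (a standard fact for integral extensions of normal domains with Galois fraction field extension), so all factors $\overline{L}_{\overline{\mathfrak{m}}_{j}}$ are pairwise $K_{\mathfrak{m}}$-isomorphic, and if $r$ denotes their number then orbit-stabilizer gives $r=[G:D_{\overline{\mathfrak{m}}_{i}}]$. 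Comparing $K_{\mathfrak{m}}$-dimensions on both sides of the displayed isomorphism then yields $[\overline{L}_{\overline{\mathfrak{m}}_{i}}:K_{\mathfrak{m}}]=[\overline{L}:K]/r=|D_{\overline{\mathfrak{m}}_{i}}|$.

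Next, I would recognize $\overline{L}_{\overline{\mathfrak{m}}_{i}}$ as the splitting field of $f$ over $K_{\mathfrak{m}}$. Since $\overline{L}$ is the splitting field of $f$ over $K$, the roots $\alpha_{1},\dots,\alpha_{n}$ of $f$ lie in $\overline{B}\subset\overline{B}_{\overline{\mathfrak{m}}_{i}}$, so $\overline{L}_{\overline{\mathfrak{m}}_{i}}$ contains $\overline{L}_{f,\mathfrak{m}}$. For the converse inclusion, note that the projection $\overline{L}\otimes_{K}K_{\mathfrak{m}}\twoheadrightarrow \overline{L}_{\overline{\mathfrak{m}}_{i}}$ is surjective as a factor projection, so $\overline{L}_{\overline{\mathfrak{m}}_{i}}$ is generated as a $K_{\mathfrak{m}}$-algebra by the images of any $K$-generators of $\overline{L}$, in particular by the images of $\alpha_{1},\dots,\alpha_{n}$. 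Thus $\overline{L}_{\overline{\mathfrak{m}}_{i}}=K_{\mathfrak{m}}(\alpha_{1},\dots,\alpha_{n})\simeq \overline{L}_{f,\mathfrak{m}}$ as $K_{\mathfrak{m}}$-algebras.

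Finally, for the Galois statement, the decomposition group $D_{\overline{\mathfrak{m}}_{i}}$ acts on $\overline{B}_{\overline{\mathfrak{m}}_{i}}$ via the commutative square recalled just before the lemma, hence on $\overline{L}_{\overline{\mathfrak{m}}_{i}}$ by $K_{\mathfrak{m}}$-automorphisms (it fixes $A_{\mathfrak{m}}$ by the cited Exercise 3.18, and extends by continuity to fix $K_{\mathfrak{m}}$). The action is faithful because $\overline{L}\hookrightarrow\overline{L}_{\overline{\mathfrak{m}}_{i}}$ is injective (Krull + localization of a domain) and $D_{\overline{\mathfrak{m}}_{i}}\subset G$ already acts faithfully on $\overline{L}$. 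Combined with the order equality $|D_{\overline{\mathfrak{m}}_{i}}|=[\overline{L}_{\overline{\mathfrak{m}}_{i}}:K_{\mathfrak{m}}]$, Artin's theorem forces $\overline{L}_{\overline{\mathfrak{m}}_{i}}/K_{\mathfrak{m}}$ to be Galois with group $D_{\overline{\mathfrak{m}}_{i}}$. The main delicate point is the identification $\overline{B}_{\overline{\mathfrak{m}}_{j}}\otimes_{\hat{A}}K_{\mathfrak{m}}=\overline{L}_{\overline{\mathfrak{m}}_{j}}$ (which is why the domain assumption from Remark \ref{AssumptionsGalois} is essential) together with checking that transitivity of $G$ on primes over $\mathfrak{m}$ applies in this setting; once these are in place the rest is bookkeeping.
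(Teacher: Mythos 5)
Your proof is correct, but it takes a genuinely different route from the paper's. The paper argues in two short steps: it quotes the invariants equality $(\overline{B}_{\overline{\mathfrak{m}}_{i}})^{D_{\overline{\mathfrak{m}}_{i}}}=A_{\mathfrak{m}}$ (Liu, Ch.~4, Ex.~3.18) together with faithfulness of the $D_{\overline{\mathfrak{m}}_{i}}$-action (via $\overline{L}\subset\overline{L}_{\overline{\mathfrak{m}}_{i}}$) to conclude by Artin's lemma that $\overline{L}_{\overline{\mathfrak{m}}_{i}}/K_{\mathfrak{m}}$ is Galois with group $D_{\overline{\mathfrak{m}}_{i}}$; then, since $f$ splits in $\overline{L}_{\overline{\mathfrak{m}}_{i}}$, it produces a $K_{\mathfrak{m}}$-embedding $\overline{L}_{f,\mathfrak{m}}\to\overline{L}_{\overline{\mathfrak{m}}_{i}}$ which is onto because any $\sigma\in D_{\overline{\mathfrak{m}}_{i}}$ fixing the roots of $f$ fixes $\overline{L}$ and hence the completion, so the subgroup attached to $\overline{L}_{f,\mathfrak{m}}$ is trivial. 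You instead extract everything from the decomposition $\overline{L}\otimes_{K}K_{\mathfrak{m}}\simeq\prod_{j}\overline{L}_{\overline{\mathfrak{m}}_{j}}$ (Proposition \ref{CompletionDecomposition} plus the identification $K(\hat{A})\otimes_{\hat{A}}\overline{B}_{\overline{\mathfrak{m}}_{j}}=\overline{L}_{\overline{\mathfrak{m}}_{j}}$ already used in the proof of Theorem \ref{KummerDedekind}, which indeed rests on the domain assumption of Remark \ref{AssumptionsGalois}), transitivity of $G$ on the primes over $\mathfrak{m}$ (valid here since $A$ is normal, so $\overline{B}^{G}=A$), and a degree count, and you then recover Galoisness and the fixed field from Artin's lemma rather than from the cited exercise; you also identify $\overline{L}_{f,\mathfrak{m}}$ via the surjective factor projection instead of via the Galois correspondence. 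Both arguments are sound: the paper's is shorter and needs neither transitivity nor the product decomposition, while yours is more self-contained relative to the paper's own toolkit and makes the degree formula $[\overline{L}_{\overline{\mathfrak{m}}_{i}}:K_{\mathfrak{m}}]=|D_{\overline{\mathfrak{m}}_{i}}|$ explicit. One small remark: citing Exercise 3.18 merely to say that $D_{\overline{\mathfrak{m}}_{i}}$ acts by $K_{\mathfrak{m}}$-automorphisms is unnecessary --- $\hat{A}$-linearity of the induced action on the completion already follows from $A$-linearity and continuity --- and in fact your degree count re-proves that invariants statement rather than using it.
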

\begin{proof}
By the aforementioned equality $(\overline{B}_{\overline{\mathfrak{m}}_{i}})^{D_{\overline{\mathfrak{m}}_{i}}}=A_{\mathfrak{m}}$ and the fact that $D_{\overline{\mathfrak{m}}_{i}}$ acts faithfully on $\overline{L}_{\overline{\mathfrak{m}}_{i}}$ (as it acts faithfully on $\overline{L}\subset{\overline{L}_{\overline{\mathfrak{m}}_{i}}}$), %
we directly find that $\overline{L}_{\overline{\mathfrak{m}}_{i}}$ is Galois over $K_{\mathfrak{m}}$ with Galois group $D_{\overline{\mathfrak{m}}_{i}}$. Since $f$ splits in $\overline{L}_{\overline{\mathfrak{m}}_{i}}$, there exists a $K_{\mathfrak{m}}$-injection $\overline{L}_{f,\mathfrak{m}}\rightarrow{}\overline{L}_{\overline{\mathfrak{m}}_{i}}$. But this must be an isomorphism: if there exists a %
$\sigma\in{D_{\overline{\mathfrak{m}}_{i}}}$ that acts trivially on the roots of $f$ (so that $\sigma$ belongs to the subgroup corresponding to $\overline{L}_{f,\mathfrak{m}}$), then $\sigma$ acts trivially on $\overline{L}$ and by Krull's theorem it also acts trivially on %
$\overline{L}_{\overline{\mathfrak{m}}_{i}}$, so $\sigma=\mathrm{id}$. We then conclude by Galois theory. %
\end{proof}

\begin{lemma}\label{FactorsOrbits}
Let $A\subset{B}\subset{\overline{B}}$ be the integrally closed finite Galois extension associated to an irreducible monic polynomial $f\in{K[x]}$ with Galois group $G$. %
 Let $D_{\overline{\mathfrak{m}}}$ be the decomposition group of a prime $\overline{\mathfrak{m}}$ above $\mathfrak{m}$. %
Then the orbits of the roots under $D_{\overline{\mathfrak{m}}}$ correspond exactly to monic irreducible factors $f_{i}$ of $f$ over $K_{\mathfrak{m}}$.
\end{lemma}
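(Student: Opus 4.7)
The plan is to reduce the statement to a classical Galois-theoretic fact about how the splitting field of a polynomial factors, using Lemma \ref{LemmaGaloisGroup} as the bridge. First I would invoke Lemma \ref{LemmaGaloisGroup} to identify the splitting field $\overline{L}_{f,\mathfrak{m}}$ of $f$ over $K_{\mathfrak{m}}$ with $\overline{L}_{\overline{\mathfrak{m}}}$, so that the Galois group $\mathrm{Gal}(\overline{L}_{f,\mathfrak{m}}/K_{\mathfrak{m}})$ is canonically identified with the decomposition group $D_{\overline{\mathfrak{m}}}$. Crucially, under this identification the set of roots of $f$ inside $\overline{L}_{\overline{\mathfrak{m}}}$ is the same set of roots one sees inside $\overline{L}$, since $\overline{L} \subset \overline{L}_{\overline{\mathfrak{m}}}$ contains all roots of $f$ already, and the action of $D_{\overline{\mathfrak{m}}}$ on these roots coincides with the restriction of its action on $\overline{B}$.

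Once this identification is in place, the lemma becomes the standard fact: if $F$ is a field, $f \in F[x]$, and $E$ is the splitting field of $f$ over $F$, then the orbits of $\mathrm{Gal}(E/F)$ on the roots of $f$ are in bijection with the monic irreducible factors of $f$ over $F$. The bijection sends an orbit $\{\alpha_{i_1},\ldots,\alpha_{i_k}\}$ to the polynomial $\prod_j (x - \alpha_{i_j})$; this polynomial is $\mathrm{Gal}(E/F)$-invariant, hence lies in $F[x]$, and it is irreducible over $F$ because the Galois group acts transitively on its roots. Conversely, if $g \mid f$ is monic irreducible over $F$, its roots form a single orbit because $\mathrm{Gal}(E/F)$ acts transitively on the roots of any irreducible polynomial in $F[x]$.

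Applying this with $F = K_{\mathfrak{m}}$ and $E = \overline{L}_{f,\mathfrak{m}}$ gives the desired bijection between orbits of $D_{\overline{\mathfrak{m}}}$ on the roots of $f$ and the monic irreducible factors of $f$ over $K_{\mathfrak{m}}$. I do not foresee a serious obstacle here beyond being careful that the action of $D_{\overline{\mathfrak{m}}}$ on the roots as elements of $\overline{B}$ agrees, after the isomorphism of Lemma \ref{LemmaGaloisGroup}, with the Galois action of $\mathrm{Gal}(\overline{L}_{f,\mathfrak{m}}/K_{\mathfrak{m}})$ on the roots inside the splitting field. This compatibility is immediate from the construction in Lemma \ref{LemmaGaloisGroup}, since the isomorphism $\overline{L}_{f,\mathfrak{m}} \simeq \overline{L}_{\overline{\mathfrak{m}}}$ is the inclusion coming from $\overline{L} \hookrightarrow \overline{L}_{\overline{\mathfrak{m}}}$.
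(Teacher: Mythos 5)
Your proposal is correct and follows exactly the route the paper takes: its proof of this lemma is the one-line remark that it "follows directly from Lemma \ref{LemmaGaloisGroup} and Galois theory," which is precisely your argument with the classical orbit--irreducible-factor correspondence over $K_{\mathfrak{m}}$ spelled out. The compatibility point you flag (that the $D_{\overline{\mathfrak{m}}}$-action on roots in $\overline{B}$ matches the Galois action on $\overline{L}_{f,\mathfrak{m}}$ under the identification of Lemma \ref{LemmaGaloisGroup}) is exactly the content the paper leaves implicit, so nothing is missing.
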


\begin{proof}
This follows directly from Lemma \ref{LemmaGaloisGroup} and Galois theory. 
\end{proof}

We now interpret the Kummer-Dedekind Theorem given in \ref{KummerDedekind} in terms of Galois groups.

\begin{pro}\label{OrbitsFactors}
There is a natural bijection between the set of orbits $D_{\overline{\mathfrak{m}}}(\alpha_{i})$ of the roots $\alpha_{i}$ under the action of $D_{\overline{\mathfrak{m}}}$ and the set of extensions $\mathfrak{m}_{i}\supset{\mathfrak{m}}$ of $\mathfrak{m}$ to $B$. More explicitly, this map takes a root $\alpha_{i}$ with corresponding embedding $\phi_{i}: L\rightarrow{\overline{L}}$ and %
sets $\mathfrak{m}_{i}:=\phi_{i}^{-1}(\overline{\mathfrak{m}})$.  %
\end{pro}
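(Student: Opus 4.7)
The plan is to combine the two bijections already in hand: Theorem \ref{KummerDedekind} provides a bijection between extensions $\mathfrak{m}_{i}\supset\mathfrak{m}$ in $B$ and monic irreducible factors $f_{i}$ of $f$ in $K_{\mathfrak{m}}[x]$, while Lemma \ref{FactorsOrbits} provides a bijection between such factors and the $D_{\overline{\mathfrak{m}}}$-orbits of roots. Composing them gives an abstract bijection between orbits and extensions, so the substantive content of the proposition is the verification that this composite agrees with the explicit recipe $\alpha_{i}\mapsto\phi_{i}^{-1}(\overline{\mathfrak{m}})$.

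First I would check well-definedness. Fix a generator $\alpha$ of $L/K$ and let $\phi_{i}:L\to\overline{L}$ be the $K$-embedding sending $\alpha\mapsto\alpha_{i}$. If $\alpha_{j}=\sigma(\alpha_{i})$ for some $\sigma\in D_{\overline{\mathfrak{m}}}$, then $\phi_{j}(\alpha)=\sigma(\phi_{i}(\alpha))$, so by $K$-linearity $\phi_{j}=\sigma\circ\phi_{i}$. Hence $\phi_{j}^{-1}(\overline{\mathfrak{m}})=\phi_{i}^{-1}(\sigma^{-1}(\overline{\mathfrak{m}}))=\phi_{i}^{-1}(\overline{\mathfrak{m}})$ since $\sigma\in D_{\overline{\mathfrak{m}}}$, and since $\phi_{i}$ restricts to an injection $B\hookrightarrow\overline{B}$ identifying $B$ with $\phi_{i}(L)\cap\overline{B}$, the pullback $\phi_{i}^{-1}(\overline{\mathfrak{m}})$ is a prime of $B$ contracting to $\mathfrak{m}$.

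To identify the explicit formula with the composed bijection, I would trace an orbit through both steps: by Lemma \ref{FactorsOrbits} and the identification $\overline{L}_{f,\mathfrak{m}}\simeq\overline{L}_{\overline{\mathfrak{m}}}$ of Lemma \ref{LemmaGaloisGroup}, the orbit of $\alpha_{i}$ corresponds to the minimal polynomial $f_{i}$ of $\alpha_{i}$ over $K_{\mathfrak{m}}$; by Theorem \ref{KummerDedekind}, $f_{i}$ corresponds to the unique extension $\mathfrak{m}_{i}$ with $K(B_{i})\simeq K_{\mathfrak{m}}[x]/(f_{i})$ as $K_{\mathfrak{m}}$-algebras. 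Now $\phi_{i}$ factors through $\overline{L}\subset\overline{L}_{\overline{\mathfrak{m}}}$ and, after tensoring $B$ with $\hat{A}$ and using Proposition \ref{CompletionDecomposition}, picks out exactly the idempotent factor whose fraction field is $K_{\mathfrak{m}}(\alpha_{i})=K_{\mathfrak{m}}[x]/(f_{i})$. Chasing the idempotents through $B\otimes_{A}\hat{A}\simeq\prod B_{j}$ then identifies the selected factor as $B_{\mathfrak{m}_{j}}$ for $\mathfrak{m}_{j}=\phi_{i}^{-1}(\overline{\mathfrak{m}})$, giving $\mathfrak{m}_{i}=\phi_{i}^{-1}(\overline{\mathfrak{m}})$.

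The main obstacle will be this last compatibility check between $\phi_{i}$ and the Kummer-Dedekind decomposition of $B\otimes_{A}\hat{A}$, where one must carefully track completions. A cleaner route that bypasses the completion bookkeeping is to argue directly using the Galois closure. Writing $H=\mathrm{Gal}(\overline{L}/L)$ and $\phi_{i}=\sigma_{i}\circ\iota$ for a fixed inclusion $\iota:L\hookrightarrow\overline{L}$ and $\sigma_{i}\in G$, surjectivity follows because $G$ acts transitively on primes of $\overline{B}$ above $\mathfrak{m}$, so every $\mathfrak{m}_{j}\supset\mathfrak{m}$ has the form $\sigma^{-1}(\overline{\mathfrak{m}})\cap B=(\sigma\circ\iota)^{-1}(\overline{\mathfrak{m}})$. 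For injectivity, if $\phi_{i}^{-1}(\overline{\mathfrak{m}})=\phi_{j}^{-1}(\overline{\mathfrak{m}})$ then $\sigma_{i}^{-1}(\overline{\mathfrak{m}})$ and $\sigma_{j}^{-1}(\overline{\mathfrak{m}})$ lie over the same prime of $B$; by transitivity of $H$ on primes of $\overline{B}$ above a fixed prime of $B$ we obtain $\sigma_{j}\in D_{\overline{\mathfrak{m}}}\cdot\sigma_{i}\cdot H$, which is precisely the condition $\alpha_{j}\in D_{\overline{\mathfrak{m}}}\cdot\alpha_{i}$.
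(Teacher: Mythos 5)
Your proof is correct, but the argument you end up with is genuinely different from the paper's. The paper obtains the bijection by composing the Kummer--Dedekind correspondence of Theorem \ref{KummerDedekind} (extensions $\mathfrak{m}_{i}\supset\mathfrak{m}$ $\leftrightarrow$ irreducible factors of $f$ over $K_{\mathfrak{m}}$) with Lemma \ref{FactorsOrbits} (factors $\leftrightarrow$ $D_{\overline{\mathfrak{m}}}$-orbits), and then checks well-definedness of $\alpha_{i}\mapsto\phi_{i}^{-1}(\overline{\mathfrak{m}})$ by passing to the commutative diagram of completions: conjugate roots have the same minimal polynomial over $K_{\mathfrak{m}}$, the generator of $B_{\phi_{i}^{-1}(\overline{\mathfrak{m}})}$ has minimal polynomial $f_{\mathfrak{m}_{i}}$, and by Theorem \ref{KummerDedekind} this polynomial determines the extension; the same computation shows the composite is the identity. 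So everything in the paper runs through $\hat{A}$ and the standing assumption of Remark \ref{AssumptionsGalois} that the relevant completions are domains. Your ``cleaner route'' is instead the classical double-coset argument in the Galois closure: writing $\phi_{i}=\sigma_{i}\circ\iota$, surjectivity follows from transitivity of $G$ on primes of $\overline{B}$ above $\mathfrak{m}$, and injectivity from transitivity of $H=\mathrm{Gal}(\overline{L}/L)$ on primes of $\overline{B}$ above a prime of $B$, identifying $D_{\overline{\mathfrak{m}}}$-orbits of roots with $D_{\overline{\mathfrak{m}}}\backslash G/H$; together with your (simpler than the paper's) well-definedness observation $\phi_{j}=\sigma\circ\phi_{i}$ for $\sigma\in D_{\overline{\mathfrak{m}}}$, this is a complete proof that never touches completions and does not need the domain assumption. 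What you lose relative to the paper is that its proof records, en route, the explicit compatibility between the extension $\mathfrak{m}_{i}$ and the factor $f_{\mathfrak{m}_{i}}$ of $f$ over $K_{\mathfrak{m}}$ (the compatibility your second paragraph only sketches via idempotent chasing, and which the later Dedekind-type theorems exploit); but since that compatibility is already supplied by Theorem \ref{KummerDedekind} and Lemma \ref{FactorsOrbits} themselves, your route proves the stated proposition in full, and in somewhat greater generality.
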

\begin{proof}
 An extension $\mathfrak{m}_{i}\supset{\mathfrak{m}}$ corresponds to an irreducible factor $f_{\mathfrak{m}_{i}}$ of $f$ over $K_{\mathfrak{m}}$ by Theorem \ref{KummerDedekind} and this gives an orbit by Lemma \ref{FactorsOrbits}. We now show that the converse map in %
 the statement of the proposition is well defined. Suppose that $\alpha_{j}$ is in the same $D_{\overline{\mathfrak{m}}}$-orbit. The elements $\alpha_{i}$ and $\alpha_{j}$ then have the same minimal polynomial over $K_{\mathfrak{m}}$. The embedding $\phi_{i}$ now induces a commutative diagram 
  \begin{center}
\begin{tikzcd}
A\arrow{r} \arrow{d} & A_{\mathfrak{m}} \arrow{d}\\
B\arrow{r} \arrow{d}& {B}_{\phi_{i}^{-1}(\overline{\mathfrak{m}})} \arrow{d} \\
\overline{B} \arrow{r}& \overline{B}_{\overline{\mathfrak{m}}}  \\
\end{tikzcd}
\end{center}
by taking completions. %
The minimal polynomial of $\overline{x}\in{{B}_{\phi_{i}^{-1}(\overline{\mathfrak{m}})}}$ over $K_{\mathfrak{m}}$ is then equal to the minimal polynomial of $\phi_{i}(\overline{x})=\alpha_{i}$ over $K_{\mathfrak{m}}$, which is $f_{\mathfrak{m}_{i}}$. By Theorem \ref{KummerDedekind}, this minimal polynomial uniquely determines the extension so we must have $\phi^{-1}_{j}(\overline{\mathfrak{m}})=\phi^{-1}_{i}(\overline{\mathfrak{m}})$. %
We thus see that the map is well defined. The same argument also shows that the composite of the above maps is the identity. Indeed, if we start with an extension $\mathfrak{m}'\supset{\mathfrak{m}}$ and choose a root $\alpha_{i}$ of $f_{\mathfrak{m}'}$, then $\phi^{-1}_{i}(\overline{\mathfrak{m}})=\mathfrak{m}'$ by the earlier argument, so the composite of the two maps is the identity.          %
 
\end{proof}

\begin{rem}
We will use this proposition to detect extensions of a prime ideal $\mathfrak{m}$ in the upcoming algorithms. Our algorithms luckily will not have to work explicitly with a Galois action in many cases. For us, it suffices to know that the Galois group of an irreducible polynomial acts transitively on the roots. This will reduce the problem to calculating irreducible factorizations over certain fields. 
\end{rem}

We now work on a generalization of another well-known theorem: Dedekind's theorem on cycles in Galois groups. Roughly speaking, this theorem says that we can deduce the existence of cycles in Galois groups for polynomials $f\in\mathbf{Z}[x]$ by factorizing $f$ over finite fields $\mathbf{F}_{p}$. We will give a version using completions and finite approximations, which we then apply to polynomials over $\mathbf{Z}$ to obtain the original version. We will also give an example over $\mathbf{C}[t]$.  %

\begin{theorem}
\label{Dedekind1}{\bf{[Dedekind I]}}
Let $A\subset{B}\subset{\overline{B}}$ be the integrally closed finite Galois extension associated to an irreducible polynomial $f\in{K[x]}$ with Galois group $G$ and roots $V$, see  %
Definition \ref{IntegrallyClosedExtension}. Let $D_{\overline{\mathfrak{m}}}$ be the decomposition group of a prime $\overline{\mathfrak{m}}$ above $\mathfrak{m}$ and consider the factorization of the action $D_{\overline{\mathfrak{m}}}\to\mathrm{Aut}(V)$ into $D_{\overline{\mathfrak{m}}}$-orbits: %
\begin{equation}\label{Factorization1}
i_{1}:D_{\overline{\mathfrak{m}}}\to\prod_{i}\mathrm{Aut}(V_{i}).%
\end{equation}

On the other hand, consider the monic irreducible factors $f_{i}$ of $f$ over $K_{\mathfrak{m}}$ with roots $V_{f_{i}}$. Then $D_{\overline{\mathfrak{m}}}$ acts on the $V_{f_{i}}$, giving an injective homomorphism
\begin{equation}\label{Factorization2}
i_{2}:D_{\overline{\mathfrak{m}}}\to\prod_{i}\mathrm{Aut}(V_{f_{i}}).
\end{equation} 

There are then canonical bijections $V_{i}\simeq{V_{f_{i}}}$ %
such that the induced isomorphism $j:\prod_{i}\mathrm{Aut}(V_{i})\to\prod_{i}\mathrm{Aut}(V_{f_{i}})$ yields $i_{2}=j\circ{i_{1}}$. %

\end{theorem}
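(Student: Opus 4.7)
The plan is to reduce the statement to Lemma \ref{LemmaGaloisGroup}, which already identifies the splitting field of $f$ over $K_{\mathfrak{m}}$ with the fraction field $\overline{L}_{\overline{\mathfrak{m}}}$ of the completion and realises $D_{\overline{\mathfrak{m}}}$ as its Galois group over $K_{\mathfrak{m}}$. Once this identification is in place, the canonical bijection $V_{i}\simeq V_{f_{i}}$ and the compatibility $i_{2}=j\circ i_{1}$ become essentially tautological.

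First I would fix an embedding $\overline{L}\hookrightarrow\overline{L}_{\overline{\mathfrak{m}}}$ arising from the completion map $\overline{B}\to\overline{B}_{\overline{\mathfrak{m}}}$, and combine it with the $K_{\mathfrak{m}}$-isomorphism $\overline{L}_{f,\mathfrak{m}}\simeq\overline{L}_{\overline{\mathfrak{m}}}$ furnished by Lemma \ref{LemmaGaloisGroup}. Since $f$ already splits in $\overline{L}$, its root set $V\subset\overline{L}$ is sent bijectively onto its root set inside $\overline{L}_{f,\mathfrak{m}}$. Under the factorization $f=\prod_{i}f_{i}$ in $K_{\mathfrak{m}}[x]$, the latter decomposes as the disjoint union $\bigsqcup_{i}V_{f_{i}}$, and pulling this back produces canonical bijections between a decomposition of $V$ and the $V_{f_{i}}$.

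Next I would show that these bijections identify the pieces with the $D_{\overline{\mathfrak{m}}}$-orbits $V_{i}$. The commutative square that precedes Lemma \ref{LemmaGaloisGroup} shows that the action of $D_{\overline{\mathfrak{m}}}$ on $\overline{L}$ is the restriction of its action on $\overline{L}_{\overline{\mathfrak{m}}}$, hence, via the chosen embedding, it coincides with the action of $\mathrm{Gal}(\overline{L}_{f,\mathfrak{m}}/K_{\mathfrak{m}})$ on the roots of $f$ in the splitting field. Each $V_{f_{i}}$ is then a single $D_{\overline{\mathfrak{m}}}$-orbit, by transitivity of the Galois action on the roots of the irreducible factor $f_{i}$, and the $V_{f_{i}}$ are pairwise disjoint, so they exhaust the $D_{\overline{\mathfrak{m}}}$-orbits of $V$. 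This is the same observation already used in the proof of Lemma \ref{FactorsOrbits}.

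Finally, with the bijections $V_{i}\simeq V_{f_{i}}$ in hand, the identity $i_{2}=j\circ i_{1}$ is automatic: both $i_{1}$ and $i_{2}$ are restrictions of the single $D_{\overline{\mathfrak{m}}}$-action on the common root set $V=\bigsqcup_{i}V_{f_{i}}$, and $j$ is by construction the product of the identifications on each factor. There is no genuinely hard step here; the only real obstacle is careful bookkeeping between the three ambient splitting fields $\overline{L}$, $\overline{L}_{\overline{\mathfrak{m}}}$ and $\overline{L}_{f,\mathfrak{m}}$, and checking that the canonical maps among them are $D_{\overline{\mathfrak{m}}}$-equivariant, which is where Lemma \ref{LemmaGaloisGroup} and the functoriality of completion must be invoked.
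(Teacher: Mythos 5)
Your proposal is correct and follows essentially the same route as the paper: both arguments push the orbits $V_{i}$ through the $D_{\overline{\mathfrak{m}}}$-equivariant embedding $\overline{B}\to\overline{B}_{\overline{\mathfrak{m}}}$ and invoke Lemma \ref{LemmaGaloisGroup} (the paper via its corollary, Lemma \ref{FactorsOrbits}, whose content you re-derive by transitivity of the Galois action on roots of each irreducible $f_{i}$) to identify $V_{i}$ with $V_{f_{i}}$, after which $i_{2}=j\circ i_{1}$ follows from the equivariance of the embedding.
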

\begin{proof}

Let $V_{i}$ be an orbit under $D_{\overline{\mathfrak{m}}}$. Using the embedding $e:\overline{B}\to\overline{B}_{\overline{\mathfrak{m}}}$, we then obtain the set $e(V_{i})\subset{\overline{B}_{\overline{\mathfrak{m}}}}$. Since the action of $D_{\overline{\mathfrak{m}}}$ commutes with the embedding $e$, we find that $e(V_{i})$ is an orbit under $D_{\overline{\mathfrak{m}}}$ of roots of $f$ . By Lemma \ref{FactorsOrbits}, we find that $e(V_{i})$ corresponds to an irreducible factor $f_{i}$ of $f$ over $K_{\overline{\mathfrak{m}}}$, so that $e(V_{i})=V_{f_{i}}$. Using again the fact that the action of $D_{\overline{\mathfrak{m}}}$ commutes with the embedding $e(\cdot{})$, we directly find that the homomorphisms $i_{1}$ and $i_{2}$ commute with $j:\prod_{i}\mathrm{Aut}(V_{i})\to\prod_{i}\mathrm{Aut}(V_{f_{i}})$. %

\end{proof}

\begin{rem}
To put Theorem \ref{Dedekind1} differently, if we can detect the action of $D_{\overline{\mathfrak{m}}}$ on the completion, then we obtain information about cycle patterns in the original Galois group. Indeed, if we know the image $i_{2}(D_{\overline{\mathfrak{m}}})$, then we can push it forward through $j^{-1}$, which gives $j^{-1}(i_{2}(D_{\overline{\mathfrak{m}}}))=i_{1}(D_{\overline{\mathfrak{m}}})$. Using the map $\prod_{i}\mathrm{Aut}(V_{i})\to\mathrm{Aut}(V)$, we then directly obtain cycles in the original Galois group $G$ through the commutative diagram

\begin{equation}
\begin{tikzcd}
D_{\overline{\mathfrak{m}}} \arrow{r} \arrow{d} & \prod_{i}\mathrm{Aut}(V_{i}) \arrow{d} \\
G \arrow{r} & \mathrm{Aut}(V).
\end{tikzcd}
\end{equation}
\end{rem}

We would now like to obtain an analogue of Theorem \ref{Dedekind1} using approximations for the roots. %
To that end, consider a local ring %
$(B,\mathfrak{m})$ %
and suppose that there is an action of a finite group $G$ on $B$:
\begin{equation}
G\rightarrow{\mathrm{Aut}(B)}.
\end{equation} 
This action is automatically continuous with respect to the $\mathfrak{m}$-adic topology, since $B$ is local. We moreover have $\sigma(\mathfrak{m}^{k})=\mathfrak{m}^{k}$ by a simple calculation. %

 \begin{mydef}{\bf{[Invariant ideals]}}
Let $G$ be a finite group acting on a local ring $B$. %
An ideal $I$ is said to be $G$-invariant if $\sigma(I)=I$ for every $\sigma\in{G}$. %
 \end{mydef}
Let $I$ be an invariant ideal in a local ring $B$. %
Every automorphism $\sigma$ now induces a well-defined ring homomorphism 
\begin{equation}
B/I\to{B/I}
\end{equation}
which fits into a commutative diagram 
\begin{equation}\label{CommutativeDiagramAction}
\begin{tikzcd}
B \arrow[r,"\sigma"] \arrow{d} & B  \arrow{d} \\ 
 B/I \arrow[r,"\sigma"]& B/I
\end{tikzcd}.
\end{equation} 
We thus have an induced action of $G$ on $B/I$ that commutes with the quotient map. %
\begin{lemma}\label{FaithfulActionQuotient}
Let $B$ be a local ring with a finite group $G$ acting on it and let $I$ be an invariant ideal.  Let $T\subset{B}$ be a finite $G$-invariant set and let $\overline{T}$ be the induced set in $B/I$.  There is then an induced action of $G$ on $\overline{T}$. Write $H=\{g:gx=x \text{ for all }x\in{T}\}=\bigcap_{x\in{T}}{\mathrm{Stab}(x)}$. There is then also an induced action of $G/H$ on $T$ and $\overline{T}$ and this action is faithful on $T$. %
If $I$ is separating for $T$, %
then the action of $G/H$ on $\overline{T}$ is also faithful. %
\end{lemma}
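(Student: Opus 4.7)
The plan is to unpack the statement into three claims and verify them in order: first that $H$ is a normal subgroup of $G$ (so that the quotient $G/H$ makes sense and acts), then that the induced action of $G/H$ on $T$ is faithful, and finally that under the separating hypothesis, the action on $\overline{T}$ is also faithful. The whole argument is essentially bookkeeping once we identify $H$ as the kernel of an action homomorphism.

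First, note that the action of $G$ on $B$ restricts to an action on the finite $G$-invariant set $T$, giving a homomorphism $\rho:G\to\mathrm{Sym}(T)$. The kernel of $\rho$ is exactly the set of $g\in G$ fixing every element of $T$, which is $\bigcap_{x\in T}\mathrm{Stab}(x)=H$. Hence $H$ is normal in $G$ and the first isomorphism theorem gives an injection $G/H\hookrightarrow\mathrm{Sym}(T)$; this is precisely the faithful action of $G/H$ on $T$.

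Next, since $I$ is $G$-invariant, the commutative diagram \eqref{CommutativeDiagramAction} shows that each $\sigma\in G$ descends to an automorphism of $B/I$ compatible with the quotient map $q:B\to B/I$. In particular, since $q(T)=\overline{T}$ and $T$ is $G$-invariant, $\overline{T}$ is stable under the induced $G$-action on $B/I$, so we get a homomorphism $\overline{\rho}:G\to\mathrm{Sym}(\overline{T})$. Compatibility of $\rho$ and $\overline{\rho}$ with $q$ shows that $H\subseteq\ker\overline{\rho}$, so $\overline{\rho}$ factors through $G/H$, giving the action of $G/H$ on $\overline{T}$.

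Finally, to prove faithfulness of this last action under the separating hypothesis, suppose $g\in G$ satisfies $g\cdot\overline{x}=\overline{x}$ in $B/I$ for every $x\in T$. Then $g(x)-x\in I$ for each $x\in T$. Since $T$ is $G$-invariant, both $g(x)$ and $x$ lie in $T$, and the assumption that $I$ is separating for $T$ (Definition \ref{SeparatingMonomialsRegular}) means $q$ is injective on $T$; thus $g(x)=x$ for every $x\in T$, i.e.\ $g\in H$. So $\ker\overline{\rho}\subseteq H$, and combined with the opposite inclusion above we get $\ker\overline{\rho}=H$, proving that $G/H$ acts faithfully on $\overline{T}$. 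There is no real obstacle here; the only subtle point is making sure the separating condition is used exactly where the faithful descent could otherwise fail, namely in the step that lifts the equality $\overline{g(x)}=\overline{x}$ back to $g(x)=x$ inside $T$.
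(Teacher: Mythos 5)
Your proof is correct and is exactly the argument the paper has in mind: the paper's own proof is just "this follows directly from the definitions," and your write-up is the straightforward expansion of that, identifying $H$ as the kernel of $G\to\mathrm{Sym}(T)$ and using the separating hypothesis to lift $\overline{g(x)}=\overline{x}$ to $g(x)=x$ inside $T$. Nothing further is needed.
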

\begin{proof}
This follows directly from the definitions. %
\end{proof}

\begin{theorem}\label{Dedekind2a}
{\bf{[Dedekind II]}} Let $A\subset{B}\subset{\overline{B}}$ be an integrally closed finite Galois extension associated to an irreducible polynomial $f\in{K[x]}$. Let $A\subset{C}$ %
be an integrally closed finite Galois extension containing $\overline{B}$, with $\mathfrak{m}_{C}$ a prime lying over $\mathfrak{m}$. %
We write $C_{\mathfrak{m}_{C}}$ for either the localization of $C$ at $\mathfrak{m}_{C}$ or the completion of the localization with respect to $\mathfrak{m}_{C}$. Let $I\subset{C_{\mathfrak{m}_{C}}}$ be a $D_{\mathfrak{m}_{C}}$-invariant ideal %
that is separating for the roots of $f$. %
Let $V$ be the set of roots of $f$ in $C$ and %
consider the factorization of the action $D_{\overline{\mathfrak{m}}}\to\mathrm{Aut}(V)$ into orbits %
\begin{equation}
i_{1}:D_{\overline{\mathfrak{m}}}\to\prod_{i}\mathrm{Aut}(V_{i}).
\end{equation} 

Let $\overline{V}$ be the image of $V$ in $C_{\mathfrak{m}_{C}}/I$. 
There is then an action of $D_{\overline{\mathfrak{m}}}$ on $\overline{V}$. Write $\overline{V}_{i}$ for the orbits under this action. This gives an injective homomorphism 
\begin{equation}
i_{2}:D_{\overline{\mathfrak{m}}}\to\prod_{i}\mathrm{Aut}(\overline{V}_{i}).
\end{equation}  
There are then bijections $V_{i}\simeq{\overline{V}_{i}}$ such that the induced isomorphism $j:\prod_{i}\mathrm{Aut}(V_{i})\simeq{\prod_{i}\mathrm{Aut}(\overline{V}_{i})}$ satisfies $i_{2}=j\circ{i_{1}}$. 
\end{theorem}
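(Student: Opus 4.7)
The plan is to reduce this to Theorem \ref{Dedekind1} together with Lemma \ref{FaithfulActionQuotient}, using the separating hypothesis to transport the action on $V$ to the action on $\overline{V}$ via a canonical bijection. First I would set up the action. The inclusion $\overline{B}\subset C$ is an inclusion inside a Galois extension of $A$, and restricting automorphisms gives a surjection $D_{\mathfrak{m}_C}\twoheadrightarrow D_{\overline{\mathfrak{m}}}$, where $\overline{\mathfrak{m}}=\mathfrak{m}_C\cap \overline{B}$. The $D_{\mathfrak{m}_C}$-action on $C$ extends to $C_{\mathfrak{m}_C}$ (continuously, in the completion case, or trivially in the localization case), and since $I$ is $D_{\mathfrak{m}_C}$-invariant it descends to an action on $C_{\mathfrak{m}_C}/I$ that is compatible with the reduction map $q\colon C\to C_{\mathfrak{m}_C}/I$. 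In particular, for every root $\alpha\in V\subset\overline{B}\subset C$ and every $\sigma\in D_{\mathfrak{m}_C}$ we have $\sigma(q(\alpha))=q(\sigma(\alpha))\in \overline{V}$, so $\overline{V}$ is stable under $D_{\mathfrak{m}_C}$, and since elements of $\mathrm{Gal}(C/\overline{B})\cap D_{\mathfrak{m}_C}$ fix $V$ pointwise the resulting action factors through $D_{\overline{\mathfrak{m}}}$.

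Next I would use the separating property. Because $I$ is separating for the roots of $f$, the restriction $q|_V\colon V\to \overline{V}$ is a bijection by Definition \ref{SeparatingMonomialsRegular}. The computation $\sigma(q(\alpha))=q(\sigma(\alpha))$ shows that $q|_V$ is $D_{\overline{\mathfrak{m}}}$-equivariant. Since any equivariant bijection carries orbits to orbits, we obtain canonical bijections $V_i\simeq \overline{V}_i$ under which the actions of $D_{\overline{\mathfrak{m}}}$ on $V_i$ and on $\overline{V}_i$ correspond. These bijections assemble into the desired isomorphism $j\colon \prod_i\mathrm{Aut}(V_i)\xrightarrow{\sim}\prod_i\mathrm{Aut}(\overline{V}_i)$, and by construction the equivariance of $q|_V$ is exactly the identity $i_2=j\circ i_1$.

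Finally, the injectivity of $i_2$ follows from Lemma \ref{FaithfulActionQuotient} applied to the $G$-invariant finite set $T=V$ inside $C_{\mathfrak{m}_C}$, with $G=D_{\overline{\mathfrak{m}}}$. The subgroup $H$ of elements fixing all of $V$ pointwise is trivial, because any $\sigma\in G=\mathrm{Gal}(\overline{L}/K)$ fixing every root of $f$ fixes the splitting field $\overline{L}=K(\alpha_1,\dots,\alpha_n)$, so $\sigma=\mathrm{id}$. Since $I$ is separating for $V$, Lemma \ref{FaithfulActionQuotient} then gives that $D_{\overline{\mathfrak{m}}}=G/H$ acts faithfully on $\overline{V}$, which is the injectivity of $i_2$.

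I expect the main subtlety to be bookkeeping around the action, namely that the a priori $D_{\mathfrak{m}_C}$-action on $C_{\mathfrak{m}_C}/I$ descends to a well-defined $D_{\overline{\mathfrak{m}}}$-action on $\overline{V}$; once one unpacks the surjection $D_{\mathfrak{m}_C}\twoheadrightarrow D_{\overline{\mathfrak{m}}}$ and uses that elements of its kernel fix $\overline{B}$ (and hence $V$, and hence $\overline{V}$) pointwise, everything else reduces to the equivariance of a single bijection.
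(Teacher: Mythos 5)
Your proposal is correct and follows essentially the same route as the paper: the surjection $D_{\mathfrak{m}_C}\twoheadrightarrow D_{\overline{\mathfrak{m}}}$ whose kernel is the pointwise stabilizer of $V$, Lemma \ref{FaithfulActionQuotient} plus the separating hypothesis for faithfulness on $\overline{V}$, and equivariance of the map $C\to C_{\mathfrak{m}_C}\to C_{\mathfrak{m}_C}/I$ to match orbits and obtain $i_2=j\circ i_1$. The only slight imprecision is invoking Lemma \ref{FaithfulActionQuotient} with $G=D_{\overline{\mathfrak{m}}}$, which does not literally act on the local ring $C_{\mathfrak{m}_C}$; applying it as in the paper with $G=D_{\mathfrak{m}_C}$ and $H=\ker(D_{\mathfrak{m}_C}\to D_{\overline{\mathfrak{m}}})=\bigcap_{\alpha\in V}\mathrm{Stab}(\alpha)$ fixes this with no change to your argument.
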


\begin{proof}

Viewing $D_{\mathfrak{m}_{C}}$ and $D_{\overline{\mathfrak{m}}}$ as Galois groups over the completions as in Lemma \ref{LemmaGaloisGroup}, we obtain a canonical surjective map
\begin{equation}
\phi:D_{\mathfrak{m}_{C}}\to{D_{\overline{\mathfrak{m}}}}.
\end{equation}
The kernel consists of all automorphisms in $D_{\mathfrak{m}_{C}}$ that are the identity on all the roots of $f$. In the notation of Lemma \ref{FaithfulActionQuotient}, we then have $\bigcap_{\alpha\in{V}}\mathrm{Stab}(\alpha)=H=\mathrm{ker}(\phi)$. Since $I$ is separating, we find by Lemma \ref{FaithfulActionQuotient} that the action of $D_{\overline{\mathfrak{m}}}$ on $\overline{V}$ is faithful. We now repeat the proof of Theorem \ref{Dedekind1}: if $V_{i}$ is an orbit in $C$, then $e(V_{i})$ is also an orbit in $C_{\mathfrak{m}_{C}}$ since the Galois action commutes with $e:C\to{C_{\mathfrak{m}_{C}}}$. This action furthermore commutes with the quotient map $C_{\mathfrak{m}_{C}}\to{C_{\mathfrak{m}_{C}}/I}$, see Equation \ref{CommutativeDiagramAction}. As before, this gives the desired commutativity of the morphisms $D_{\overline{\mathfrak{m}}}\to\prod_{i}\mathrm{Aut}(V_{i})$ and $D_{\overline{\mathfrak{m}}}\to\prod_{i}\mathrm{Aut}(\overline{V}_{i})$.    %

\end{proof}

We now indicate how this can be used to obtain the original version of Dedekind's theorem. 
\begin{cor}{\bf{[Dedekind]}} \label{DedekindTheorem} %
Let $f\in\mathbf{Z}[x]$ be a monic irreducible polynomial and consider a prime such that $p\nmid{\Delta(f)}$. Suppose that the reduction $\overline{f}\in\mathbf{F}_{p}[x]$ factors into irreducible factors %
as $\overline{f}=\prod_{i=1}^{k}\overline{f}_{i}$, where the $\overline{f}_{i}$ have degree $d_{i}$. Then the Galois group $G$ of $f$ contains a product of $k$ disjoint cycles of length $d_{i}$.%
\end{cor}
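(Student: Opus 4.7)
The plan is to specialize Theorem \ref{Dedekind2a} to the arithmetic setting $A = \mathbf{Z}$, $K = \mathbf{Q}$, $\mathfrak{m} = (p)$, with $C = \overline{B}$ equal to the ring of integers $\mathcal{O}_L$ of the splitting field $L$ of $f$. Fix a prime $\mathfrak{m}_C \subset \mathcal{O}_L$ above $p$, so that $D_{\overline{\mathfrak{m}}} = D_{\mathfrak{m}_C}$ is a subgroup of $G = \mathrm{Gal}(L/\mathbf{Q})$, and take as candidate separating ideal $I = \mathfrak{m}_C C_{\mathfrak{m}_C}$, which is automatically $D_{\mathfrak{m}_C}$-invariant because it is the maximal ideal of the local ring $C_{\mathfrak{m}_C}$.

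Next, I would verify the hypotheses of Theorem \ref{Dedekind2a}. The condition $p \nmid \Delta(f)$ forces $\overline{f} \in \mathbf{F}_p[x]$ to be separable, so its roots in the residue field $\kappa(\mathfrak{m}_C) \simeq C_{\mathfrak{m}_C}/I$ are pairwise distinct; equivalently, $I$ separates the root set $V = \{\alpha_1, \ldots, \alpha_n\}$ of $f$ in $\mathcal{O}_L$. The same hypothesis also implies that $p$ is unramified in $\mathbf{Z}[\alpha] \subset \mathcal{O}_{\mathbf{Q}(\alpha)}$ (a standard discriminant argument, or directly via Theorem \ref{KummerDedekind}), hence in each Galois conjugate subfield of $L$, and therefore in the compositum $L$ itself.

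Since $p$ is unramified in $L$, the decomposition group $D_{\mathfrak{m}_C}$ maps isomorphically onto the cyclic residue field Galois group $\mathrm{Gal}(\kappa(\mathfrak{m}_C)/\mathbf{F}_p)$, and is generated by a Frobenius element $\phi$ whose induced action on $C_{\mathfrak{m}_C}/I$ is $x \mapsto x^p$. On the subset $\overline{V} \subset \kappa(\mathfrak{m}_C)$, the $\phi$-orbits are precisely the root sets of the irreducible factors $\overline{f}_i$ of $\overline{f}$, each of size $d_i$, so $\phi$ acts on $\overline{V}$ as a product of $k$ disjoint cycles of lengths $d_1, \ldots, d_k$. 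Invoking Theorem \ref{Dedekind2a}, the bijections $V_i \simeq \overline{V}_i$ intertwine the two actions $i_1$ and $i_2$, so the image of $\phi$ in $G \hookrightarrow \mathrm{Aut}(V)$ has the same cycle structure, yielding the desired element of $G$. The only mildly subtle step will be the deduction that $p$ is unramified in the full Galois closure $L$ from the weaker hypothesis $p \nmid \Delta(f)$; the cycle count itself is then immediate from the mechanics of Theorem \ref{Dedekind2a} once the separating ideal has been identified.
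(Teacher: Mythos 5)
Your proposal is correct, and its skeleton is the same as the paper's: specialize Theorem \ref{Dedekind2a} with $C=\overline{B}$ the ring of integers of the splitting field, $\mathfrak{m}_{C}$ a prime over $(p)$, and $I$ the maximal ideal of the local ring, which is separating for the roots precisely because $p\nmid\Delta(f)$ (the paper checks this via $\prod_{i<j}(\alpha_i-\alpha_j)^2\in\overline{\mathfrak{m}}\cap\mathbf{Z}=(p)$; your separability-of-$\overline{f}$ argument is equivalent). The one place you diverge is the step you yourself flag as subtle: you establish that $p$ is unramified in the full splitting field by the classical external route ($p\nmid\Delta(f)$ gives $p$ unramified in $\mathbf{Q}(\alpha)$ via Kummer--Dedekind, hence in each conjugate, hence in the compositum), which is standard but requires the unramified-in-compositum fact. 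The paper instead gets triviality of inertia for free from its own machinery: since $I$ is separating, Theorem \ref{Dedekind2a} (via Lemma \ref{FaithfulActionQuotient}) makes $D_{\overline{\mathfrak{m}}}\hookrightarrow\prod_i\mathrm{Aut}(\overline{V}_i)$ injective, and $I_{\overline{\mathfrak{m}}}$ acts trivially on $\overline{V}$, so $I_{\overline{\mathfrak{m}}}=(1)$ with no appeal to outside number theory. Your route buys familiarity and makes the Frobenius description very explicit; the paper's buys self-containedness and illustrates that the faithfulness clause in Theorem \ref{Dedekind2a} is exactly what replaces the classical ramification argument. From that point on (decomposition group cyclic, generated by Frobenius, orbits on $\overline{V}$ matching the factors $\overline{f}_i$, cycle lengths $d_i$, transfer to $G$ through the bijections $V_i\simeq\overline{V}_i$) the two arguments coincide.
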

\begin{proof}

We apply Theorem \ref{Dedekind2a} with $\overline{B}=C$ the integral closure of $\mathbf{Z}$ in the splitting field of $f$ and $\overline{\mathfrak{m}}$ a prime lying above $(p)$. Here we take $C_{\overline{\mathfrak{m}}}$ to be the localization of $\overline{B}=C$ at $\overline{\mathfrak{m}}$. The ideal $\overline{\mathfrak{m}}$ is then invariant and separating for the roots. %
Indeed, suppose that it is not separating. Then there are two roots, say $\alpha_{1}$ and $\alpha_{2}$, such that $\alpha_{1}-\alpha_{2}\in\overline{\mathfrak{m}}$. But then $\Delta:=\prod_{i<j}(\alpha_{i}-\alpha_{j})^{2}\in\overline{\mathfrak{m}}\cap\mathbf{Z}=(p)$, a contradiction. 

By Theorem \ref{Dedekind2a} we see that we can calculate the action of $D_{\overline{\mathfrak{m}}}$ on $V$ through $\overline{V}$. Note that the inertia group $I_{\overline{\mathfrak{m}}}$ acts trivially on every element of $\overline{V}$. We have $I_{\overline{\mathfrak{m}}}\subset{D_{\overline{\mathfrak{m}}}}\hookrightarrow{\mathrm{Aut}(\overline{V})}$, so %
$I_{\overline{\mathfrak{m}}}=(1)$.  %
We thus have an induced action of the cyclic group $D_{\overline{\mathfrak{m}}}/I_{\overline{\mathfrak{m}}}=\mathrm{Gal}(\mathbf{F}_{q}/\mathbf{F}_{p})=:G_{k}$ on $\overline{V}=\sqcup{\overline{V}_{i}}$, where $q=\#k(\overline{\mathfrak{m}})$. Every $\overline{V}_{i}$ is a $D_{\overline{\mathfrak{m}}}$-orbit and thus a $G_{k}$%
-orbit. We now consider the action of a generator $\sigma$ of $G_{k}$ %
on a factor $\overline{V}_{i}$ with $\#(\overline{V}_{i})=d_{i}$. We claim that this defines a $d_{i}$-cycle. Indeed, otherwise $G_{k}$ %
would leave a subset of $\overline{V}_{i}$ invariant, which means that $\overline{V}_{i}$ is not a $G_{k}$-%
orbit. We thus see that the image of $\sigma$ in $\prod_{i}\mathrm{Aut}(\overline{V}_{i})$ gives a product of $d_{i}$-cycles. As in Lemma \ref{FactorsOrbits}, every $G_{k}$%
-orbit $\overline{V}_{i}$ corresponds to an irreducible factor $\overline{f}_{i}$ of $\overline{f}$ over $\mathbf{F}_{p}$. We thus have $d_{i}=\mathrm{deg}(\overline{f}_{i})$, which gives the desired statement.   

\end{proof}

We now give a geometric example of Theorem \ref{Dedekind2a}. 

\begin{exa}
Consider the polynomial 
\begin{equation}
f(x)=x^{p}+t(x+1)
\end{equation}
over $\mathbf{C}(t)$, where $p$ is a prime number. 
The completion of $\mathbf{C}[t]$ at $(t)$ is the %
ring of Laurent formal power series $R=\mathbf{C}[[t]]$. Over %
$R'=\mathbf{C}[[t]][t^{1/p}]$, we then find %
\begin{equation}
f_{1}(x):=1/t\cdot{}f(t^{1/p}x)=x^{p}+t^{1/p}x+1.
\end{equation}    %
Its reduction modulo $\mathfrak{m}'=(t^{1/p})$ has $p$ distinct roots over $\mathbf{C}=R'/\mathfrak{m}'$, so we can use Hensel's Lemma to lift these to $R'$. This means that $f_{1}$ and $f$ split over $R'$.  %
We now find that the  %
Newton-Puiseux series of the roots $\alpha_{i}\in{R'}$ start with 
\begin{equation}
\alpha_{i}=\zeta^{i}_{p}t^{1/p}+\mathcal{O}(t^{2/p}).
\end{equation}
Here $\zeta_{p}$ is a primitive $p$-th root of unity in $\mathbf{C}$. Let $L$ be a %
Galois extension of $\mathbf{C}(t)$ that contains the splitting field of $f$ and $\mathbf{C}(t)(t^{1/p})$. We write $C$ for the integral closure of $\mathbf{C}[t]$ in $L$ and $\mathfrak{m}_{C}$ for a prime extending $(t)$. We then directly see that $\mathfrak{m}^{2}_{C}$ is separating for the $\alpha_{i}$. We apply Theorem \ref{Dedekind2a} with $C_{\mathfrak{m}_{C}}$ the completion of $C$ at $\mathfrak{m}_{C}$ %
and see that we can calculate the Galois action on the approximations $\zeta^{i}_{p}t^{1/p}$ inside the completion $C_{\mathfrak{m}_{C}}$. The Galois group of $\mathbf{C}((t))\subset{\mathbf{C}((t))(t^{1/p})}$ (which is a quotient of $D_{\mathfrak{m}_{C}}$) is cyclic of order $p$, being generated by the automorphism $\sigma$ with $\sigma(t^{1/p})=\zeta_{p}t^{1/p}$. This automorphism acts as a $p$-cycle on the approximations for the $\alpha_{i}$. Using Theorem \ref{Dedekind2a} we see that %
the Galois group of $f$ contains a $p$-cycle. 

A quick calculation shows that the discriminant of $f(x)$ contains another linear factor and the ramification above this point is simple of degree $2$. We then similarly deduce that the Galois group contains a $2$-cycle. Since $\mathrm{Gal}(f)$ contains an $p$-cycle and a $2$-cycle, we deduce that the Galois group is $S_{p}$ (here we use that $p$ is a prime number). The Galois group of this polynomial over $\mathbf{Q}(x)$ is then also $S_{p}$. By Hilbert's irreducibility theorem, we then find that for infinitely many values $t_{0}\in\mathbf{Q}$ of $t$, the Galois group of the polynomial $x^{p}+t_{0}x+t_{0}$ over $\mathbf{Q}$ is $S_{p}$ (one value for which this of course does not hold is $t_{0}=0$).

\end{exa}

\subsection{Henselizations}\label{HenselizationsSection}

In this section we discuss some generalities on Henselizations. We also quickly mention how the results in Sections \ref{FiniteNormalExtensions} and \ref{GaloisSection} can be generalized to the context of Henselizations. %

We start with a short categorical introduction on Henselizations. For any commutative local ring $(R,\mathfrak{m})$, we can define its (strict) Henselization as colimits of a certain filtered diagrams of \'{e}tale algebras, see \cite[\href{https://stacks.math.columbia.edu/tag/0BSK}{Section 0BSK}]{stacks-project} for a general introduction from this point of view. To define the Henselization for instance, we consider the category of pairs $(S,\mathfrak{m}_{S})$, where $S$ is an \'{e}tale $R$-algebra and $\mathfrak{m}_{S}$ is a prime ideal mapping to $\mathfrak{m}$ with residue field $k(\mathfrak{m}_{S})=k(\mathfrak{m})$. This category $\mathcal{I}$ is filtered, so we can find a directed set $I_{0}$ with category $\mathcal{I}_{0}$ and a cofinal functor $\mathcal{I}_{0}\to\mathcal{I}$. %
We can then consider the functor $\mathcal{F}_{h}:\mathcal{I}\to(\mathrm{CRings})$ given by $(S,\mathfrak{m}_{S})\mapsto{S}$. This functor has a colimit if and only if the composite functor $\mathcal{I}_{0}\to(\mathrm{CRings})$ has a colimit. But $(\mathrm{CRings})$ is cocomplete, so this colimit exists. We call this object the Henselization of $(R,\mathfrak{m})$ and we denote it by $R^{\mathrm{h}}$. The strict Henselization is defined similarly using triples $(S,\mathfrak{m}_{S},\alpha)$, where $\alpha:k(\mathfrak{m}_{S})\to{k(\mathfrak{m})^{\mathrm{sep}}}$ is a $k(\mathfrak{m})$-embedding of residue fields and $S$ and $\mathfrak{m}_{S}$ are as before. Here the colimit is denoted by $R^{\mathrm{sh}}$. The strict Henselization can also be thought of as the local ring of the corresponding geometric point with respect to the \'{e}tale topology on $R$, see \cite[\href{https://stacks.math.columbia.edu/tag/04HX}{Lemma 04HX}]{stacks-project}. The Henselization is then similarly the local ring with respect to the Nisnevich topology.

We now assume that $R$ is a normal domain. This holds if and only if $R^{\mathrm{h}}$ or $R^{\mathrm{sh}}$ is a normal domain. To work with these rings in practice, we can use the following lemma.
\begin{lemma}\label{InjectionHenselization}
Let $(S,\mathfrak{m}_{S})$ be a pair as in the definition of the Henselization of $R$ and suppose that $S$ is a domain. Then the natural map $S\rightarrow{R^{\mathrm{h}}}$ is an injection. The same holds for the strict Henselization. 
\end{lemma}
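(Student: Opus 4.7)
My plan is to leverage the filtered colimit description of $R^{\mathrm{h}}$ given in the preceding discussion. In a filtered colimit of commutative rings, the kernel of the structure map from a fixed object is the union of the kernels of all transition maps out of that object. Applied here, this gives
\[
\ker(S \to R^{\mathrm{h}}) \;=\; \bigcup_{\phi} \ker(\phi),
\]
where $\phi$ runs over all morphisms $(S, \mathfrak{m}_S) \to (S', \mathfrak{m}_{S'})$ in the indexing category $\mathcal{I}$. It therefore suffices to show that every such transition map $\phi$ is injective.

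For this, I would use that $\phi$ is an $R$-algebra homomorphism between two étale $R$-algebras. By the standard cancellation property for étale morphisms (namely, any $R$-linear map between étale $R$-algebras is itself étale), $\phi$ is étale and in particular flat. Since $S$ is a domain by hypothesis and $S'$ is nonzero (it contains the prime $\mathfrak{m}_{S'}$), flatness immediately gives injectivity: for any $s \neq 0$ in $S$, multiplication by $s$ on $S$ is injective, hence multiplication by $s$ on $S'$ is injective by flatness, so $\phi(s) = s \cdot 1_{S'} \neq 0$.

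The strict Henselization is handled identically, with $\mathcal{I}$ replaced by the analogous category of triples $(S', \mathfrak{m}_{S'}, \alpha)$: morphisms there are still $R$-algebra maps between étale $R$-algebras, so the same argument applies verbatim. The only non-formal ingredient is the cancellation lemma for étale morphisms, which I would quote as a standard fact from, e.g., the Stacks Project; apart from that, the argument is a direct manipulation of filtered colimits combined with the elementary observation that a flat map out of a domain into a nonzero ring is injective. I do not expect any genuine obstacle.
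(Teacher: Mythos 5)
Your proposal is correct and takes essentially the same route as the paper: both reduce, via the filtered-colimit description, to showing that every transition map out of $(S,\mathfrak{m}_{S})$ is injective, and both get this from the cancellation property (any $R$-algebra map between \'{e}tale $R$-algebras is \'{e}tale, hence flat). The only difference is cosmetic: the paper localizes at the distinguished primes and uses faithful flatness of the local map together with $S\hookrightarrow S_{\mathfrak{m}_{S}}$, while you conclude directly from the fact that a flat ring map out of a domain into a nonzero ring is injective.
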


\begin{proof}
Let $f\in{S}$. We can represent its image in the Henselization as an equivalence class $(S,\mathfrak{m}_{S})$. Suppose that $f$ maps to zero in $R^{\mathrm{h}}$. We can then find a commutative diagram 
\begin{equation}
\begin{tikzcd}
(R,\mathfrak{m})\arrow{r} \arrow{dr} & (S,\mathfrak{m}_{S}) \arrow{d} \\
{} & (B,\mathfrak{m}_{B})
\end{tikzcd}
\end{equation}
such that $f$ maps to zero in $B$.  We can localize $B$ to make it local so that it becomes a normal domain by %
\cite[\href{https://stacks.math.columbia.edu/tag/025P}{Tag 025P}]{stacks-project}. %
Since $B$ is \'{e}tale over $R$, we have that the map $S\to{B}$ is \'{e}tale by \cite[\href{https://stacks.math.columbia.edu/tag/00U7}{Lemma 00U7}]{stacks-project} and thus flat. We conclude that the map of localizations $S_{\mathfrak{m}_{S}}\to{B_{\mathfrak{m}_{B}}}$ is faithfully flat and thus injective. We then have a commutative diagram
\begin{equation}
\begin{tikzcd}
S_{\mathfrak{m}_{S}} \arrow{r} & B_{\mathfrak{m}_{B}}\\
S \arrow{u} \arrow{r} & B \arrow{u}
\end{tikzcd}
\end{equation}
Since $S$ is a domain, we have that the map $S\rightarrow{S_{\mathfrak{m}_{S}}}$ is injective and thus $S\to{B_{\mathfrak{m}_{B}}}$ is also injective. We conclude that $f=0$ and therefore $S\to{R^{\mathrm{h}}}$ is injective.

\end{proof} 

\begin{rem}
In our algorithms these \'{e}tale algebras $S$ turn up naturally from lifts of factorizations over  residue fields. %
By choosing a prime ideal $\mathfrak{m}_{S}$ lying over $\mathfrak{m}$ together with some embedding $\alpha$, we then obtain an embedding $S\to{R^{\mathrm{sh}}}$. %
\end{rem}

For local normal domains $(R,\mathfrak{m})$, there is another well-known definition of a (strict) Henselization. We start with a separable closure $K\subset{K^{{\mathrm{s}}}}$ of the fraction field of $R$ and consider the integral closure $R^{\mathrm{s}}$ of $R$ inside $K^{\mathrm{s}}$. By the going-up theorem, we can find a prime $\mathfrak{m}^{\mathrm{\mathrm{s}}}$ lying above $\mathfrak{m}$. We then consider the decomposition and inertia subgroups of $\mathfrak{m}^{\mathrm{s}}$ in the absolute Galois group $G_{K}$:
\begin{align*}
D_{\mathfrak{m}}=D_{\mathfrak{m}^{\mathrm{s}}/\mathfrak{m}}:=&\{\sigma\in{G}_{K}:\sigma(\mathfrak{m}^{\mathrm{s}})=\mathfrak{m}^{\mathrm{s}}\},\\
I_{\mathfrak{m}}=I_{\mathfrak{m}^{\mathrm{s}}/\mathfrak{m}}:=&\{\sigma\in{D_{\mathfrak{m}}:\sigma(x)-x\in{\mathfrak{m}^{\mathrm{s}}} \,\text{ for all }x\in{{R}^{\mathrm{s}}}}\}.
\end{align*}
Since the local ring $R^{\mathrm{s}}_{\mathfrak{m}^{\mathrm{s}}}$ is (strictly) Henselian, we have induced injections %
\begin{equation}
R\rightarrow{R^\mathrm{h}}\rightarrow{R^{\mathrm{sh}}}\rightarrow{R^{\mathrm{s}}_{\mathfrak{m}^{\mathrm{s}}}},
\end{equation}
see \cite[\href{https://stacks.math.columbia.edu/tag/0BSD}{Section 0BSD}]{stacks-project}. Under these embeddings, we have 
\begin{align*}
(R^{\mathrm{s}}_{\mathfrak{m}^{\mathrm{s}}})^{D_{\mathfrak{m}}}&=R^\mathrm{h},\\
(R^{\mathrm{s}}_{\mathfrak{m}^{\mathrm{s}}})^{I_{\mathfrak{m}}}&=R^\mathrm{sh}.
\end{align*}
As we saw above, the (strict) Henselization of a ring can be constructed without choosing any embeddings, so it is a more canonical object in this sense.

\begin{rem}
We now discuss the results of Sections \ref{FiniteNormalExtensions} and \ref{GaloisSection} in the context of Henselizations. We will assume that $R$ is Noetherian so that the normalization morphisms are finite. The results of Section \ref{FiniteNormalExtensions} continue to hold by \cite[Proposition 18.6.8, Page 139]{EGA4} and the fact that $R^{\mathrm{h}}$ is faithfully flat over $R$. We do not however have to assume that the Henselizations of the local rings are domains, since this is automatic. For the results of Section \ref{GaloisSection}, one first proves the analogue of Lemma \ref{LemmaGaloisGroup}. %
As before, the corresponding extension is Galois with Galois group $D_{\mathfrak{m}}=D_{\overline{\mathfrak{m}}/\mathfrak{m}}$.  %
The characterization of primes in terms of $D_{\mathfrak{m}}$-orbits follows similarly, as do the generalized %
Kummer-Dedekind theorems.\footnote{The proofs of some of these statements can be found in older versions of this manuscript.} 
\end{rem}

\subsubsection{Computational aspects of Henselizations}

We now discuss some computational aspects of Henselizations. Let $(A,\mathfrak{m})$ be a local Noetherian ring with Henselization $A^\mathrm{h}$. This ring is again local with maximal ideal $\mathfrak{m}_\mathrm{h}=\mathfrak{m}A^\mathrm{h}$, so we can consider its $\mathfrak{m}_\mathrm{h}$-adic completion. By \cite[\href{https://stacks.math.columbia.edu/tag/06LJ}{Lemma 06LJ}]{stacks-project}, we then have that $A^{\vee}\simeq{(A^{\mathrm{h}})^{\vee}}$. Using Proposition \ref{RegularExpansions}, we can then represent elements of the Henselization as $\mathfrak{m}$-adic power series if $A$ is regular. For instance, if $A^{\vee}\simeq{R[[u,v]]/(uv-\pi)}$ we can express elements of the Henselization as power series in the variables $u$ and $v$. If the residue field of the discrete valuation ring $R$ is algebraically closed, then this also gives the strict Henselization.

We now turn to %
the strict Henselization of a discrete valuation ring $A$ with maximal ideal  %
$\mathfrak{m}$,  residue field $k$ and uniformizer $\pi$. The ring $A^{\mathrm{sh}}$ is again a discrete valuation ring with uniformizer $\pi$ by \cite[\href{https://stacks.math.columbia.edu/tag/0AP3}{Lemma 0AP3}]{stacks-project}. %
We choose a set of representatives $S$ in $R^{\mathrm{sh}}$ of the residue field $k^{\mathrm{sep}}$. By Proposition \ref{RegularExpansions}, we can find unique $c_{i}\in{S}$ for every element $z\in{R^{\mathrm{sh}}}$ such that %
\begin{equation}\label{RepresentationHenselization2}
z=\sum_{i=0}^{\infty}c_{i}\pi^{i}. 
\end{equation} 
If $z$ has a representative in some %
\'{e}tale algebra $B$, then all of the $c_{i}$ can be chosen in $B$. Instead of fixing a set of representatives at the start, we will create larger sets $S_{B}$ using \'{e}tale algebras and then assume that $S_{B}\subset{S}$. 

\begin{rem}\label{CharacteristicZeroRepresentation}
In this paper we are mostly interested in algebras of the form $A:=R[u,v]/(uv-\pi^{n})$, where $R$ is a fixed discrete valuation ring. The codimension one primes %
$(u,\pi)$ and $(v,\pi)$ are regular, so their localizations give discrete valuation rings. %
In these valuation rings %
$\pi$ is a uniformizer and the residue fields are $k(v)$ and $k(u)$ respectively. Elements in the strict Henselization can then be represented as in Equation \ref{RepresentationHenselization2}, where the $c_{i}$ are lifts of elements in ${k(v)}^{\mathrm{sep}}$ and ${k(u)}^{\mathrm{sep}}$.  

We now suppose that $R=\overline{\mathbf{Q}}[[t]]$ and we fix the prime $\mathfrak{p}=(v,t)$ in $A=R[u,v]/(uv-t^{n})$ with localization $A_{\mathfrak{p}}$ and Henselization $A^{\mathrm{h}}_{\mathfrak{p}}$. %
In this case, the residue field at $\mathfrak{p}$ is $\overline{\mathbf{Q}}(u)$ and there is a natural injection of $\overline{\mathbf{Q}}(u)$ into $A_{\mathfrak{p}}\subset{A^{\mathrm{h}}_{\mathfrak{p}}}$. 
By \cite[Theorem 2, Page 33]{Ser1}, the completion of $A^{\mathrm{h}}_{\mathfrak{p}}$ is isomorphic to $\overline{\mathbf{Q}}(u)[[t]]$. Furthermore, if we take the completion of the strict Henselization of $A_{\mathfrak{p}}$, then it is isomorphic to $\overline{\mathbf{Q}(u)}[[t]]$. In other words, we can perform arithmetic in \'{e}tale extensions of $A_{\mathfrak{p}}$ by doing polynomial arithmetic over finite extensions $L\supset{\overline{\mathbf{Q}}(u)}$ of the residue field $\overline{\mathbf{Q}}(u)$. This greatly simplifies computations in these rings. If $R$ is of mixed characteristic then this won't work, so we lift coefficients defined over finite extensions of the residue field to an \'{e}tale algebra over $R$ and do our computations there. This problem is discussed in more detail in Sections \ref{LiftResidueZero} and \ref{LiftCharacteristicp}.  %
\end{rem}

\begin{rem}\label{ConstructionEtaleAlgebras}
We can construct connected \'{e}tale algebras from finite extensions of the residue field $k$ of $A$ using the following procedure. %
Consider an irreducible separable polynomial $\overline{g}\in{k[x]}$, giving rise to %
a finite field extension
\begin{equation}
k\subset{k[x]/(\overline{g})}.
\end{equation}
Let $g\in{A[x]}$ be a lift of $\overline{g}$ of the same degree. 
The extension
\begin{equation}
A\subset{A[x]/(g)}=:A_{1}
\end{equation} 
is then \'{e}tale. %
Note furthermore that there is only one extension of $\mathfrak{m}$ to $A_{1}$, namely $\mathfrak{m}A_{1}$. %
If we now choose an embedding of $k(\mathfrak{m}A_{1})=k[x]/(\overline{g})\rightarrow{k^{\mathrm{sep}}}$, then we obtain an injection $A_{1}\rightarrow{A^{\mathrm{sh}}}$. This choice will come up again when we discuss the algorithms, see Remark \ref{ChoiceEmbedding}. If we have a set of representatives $S$ of $A$, then any set $\{x_{i}\}$ of elements in $A_{1}$ that maps to a basis of $k[x]/(\overline{g})$ over $k$ gives a set of representatives $S_{A,x_{i}}:=\{sx_{i}:s\in{S}\}$. One can for instance choose the standard basis %
$\{1,x,...,x^{d-1}\}$, where $n=\mathrm{deg}(g)$.
\end{rem}

\subsection{Chains of prime ideals}\label{sec:ChainPrime}

In Proposition \ref{OrbitsFactors},  %
we expressed the extensions of a prime ideal $\mathfrak{m}$ in terms of the $D_{\mathfrak{m}}$-orbits of the roots of a polynomial. %
In this section we give a similar Galois-theoretic characterization for extensions of chains of prime ideals. %
Let $\mathfrak{p}\subset{\mathfrak{m}}$ be prime ideals in a normal Noetherian domain $A$ and consider the integral closure $\overline{A}$ of $A$ in the separable closure of the fraction field $K(A)$. We can then lift this chain $\mathfrak{p}\subset{\mathfrak{m}}$ to a chain $\overline{\mathfrak{p}}\subset{\overline{\mathfrak{m}}}$ in $\overline{A}$. We denote the corresponding absolute inertia groups by $I_{\mathfrak{p}}$ and $I_{\mathfrak{m}}$. These are related as follows.  
\begin{lemma}
$I_{{\mathfrak{p}}}\subset{I_{{\mathfrak{m}}}}$.
\end{lemma}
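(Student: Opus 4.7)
The plan is to unpack the definitions of the two inertia groups and show directly that an element $\sigma \in I_{\mathfrak{p}}$ satisfies the defining conditions for $I_{\mathfrak{m}}$. Recall that $I_{\mathfrak{p}}$ is the set of $\sigma$ in the absolute Galois group $G_{K(A)}$ such that $\sigma(x) - x \in \overline{\mathfrak{p}}$ for every $x \in \overline{A}$; in particular this forces $\sigma \in D_{\mathfrak{p}}$. The goal is to establish that (a) $\sigma \in D_{\mathfrak{m}}$, i.e. $\sigma(\overline{\mathfrak{m}}) = \overline{\mathfrak{m}}$, and (b) $\sigma(x) - x \in \overline{\mathfrak{m}}$ for every $x \in \overline{A}$.

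For (a), I would fix $\sigma \in I_{\mathfrak{p}}$ and take any $x \in \overline{\mathfrak{m}}$. Then $\sigma(x) = x + (\sigma(x) - x)$, and by definition of $I_{\mathfrak{p}}$ the second term lies in $\overline{\mathfrak{p}}$, which is contained in $\overline{\mathfrak{m}}$ by hypothesis. Hence $\sigma(x) \in \overline{\mathfrak{m}}$, showing $\sigma(\overline{\mathfrak{m}}) \subseteq \overline{\mathfrak{m}}$. Since $I_{\mathfrak{p}}$ is a subgroup, the same argument applied to $\sigma^{-1}$ yields $\sigma^{-1}(\overline{\mathfrak{m}}) \subseteq \overline{\mathfrak{m}}$, and combining these inclusions gives $\sigma(\overline{\mathfrak{m}}) = \overline{\mathfrak{m}}$, so $\sigma \in D_{\mathfrak{m}}$.

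For (b), the verification is immediate: for every $x \in \overline{A}$ we have $\sigma(x) - x \in \overline{\mathfrak{p}} \subseteq \overline{\mathfrak{m}}$, which is precisely the defining condition for $\sigma$ to lie in $I_{\mathfrak{m}}$.

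There is no real obstacle here; the statement is essentially a tautological consequence of the containment $\overline{\mathfrak{p}} \subseteq \overline{\mathfrak{m}}$ and the pointwise formulation of the inertia condition. The only small subtlety is remembering that one needs to argue $\sigma \in D_{\mathfrak{m}}$ before invoking the inertia condition, and the cleanest way to do this is the symmetric argument with $\sigma$ and $\sigma^{-1}$ rather than appealing to maximality of $\overline{\mathfrak{m}}$ (which need not hold in this generality).
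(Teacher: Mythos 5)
Your proof is correct and follows essentially the same route as the paper: both arguments simply unpack the definitions and use $\sigma(x)-x\in\overline{\mathfrak{p}}\subset\overline{\mathfrak{m}}$ to get the inertia condition at $\overline{\mathfrak{m}}$. The only cosmetic difference is in checking $\sigma\in D_{\mathfrak{m}}$: the paper notes that $\sigma$ acts as the identity on $\overline{A}/\overline{\mathfrak{p}}$ and hence fixes every ideal containing $\overline{\mathfrak{p}}$ via the ideal correspondence, whereas you show $\sigma(\overline{\mathfrak{m}})\subseteq\overline{\mathfrak{m}}$ directly and obtain equality by applying the same inclusion to $\sigma^{-1}$ — both are valid and equally elementary.
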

\begin{proof}
Let $\sigma\in{I_{\mathfrak{p}}}$. Consider the injection 
\begin{equation}
\overline{A}/\overline{\mathfrak{p}}\to\overline{A}_{\overline{\mathfrak{p}}}/\overline{\mathfrak{p}}=k(\overline{\mathfrak{p}}). %
\end{equation}
By the bijection between ideals containing $\overline{\mathfrak{p}}$ and ideals of $\overline{A}/\overline{\mathfrak{p}}$, %
we then find that $\sigma$ fixes $\overline{\mathfrak{m}}$, so $\sigma\in{D_{\mathfrak{m}}}$. We then have that $\sigma$ acts trivially on $\overline{A}/\overline{\mathfrak{m}}$. %
Indeed, let $p(x)\in{\overline{A}/\overline{\mathfrak{m}}}$ and consider a lift $x\in{\overline{A}}$. We then have $\sigma(x)-x\in\overline{\mathfrak{p}}\subset{\overline{\mathfrak{m}}}$ so $\sigma$ acts trivially. This then also implies that $\sigma$ acts trivially on $\mathrm{Frac}(\overline{A}/\overline{\mathfrak{m}})=%
k(\overline{\mathfrak{m}})$, as desired. %
\end{proof}

\begin{rem}\label{StrictHenselizationInclusion}
Let $A$ be a Noetherian normal domain with prime ideals $\mathfrak{m}\supset{\mathfrak{p}}$. The lemma then implies that we have maps
\begin{equation}
A\subset{}\overline{A}^{I_{\mathfrak{m}}}\subset{\overline{A}^{I_{\mathfrak{p}}}}\subset{\overline{A}.}
\end{equation}
We thus have a map of strict Henselizations $A^\mathrm{sh}_{\mathfrak{m}}\to{A^\mathrm{sh}_{\mathfrak{p}}}$. The groups %
$D_{\mathfrak{m}}/I_{\mathfrak{m}}=\mathrm{Gal}(k(\mathfrak{m})^{\mathrm{sep}})$ and $D_{\mathfrak{p}}/I_{\mathfrak{p}}=\mathrm{Gal}(k(\mathfrak{p})^{\mathrm{sep}}/k(\mathfrak{p}))$ then act naturally on $A^\mathrm{sh}_{\mathfrak{m}}$. %
In our algorithms $\mathfrak{m}$ will correspond to an edge and $\mathfrak{p}$ will correspond to an adjacent vertex. The computations for any such pair will then take place in a finite Kummer extension of $K^{\mathrm{sh}}_{\mathfrak{m}}$ and this Kummer extension will lie inside $K^{\mathrm{sh}}_{\mathfrak{p}}$.  %

\end{rem}

We would now like to be able to characterize extensions of chains $\mathfrak{p}\subset{\mathfrak{m}}$ using orbits. We will do this under an extra assumption. %

\begin{theorem}\label{Dedekind3}
Let $A\subset{B}\subset{\overline{B}}$ be an integrally closed Galois extension associated to a polynomial $f\in{K[x]}$ 
and write $\overline{\mathfrak{p}}\subset\overline{\mathfrak{m}}\subset\overline{B}$ for a chain of prime ideals extending $\mathfrak{p}\subset{\mathfrak{m}}\subset{A}$. Assume that %
$\overline{\mathfrak{p}}$ is the unique extension of $\mathfrak{p}$ lying under $\overline{\mathfrak{m}}$. %
There is then a bijection between the set of extensions %
$\mathfrak{p}'\subset{\mathfrak{m}'}$ of $\mathfrak{p}\subset{\mathfrak{m}}$ to $B$ and the set of $D_{\mathfrak{m}}$-orbits of the roots of $f$. %
\end{theorem}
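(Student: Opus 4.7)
The plan is to reduce to Proposition \ref{OrbitsFactors}, which already provides a bijection between $D_{\overline{\mathfrak{m}}}$-orbits of roots of $f$ and extensions $\mathfrak{m}'\subset B$ of $\mathfrak{m}$. Given this, it suffices to show that under the uniqueness hypothesis on $\overline{\mathfrak{p}}$, each extension $\mathfrak{m}'$ of $\mathfrak{m}$ in $B$ determines a \emph{unique} extension $\mathfrak{p}'\subset\mathfrak{m}'$ of $\mathfrak{p}$ in $B$, so that chains in $B$ extending $\mathfrak{p}\subset\mathfrak{m}$ are in canonical bijection with extensions of $\mathfrak{m}$ alone.

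For existence of $\mathfrak{p}'$, I would argue as follows. Fix $\mathfrak{m}'\subset B$ extending $\mathfrak{m}$, and lift it to some $\overline{\mathfrak{m}}'\subset\overline{B}$ extending $\mathfrak{m}'$. Since $\overline{B}/A$ is Galois, there exists $\sigma\in G$ with $\sigma(\overline{\mathfrak{m}})=\overline{\mathfrak{m}}'$. Then $\sigma(\overline{\mathfrak{p}})\subset\sigma(\overline{\mathfrak{m}})=\overline{\mathfrak{m}}'$ lies over $\mathfrak{p}$, so $\mathfrak{p}':=\sigma(\overline{\mathfrak{p}})\cap B$ is a prime of $B$ lying over $\mathfrak{p}$ and contained in $\mathfrak{m}'$.

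For uniqueness of $\mathfrak{p}'$ -- which is the main content of the theorem -- suppose $\mathfrak{p}_{1},\mathfrak{p}_{2}\subset\mathfrak{m}'$ were two extensions of $\mathfrak{p}$ in $B$. By going-down (applicable since $B$ is integrally closed over $A$, or by lifting in $\overline{B}$ and contracting), I would produce primes $\overline{\mathfrak{p}}_{1},\overline{\mathfrak{p}}_{2}\subset\overline{\mathfrak{m}}'$ of $\overline{B}$ lying over $\mathfrak{p}_{1}$ and $\mathfrak{p}_{2}$ respectively. Translating by $\sigma^{-1}$, the primes $\sigma^{-1}(\overline{\mathfrak{p}}_{1}),\sigma^{-1}(\overline{\mathfrak{p}}_{2})\subset\overline{\mathfrak{m}}$ both lie over $\mathfrak{p}$. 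The hypothesis that $\overline{\mathfrak{p}}$ is the unique extension of $\mathfrak{p}$ under $\overline{\mathfrak{m}}$ then forces $\sigma^{-1}(\overline{\mathfrak{p}}_{1})=\sigma^{-1}(\overline{\mathfrak{p}}_{2})=\overline{\mathfrak{p}}$, hence $\overline{\mathfrak{p}}_{1}=\overline{\mathfrak{p}}_{2}=\sigma(\overline{\mathfrak{p}})$, and contracting to $B$ gives $\mathfrak{p}_{1}=\mathfrak{p}_{2}$.

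Thus the forgetful map $\{(\mathfrak{p}',\mathfrak{m}')\,:\,\mathfrak{p}'\subset\mathfrak{m}',\,\mathfrak{p}'\cap A=\mathfrak{p},\,\mathfrak{m}'\cap A=\mathfrak{m}\}\to\{\mathfrak{m}'\supset\mathfrak{m}\}$ is a bijection, and composing with the bijection of Proposition \ref{OrbitsFactors} yields the claim. The subtle point is really the well-definedness of the uniqueness step: one must be careful that the choice of lift $\overline{\mathfrak{m}}'$ and of the Galois element $\sigma$ do not matter, which is ensured precisely by the hypothesis on $\overline{\mathfrak{p}}$; without it, different lifts could produce genuinely different primes $\mathfrak{p}'$ beneath the same $\mathfrak{m}'$.
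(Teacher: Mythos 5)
Your proof is correct and follows essentially the same route as the paper: both reduce to Proposition \ref{OrbitsFactors} and both use the going-down theorem together with the hypothesis that $\overline{\mathfrak{p}}$ is the unique prime over $\mathfrak{p}$ lying under $\overline{\mathfrak{m}}$ to show that each extension $\mathfrak{m}'$ of $\mathfrak{m}$ in $B$ carries a unique $\mathfrak{p}'$ over $\mathfrak{p}$. The only cosmetic difference is that you produce $\mathfrak{p}'$ by Galois-translating $\overline{\mathfrak{p}}$ and contracting, whereas the paper encodes the same fact as the containment $D_{\overline{\mathfrak{m}}}\subset D_{\overline{\mathfrak{p}}}$ of decomposition groups.
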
 

\begin{proof}
An extension $\mathfrak{m}'$ of $\mathfrak{m}$ gives a unique $D_{\mathfrak{m}}$-orbit by Proposition \ref{OrbitsFactors}. The condition in the theorem then directly implies that $D_{\mathfrak{m}}\subset{D_{\mathfrak{p}}}$, so we obtain a $D_{\mathfrak{p}}$-orbit and thus an extension $\mathfrak{p}'$ of $\mathfrak{p}$. We take a root $\alpha$ in the $D_{\mathfrak{m}}$-orbit and consider the corresponding embedding $\phi:L\to\overline{L}$. By construction, we then have $\mathfrak{m}'\supset{\mathfrak{p}'}$. Conversely, suppose that we have a chain $\mathfrak{m}'\supset{\mathfrak{p}'}$. We consider a root $\alpha$ in the corresponding $D_{\mathfrak{m}}$-orbit together with its embedding $\phi$. We claim that $\mathfrak{p}'':=\phi^{-1}(\overline{\mathfrak{p}})=\mathfrak{p}'$. Indeed, otherwise we would have two prime ideals $\mathfrak{p}',\mathfrak{p}''\subset{\mathfrak{m}'}$ lying over $\mathfrak{p}$ and these prime ideals can be lifted to two distinct primes lying under $\overline{\mathfrak{m}}$ by the going-down theorem, a contradiction.

\end{proof}

\begin{rem}
We will see in \ref{RemarkGaloisActionMonodromy} that the assumption in Theorem \ref{Dedekind3} always holds for coverings of semistable models. 
\end{rem}

This classification of extensions of chains can give rise to monodromy as follows. Let $\mathcal{X}$ be a normal integral scheme. We then assign an undirected graph to $\mathcal{X}$ using the concept of specialization. %
The vertex set of this graph is $\mathcal{X}$ and %
we assign an edge between two vertices if one is a specialization (or generization) of the other. In this graph, we can consider topological loops. If we have a finite \'{e}tale covering $\mathcal{Y}\to{\mathcal{X}}$ of normal schemes, this can give rise to monodromy in the sense of algebraic topology: if we traverse a lift of a loop, then we need not end up at the original starting point. If we moreover allow algebraic loops (in the sense of Galois categories), then this can give monodromy even if the underlying graph is a tree. For explicit examples, see  \ref{MonodromyExampleProjectiveLine1} and \ref{HannahExample1}. Our algorithm tackles this problem by calculating compatible $\mathfrak{m}$-adic and $\mathfrak{p}$-adic approximations. We then use Theorem \ref{Dedekind3} to find the corresponding extensions of chains. %

\begin{rem}\label{MonodromyExampleProjectiveLine1}
{\bf{[Monodromy]}}
We now give an example of a covering of the projective line $X\to\mathbf{P}^{1}_{K}$ with a tropicalization that is not determined by local data. %
We will use the material on metrized complexes in \cite{ABBR1} to create these. 
Consider the coverings of metric graphs indicated in Figure \ref{Monodromy5}. We can make these into morphisms of metrized complexes as follows: we assign a degree three covering ramified over three points on the outside vertices. Two of the nontrivial ramification indices here are $2$ and the other is $3$. We furthermore assign %
the standard Kummer covering of degree two to the vertices in the middle. By \cite[Theorem 7.4]{ABBR1}, these coverings can be lifted to coverings of curves. %
These coverings are locally the same: %
there are three edges lying above the middle edge and two over the outside edges. Note that this does not determine the graph-theoretical structure of the skeleton, as the graphs in these two examples are non-isomorphic. We say that this covering has {\it{monodromy}}.  %
The terminology is taken from complex analysis, where one finds similar coverings. %
 For instance, the $2:1$ covering $\mathbf{C}^{*}\to\mathbf{C}^{*}$ given by $z\mapsto{z^2}$ is locally trivial, but not globally. %
 We will find an explicit example of monodromy over the projective line in Section \ref{LocalPresentations1}. 

\begin{figure}[H]
\centering
\includegraphics[height=6cm]{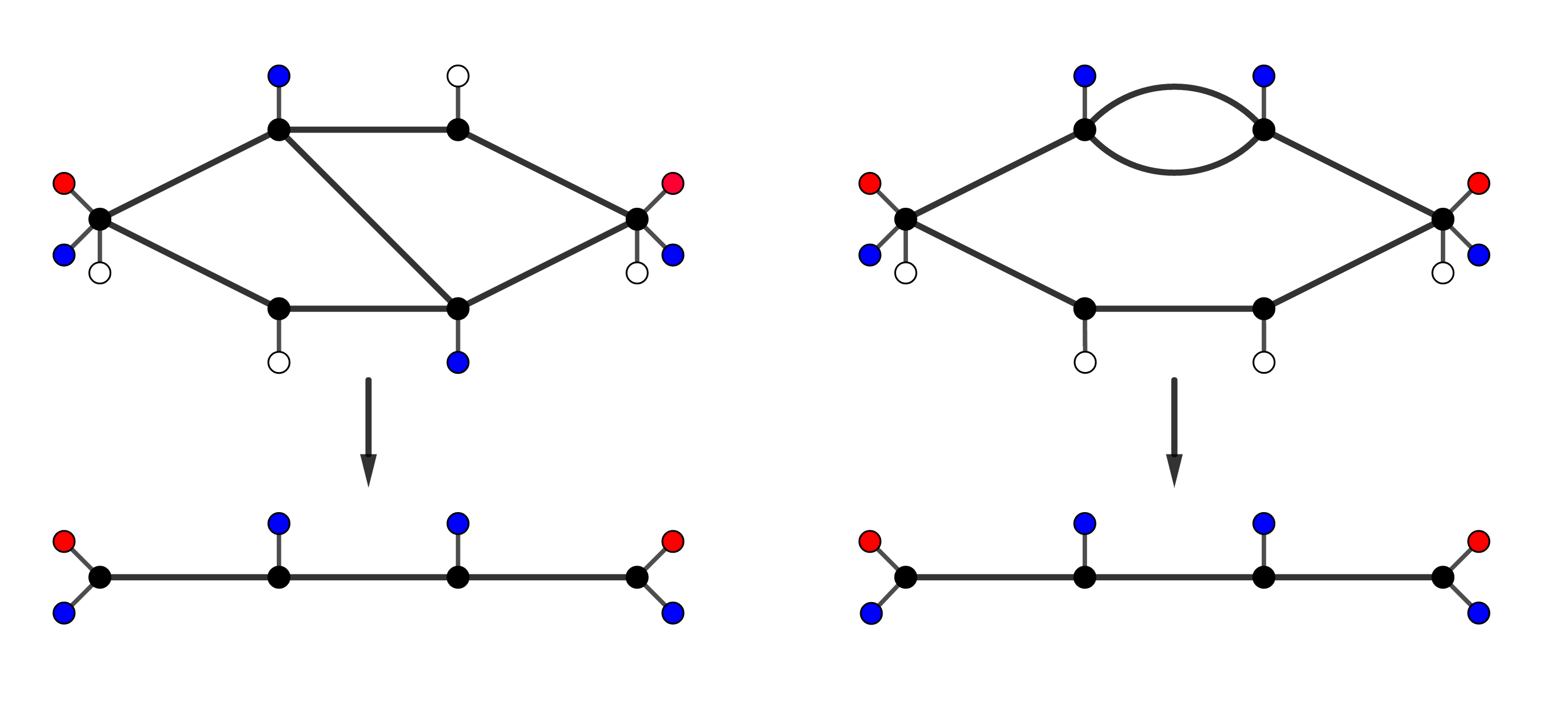}
\caption{\label{Monodromy5} An example of {\it{tropical monodromy}} of metrized complexes: these coverings are locally the same, but not globally. The blue vertices in the graphs on top have ramification index $2$, the red vertices have ramification index $3$ and the white vertices have ramification index $1$. } %
\end{figure}

\end{rem}

\section{The tame simultaneous semistable reduction theorem}
\label{SectionMainTheorem}

In this section, we prove the tame simultaneous semistable reduction theorem.  %
We first determine the spectra of the rings $R[[u]]$ and $R[[u,v]]/(uv-\pi^{n})$. %
We then use this together with Abhyankar's Lemma and purity of the branch locus to prove Theorem \ref{NPTheorem} and an analogous theorem for $R[[u]]$. In Section \ref{ProofSemistable} we combine these results to prove Theorem \ref{MainThm1}. %

\subsection{Smooth and ordinary double points}\label{ModelsCurves} %

We start by determining the spectra of smooth and ordinary double points in %
Lemmas \ref{SmoothPoints} and \ref{OrdinaryDoublePoints}. %

\begin{lemma}\label{SmoothPoints}
Let $A=R[[u]]$, where $R$ is a complete discrete valuation ring with uniformizer $\pi$.  Then the following hold:
\begin{enumerate}
\item $A$ is a complete, Noetherian, local unique factorization domain of dimension two. %
\item We have
 \begin{equation}
\mathrm{Spec}(A)=\{(u,\pi)\}\cup\{(\pi)\}\cup{\{(0)\}}\cup{T},%
\end{equation}
where $T$ is the set of ideals generated by irreducible monic polynomials $f$ in $R[u]$ whose roots $\alpha$ in $\overline{K}$ satisfy $v(\alpha)>0$.
\end{enumerate}
\end{lemma}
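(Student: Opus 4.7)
The plan is to treat the two parts separately, using the Weierstrass preparation theorem as the main input for the classification of primes.

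For part (1), I would verify each property in turn. The ring $A = R[[u]]$ is complete by construction as an inverse limit. Noetherianity follows from the formal power series analogue of the Hilbert basis theorem. The ideal $(u,\pi)$ is maximal since $A/(u,\pi) \cong R/\pi = k$, and it is the unique maximal ideal because any $f \notin (u,\pi)$ has nonzero constant term modulo $\pi$, hence has a unit constant term in $R$, so the geometric series argument shows $f \in A^{\times}$. For the dimension, the chain $(0) \subsetneq (\pi) \subsetneq (u,\pi)$ gives $\dim A \geq 2$, while Krull's Hauptidealsatz applied to the two generators of the maximal ideal gives the reverse inequality. For the UFD property, I would invoke Weierstrass preparation: any nonzero $f \in A$ decomposes uniquely as $f = \pi^{n} \cdot p(u) \cdot w$, where $w \in A^{\times}$ and $p(u) \in R[u]$ is a distinguished monic polynomial (coefficients below the leading term lying in $\pi R$). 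Since distinguished polynomials in $R[u]$ admit unique factorization into irreducible distinguished polynomials (via Gauss's lemma over $K$) and $\pi$ is prime, this yields the UFD property of $A$.

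For part (2), since $A$ is a Noetherian local domain of dimension two, every prime has height $0$, $1$, or $2$. Height $0$ gives only $(0)$ and height $2$ gives only $(u,\pi)$, which accounts for the first three sets in the decomposition. For height one primes, the UFD property forces principality, so each is generated by an irreducible element of $A$. Applying Weierstrass preparation once more, the irreducibles of $A$ are, up to units, either $\pi$ or irreducible distinguished polynomials $p(u) \in R[u]$. It remains to identify distinguished polynomials with monic polynomials in $T$. Writing $f = u^{n} + a_{n-1}u^{n-1} + \cdots + a_{0} \in R[u]$ and using that the $a_{i}$ are, up to sign, elementary symmetric functions in the roots $\alpha_{j} \in \overline{K}$, we see that all $v(\alpha_{j}) > 0$ implies $v(a_{i}) > 0$ for $i < n$, so $f$ is distinguished. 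Conversely, if $f$ is distinguished, then $f \equiv u^{n} \pmod{\pi}$, so every root reduces to $0$ in the residue field of $\overline{K}$, which is equivalent to $v(\alpha_{j}) > 0$.

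The main technical obstacle is the Weierstrass preparation theorem, together with the subtlety that an irreducible distinguished polynomial in $R[u]$ must remain irreducible as an element of the larger ring $A = R[[u]]$. This is handled by the uniqueness clause of Weierstrass preparation: any nontrivial factorization in $A$ would, after absorbing the unit factors into leading coefficients, descend to a factorization into distinguished polynomials of strictly smaller degree in $R[u]$. Once this compatibility is in place, the remainder of the proof is essentially the bookkeeping indicated above.
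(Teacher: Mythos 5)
Your proposal is correct and takes essentially the same route as the paper: the classification of height-one primes rests on the UFD property together with Weierstrass preparation, reducing to irreducible distinguished polynomials, which are then identified with monic polynomials whose roots have positive valuation via the symmetric-function/reduction argument. The only differences are organizational (you stratify primes by height rather than by the fibers of $\mathrm{Spec}(A)\to\mathrm{Spec}(R)$ as the paper does) and that you fill in details the paper leaves to the reader, such as the proof of the UFD property and the compatibility of irreducibility in $R[u]$ and $R[[u]]$.
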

\begin{proof}
The first follows from some standard commutative algebra. For the proof of the second we give an outline and leave some of the details to the reader for the sake of brevity. We start with the natural map $\mathrm{Spec}(A)\to\mathrm{Spec}(R)$ arising from $R\to{A}$. Points that map to $(\pi)$ under this map are said to be in the special fiber. These correspond to prime ideals of $A/(\pi)\simeq{k[[u]]}$, so there are exactly two: the maximal ideal $(u,\pi)$ and $(\pi)$. We then consider the generic fiber, consisting of the prime ideals that map to $(0)$. Since $A$ is a unique factorization domain and $\mathrm{dim}(A)=2$, we have that any such prime is of the form $(f)$ for $f$ an irreducible element of $A$. By the Weierstrass preparation theorem (see \cite[Proposition 8, Page 511]{BourbakiCommAlg}), we can assume that $f$ is an irreducible monic polynomial in $R[u]$ of the form %
\begin{equation}
f=u^{n}+b_{n-1}u^{n-1}+...+b_{0},
\end{equation}
with $b_{i}\in(\pi)$. This condition implies that the roots of $f$ in an algebraic closure of $K$ all have strictly positive valuation and one quickly obtains the desired bijection from this.

\end{proof}

\begin{lemma}\label{OrdinaryDoublePoints}
Let $A=R[[u,v]]/(uv-\pi^{n})$ for some $n\in\mathbf{Z}_{>{0}}$, where $R$ is a complete discrete valuation ring with uniformizer $\pi$. Then the following hold:
\begin{enumerate}
\item $A$ is a complete, Noetherian, local normal domain of dimension two.
\item If $n=1$, then $A$ is regular and thus a unique factorization domain.  
\item Write $\mathrm{Spec}(A)_{s}$ for the prime ideals that map to $(\pi)$  and $\mathrm{Spec}(A)_{\eta}$ for the prime ideals that map to $(0)$ under $\mathrm{Spec}(A)\to\mathrm{Spec}(R)$. Then 
\begin{equation}
\mathrm{Spec}(A)_{s}=\{(u,v,\pi),(u,\pi),(v,\pi)\}.
\end{equation}
There furthermore is a bijection 
\begin{equation}
\mathrm{Spec}(A)_{\eta}\backslash{(0)}\to\{\alpha\in\overline{K}:0<v(\alpha)<n\}/\sim.
\end{equation}
Here two elements $\alpha$ and $\beta$ are equivalent if they have the same minimal polynomial over $K$. Equivalently, there is an automorphism $\sigma\in\mathrm{Aut}(\overline{K}/K)$ such that $\sigma(\alpha)=\beta$.

\end{enumerate}
\end{lemma}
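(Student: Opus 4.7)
Parts (1)--(3) are largely routine verifications. For (1), the ring $A$ inherits completeness, Noetherianity, and locality from $R[[u,v]]$. A Newton polygon / Weierstrass argument shows that $uv-\pi^n$ is irreducible in the UFD $R[[u,v]]$, making $A$ a domain, and Krull's Hauptidealsatz then gives $\dim A = 2$. For normality I would apply Serre's criterion: $A$ is a hypersurface in a regular local ring, hence Cohen--Macaulay and so $S_2$, while the partial derivatives of $uv-\pi^n$ vanish simultaneously only at the maximal ideal, so $A$ satisfies $R_1$. For (2), when $n=1$ the relation gives $\pi = uv$, so $(u,v,\pi)=(u,v)$ is generated by two elements, matching $\dim A$; hence $A$ is regular and therefore a UFD by Auslander--Buchsbaum. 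For (3), reduction modulo $\pi$ yields $A/\pi A \simeq k[[u,v]]/(uv)$, whose prime ideals $(u)$, $(v)$, $(u,v)$ pull back to $(u,\pi)$, $(v,\pi)$, $(u,v,\pi)$.

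For the forward direction of (4), given $\alpha \in \overline{K}$ with $0 < v(\alpha) < n$, let $\widetilde{R[\alpha]}$ denote the integral closure of $R$ in $K(\alpha)$, a complete DVR with residue field $k$. Both $\alpha$ and $\pi^n/\alpha$ lie in its maximal ideal, so $u \mapsto \alpha$, $v \mapsto \pi^n/\alpha$ defines a continuous $R$-algebra homomorphism $\phi_\alpha: R[[u,v]] \to \widetilde{R[\alpha]}$ killing $uv-\pi^n$, which descends to a map $A \to \widetilde{R[\alpha]}$ with kernel $\mathfrak{p}_\alpha$. This kernel is prime, does not contain $\pi$ (since $\phi_\alpha(\pi) \neq 0$), and has height one: it is nonzero (containing the nonzero element $f(u)$ for $f$ the minimal polynomial of $\alpha$) and strictly contained in the unique maximal ideal $(u,v,\pi)$, so height $\leq 1$. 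Galois-conjugate roots yield the same ideal via the $R$-isomorphism $\widetilde{R[\alpha]} \simeq \widetilde{R[\alpha']}$; conversely, if $\mathfrak{p}_\alpha = \mathfrak{p}_{\alpha'}$, then the fraction field $L$ of $A/\mathfrak{p}_\alpha$ admits $K$-embeddings into $K(\alpha)$ and $K(\alpha')$, both onto (since the image of $u$ generates each target over $K$), giving a $K$-isomorphism $K(\alpha) \simeq K(\alpha')$ sending $\alpha$ to $\alpha'$ and so identifying the orbits.

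For surjectivity, let $\mathfrak{p} \subset A$ be a height-one prime not containing $\pi$, put $B = A/\mathfrak{p}$, and let $\widehat{B}$ be its $\mathfrak{m}_B$-adic completion. Since $\pi \in \mathfrak{m}_B$ is nonzero and $B$ is one-dimensional, $\pi B$ is $\mathfrak{m}_B$-primary, so the $\pi$-adic topology on $\widehat{B}$ coincides with the maximal-ideal topology and $\widehat{B}/\pi\widehat{B}$ is a finite-dimensional $k$-vector space. By Nakayama's lemma in the complete setting, lifts of a $k$-basis generate $\widehat{B}$ as an $R$-module, so $\widehat{B}$ is module-finite over $R$ and its fraction field $L$ is a finite extension of $K$. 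Letting $\alpha \in L$ denote the image of $u$, the conditions $u,v \in \mathfrak{m}_B$ force $0 < v(\alpha) < n$, and $L = K(\alpha, \pi^n/\alpha) = K(\alpha)$. Since $B \hookrightarrow \widehat{B}$ by Krull's intersection theorem, the kernel of the composite $A \to \widehat{B} \hookrightarrow \widetilde{R[\alpha]}$ is exactly $\mathfrak{p}$, and this kernel is $\mathfrak{p}_\alpha$ by construction. The main obstacle is precisely this finite-generation step for $\widehat{A/\mathfrak{p}}$ over $R$; my plan handles it through $\pi$-primariness and complete Nakayama, avoiding the full Cohen structure theorem.
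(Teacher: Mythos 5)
Your argument is correct, and on the substantive point -- the classification of the nonzero generic-fibre primes -- it takes a genuinely different route from the paper. The paper localizes $A$ at the multiplicative set generated by $u,v,\pi$, represents a nonzero prime by a homomorphism $\phi$ to a field, restricts along the subrings $R[[u]]$ and $R[[v]]$, and quotes Lemma \ref{SmoothPoints} (whose proof rests on Weierstrass preparation) to see that $\phi(u)$ and $\phi(v)$ are algebraic over $K$ with positive valuation, whence $0<v(\phi(u))<n$; its converse direction is the evaluation map $u\mapsto\alpha$, $v\mapsto\pi^{n}/\alpha$. You keep that evaluation map for the construction of $\mathfrak{p}_{\alpha}$, but for surjectivity you work directly with $B=A/\mathfrak{p}$: since $\pi B$ is $\mathfrak{m}_{B}$-primary in the one-dimensional complete local domain $B$, topological Nakayama makes $B$ module-finite over $R$, so $L=\mathrm{Frac}(B)$ is finite over $K$ and the image $\alpha$ of $u$ lies in $\mathfrak{m}_{B}=\mathfrak{m}_{\mathcal{O}_{L}}\cap B$ (with $\mathcal{O}_{L}$ the integral closure of $R$ in $L$), which forces $0<v(\alpha)<n$. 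This is self-contained (no Weierstrass preparation, no localization of power-series rings, no appeal to Lemma \ref{SmoothPoints}), and your treatment of well-definedness and injectivity through fraction fields is more explicit than the paper's ``it is easy to see that this gives a bijection''; what the paper's route buys is brevity, by recycling the already-proved smooth-point lemma. Two of your steps deserve one more line each. For $R_{1}$ you invoke vanishing of partial derivatives, but the Jacobian criterion does not literally apply to $R[[u,v]]$ over the base $R$; the cleanest repair in the spirit of the paper is to verify normality for the algebraic model $R[u,v]/(uv-\pi^{n})$ (away from the single closed point $(u,v,\pi)$ its spectrum is covered by $D(u)$ and $D(v)$, where $v=\pi^{n}/u$ resp.\ $u=\pi^{n}/v$ exhibit regular rings, and $S_{2}$ holds since it is a hypersurface) and then pass to the $\mathfrak{m}$-adic completion using excellence, exactly as in Remark \ref{AssumptionsGalois}. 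Second, $L=K(\alpha)$ is not immediate, because $B$ is only topologically generated over $R$ by $\alpha$ and $\pi^{n}/\alpha$; either observe that the finite $R$-submodule $R[\alpha,\pi^{n}/\alpha]\subset B$ is $\pi$-adically complete, hence closed, hence equal to $B$, or bypass the claim entirely: two continuous $R$-algebra maps $R[[u,v]]\to\mathcal{O}_{L}$ agreeing on $u$ and $v$ coincide, so the kernel of $A\to B\hookrightarrow\mathcal{O}_{L}$ equals $\mathfrak{p}_{\alpha}$ in any case, and $L=K(\alpha)$ then follows a posteriori from $B\hookrightarrow\widetilde{R[\alpha]}$.
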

\begin{proof}
The first two statements follow from basic commutative algebra; we give an outline of how to prove the third. %
The classification of the primes in the special fiber follows as in the previous lemma. %
For the primes in the generic fiber, we %
 consider the localization %
$S^{-1}A$, where $S$ is the multiplicative set generated by $u$, $v$ and $\pi$. Since $A$ is local and two-dimensional, we easily find that $\mathrm{dim}(S^{-1}A)=1$. %
We write $A_{1}=R[[u]]$ and $A_{2}=R[[v]]$, which we consider as subrings of $A$.  %
The sets $S_{i}=A_{i}\cap{S}$ then give rise to injective localization maps $S^{-1}_{i}A_{i}\to{S^{-1}A}$. We represent a non-zero prime ideal by a homomorphism $\phi:S^{-1}A\to{L}$ for some field $L$. This induces homomorphisms $\phi_{i}:S_{i}^{-1}A_{i}\to{L}$. By Lemma \ref{SmoothPoints}, we find that $\phi(u)=\phi_{1}(u)$ and $\phi(v)=\phi_{2}(v)$ are defined over a finite extension $K'$ of $K$. They furthermore have %
 have strictly positive valuations. %
 Since $\phi(u)\phi(v)=\phi(\pi^{n})$, we then find that $0<v(\phi(u))<n$. Conversely, for any element $\alpha\in\overline{K}$ with $0<v(\alpha)<n$, there is a homomorphism $R[[u,v]]\to{\overline{K}}$ sending $u\mapsto{\alpha}$ and $v\mapsto{\pi^{n}/\alpha}$. This factors through $S^{-1}A$, giving the desired prime ideal. It is easy to see that this gives a bijection. %

\end{proof}

\subsection{Tame Kummer extensions}\label{KummerExtensions}

In this section, we study Kummer extensions of smooth and ordinary double points. %
We show that a tame extension of a smooth point that is only ramified above one generic prime is Kummer. We also show that a tame extension of an ordinary double point that is only ramified above the vertical primes is contained in a Kummer covering. %
We start by summarizing the necessary ingredients for the proofs. %

\begin{mydef}{\bf{[Tame extensions and weakly unramified extensions]}}\label{Tame}
Let $K$ be a discretely valued field with valuation $v$ and let $L$ be a finite extension of $K$. For any valuation $w$ of $L$ extending $v$, let $e(w/v)$ be the ramification index and let $k(v)\subset{k(w)}$ be the extension of residue fields. We say that an extension $w/v$ is {\it{tame}} if the ramification index $e(w/v)$ is coprime to the characteristic and the extension $k(v)\subset{k(w)}$ is separable. We say that $L$ is tame over $K$ if all of the extensions $w/v$ are tame. We say that an extension $w/v$ is {\it{weakly unramified}} if $e(w/v)=1$ and it is {\it{unramified}} if furthermore  $k(v)\subset{k(w)}$ is separable. We say that $L$ is (weakly) unramified over $K$ if every extension $w/v$ is (weakly) unramified.  
\end{mydef}

The tame extensions of a discretely valued field can be characterized using a classical result known as  %
Abhyankar's lemma. 

\begin{lemma}\label{Abhyankar}
{\bf{[Abhyankar's Lemma]}}
Let $v$ be a discrete valuation on a field $K$ and let $L$ and $E$ be two finite extensions of $K$. Writing $M$ for the composite field $L\cdot{E}$, we arrive at the following diagram of fields
\begin{center}
\begin{tikzcd}[every arrow/.append style={dash}]\label{DiagramAbhyankar}
& M \\
L \arrow{ur}{} &  \\
& E \arrow[swap]{uu}{} \\
K \arrow{uu}{} \arrow{ur}{}& %
\end{tikzcd}
\end{center}
 Let $w$ be an extension of $v$ to $M$ and let $w_{E}$ and $w_{L}$ be the restrictions of $w$ to $E$ and $L$ respectively. Suppose that $w_{L}/v$ is tamely ramified and that $e(w_{L}/v)$ divides $e(w_{E}/v)$. Then $w$ is unramified over $w_{E}$. 
\end{lemma}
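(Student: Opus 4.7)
The plan is to reduce to the Henselian (or complete) case, then use the structure theorem for tame extensions of complete discretely valued fields to make the unramifiedness of $w/w_{E}$ manifest.

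First I would pass to the completions $\hat{K}_{v}\subset\hat{L}_{w_{L}},\hat{E}_{w_{E}}\subset\hat{M}_{w}$. Ramification indices and residue-field extensions are preserved under completion, so it suffices to prove the statement in this complete setting; moreover each of these complete DVRs is Henselian, which lets us manipulate extensions locally without worrying about several places over one place. At this point $\hat{L}_{w_{L}}/\hat{K}_{v}$ is a finite tame extension with ramification index $e := e(w_{L}/v)$ coprime to the residue characteristic and separable residue extension.

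Next, I would invoke the standard structure theorem for tame extensions: there exists a finite unramified extension $K_{1}/\hat{K}_{v}$ and a uniformizer $\pi$ of $\hat{K}_{v}$ such that
\begin{equation}
\hat{L}_{w_{L}} = K_{1}(\pi^{1/e}).
\end{equation}
Let $e_{E} := e(w_{E}/v)$ and let $\pi_{E}$ be a uniformizer of $\hat{E}_{w_{E}}$, so that $\pi = u\cdot\pi_{E}^{e_{E}}$ for some unit $u\in\hat{E}_{w_{E}}^{\times}$. Using the divisibility hypothesis $e\mid e_{E}$, write $e_{E}=en$; then inside an algebraic closure,
\begin{equation}
\pi^{1/e} = u^{1/e}\cdot\pi_{E}^{n}.
\end{equation}
Since $\pi_{E}^{n}\in\hat{E}_{w_{E}}$, the extension $\hat{E}_{w_{E}}(\pi^{1/e})/\hat{E}_{w_{E}}$ is obtained by adjoining $u^{1/e}$. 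Because $e$ is prime to the residue characteristic and $u$ is a unit, the polynomial $T^{e}-u$ has separable reduction $T^{e}-\bar u$ over the residue field $k(w_{E})$, so by Hensel's Lemma combined with the Kummer-type structure of roots of unity adjunctions, $\hat{E}_{w_{E}}(u^{1/e})/\hat{E}_{w_{E}}$ is unramified.

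Finally, I would combine this with the unramified extension coming from $K_{1}$: the compositum $K_{1}\cdot\hat{E}_{w_{E}}$ is unramified over $\hat{E}_{w_{E}}$ because unramifiedness is preserved under base change of Henselian DVRs. Since $\hat{M}_{w}$ is generated over $\hat{E}_{w_{E}}$ by $K_{1}$ together with $u^{1/e}$ (both giving unramified steps), the whole extension $\hat{M}_{w}/\hat{E}_{w_{E}}$ is unramified, which is what we wanted. The main obstacle I anticipate is the bookkeeping in Step~2: showing rigorously that the tame extension $\hat{L}_{w_{L}}/\hat{K}_{v}$ really has the form $K_{1}(\pi^{1/e})$ and selecting compatible uniformizers, especially when the residue characteristic is positive. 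Once this normal form is in hand, the rest is an easy computation with valuations of units.
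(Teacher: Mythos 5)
The paper does not actually prove this lemma: it only cites \cite[Chapter IV, Exercise 3]{Ste2} and the Stacks project tag 0BRM. So your argument is necessarily an independent route, and the route you chose is the standard one: pass to the completions $\hat{K}_{v}\subset\hat{L}_{w_{L}},\hat{E}_{w_{E}}\subset\hat{M}_{w}$, use the structure of tame extensions of complete discretely valued fields, and observe that adjoining an $e$-th root of a unit with $e$ prime to the residue characteristic is unramified (the discriminant of $x^{e}-u$ is a unit, so $O[x]/(x^{e}-u)$ is \'{e}tale). Two steps you use silently should be stated: first, that $\hat{M}_{w}=\hat{L}_{w_{L}}\cdot\hat{E}_{w_{E}}$ (the compositum $L\cdot\hat{E}_{w_{E}}$ is finite over a complete field, hence complete and closed in $\hat{M}_{w}$, and contains the dense subfield $M$); second, that the paper's notion of unramified also requires the residue extension $k(w_{E})\subset k(w)$ to be separable, which your \'{e}tale/Hensel argument does deliver but which should be said.

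The one genuine inaccuracy is the normal form in your Step 2. It is not true in general that $\hat{L}_{w_{L}}=K_{1}(\pi^{1/e})$ with $K_{1}/\hat{K}_{v}$ unramified and $\pi$ a uniformizer of $\hat{K}_{v}$: take $K=\mathbf{Q}((t))$, $K_{1}=\mathbf{Q}(\sqrt{2})((t))$ and $L=K_{1}\bigl(\sqrt{t(1+\sqrt{2})}\bigr)$, which is tame with $e=2$, $f=2$; if $L$ were $K_{1}(\sqrt{wt})$ for a unit $w$ of $\mathbf{Q}[[t]]$, then $(1+\sqrt{2})/w$ would be a square in $K_{1}$, hence $(1+\sqrt{2})/w_{0}$ a square in $\mathbf{Q}(\sqrt{2})$ for the residue $w_{0}\in\mathbf{Q}^{\times}$, impossible since its norm $-1/w_{0}^{2}$ is negative. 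What is true is either (a) $\hat{L}_{w_{L}}=K_{1}(\pi_{1}^{1/e})$ with $\pi_{1}$ a uniformizer of $K_{1}$ (not of $\hat{K}_{v}$), or (b) $\hat{L}_{w_{L}}\subseteq K_{1}'(\pi^{1/e})$ with $\pi\in\hat{K}_{v}$, after enlarging $K_{1}$ by an $e$-th root of the unit $\pi_{1}/\pi$ and by the $e$-th roots of unity, all of which is still unramified. Either version repairs your computation: in case (a) the element $u:=\pi_{1}\pi_{E}^{-e_{E}}$ is a unit of $K_{1}\cdot\hat{E}_{w_{E}}$ rather than of $\hat{E}_{w_{E}}$, which is harmless because $K_{1}\cdot\hat{E}_{w_{E}}/\hat{E}_{w_{E}}$ is unramified and unramifiedness is transitive; in case (b) one gets $\hat{M}_{w}\subseteq(K_{1}'\cdot\hat{E}_{w_{E}})(u^{1/e})$, unramified over $\hat{E}_{w_{E}}$, and one concludes because a subextension of an unramified extension of complete discrete valuation rings is again unramified (both $e=1$ and residue separability pass to subextensions). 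The same remark disposes of the small ambiguity in "adjoining $\pi^{1/e}$ is the same as adjoining $u^{1/e}$": the two differ by an $e$-th root of unity, which also lies in an unramified extension. With these adjustments your proof is correct.
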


\begin{proof}
The current formulation is the additive version of \cite[Chapter IV, Exercise 3]{Ste2}. For a proof in terms of discrete valuation rings, see \cite[\href{https://stacks.math.columbia.edu/tag/0BRM}{Lemma 0BRM}]{stacks-project}. 
\end{proof}

\begin{theorem}{\bf{[Purity of the branch locus]}}\label{Purity}
Let $f:X\rightarrow{Y}$ be a morphism of locally Noetherian schemes. Let $x\in{X}$ and set $y=f(x)$. Suppose furthermore that the following hold: $\mathcal{O}_{X,x}$ is normal, $\mathcal{O}_{Y,y}$ is regular, $f$ is quasi-finite at $x$ and $\mathrm{dim}(\mathcal{O}_{X,x})=\mathrm{dim}(\mathcal{O}_{Y,y})\geq{1}$. If $f$ is \'{e}tale at specializations $x'\subset{x}$, with $\mathrm{dim}(\mathcal{O}_{X,x'})=1$, then %
 $f$ is \'{e}tale at $x$. 
\end{theorem}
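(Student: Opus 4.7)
The plan is to deduce this from the classical Zariski--Nagata purity of the branch locus theorem after a reduction to the finite local case.

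First I would reduce to a purely local and finite setting. Since $f$ is quasi-finite at $x$ and $\dim \mathcal{O}_{X,x} = \dim \mathcal{O}_{Y,y}$, Zariski's main theorem lets me pass to henselizations and replace $f$ by a finite local ring map $A \to B$, where $A = \mathcal{O}_{Y,y}^{\mathrm{h}}$ is regular local and $B$ is the henselian local factor of $\mathcal{O}_{X,x}^{\mathrm{h}}$ above the closed point, which is normal and finite over $A$ of the same dimension. The étale hypothesis at generizations $x' \subset x$ of codimension one translates into the statement that $A \to B$ is étale at every height one prime of $A$: indeed, for any height one $\mathfrak{p} \subset A$, each prime of $B$ lying over $\mathfrak{p}$ corresponds to a point $x'$ with $\dim \mathcal{O}_{X,x'} = 1$ by the going-up/going-down behaviour in the finite extension.

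Étaleness is flatness plus unramifiedness, so I would handle these two separately. For unramifiedness, the key input is the Zariski--Nagata purity of the branch locus theorem: if $A$ is regular local, $B$ is normal and finite over $A$, then the branch locus $V(\mathrm{Ann}(\Omega^1_{B/A})) \subset \mathrm{Spec}(A)$ is empty or pure of codimension one. By assumption the branch locus contains no height one prime, and by purity it also cannot contain only primes of height $\geq 2$; hence it is empty, i.e.\ $A\to B$ is everywhere unramified.

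For flatness, I would use the Auslander--Buchsbaum formula combined with miracle flatness: for a finite map $A\to B$ between local rings with $A$ regular and $\dim A = \dim B$, flatness is equivalent to $B$ being Cohen--Macaulay of depth $\dim A$. Since $B$ is normal, this holds automatically when $\dim A \le 2$ by Serre's $R_1 + S_2$ criterion. In higher dimension one combines unramifiedness (just established) with normality of $B$ to conclude $B$ is Cohen--Macaulay of maximal depth via standard commutative algebra, giving flatness.

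The main obstacle is invoking Zariski--Nagata purity: everything else is a standard reduction. Since the author clearly intends to cite this as a black box (the statement is labelled \emph{Purity of the branch locus} in the excerpt), my plan really amounts to organizing the reduction from the quasi-finite, not-necessarily-finite morphism of schemes to the finite local algebra setting in which Zariski--Nagata applies directly.
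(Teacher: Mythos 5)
The paper does not actually prove this statement: its ``proof'' is a one-line citation of \cite[Lemma 0BMB]{stacks-project}, which is verbatim the statement in question. So your route is necessarily different: you re-derive the scheme-level, quasi-finite statement from the classical finite local Zariski--Nagata theorem. That reduction is sound and is essentially what the cited Stacks proof also does (there with completions rather than henselizations): quasi-finiteness plus Zariski's Main Theorem over the henselian base gives a finite local map $A\to B$ with $A$ regular and $B$ normal local, hence a domain; equality of dimensions forces $A\to B$ to be injective, so $B$ is a finite extension domain; going-down for an integral extension of a normal domain identifies the codimension-one points, so the hypothesis becomes ``\'{e}tale at all height one primes''; and normality, regularity and the codimension-one hypothesis all pass to henselizations because henselization is ind-\'{e}tale and faithfully flat. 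One point you should record explicitly: the classical purity theorem needs the fraction-field extension to be separable (generically \'{e}tale) --- otherwise the ramification locus can be everything --- but this follows here because the \'{e}tale locus is open, hence stable under generization, and $\dim\geq 1$ guarantees a codimension-one point where $f$ is \'{e}tale. The only genuinely soft spot is your flatness step in dimension $\geq 3$: ``combine unramifiedness with normality of $B$ to conclude $B$ is Cohen--Macaulay via standard commutative algebra'' is not an argument as it stands. The clean finish is that over the henselian local base a finite unramified local $A$-algebra is a quotient $A'/I$ of a finite \'{e}tale local $A$-algebra $A'$; then $A'$ is a regular domain and $\dim B=\dim A'$ forces $I=0$, so $B=A'$ is \'{e}tale (equivalently, invoke the classical fact that a dominant unramified morphism to a normal scheme is \'{e}tale, SGA1 Exp.\ I). With that patched your sketch is correct; what it buys over the paper is an actual derivation from the classical finite-normalization form of purity, though one should be aware that the black box you invoke is essentially the theorem itself, so the net content of your proposal is the (standard but worthwhile) reduction --- which is still more than the paper's bare citation provides.
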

\begin{proof}
See \cite[\href{https://stacks.math.columbia.edu/tag/0BMB}{Lemma 0BMB}]{stacks-project}. %
\end{proof}

\begin{exa}
We give an example where purity fails. Let $A=R[u,v]/(uv-\pi^{2})$, so that $A$ is not regular at $(u,v,\pi)$. Consider the normalization $B$ of $A$ in the field extension $K(A)\subset{K(A)[z]/(z^2-u)}$. More explicitly, this normalization is generated as an $A$-module by the elements $z_{1}:=\dfrac{z}{1}$ and $z_{2}:=\dfrac{\pi}{z}$, where $z_{2}$ satisfies
\begin{equation}
z_{2}^2=v.
\end{equation} 
There is then only one prime in $\mathrm{Spec}(B)$ over $(u,v,\pi)$, namely  %
$(z_{1},z_{2},\pi)=(z_{1},z_{2})$. An easy check now shows that $B$ is \'{e}tale over every codimension one prime, but not over $(u,v,\pi)$. We conclude that the purity theorem does not hold in this case. By %
\cite[\href{https://stacks.math.columbia.edu/tag/0BJH}{Lemma 0BJH}]{stacks-project} we then also find that $A\rightarrow{B}$ is not flat. The flatness part of the result stated in \cite[Proposition 2]{BCKKGS2020} is thus incorrect. %

\end{exa}

\subsubsection{Kummer extensions of smooth points}\label{KummerSmoothSection}

We now introduce Kummer extensions of smooth points. We show that any extension that is only ramified over $(u)$ is automatically Kummer. We start with the ring of formal power series $A=R[[u]]$ over a complete discrete valuation ring $R$, with field of fractions $K(A)$. 
\begin{rem}
For the remainder of the section, we fix a compatible set of $n$-th roots of $u$ inside an algebraic closure $\overline{K(A)}$. %
We denote these $n$-roots of $u$ by $u^{1/n}$. The compatibility means that for any divisor $k$ of $n$ with $kh=n$, we have that $(u^{1/n})^{k}=u^{1/h}$. 
\end{rem}  %

\begin{mydef}
{\bf{[Kummer extensions I]}}\label{TameKummerSmooth}
Let $A:=R[[u]]$ be the ring of formal power series over a complete discrete valuation ring $R$ with algebraically closed residue field $k$ and field of fractions $K(A)$. The field $K(W)$ of all $u$-Kummer extensions is the composite of all extensions $K(A)(u^{1/n})$, where $n$ varies over all positive integers. We say that a finite subextension of $K(W)$ is a $u$-Kummer extension of $K(A)$. We will omit the $u$ when the parameter is clear from context. If the degree of the Kummer extension $L$ is coprime to the residue characteristic $\mathrm{char}(k)$, then we say that $L$ is {\it{tame}} over $K(A)$. We denote the subfield of all tame Kummer extensions of $K(W)$ by $K(W^{\mathrm{tame}})$.      %

\end{mydef}

\begin{rem}
We note that the field of all Kummer extensions $K(W)$ depends on the choice of a parameter $u$ of $K(A)$. For instance, let $\alpha\in{(\pi)\backslash\{0\}}\subset{R}$ and consider the two extensions $L_{1}=K(A)(u^{1/n})$ and $L_{2}=K(A)(\beta)$, where $\beta^{n}=u-\alpha$. %
By inspecting the ramification in both extensions, we then see that there is then no $K(A)$-automorphism $L_{1}\to{L_{2}}$. 
If we allow more general automorphisms, then these two extensions are isomorphic. Indeed, there is an automorphism of $K(A)$ that sends %
\begin{equation}
u\mapsto{u+\alpha}
\end{equation} 
and from this we obtain an induced automorphism $L_{1}\to{L_{2}}$. %
\end{rem} %

We start with some preliminary remarks about these Kummer extensions. 
The normalization of $A$ inside the Kummer extension $K(A)\subset{K(A)(u^{1/n})}=:L$ %
is isomorphic to
$B=A[y]/(y^{n}-u)$ since it is regular and thus normal. %
Moreover, for $n$ coprime to the residue characteristic we have that the extension $K(A)\subset{L}$ is Galois with Galois group $\mathbf{Z}/n\mathbf{Z}$. Indeed, $K$ is a complete local field and $k$ is algebraically closed, so $K$ contains all primitive $n$-th roots of unity for $n$ coprime to the residue characteristic by Hensel's lemma. We then easily see that the field of all tame Kummer extensions $K(W^{\mathrm{tame}})$ is Galois, with profinite Galois group  %
$\hat{\mathbf{Z}}'$, which is the inverse limit of $\mathbf{Z}/n\mathbf{Z}$ for $n$ coprime to $\mathrm{char}(k)$. %

\begin{lemma}\label{GaloisRegularKummer}
Let $K$ be a field and let $u\in{K}$ be an element such that $f:=y^{n}-u\in{K[y]}$ is irreducible and separable (in particular $n$ is coprime to the characteristic of $K$). We write %
$K(\alpha)=L=K[y]/(f)$ with $\alpha:=y+(f)$. Then any subfield $K\subset{M}\subset{L}$ is of the form $K(\alpha^{d})$ for $d$ a divisor of $n$. 
\end{lemma}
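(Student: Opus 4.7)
The plan is to apply the Galois correspondence for the cyclic Kummer extension $L/K$. First I would verify that $K$ contains a primitive $n$-th root of unity $\zeta_n$ under the running assumptions of this section. Separability of $f = y^n - u$ forces $n$ to be coprime to $\mathrm{char}(k)$, so $y^n - 1$ has $n$ distinct roots over the algebraically closed residue field $k$; these lift uniquely to $R \subseteq K$ by Hensel's lemma. The map $\sigma : \alpha \mapsto \zeta_n \alpha$ is then a well-defined $K$-algebra automorphism of $L$ (since $(\zeta_n\alpha)^n = u$) of order $n$, so $L/K$ is Galois with cyclic Galois group $\langle\sigma\rangle \cong \mathbf{Z}/n\mathbf{Z}$.

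By the fundamental theorem of Galois theory, intermediate fields $K \subseteq M \subseteq L$ correspond to subgroups of $\mathrm{Gal}(L/K)$, which are exactly the subgroups $\langle \sigma^{n/d}\rangle$ of order $d$, one per divisor $d \mid n$. I would then identify the fixed field of $\langle \sigma^{n/d}\rangle$ as $K(\alpha^d)$: the inclusion $K(\alpha^d) \subseteq \mathrm{Fix}(\langle \sigma^{n/d}\rangle)$ is immediate from $\sigma^{n/d}(\alpha^d) = \zeta_n^n \alpha^d = \alpha^d$, and equality follows by comparing $K$-dimensions. For the latter I would use a Capelli-style argument to prove $y^{n/d} - u$ irreducible over $K$: any nontrivial factorization $y^{n/d} - u = p(y)q(y)$ in $K[y]$ would yield a nontrivial factorization $y^n - u = p(y^d)q(y^d)$, contradicting the irreducibility hypothesis on $f$. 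Hence $[K(\alpha^d):K] = n/d$, which matches the $K$-degree $[L:K]/d$ of the fixed field, forcing equality.

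The only real obstacle is the very first step, extracting $\zeta_n$ from $K$: without this hypothesis the statement genuinely fails, as the example $K = \mathbf{Q}$, $u = -1$, $n = 4$ already shows, where $L = \mathbf{Q}(\zeta_8)$ has three quadratic subfields while only $\mathbf{Q}(\alpha^2) = \mathbf{Q}(i)$ is of the form $K(\alpha^d)$. In the setting of this section the complete-DVR hypothesis with algebraically closed residue field supplies all needed roots of unity via Hensel's lemma, so the rest of the argument reduces to a direct application of Kummer theory.
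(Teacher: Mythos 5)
Your core argument is correct, and it takes a genuinely different route from the paper's. The paper does not assume $\zeta_{n}\in K$: its proof passes to the splitting field $L(\zeta_{n})$, embeds $\mathrm{Gal}(L(\zeta_{n})/K)$ into $(\mathbf{Z}/n\mathbf{Z})\rtimes(\mathbf{Z}/n\mathbf{Z})^{*}$ via the action on $\alpha$ and $\zeta_{n}$, and leaves the identification of the subgroup corresponding to $M$ to the reader. You instead add the hypothesis $\zeta_{n}\in K$, so that $L/K$ is cyclic of order $n$, and identify the fixed field of $\langle\sigma^{n/d}\rangle$ with $K(\alpha^{d})$ using the Capelli-type irreducibility of $y^{n/d}-u$; under that hypothesis your proof is complete and correct. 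Moreover your counterexample is valid: for $K=\mathbf{Q}$, $u=-1$, $n=4$ one has $L=\mathbf{Q}(\zeta_{8})$, and $\mathbf{Q}(\sqrt{2})$ is an intermediate field not of the form $\mathbf{Q}(\zeta_{8}^{d})$. So the lemma as literally stated, for an arbitrary field $K$, is false, and the paper's sketched argument cannot be completed in that generality; your ``extra'' hypothesis is a correction rather than a loss of strength.

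The one gap is in how you justify $\zeta_{n}\in K$. Separability of $y^{n}-u$ forces $n$ to be coprime to $\mathrm{char}(K)$, not to $\mathrm{char}(k)$; in mixed characteristic, with $p=\mathrm{char}(k)$ dividing $n$, Hensel's lemma only supplies the prime-to-$p$ roots of unity (for instance $\zeta_{p}\notin\mathrm{Frac}(W(\overline{\mathbf{F}}_{p}))$), so the section's running assumptions do not by themselves give $\zeta_{n}\in K$. Note also that the lemma is never applied with the global $K$ as base field: it is applied to fields such as the fraction fields of $R[[u]]$ and of $R[[u,v]]/(uv-\pi^{n})$ in Lemmas \ref{LemmaSmooth} and \ref{LemmaKummer33}. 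In the tame situations those applications require ($n$ coprime to $\mathrm{char}(k)$, residue field algebraically closed), one does have $\mu_{n}\subset R$ and hence in those fraction fields, so your corrected statement covers the intended uses; but the hypothesis should be stated as $\mu_{n}\subset K$ (or $n$ coprime to the residue characteristic) rather than derived from separability, and the invocation of the lemma inside the proof of Lemma \ref{LemmaKummer33} in mixed characteristic with $p\mid n$ is covered neither by your argument nor, by your own counterexample, by the paper's stated generality.
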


\begin{proof}
The splitting field of $f$ over $K$ is given by $L(\zeta_{n})$ for a primitive $n$-th root of unity and the Galois group can be embedded into the semi-direct product $(\mathbf{Z}/n\mathbf{Z})\rtimes{(\mathbf{Z}/n\mathbf{Z})^{*}}$ using its action on $\alpha$ and $\zeta_{n}$. %
The lemma now follows by explicitly working out the subgroup corresponding to $M$. We leave the details to the reader. %

\end{proof}

\begin{lemma}\label{LemmaSmooth}
Let $L$ be a finite separable extension of degree $n$ of the quotient field $K$ of $A=R[[u]]$ such that the normalization $B$ of $A$ inside $L$ is at most tamely ramified over $(u)$ and \'{e}tale over all other primes of codimension one. %
Then $K(B)=K(A)(u^{1/r})$ 
for some $r\in\mathbf{Z}_{>{0}}$ coprime to the residue characteristic. %

\end{lemma}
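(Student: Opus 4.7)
The plan is to reduce to the Galois case, then use a Kummer extension of $A$ together with Abhyankar's lemma and purity of the branch locus to kill all ramification, and finally read off the structure of $L$ from Lemma \ref{GaloisRegularKummer}. The point is that a finite \'etale cover of a strictly Henselian regular local ring is trivial, so after climbing into a suitable Kummer tower everything collapses to the base.

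First I would pass to the Galois closure $\tilde L/K(A)$, with normalization $\tilde B$ of $A$ inside $\tilde L$. Since $(u)$ is the unique codimension-one prime of $A$ whose residue field is $K$, it is preserved by any $K(A)$-linear automorphism, so the ramification profile of $B$ passes to $\tilde B$: it is tame at $(u)$ and \'etale at every other codimension-one prime. Let $e$ be the common ramification index of $\tilde B/A$ at the primes above $(u)$, which is coprime to $\mathrm{char}(k)$ by tameness. Set $A':=R[[u^{1/e}]]$, $E:=K(A')=K(A)(u^{1/e})$, $\tilde M:=\tilde L\cdot E$, and let $C$ be the normalization of $A'$ in $\tilde M$. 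Abhyankar's lemma (Lemma \ref{Abhyankar}) applied at $(u)$ shows that $C/A'$ is unramified, and in fact \'etale by tameness, at the unique prime of $A'$ lying over $(u)$.

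At any codimension-one prime $\mathfrak q$ of $A'$ not above $(u)$, the map $A\to A'=A[x]/(x^e-u)$ is itself \'etale, because the relative discriminant $e^e u^{e-1}$ is a unit multiple of a power of $u$; and $\tilde B/A$ is \'etale above $\mathfrak q\cap A$ by hypothesis. A second application of Abhyankar's lemma, this time with both relevant ramification indices equal to one, then shows that $C/A'$ is \'etale at $\mathfrak q$ as well. Hence $C/A'$ is \'etale in codimension one. Since $A'\cong R[[u^{1/e}]]$ is a regular local ring of dimension two and $C$ is normal, purity of the branch locus (Theorem \ref{Purity}) upgrades this to \'etaleness everywhere.

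Finally, $A'$ is complete local with algebraically closed residue field $k$, hence strictly Henselian, so every connected finite \'etale cover is trivial. Because $C$ is a normal domain, this forces $C=A'$, hence $\tilde M=E$, i.e.\ $\tilde L\subset K(A)(u^{1/e})$. Lemma \ref{GaloisRegularKummer} then identifies the subfield $L\subset\tilde L$ as $L=K(A)(u^{1/r})$ for some divisor $r\mid e$, and $r$ inherits from $e$ being coprime to $\mathrm{char}(k)$. I expect the main technical obstacle to be the bookkeeping in passing to the Galois closure while preserving the ramification hypotheses, together with verifying \'etaleness of $C/A'$ at the codimension-one primes lying over $(\pi)$ and the other ``horizontal'' primes of Lemma \ref{SmoothPoints}; everything beyond that is a standard combination of Abhyankar, purity, and triviality of \'etale covers over strictly Henselian rings.
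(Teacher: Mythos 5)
Your argument is essentially the paper's own proof: adjoin a root of $u$ of sufficiently divisible order, use Abhyankar's Lemma to kill the ramification over $(u)$, check \'etaleness at the remaining codimension-one primes, invoke purity of the branch locus, use that the resulting Kummer normalization is strictly Henselian (so its finite \'etale covers are trivial), and finish with Lemma \ref{GaloisRegularKummer}. The one genuine difference is your preliminary passage to the Galois closure, which is superfluous: the paper simply takes $m$ to be the least common multiple of the ramification indices of $B$ over $(u)$, which already satisfies the divisibility hypothesis of Abhyankar's Lemma, so no common ramification index (and hence no Galois closure) is needed. Your justification for that detour is also shaky on two counts: the claim that $(u)$ is the unique codimension-one prime of $A$ with residue field $K$ is false (e.g.\ $(u-\pi)$ also has residue field $K$), and it is beside the point anyway, since $K(A)$-automorphisms act trivially on $A$ and hence on all its primes; what you actually need is that the compositum of the conjugates of $L$ remains tame over $(u)$ and unramified at the other codimension-one primes, a standard fact about composita that you assert rather than prove. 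Dropping the Galois closure and replacing your $e$ by the lcm removes this issue entirely and reduces your write-up to the paper's argument; on the plus side, your explicit discriminant check that $A\to R[[u^{1/e}]]$ is \'etale away from $(u)$ makes the ``unramified in codimension one'' step more transparent than the paper's rather terse treatment.
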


\begin{proof}

Write $e_{\mathfrak{p}_{i}/\mathfrak{p}}$ for the ramification indices of all primes $\mathfrak{p}_{i}$ lying over $\mathfrak{p}$ and let $m$ be their least common multiple. Consider the Kummer extension
\begin{equation}
K(A)\subset{K(A)(u^{1/m})=:K(W)}%
\end{equation}
with normalization $W$. 
We take the composite field $K(W')=K(W)\cdot{K(B)}$ and consider the normalization $N(A,K(W'))$ of $A$ inside this field. Since $B$ is normal, it is contained in $N(A,K(W'))$.  %
By Abhyankar's Lemma we then find that $N(A,K(W'))$ is unramified over $W$ in codimension one. Purity of the branch locus then implies that $N(A,K(W'))$ is \'{e}tale over $W$. The \'{e}tale fundamental group of $W$ is trivial since %
$W$ is a strictly Henselian ring, so we find %
$N(A,K(W'))=W$. We thus see that $B$ injects into $W$ and thus the quotient field of $B$ is contained in the tame Kummer extension $K(W)$. Since all normal domains contained in tame Kummer extensions are again Kummer by Lemma \ref{GaloisRegularKummer}, we find that $B$ is as in the statement of the lemma. %

\end{proof}

\subsubsection{Kummer extensions of ordinary double points}\label{KummerOrdinaryDoublePointsSection}

We now move to Kummer extensions of ordinary double points.  %
We characterize the tame Kummer extensions in terms of the ramification on the generic fiber and we show that the ones that are unramified in codimension one correspond to regularized Kummer extensions. %

\begin{rem}\label{CompatibleRootsDouble}
Let $A=R[[u,v]]/(uv-\pi^{n})$, where $R$ is a complete discrete valuation ring. %
As in Section \ref{KummerSmoothSection}, we fix a compatible set of $m$-th roots of $u$ and $v$ inside $\overline{K(A)}$ for all $m>0$. We write $u^{1/m}$ and $v^{1/m}$ for these. We will assume that $u^{1/n}$ and $v^{1/n}$ are chosen so that $u^{1/n}v^{1/n}=\pi$. For higher roots of $\pi$, we then set $\pi^{1/m}:=u^{1/nm}v^{1/nm}$.  
\end{rem}

\begin{mydef}{\bf{[Kummer extensions II]}}
Let $A=R[[u,v]]/(uv-\pi^{n})$ be the completion of an ordinary double point with field of fractions $K(A)$. The field $K(W)$ of all Kummer extensions of $A$ is the composite of all extensions $K(A)(u^{1/k})$ and $K(A)(v^{1/k})$, where $k$ varies over all positive integers. If $K(A)\subset{}L$ is a finite subextension of $K(W)$, then $L$ is a (finite) Kummer extension. If the degree of $L$ over $K(A)$ is coprime to the residue characteristic, then we say that $L$ is {\it{tame}}. The subfield of all tame Kummer extensions inside $K(W)$ is denoted by $K(W^{\mathrm{tame}})$. %
\end{mydef}

\begin{rem}\label{AssumptionCyclotomic}
Using the fact that $K$ contains all $m$-th roots of unity for $m$ coprime to the residue characteristic, we easily see that any tame Kummer extension is Galois, with abelian Galois group. %
We will say more about this Galois action later on.  
\end{rem}

\begin{exa}
By definition, $K(W)$ contains all $n$-th roots of the uniformizer $\pi$. %
In particular, we see that $K(W)$ contains all tame extensions of $K$ since $k$ is algebraically closed. %

\end{exa}

We now start by considering Kummer extensions for regular annuli with $n=1$.

\begin{lemma}\label{LemmaKummer}
Let $A:=R[[u,v]]/(uv-\pi)$ and consider a finite separable extension $K(A)\subset{L}$ with normalization $A\subset{B}$. Suppose that $B$ is \'{e}tale %
over all generic primes of $A$ and that it is at most tamely ramified above the special fiber. Then $K(B)$ is a subextension of $K(W^{\mathrm{tame}})$. %
\end{lemma}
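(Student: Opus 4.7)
The plan is to adapt the proof of Lemma \ref{LemmaSmooth} to the two-component setting. By Lemma \ref{OrdinaryDoublePoints} the codimension-one primes of $A$ lying above the special fiber of $R$ are exactly $(u)$ and $(v)$, so the tameness hypothesis translates into: every prime of $B$ lying above $(u)$ or $(v)$ has ramification index coprime to $\mathrm{char}(k)$, and above every other codimension-one prime the extension $B/A$ is \'{e}tale.

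First I would collect the ramification data. Choose a positive integer $m$ coprime to $\mathrm{char}(k)$ that is divisible by every ramification index $e(\mathfrak{q}/\mathfrak{p})$ with $\mathfrak{p}\in\{(u),(v)\}$ and $\mathfrak{q}$ a prime of $B$ above $\mathfrak{p}$. Using the compatible system of roots in Remark \ref{CompatibleRootsDouble}, form the tame Kummer extension
\[
K(A)\subset K(A)(u^{1/m},v^{1/m})=:K(W)\subseteq K(W^{\mathrm{tame}}),
\]
and let $W$ be the normalization of $A$ in $K(W)$. The compatibility $u^{1/m}v^{1/m}=\pi^{1/m}$ suggests the explicit model
\[
R'[[u_1,v_1]]/(u_1 v_1 - \pi^{1/m}),\qquad R'=R[\pi^{1/m}],\ u_1=u^{1/m},\ v_1=v^{1/m},
\]
and this candidate ring is manifestly a regular ordinary double point of thickness one with fraction field $K(W)$, hence normal, so it must coincide with $W$. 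In particular $W$ is a complete local Noetherian ring with algebraically closed residue field $k$; it is therefore strictly Henselian and its \'{e}tale fundamental group is trivial.

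Next I would run the endgame of Lemma \ref{LemmaSmooth}. Set $K(W'):=K(W)\cdot K(B)$ and let $N$ be the normalization of $A$ (equivalently of $W$) in $K(W')$, so that $B,W\subseteq N$. At the unique vertical prime of $W$ lying above each of $(u)$ and $(v)$ the ramification index of $W/A$ equals $m$, so Abhyankar's Lemma \ref{Abhyankar} forces $N/W$ to be unramified at every codimension-one prime of $W$ above $(u)$ or $(v)$. At the remaining codimension-one primes of $W$, namely those on the generic fiber, the extension $B/A$ was already \'{e}tale, and this is preserved under the flat base change to $W$. Purity of the branch locus (Theorem \ref{Purity}) then upgrades codimension-one \'{e}taleness to \'{e}taleness of $N$ over all of $W$. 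Since the \'{e}tale fundamental group of $W$ is trivial, $N$ must be a finite product of copies of $W$; comparing fraction fields yields $K(W')=K(W)$ and hence
\[
K(B)\subseteq K(W)\subseteq K(W^{\mathrm{tame}}),
\]
as desired.

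The step I expect to require the most care is the explicit identification of the normalization $W$ with $R'[[u_1,v_1]]/(u_1 v_1 - \pi^{1/m})$: one has to verify that this concrete regular ring really is the normalization of $A$ in $K(A)(u^{1/m},v^{1/m})$, which hinges on the fact that the candidate is already regular (hence normal) and has the correct fraction field. Once this identification is in place, the Abhyankar--purity--strict Henselian argument is essentially the same as in the smooth-point lemma, the only new bookkeeping being the symmetric treatment of the two vertical primes $(u)$ and $(v)$ in place of the single prime $(u)$.
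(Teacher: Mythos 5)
Your proposal is correct and follows essentially the same route as the paper's proof: adjoin $u^{1/m},v^{1/m}$ for $m$ the lcm of the vertical ramification indices, identify the normalization with the regular ring $R'[[u_1,v_1]]/(u_1v_1-\pi^{1/m})$, and then combine Abhyankar's Lemma, purity of the branch locus, and the triviality of the \'{e}tale fundamental group of this strictly Henselian ring to force $B\subseteq W$. The only cosmetic difference is your phrase about \'{e}taleness over the generic primes being ``preserved under flat base change''; strictly speaking $N$ is a normalization of a composite field rather than a base change, but the same Abhyankar argument with trivial ramification index handles those primes, exactly as in the paper.
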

\begin{proof}
Consider the primes $\mathfrak{p}_{u,i}$ and $\mathfrak{p}_{v,i}$ of $B$ lying above $(u)$ and $(v)$ respectively. We write $e_{u_{i}}$ and $e_{v_{i}}$ for the ramification indices arising from the corresponding extensions of discrete valuation rings. 
Let $m=\mathrm{lcm}(e_{u_{i}},e_{v_{i}})$ and consider the normalization $W_{1}$ of $A$ inside the field extension %
\begin{equation}
K(A)\subset{K(A)(u^{1/m},v^{1/m})}=K(W_{1}).%
\end{equation}
We write $w_{1}=u^{1/m}$ and $w_{2}=v^{1/m}$. Note that $W_{1}$ is isomorphic to $R'[[w_{1},w_{2}]]/(w_{1}w_{2}-\pi^{1/m})$.  Here $R'$ is the unique tamely ramified extension of degree $m$ of $R$. %
In particular we now see that $W_{1}$ is regular.  
Let $L=K(W_{1})\cdot{K(B)}$ be the composite field of $K(W_{1})$ and $K(B)$ inside some algebraic closure and consider the normalization of $A$ inside $L$. We denote this ring by $N(A,L)$. %
Let $\mathfrak{q}\in{N(A,L)}$ be any prime lying above $(w_{1})$ or $(w_{2})$. %
The corresponding localization is then a discrete valuation ring. %
By Abhyankar's Lemma, %
we find that $N(A,L)\supset{W_{1}}$ is \'{e}tale at $\mathfrak{q}$ and thus $N(A,L)\supset{W_{1}}$ is \'{e}tale over all codimension one primes of $W_{1}$. Since $W_{1}$ is regular, we can now use the purity theorem to conclude that $N(A,L)$ is \'{e}tale over $W_{1}$. But $W_{1}$ is a complete Noetherian local ring with algebraically closed residue field $k$, so it is strictly Henselian. This means that its \'{e}tale fundamental group is trivial and thus $N(A,L)=W_{1}$. We conclude that %
$B\subset{N(A,L)}=W_{1}$ and thus $K(B)\subset{K(W_{1})}\subset{K(W^{\mathrm{tame}})}$, as desired. %

\end{proof}

\begin{rem}
By Lemma \ref{LemmaKummer} we see that the composite of all fields that are at most tamely ramified above the special fiber and \'{e}tale outside the special fiber is equal to $K(W^{\mathrm{tame}})$. This latter extension is Galois, as we now point out.  For any pair $(k_{1},k_{2})$ of integers coprime to the residue characteristic, consider the extension
\begin{equation}
K(A)\subset{K(W_{k_{1},k_{2}}):=K(A)(u^{1/k_{1}},v^{1/k_{2}})}.
\end{equation}
This is Galois with Galois group $\mathbf{Z}/k_{1}\mathbf{Z}\times{\mathbf{Z}/k_{2}\mathbf{Z}}$, where $(i,j)$ acts on $u^{1/k_{1}}$ and $v^{1/k_{2}}$ by
\begin{align*}
u^{1/k_{1}}&\mapsto{\zeta^{i}_{k_{1}}\cdot{}u^{1/k_{1}}}\\
v^{1/k_{2}}&\mapsto{\zeta^{j}_{k_{2}}\cdot{}v^{1/k_{2}}}.
\end{align*}
Here the $\zeta_{k_{i}}$ a primitive $k_{i}$-th root of unity. %
These extensions $K(W_{k_{1},k_{2}})$ form a natural direct system with limit $K(W^{\mathrm{tame}})$. This then also yields an inverse system of Galois groups and the inverse limit is given by $(\hat{\mathbf{Z}}')^{2}$, where the prime indicates that the inverse limit is taken over all integers coprime to the characteristic of $k$. We invite the reader to compare this to \cite[Page 316, Corollaire 5.3]{SGA1}. 

\end{rem}

Before we prove our generalized version of the Newton-Puiseux theorem, we first review how to go from a non-regular ring to a regular ring. 
Let $A=R[[u,v]]/(uv-\pi^{n})$. Consider the field extension $K(A)\subset{K(A)}(u^{1/n})=:K(A_{\mathrm{reg}})$ of degree $n$. The normalization of $A$ inside this field extension is isomorphic to $R[[u_{1},v_{1}]]/(u_{1}v_{1}-\pi)$.
\begin{mydef}{\bf{[Regularization of an ordinary double point]}}\label{Regularization1}
The normalization $A_{\mathrm{reg}}$ of $A$ inside the field extension $K(A)\subset{K(A_{\mathrm{reg}})}$ is the {\it{regularization}} of $A$.
\end{mydef} %
This extension $K(A)\subset{K(A_{\mathrm{reg}})}$ is non-separable if $n$ is divisible by the characteristic of $K$. Even if $n$ is not divisible by the characteristic of $K$, it need not be Galois, see  %
Remark \ref{AssumptionCyclotomic}.

 \begin{lemma}\label{LemmaKummer33}
 Let $A=R[[u,v]]/(uv-\pi^{n})\subset{B}$ be a finite extension of normal Noetherian domains that is \'{e}tale in codimension one. Then $B\simeq{R[[u,v]]/(uv-\pi^{k})}$ with $k|n$. Furthermore, $n/k$ is coprime to the residue characteristic. 
 \end{lemma}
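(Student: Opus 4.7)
The strategy is to invoke the semistable Newton--Puiseux theorem to place $K(B)$ inside the tame Kummer field, and then pin down the exact sub-extension by Kummer theory. Since $A \subset B$ is \'{e}tale over every codimension-one prime of $A$, it is in particular unramified (hence tamely ramified) at the two vertical primes $(u,\pi)$ and $(v,\pi)$ of the special fiber, and \'{e}tale over all generic primes. Theorem~\ref{NPTheorem} therefore yields $K(B) \subset K(W^{\mathrm{tame}})$.

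Because $K(B)/K(A)$ is finite, I would choose an integer $s$ coprime to the residue characteristic $p$ with $K(B) \subset K(A_{s}) := K(A)(u^{1/s}, v^{1/s})$. Writing $n = p^{a} m$ with $\gcd(m,p) = 1$, I would enlarge $s$ to $\mathrm{lcm}(s, m)$, so that $m \mid s$ and $\gcd(s, n) = m$. Using $v_{(u,\pi)}(u) = n$ and $v_{(u,\pi)}(\pi) = 1$ (from the computation that $\pi$ is a uniformizer of $A_{(u,\pi)}$ with $u = \pi^{n} v^{-1}$), the element $u^{1/s}$ has $(u,\pi)$-valuation $n/s$; this shows that $K(A_{s})/K(A)$ is ramified of index $s/m$ at each vertical prime and abelian of degree $s^{2}/m$. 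The \'{e}tale-in-codimension-one hypothesis on $B/A$ forces $K(B)$ to lie in the inertia-fixed subfield at each vertical prime. An explicit Kummer computation identifies $K(A_{s})^{I_{(u,\pi)}} = K(A)(v^{1/s})$ and $K(A_{s})^{I_{(v,\pi)}} = K(A)(u^{1/s})$; the product formula $[L_{1}L_{2}:K] \cdot [L_{1} \cap L_{2}:K] = [L_{1}:K] \cdot [L_{2}:K]$ for composita of abelian extensions then forces $[K(A)(u^{1/s}) \cap K(A)(v^{1/s}) : K(A)] = m$, and since $K(A)(u^{1/m})$ is contained in both (using $v^{1/m} = \pi^{n/m}/u^{1/m} \in K(A)(u^{1/m})$, well-defined because $m \mid n$) and has degree $m$ over $K(A)$, the intersection is exactly $K(A)(u^{1/m})$. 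Hence $K(B) \subset K(A)(u^{1/m})$.

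Finally, $K(A) \subset K(A)(u^{1/m})$ is cyclic Kummer of degree $m$ with $\mu_{m} \subset K$, so Lemma~\ref{GaloisRegularKummer} identifies its intermediate fields as exactly $K(A)(u^{1/d'})$ for divisors $d' \mid m$. Thus $K(B) = K(A)(u^{1/d'})$ for some $d' \mid m$, in particular with $\gcd(d', p) = 1$. Taking $U := u^{1/d'}$ and $V := \pi^{n/d'}/U$ (well-defined since $d' \mid m \mid n$), one has $UV = \pi^{n/d'}$; the subring $R[[U, V]]/(UV - \pi^{n/d'}) \subset B$ is a complete local normal domain with fraction field $K(B)$, and therefore equals the normalization $B$. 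Setting $k := n/d'$ gives $k \mid n$ and $n/k = d'$ coprime to the residue characteristic, as required.

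The main obstacle will be the identification of the two inertia-fixed subfields and the computation of their intersection in the second paragraph. When $p \mid n$ one cannot arrange $s \mid n$ (since $s$ must be coprime to $p$), so $\pi^{n/s} = u^{1/s} v^{1/s}$ is a genuine fractional power of $\pi$ inside $K(A_{s})$ and $K(A_{s})/K(A)$ is not a single cyclic Kummer extension. Tracking the Kummer action of the abelian Galois group on $u^{1/s}$ and $v^{1/s}$ subject to the relation $u^{1/s} v^{1/s} = \pi^{n/s}$, and verifying the asserted degree equalities, is the technical heart of the argument.
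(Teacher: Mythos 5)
The decisive problem is circularity within the paper's logical structure: your very first step invokes Theorem \ref{NPTheorem} to place $K(B)$ inside $K(W^{\mathrm{tame}})$, but the paper's proof of Theorem \ref{NPTheorem} is itself an application of Lemma \ref{LemmaKummer33} (after adjoining $\pi^{1/m}$ and applying Abhyankar's Lemma, it concludes precisely by citing this lemma). So, as a proof of Lemma \ref{LemmaKummer33}, your argument assumes what is to be proved. The paper instead argues directly: it passes to the regularization $A'=R[[u_{1},v_{1}]]/(u_{1}v_{1}-\pi)$ with $u=u_{1}^{n}$ (a possibly inseparable extension), shows that the induced extension $B'\supset A'$ obtained from the composite $K(B)\cdot K(A')$ is \'{e}tale in codimension one (EGA IV, Corollaire 18.10.14), concludes $B'=A'$ from purity of the branch locus together with the fact that the regular, strictly Henselian ring $A'$ has trivial \'{e}tale fundamental group, so $K(B)\subset K(A')$; it then identifies the separable closure of $K(A)$ in $K(A')$ as $K(A)(u_{1}^{b})$, with $b=p^{v_{p}(n)}$ when $p=\mathrm{char}(K)>0$ and $b=1$ otherwise, and finishes with Lemma \ref{GaloisRegularKummer}. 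If you want to keep your Kummer-theoretic strategy, you must replace the appeal to Theorem \ref{NPTheorem} by an argument of this kind (for instance Lemma \ref{LemmaKummer} for the regular case combined with the regularization step) — at which point you are essentially rederiving the paper's proof rather than giving an alternative to it.

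Two further gaps, independent of the circularity. First, to invoke Theorem \ref{NPTheorem} at all (or to speak of $K(B)$ as a subextension of a tame Kummer field) you need $K(B)/K(A)$ to be separable; this is not part of the hypotheses and must be extracted, as the paper does, from openness of the \'{e}tale locus (\'{e}tale at the codimension one points forces \'{e}tale, hence separable, at the generic point). Second, even granting $K(B)\subset K(A)(u^{1/s},v^{1/s})$ for some $s$ coprime to $p$, the identification of the two inertia-fixed subfields as $K(A)(v^{1/s})$ and $K(A)(u^{1/s})$, the degree formula $s^{2}/m$, the ramification index $s/m$, and the computation of the intersection as $K(A)(u^{1/m})$ are exactly the steps you yourself defer as ``the technical heart''; as written, the proposal does not verify them, so it is incomplete at its decisive step. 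The intended computations are plausible (the extension $K(A_{s})/K(A)$ is abelian since $\mu_{s}\subset K$, and the value-group and residue-field bookkeeping can be carried out), but note that the paper's route avoids this entire analysis, needing only the elementary Lemma \ref{GaloisRegularKummer} once containment in a single cyclic Kummer extension is established.
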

 \begin{proof}
 Consider the Kummer extension $A':=R[[u_{1},v_{1}]]/(u_{1}v_{1}-\pi)$ of $A$, where $u\mapsto{u_{1}^{n}}$ as before. The induced extension $B'\supset{A'}$ by taking the normalization inside the composite $K(B)\cdot{K(A')}$ is then \'{e}tale over all codimension one primes by \cite[Corollaire 18.10.14, Page 162]{EGA4} and thus trivial since $A'$ is regular and strictly Henselian. %
 We thus see that we have an inclusion $B\subset{A'}$. %
 Since the \'{e}tale locus of a morphism is open, we find that $B\supset{A}$ is \'{e}tale at the generic point, which implies that $K(B)\supset{K(A)}$ is separable. We write $a=v_{p}(n)$ and $b=p^{a}$ if $p=\mathrm{char}(K)\neq{0}$; if $\mathrm{char}(K)=0$, then we set $a=0$ and $b=1$. The separable closure of $K(A)$ in $K(A')$ is now easily seen to be $K(A)(u_{1}^{b},v_{1}^{b})=K(A)(u_{1}^{b})$, so $K(B)\subset{K(A)(u_{1}^{b})}$. By  %
 Lemma \ref{GaloisRegularKummer} %
we then find %
that $K(B)$ is of the desired form. %
 \end{proof}

\begin{reptheorem}{NPTheorem}{\bf{[Generalized Newton-Puiseux theorem]}}
Let $A:=R[[u,v]]/(uv-\pi^{n})$. Consider the composite $L$ of all finite separable field extensions $K(B)$ such that the normalization $A\subset{B}$ is \'{e}tale over all the generic primes of $A$ and at most tamely ramified over the special fiber. 
Then $L=K(W^{\mathrm{tame}})$.  %
\end{reptheorem}
\begin{proof}

Suppose that $B$ is ramified over $\mathfrak{p}_{1}=(u,\pi)$ and $\mathfrak{p}_{2}=(v,\pi)$ of orders $e_{1}$ and $e_{2}$ respectively and let $m=\mathrm{lcm}(e_{1},e_{2})$. Consider the tame Kummer extension given by $K(A)\subset{K(A)}(\pi^{1/m})=:K(A')$. Note that the normalization of $A$ in $K(A')$ is isomorphic to $R'[[u,v]]/(uv-(\pi^{1/m})^{mn})$. By Abhyankar's Lemma we find that $K(A')\subset{K(A')\cdot{K(B)}}=:K(B')$ is \'{e}tale in codimension one. By Lemma \ref{LemmaKummer33} $K(B')$ is contained in a tame Kummer extension of $K(A')$ and thus $K(B')$ is contained in a tame Kummer extension of $K(A)$, as desired.

\end{proof}

\begin{mydef}
{\bf{[Disjointly branched polynomials]}}\label{DisjointlyBranchedPolynomial}
Let $f\in{K(A)[y]}$ be a polynomial, where $A=R[[u,v]]/(uv-\pi^{n})$, and suppose that the splitting field of $f$ is in $K(W^{\mathrm{tame}})$. We then say that %
$f$ is a disjointly branched polynomial.  
\end{mydef}

\begin{rem}
If $\mathrm{char}(k)=0$, then we only have to check the ramification on the generic fiber to see whether $f$ is disjointly branched. Since the generic primes of $A$ correspond to points $x\in{\overline{K}}$ such that $0<v(x)<n$ by Lemma \ref{OrdinaryDoublePoints}, we only have to check whether these extensions are ramified over points of this form. As a first step towards this problem, one can calculate the Newton-polygon of the discriminant of $f$. If this has a zero $\alpha$ with the property $0<v(\alpha)<n$, then one has to check whether it is actually a branch point. This can be done using the discrete Newton-Puiseux method, which will be given in Section \ref{NewtonDiscreteSection}.     %
\end{rem}

\subsection{A proof of the tame simultaneous semistable reduction theorem}\label{ProofSemistable}

In this section, we prove our main theorem, the tame simultaneous semistable reduction theorem. We first define the notion of a tame covering of permanent models. %
   After this, %
   we prove the main theorem using the results from Section \ref{KummerExtensions}. %

\begin{mydef}{\bf{[Tame morphisms]}}\label{TameCoveringsPermanent}
Let $\phi: \mathcal{X}\rightarrow{\mathcal{Y}}$ be a finite $R$-morphism of normal models of curves. We say that $\phi$ is tame if for every pair of points of codimension one $\eta',\eta$ such that $\phi(\eta')=\eta$, we have that $\phi$ is tame in the sense of Definition \ref{Tame}.
\end{mydef}

\begin{lemma}\label{AbhyankarPermanent}
Let $\phi:\mathcal{X}\to{\mathcal{Y}}$ be a tame morphism of normal models and suppose that $\mathcal{Y}$ is permanent. Then there exists a finite Kummer extension $R\subset{R'}$ such that the normalized base change of $\mathcal{X}$ is permanent. %
\end{lemma}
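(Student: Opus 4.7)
The plan is to invoke Abhyankar's Lemma to kill all ramification at the codimension one points of the special fiber simultaneously via a single tame Kummer base change. Since $\mathcal{X}$ is a model of a curve, its special fiber has only finitely many irreducible components and hence only finitely many codimension one points $\eta'_{1},\dots,\eta'_{r}$ lying in it. For each $\eta'_{i}$, the point $\eta_{i}=\phi(\eta'_{i})$ is a codimension one point of $\mathcal{Y}$ in the special fiber because $\phi$ is finite. The permanence of $\mathcal{Y}$ guarantees that $\mathcal{O}_{\mathcal{Y},\eta_{i}}$ is a discrete valuation ring with uniformizer $\pi$, and the tameness of $\phi$ means that the ramification index $e_{i}:=e(\mathcal{O}_{\mathcal{X},\eta'_{i}}/\mathcal{O}_{\mathcal{Y},\eta_{i}})$ is coprime to the residue characteristic.

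I would first set $m:=\mathrm{lcm}(e_{1},\dots,e_{r})$, which is then a finite positive integer coprime to $\mathrm{char}(k)$, and take $R':=R[\pi^{1/m}]$, a tame Kummer extension of degree $m$. Writing $\mathcal{Y}'$ for the normalized base change of $\mathcal{Y}$, at each codimension one point $\eta_{i}\in\mathcal{Y}$ in the special fiber the base change $\mathcal{O}_{\mathcal{Y},\eta_{i}}\otimes_{R}R'\simeq \mathcal{O}_{\mathcal{Y},\eta_{i}}[y]/(y^{m}-\pi)$ is already a DVR with uniformizer $\pi^{1/m}$, so no further normalization is needed there, $\mathcal{Y}'$ remains permanent, and the ramification index of $\mathcal{O}_{\mathcal{Y},\eta_{i}}\subset\mathcal{O}_{\mathcal{Y}',\eta_{i}^{*}}$ is exactly $m$ at the unique point $\eta_{i}^{*}$ above $\eta_{i}$.

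For the normalized base change $\mathcal{X}'$ of $\mathcal{X}$, I would work locally at an arbitrary codimension one point $\eta''\in\mathcal{X}'$ in the special fiber, lying above some $\eta_{i}^{*}\in\mathcal{Y}'$, which in turn maps to some $\eta_{i}\in\mathcal{Y}$, and let $\eta'_{i}\in\mathcal{X}$ be the codimension one point below $\eta''$. Applying Abhyankar's Lemma \ref{Abhyankar} with $K=\mathrm{Frac}(\mathcal{O}_{\mathcal{Y},\eta_{i}})$, $L=\mathrm{Frac}(\mathcal{O}_{\mathcal{X},\eta'_{i}})$, $E=\mathrm{Frac}(\mathcal{O}_{\mathcal{Y}',\eta_{i}^{*}})$, $M=L\cdot E$ and the valuation $w$ on $M$ corresponding to $\eta''$ then finishes the local step: tameness of $w_{L}/v$ comes from Definition \ref{TameCoveringsPermanent}, and $e(w_{L}/v)=e_{i}$ divides $m=e(w_{E}/v)$ by construction, so $w/w_{E}$ is unramified. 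This forces $\pi^{1/m}$ to be a uniformizer of $\mathcal{O}_{\mathcal{X}',\eta''}$, which is equivalent to the special fiber of $\mathcal{X}'$ being reduced at $\eta''$, and hence $\mathcal{X}'$ is permanent.

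The main obstacle will be minor bookkeeping: verifying that the normalized base change of $\mathcal{Y}$ is itself permanent (a consequence of the regularity of $\mathcal{O}_{\mathcal{Y},\eta_{i}}[y]/(y^{m}-\pi)$), identifying the DVRs correctly so that Abhyankar's Lemma applies in this relative geometric setting, and handling the possibility that $X\otimes_{K}K'$ splits into several components by treating each component separately, which is harmless since permanence is a local property at codimension one points of the special fiber.
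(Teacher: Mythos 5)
Your proposal is correct and follows essentially the same route as the paper: take $r$ (your $m$) to be the lcm of the ramification indices over the codimension one points of the special fiber, adjoin $\pi^{1/r}$, observe that the normalized base change of $\mathcal{Y}$ stays permanent with uniformizer $\pi^{1/r}$, and then use Abhyankar's Lemma to conclude the normalized base change of $\mathcal{X}$ is unramified there, so its special fiber is reduced. The extra bookkeeping you flag (Eisenstein-ness of $y^{m}-\pi$, possible splitting of $X\otimes_{K}K'$) is harmless and, in the second case, moot since the paper assumes curves are geometrically irreducible.
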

\begin{proof}
Since $\mathcal{Y}$ is permanent, we find that $\pi$ is a uniformizer for every $\mathcal{O}_{\mathcal{X},\eta}$ corresponding to the generic point $\eta$ of an irreducible component of the special fiber. We now take $r$ to be the least common multiple of the ramification indices $e(\eta_{i}/\eta)$, where $\eta$ ranges over the generic points of the irreducible components of the special fiber and $\eta_{i}$ ranges over the points in $\mathcal{X}$ lying over $\eta$. We claim that $R'=R[\pi^{1/r}]$ has the desired properties. First, every $\eta$ as above gives a unique $\eta'$ in the normalized base change $\mathcal{N}(\mathcal{Y},K'(Y))$ and this scheme has reduced special fiber since $\pi^{1/r}$ is a uniformizer at the corresponding generic points. %
Abhyankar's Lemma then implies that $\mathcal{N}(\mathcal{X},K'(X))$ is \'{e}tale over these $\eta'$, so $\pi^{1/r}$ is again a uniformizer. This implies that it has reduced special fiber, as desired.    %
\end{proof}

We thus see that we can assume in the proof of Theorem \ref{MainThm1} that $\mathcal{X}$ is permanent. To show that it is (strongly) semistable, we will need a small lemma. %

\begin{lemma}\label{Separability}
Let $\phi:\mathcal{X}\rightarrow{\mathcal{Y}}$ be a tame covering of permanent models with $\mathcal{Y}$ strongly semistable. Let $\eta_{e}$ be an ordinary double point and let $\eta\subset{\eta_{e}}$ be a generization of codimension one. Suppose that $\phi$ is \'{e}tale over $\eta$. Then for any $\eta'_{e}\in\mathcal{X}$ mapping to $\eta_{e}$, the induced morphism of  completions $\hat{\mathcal{O}}_{\mathcal{Y},\eta_{e}}\to\hat{\mathcal{O}}_{\mathcal{X},\eta'_{e}}$ is \'{e}tale at any prime ideal mapping to $\eta$ under $\mathrm{Spec}(\hat{\mathcal{O}}_{\mathcal{X},\eta'_{e}})\to\mathcal{X}\to\mathcal{Y}$.  %
\end{lemma}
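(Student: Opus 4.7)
The strategy is to translate the étaleness question at the completions down to a statement about the finite algebra describing $\phi$, and then pass through completion by flat base change. First, I would choose an affine open $U = \mathrm{Spec}(S_1) \subset \mathcal{Y}$ containing $\eta_e$ such that $\phi^{-1}(U) = \mathrm{Spec}(S_2)$ with $S_2$ finite over $S_1$ (using that $\phi$ is finite). Setting $A_0 := \mathcal{O}_{\mathcal{Y},\eta_e}$ and $B := S_2 \otimes_{S_1} A_0$ yields a semi-local finite $A_0$-algebra whose maximal ideals $\mathfrak{n}_i$ correspond to the preimages $\eta'_{e,i}$ of $\eta_e$ in $\mathcal{X}$, and satisfy $B_{\mathfrak{n}_i} = \mathcal{O}_{\mathcal{X},\eta'_{e,i}}$.

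Second, I apply Proposition \ref{CompletionDecomposition} to the finite $A_0$-algebra $B$: upon tensoring with the completion $\hat{A} := \hat{\mathcal{O}}_{\mathcal{Y},\eta_e}$, we obtain
\[
C \;:=\; B \otimes_{A_0} \hat{A} \;\simeq\; \prod_i \hat{\mathcal{O}}_{\mathcal{X},\eta'_{e,i}},
\]
in which $\hat{B} := \hat{\mathcal{O}}_{\mathcal{X},\eta'_e}$ appears as one factor. The given prime $\mathfrak{q} \subset \hat{B}$ pulls back via the projection $C \to \hat{B}$ to a prime $\tilde{\mathfrak{q}}$ of $C$, and since the orthogonal idempotents of the other factors become invertible upon localization, one has $C_{\tilde{\mathfrak{q}}} = \hat{B}_{\mathfrak{q}}$. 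Contracting $\tilde{\mathfrak{q}}$ further to $B$ gives a prime $\mathfrak{q}_B$ whose contraction to $A_0$ coincides with the contraction of $\mathfrak{q}$, i.e.\ with the prime corresponding to $\eta$; hence $\mathfrak{q}_B$ corresponds to a point $\eta' \in \mathcal{X}$ specializing to $\eta'_e$ and mapping to $\eta$ in $\mathcal{Y}$.

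Finally, since $\phi$ is \'{e}tale over $\eta$ by hypothesis, the finite ring map $A_0 \to B$ is \'{e}tale at the prime $\mathfrak{q}_B$. Base changing along the flat map $A_0 \to \hat{A}$ preserves \'{e}taleness at every prime of $C$ lying over $\mathfrak{q}_B$, so $\hat{A} \to C$ is \'{e}tale at $\tilde{\mathfrak{q}}$. Localizing at $\tilde{\mathfrak{q}}$ and using the identification $C_{\tilde{\mathfrak{q}}} = \hat{B}_{\mathfrak{q}}$ then gives that $\hat{A}_{\mathfrak{q} \cap \hat{A}} \to \hat{B}_{\mathfrak{q}}$ is \'{e}tale, which is the desired conclusion. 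The only step that needs to be carried out with care is the tracking of the prime $\mathfrak{q}$ through the product decomposition of Proposition \ref{CompletionDecomposition}; the remaining moves are standard stability of \'{e}tale morphisms under flat base change and localization. I note that the hypotheses of tameness, strong semistability, and the specific form of $\eta_e$ as an ordinary double point are not invoked in the proof itself and merely set the context in which the lemma is later applied.
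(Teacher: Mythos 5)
Your proof is correct and follows essentially the same route as the paper: reduce to an affine chart, use that $\phi$ is \'{e}tale over $\eta$ together with openness of the \'{e}tale locus, and transport \'{e}taleness through the flat base change $A\to\hat{A}$ given by completion. The only difference is bookkeeping: you make explicit, via Proposition \ref{CompletionDecomposition} and the tracking of $\mathfrak{q}$ through the product decomposition, the step the paper compresses into ``this easily implies the statement'' (note the small wording slip: upon localizing at $\tilde{\mathfrak{q}}$ it is the idempotent of the factor $\hat{B}$ that becomes invertible, while the idempotents of the other factors become zero).
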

\begin{proof}
We write $U=\mathrm{Spec}(A)$ for an open affine around $\eta_{e}$ and $\mathrm{Spec}(B)=\phi^{-1}(U)$. The points $\eta_{e}$ and $\eta$ are given by the prime ideals $\mathfrak{m}$ and $\mathfrak{p}$ respectively. We write $\hat{A}$ and $\hat{B}$ for the completions of $A$ and $B$ with respect to $\mathfrak{m}$. Note that these are isomorphic to the completions of the localizations $A_{\mathfrak{m}}$ and $B_{\mathfrak{m}}$ with respect to $\mathfrak{m}$. We now have the Cartesian diagram 
\begin{equation}\label{EquationDiagramCompletion}
\begin{tikzcd}
A \arrow{r} \arrow{d} & B \arrow{d} \\
\hat{A} \arrow{r} & \hat{B}
\end{tikzcd}.
\end{equation}
Since $\phi$ is \'{e}tale over $\eta$, it is not too hard to see that we can find an $h$ in $A\backslash\mathfrak{p}$ such that the localization $A_{h}\to{B_{h}}$ is \'{e}tale. We tensor the diagram in Equation \ref{EquationDiagramCompletion} with $A_{h}$ over $A$ and obtain
\begin{equation}
\begin{tikzcd}
A_{h} \arrow{r} \arrow{d} & B_{h} \arrow{d} \\
\hat{A}_{h} \arrow{r} & \hat{B}_{h}
\end{tikzcd}.
\end{equation}
This diagram is again Cartesian since the diagram in Equation \ref{EquationDiagramCompletion} is Cartesian and Cartesian diagrams are stable under base change. Since \'{e}tale maps are stable under base change, we conclude that %
$\hat{A}_{h}\rightarrow{\hat{B}_{h}}$ is \'{e}tale. This easily implies the statement of the lemma.  %

\end{proof}

\begin{reptheorem}{MainThm1}{\bf{[Main Theorem]}}
Let $\phi:(X,D_{X})\rightarrow{(Y,D_{Y})}$ be a finite separable morphism of smooth proper geometrically connected marked curves %
and let $\mathcal{Y}/R$ be a semistable model for $(Y,D)$. %
Suppose that the induced morphism of normalizations $\mathcal{X}\to\mathcal{Y}$ is tame in codimension one. Then there is a finite Kummer extension $R\subset{R'}$ such that the normalized base change of $\mathcal{X}$ %
is semistable. %
If $\mathcal{Y}$ is strongly semistable, then the normalized base change of $\mathcal{X}$ %
is also strongly semistable.

\end{reptheorem}

\begin{proof}
We take the extension as in Lemma \ref{AbhyankarPermanent} and write $\mathcal{X}$ and $\mathcal{Y}$ for their normalized base changes. We now show that any point in the inverse image of a smooth or ordinary double point is again a smooth or an ordinary double point. Consider a smooth point $\eta\in\mathcal{Y}$ and let $\eta'$ be any point in $\mathcal{X}$ lying above $\eta$. Consider the map of completions 
\begin{equation}\label{ExtensionCompletionsStable}
\hat{\mathcal{O}}_{\mathcal{Y},\eta}\rightarrow{\hat{\mathcal{O}}_{\mathcal{X},\eta'}}.%
\end{equation}
We now show that $\hat{\mathcal{O}}_{\mathcal{Y},\eta}\rightarrow{\hat{\mathcal{O}}_{\mathcal{X},\eta'
}}$ is \'{e}tale outside possibly a single codimension one point. %
Let $\hat{\mathfrak{p}}\in\mathrm{Spec}(\hat{\mathcal{O}}_{\mathcal{Y},\eta})$ be a nonzero point of the generic fiber, in the sense that it is not equal to $(0)$ and maps to $(0)$ under the map $\mathrm{Spec}(\hat{\mathcal{O}}_{\mathcal{Y},\eta})\to\mathrm{Spec}(R)$. Then the residue field of $\mathfrak{\hat{p}}$ is a finite extension $K'$ of $K$ by Lemma \ref{SmoothPoints}. Using \cite[Chapter 10, Proposition 1.40]{liu2} we find that this corresponds to a $K'$-rational point of the %
the generic fiber of $\mathcal{Y}$ that reduces to $\eta$ under the reduction map in Equation \ref{ReductionMap}. %
 By our assumption on the branch locus and Lemma \ref{Separability}, we then find that there is at most one generic prime over which the map of completions is non-\'{e}tale. Our tameness assumptions imply that %
 the map of completions is \'{e}tale over the special fiber and %
 using Lemma \ref{LemmaSmooth} we find that ${\hat{\mathcal{O}}_{\mathcal{X},\eta'}}$ is smooth over $R$, as desired.

Suppose now that $\eta$ is an ordinary double point and let $\eta'\in\mathcal{X}$ be any point mapping to $\eta$. We again consider the corresponding map of completions and retain the notation from the smooth case. As before, we obtain from Lemma \ref{Separability} that the map $\hat{\mathcal{O}}_{\mathcal{Y},\eta}\rightarrow{\hat{\mathcal{O}}_{\mathcal{X},\eta'}}$ is \'{e}tale in codimension one. %
Indeed, there is no ramification on the special fiber by the tameness conditions and there is no generic ramification by our assumption on the branch locus. %
We now apply Lemma \ref{LemmaKummer33} to conclude that $\eta'$ is again an ordinary double point. Since every closed point $\eta'\in\mathcal{X}$ is either an ordinary double point or a smooth point and the special fiber is reduced by assumption, we conclude that $\mathcal{X}$ is semistable. %

We now show that $\mathcal{X}$ is strongly semistable if $\mathcal{Y}$ is strongly semistable. Let $\eta'\in\mathcal{X}$ be an ordinary double point. We saw earlier that $\phi(\eta')$ is an ordinary double point. Suppose that there is only one codimension one generization $\eta'_{g}$ of ${\eta'}$ in the special fiber. Then $\phi(\eta'_{g})$ is also the only codimension one generization of $\phi(\eta')$, a contradiction. We conclude that $\mathcal{X}$ is strongly semistable.

\end{proof}

\begin{cor}
(\cite[Theorem 2.3]{liu1})
Let $\phi: X\rightarrow{Y}$ be a Galois cover of curves over $K$ and suppose that $\mathrm{char}(k)$ does not divide the order of the Galois group. Write $D$ for the branch locus of $\phi$ and $\mathcal{Y}$ for a semistable model for $(Y,D)$. Suppose that the morphism  %
of normalizations $\mathcal{X}\to\mathcal{Y}$ has no vertical ramification, so that the special fiber of $\mathcal{X}$ is reduced. Then $\mathcal{X}$ is semistable.  %
\end{cor}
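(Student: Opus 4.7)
The plan is to deduce this corollary from Theorem \ref{MainThm1} by verifying its codimension one tameness hypothesis and observing that, because $\mathcal{X}$ is already permanent, no nontrivial Kummer extension $R\subset R'$ is required.

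First I would verify that the map $\mathcal{X}\to\mathcal{Y}$ is tame in codimension one. If $\eta'\in\mathcal{X}$ and $\eta=\phi(\eta')\in\mathcal{Y}$ are codimension one points, then $\mathcal{O}_{\mathcal{Y},\eta}\subset\mathcal{O}_{\mathcal{X},\eta'}$ is a localization of a Galois extension of normal Noetherian domains with group $G$. The ramification index $e(\eta'/\eta)$ therefore equals the order of the corresponding inertia subgroup, hence divides $|G|$ and is in particular coprime to $\mathrm{char}(k)$. The residue field extension is finitely generated over the algebraically closed field $k$ and is therefore separable. Thus the codimension one tameness hypothesis of Theorem \ref{MainThm1} in the sense of Definition \ref{Tame} is satisfied.

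Second, the hypothesis that $\phi$ has no vertical ramification is exactly the statement that $\pi$ remains a uniformizer at every generic point of an irreducible component of $\mathcal{X}_s$, so that $\mathcal{X}_s$ is reduced and $\mathcal{X}$ is permanent. This allows us to bypass Lemma \ref{AbhyankarPermanent} in the proof of Theorem \ref{MainThm1} and to take $R'=R$ throughout.

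Third, I would rerun the second half of the proof of Theorem \ref{MainThm1} verbatim. For each $\eta'\in\mathcal{X}$ lying over a smooth point or ordinary double point $\eta\in\mathcal{Y}$, the fact that the branch locus $D$ reduces to smooth points of $\mathcal{Y}_s$ (by the definition of a semistable model for $(Y,D)$), together with Lemma \ref{Separability}, shows that the map of completions $\widehat{\mathcal{O}}_{\mathcal{Y},\eta}\to\widehat{\mathcal{O}}_{\mathcal{X},\eta'}$ is \'{e}tale in codimension one away from at most one generic prime. Lemma \ref{LemmaSmooth} in the smooth case and Lemma \ref{LemmaKummer33} in the ordinary double point case then force $\widehat{\mathcal{O}}_{\mathcal{X},\eta'}$ to be of the required form. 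Combined with permanence of $\mathcal{X}$, this gives semistability.

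The main point to pin down is the translation of the coprimality of $|G|$ with $\mathrm{char}(k)$ into the codimension one tameness condition of Theorem \ref{MainThm1}; once this is in hand the corollary is essentially the second half of the proof of Theorem \ref{MainThm1} applied with $R'=R$.
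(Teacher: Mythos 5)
Your overall route is the same as the paper's: verify the codimension-one tameness hypothesis of Theorem \ref{MainThm1} from the coprimality of $|G|$ with $\mathrm{char}(k)$, then conclude via (the second half of the proof of) that theorem. Your explicit observation that the absence of vertical ramification makes $\mathcal{X}$ permanent, so that the Kummer extension of Lemma \ref{AbhyankarPermanent} is trivial and one may take $R'=R$, is in fact spelled out more carefully than in the paper, whose proof simply cites Theorem \ref{MainThm1}.

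There is, however, one incorrect justification in your tameness check: separability of the residue field extension $k(\eta)\subset k(\eta')$ does not follow from these fields being finitely generated over the algebraically closed field $k$. In residue characteristic $p>0$ the vertical codimension-one points have residue fields that are function fields of curves over $k$, and a finite extension between two such fields can be inseparable (e.g.\ $k(t)\subset k(t^{1/p})$); being separably generated over $k$ is not the same as the finite extension between them being separable. The correct argument, and the one the paper uses, is that in a Galois extension the residue degree $f(\eta'/\eta)$ divides $|G|$, just as $e(\eta'/\eta)$ does (note also that $e$ equals the order of the inertia group only when the residue extension is separable, but divisibility of $e$ in $|G|$ is all you need), hence both are coprime to $\mathrm{char}(k)$; this forces the inseparable degree of $k(\eta')/k(\eta)$ to be $1$. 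With that step repaired, your proof is complete and matches the paper's.
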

\begin{proof}
The ramification indices and degrees of the field extensions on the codimension one primes have to divide the order of the Galois group, so we see that these are all coprime to the characteristic of $k$. But then $\mathcal{X}\rightarrow{\mathcal{Y}}$ is tame and we obtain the desired statement using Theorem \ref{MainThm1}. %
\end{proof}

\begin{mydef}{\bf{[Disjointly branched polynomials]}}
Let $\mathcal{Y}$ be a strongly semistable model for a curve $Y/K$ with function field $K(Y)$ and let %
$f\in{K(Y)}[y]$ be a geometrically irreducible\footnote{By geometrically irreducible, we mean that the polynomial stays irreducible over $\overline{K}(Y)$. 
} separable polynomial. We then have a finite extension $K(Y)\subset{K(Y)[y]/(f)}$ and we write $X\to{Y}$ for the induced morphism of smooth curves and $D$ for its branch locus.   %
We say that $f$ is disjointly branched with respect to $\mathcal{Y}$ if the following two hold: 
\begin{enumerate}
\item $\mathcal{Y}$ is a strongly semistable model for the marked curve $(Y,D)$.
\item The morphism of normalizations $\mathcal{X}\to\mathcal{Y}$ induced from the field extension $K(Y)\subset{K(X)=K(Y)[y]/(f)}$ is tame. %
\end{enumerate}

\end{mydef}

\begin{rem}\label{TropicalizationDisjointlyBranched}
Let $f$ be a disjointly branched polynomial. %
Theorem \ref{MainThm1} then implies that there is a finite Kummer extension $R\subset{R'}$ such that the normalized base change of $\mathcal{X}$ with respect to $R'$ is strongly semistable.  %
We will assume that $R=R'$, so that $\mathcal{X}$ is strongly semistable by Theorem \ref{MainThm1}. %
In the proof of Theorem \ref{MainThm1} we showed  %
that the inverse image of a smooth point consists of smooth points. By Remark \ref{TropicalizationMorphism} we then find that the morphism of strongly semistable models tropicalizes to a morphism of skeleta  %
$\Sigma(\mathcal{X})\to\Sigma(\mathcal{Y})$. %
\end{rem}

  \begin{rem}%
  \label{TamenessConditions22}
Suppose we are given a separable, geometrically irreducible polynomial $f\in{K(x)[y]}$. %
This gives an extension of function fields
\begin{equation}
K(x)\subset{K(x)[y]/(f)},
\end{equation}
which in turn corresponds to a finite morphism of curves $X\to\mathbf{P}^{1}_{K}$. %
To find out whether this morphism gives rise to a tame morphism, %
we can use the following two sufficient conditions: %
\begin{enumerate}
\item $\mathrm{char}(k)=0$ or 
\item $\mathrm{deg}_{y}(f)<p$, where $p=\mathrm{char}(k)$. 
\end{enumerate} 
To see the second condition, we write $n=\mathrm{deg}_{y}(f)$. Then $n!$ is not divisible by $p$ so any extension of discrete valuation rings for this covering is tame. Indeed, the ramification indices have to divide the order of the Galois group. This order divides $n!$, so these indices are not divisible by $p$. Furthermore, the residue field degrees are not divisible by $p$ by the formula in \cite[Chapter 1, Section 7, Corollary 1]{Ser1}, so we see that the covering we obtain for these polynomials is tame. 
\end{rem}

\begin{rem}\label{RemarkGaloisActionMonodromy}
Let $\eta_{e}\supset{\eta_{v}}$ correspond to a vertex $v\in{V(\Sigma(\mathcal{Y}))}$ and an adjacent edge $e\in{E(\Sigma(\mathcal{Y}))}$. Let $f$ be a %
disjointly branched polynomial and assume as in Remark \ref{TropicalizationDisjointlyBranched} that $R'=R$ so that we have coverings of strongly semistable models. It is then not too hard to see that the Galois closure also induces a tame covering of strongly semistable models. %
We write $\overline{\eta}_{e}\supset{\overline{\eta}}_{v}$ for an extension of the chain $\eta_{e}\supset{\eta_{v}}$ to the Galois closure. %
The point $\overline{\eta}_{v}$ is then the unique point lying over $\eta_{v}$ with $\overline{\eta}_{v}\subset{\overline{\eta}_{e}}$. %
We conclude that the condition in Theorem \ref{Dedekind3} is satisfied for these points. %
\end{rem}

\section{Generalizations of the Newton-Puiseux algorithm} %
\label{Algorithm}

In this section we generalize the Newton-Puiseux algorithm to obtain an algorithm that can calculate skeleta of curves over discrete valuation rings. %
We first give an algorithm 	for calculating power series expansions of tame polynomials over discrete valuation rings in Section \ref{NewtonDiscreteSection}. This is a generalization of the Newton-Puiseux algorithm for calculating power series expansions of polynomials over $\mathbf{C}[[t]]$ (or more precisely, the one over $\mathbf{Q}$). %
In Section \ref{MixedNP1} we specialize to semistable models and %
give an algorithm that simultaneously calculates $\eta_{e}$-adic and the $\eta_{v}$-adic power series expansions of a disjointly branched polynomial $f$ for a pair $(v,e)$ of a vertex and an adjacent edge in strongly semistable model $\mathcal{Y}$.  It first uses the generalized discrete algorithm to calculate the $\eta_{v}$-adic power series expansions, after which it calculates the $\eta_{e}$-adic power series expansions using another application of the discrete algorithm to the coefficients in the $\eta_{v}$-adic power series. By applying this algorithm to all pairs $(v,e)$ in $\Sigma(\mathcal{Y})$, we are then able to reconstruct the tropicalization $\Sigma(\mathcal{X})\to\Sigma(\mathcal{Y})$.  %

  \subsection{Preliminaries for the algorithms} \label{PreliminariesAlgorithms} %

  Let $B$ be a Noetherian unique factorization domain with field of fractions $K(B)$ and let $f\in{B[y]}$ be a completely split polynomial. 
 We write $\gamma_{i}$ for the roots of $f$ in $K(B)$ and %
  $\mathfrak{q}$ for a prime of codimension one. The zero set of $f$ is denoted by $Z(f)$. Since $B$ is a UFD, we can find a generator $\pi_{B}$ for $\mathfrak{q}$. %
For every root $\gamma_{i}$ of $f$ in $K(B)$, we can find a minimal $k_{i}\in\mathbf{Z}_{\geq{0}}$ such that $\pi_{B}^{k_{i}}\gamma_{i}\in{B}$. The set of roots with $k_{i}=0$ is denoted by $Z_{0}(f)$. %
We now write $\alpha_{i}:=\pi_{B}^{k_{i}}\gamma_{i}$ and consider the polynomial 
\begin{equation}\label{RootsRepresentation54}
g:=\prod(\pi_{B}^{k_{i}}y-\alpha_{i}).
\end{equation} 
Since $f$ and $g$ have the same roots, we find that %
\begin{equation}
f=u\cdot{g}
\end{equation}
for some $u\in{K(B)}$. If we furthermore demand that $f$ be primitive, %
then $u$ is a unit in $B$ since %
$g$ is also primitive. We will assume without loss of generality that $u=1$. %
We then have the following elementary lemma: %
\begin{lemma}\label{ApproximationMainLemma}
Let $f\in{B}[y]$ be a primitive completely split polynomial as above. %
Let $S=\{c_{i}\}$ be a set of pairwise distinct elements in $B$ with distinct reductions in $B/\mathfrak{q}$ %
such that $\overline{Z_{0}(f)}=\overline{S}$.  
Then for every $\gamma_{i}\in{Z_{0}(f)}$, %
there is a unique $c_{i}$ such that 
\begin{equation}
\gamma_{i}-c_{i}%
\in\mathfrak{q}. 
\end{equation}
Vice versa, for every $c_{i}\in{S}$ there is a root $\gamma_{i}\in{Z_{0}(f)}$ %
such that the above holds.
\end{lemma}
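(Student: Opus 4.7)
The plan is to unwind the hypotheses and reduce the statement to an elementary fact about the reduction map $B \to B/\mathfrak{q}$. First I would observe that the elements of $Z_0(f)$ actually lie in $B$: by the definition preceding Equation~\eqref{RootsRepresentation54}, an element $\gamma_i \in Z_0(f)$ has $k_i = 0$, i.e.\ $\alpha_i = \pi_B^{0}\gamma_i = \gamma_i \in B$. This legitimises taking the reductions $\overline{\gamma}_i \in B/\mathfrak{q}$ that appear in the hypothesis $\overline{Z_0(f)} = \overline{S}$.

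Having checked this, the forward direction follows immediately. Given $\gamma \in Z_0(f)$, the equality $\overline{Z_0(f)} = \overline{S}$ in $B/\mathfrak{q}$ provides some $c_i \in S$ with $\overline{c_i} = \overline{\gamma}$, equivalently $\gamma - c_i \in \mathfrak{q}$. Uniqueness of this $c_i$ comes straight out of the assumption that the reductions $\overline{c_i}$ are pairwise distinct in $B/\mathfrak{q}$: if also $\gamma - c_j \in \mathfrak{q}$ then $\overline{c_i} = \overline{\gamma} = \overline{c_j}$ forces $c_i = c_j$. The converse direction is symmetric: given $c_i \in S$, the equality $\overline{S} = \overline{Z_0(f)}$ guarantees a $\gamma \in Z_0(f)$ with $\overline{\gamma} = \overline{c_i}$, which is exactly $\gamma - c_i \in \mathfrak{q}$.

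There is genuinely no obstacle in this argument; it is a bookkeeping statement packaging the separating properties of $S$ with the integrality of the roots in $Z_0(f)$. The only point worth emphasising in the write-up is why $Z_0(f) \subset B$, since this is what allows one to speak of $\overline{Z_0(f)}$ in the first place. Once this is made explicit, the two bullets of the lemma are simply the two directions of the set-theoretic bijection $Z_0(f)/\mathfrak{q} \leftrightarrow S$ induced by the reduction map, and I would present the proof in this form.
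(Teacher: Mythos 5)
Your proof is correct, and it is exactly the intended argument: the paper states this lemma as "elementary" without a written proof, and the content is precisely the definition-unwinding you give (roots in $Z_{0}(f)$ lie in $B$ since $k_{i}=0$, so $\overline{Z_{0}(f)}=\overline{S}$ plus the pairwise distinct reductions of $S$ yield both existence and uniqueness). Your emphasis on why $Z_{0}(f)\subset{B}$, which is what makes $\overline{Z_{0}(f)}$ meaningful, is the right point to make explicit.
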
 
  
  \begin{rem}
  Note that this reduced polynomial $\overline{f}$ can be a constant, for instance if $f=\pi_{B}y-1$. We will not encounter these in our algorithms. %
  \end{rem}
  
  \begin{mydef}{\bf{[Zeroth-order approximations]}}\label{ZerothOrderApproximation}
  Let $f$ be as in Lemma \ref{ApproximationMainLemma}. We refer to the $c_{i}\in{B}$ as zeroth-order approximations of the roots $\gamma_{i}$ of $f$ with respect to $\mathfrak{q}$. Given a set $S$ as in Lemma \ref{ApproximationMainLemma} and a specific root $\gamma_{i}$, we refer to the unique $c_{i}\in{S}$ such that $\gamma_{i}-c_{i}\in\mathfrak{q}$ as a zeroth-order approximation of $\gamma_{i}$. 
  \end{mydef}

 \subsubsection{Higher-order approximations}\label{HigherOrderApproximations}
We would now like to obtain higher-order approximation of the roots of $f$.  %
To that end, we first translate the polynomial over a specific zeroth order-approximation $c_{i,0}:=c_{i}$ so that the roots $\alpha_{i}$ with $\alpha_{i}-c_{i,0}\in\mathfrak{q}$ now have positive valuation. We then scale over a certain power of $\pi_{B}$ to reduce the valuations of a subset of the roots back to zero. After this, we find a zeroth-order approximation of the resulting polynomial, which gives us higher-order approximations of the original roots of $f$ by reversing the above operations. We refer to the first operation as a {\it{translation}} and the second as a {\it{scaling}}. %

We start with the translation. Let $f$ be as in Lemma \ref{ApproximationMainLemma} and assume that its reduction is non-constant. Let $c_{i,0}:=c_{i}$ be a zeroth-order approximation for the roots of $f$ with respect to $\mathfrak{q}$ as in Definition \ref{ZerothOrderApproximation}. There is then a natural ring homomorphism $\phi: B[y]\to{B}[y]$ which sends $y$ to $y+c_{i,0}$. We apply this homomorphism to $f$ and obtain $f_{1}:=\phi(f)=f(y+c_{i,0})$. %
  The roots of this polynomial are given by $\gamma'_{i}:=\gamma_{i}-c_{i,0}$. %

  We now define the scaling operation on $f_{1}$. Consider the roots $\gamma_{i}$ %
  with $k_{i}=0$ such that $c_{i,0}-\gamma_{i}%
  \in\mathfrak{q}$ as in %
  Lemma \ref{ApproximationMainLemma}. For any such root, we find %
 that $v_{\mathfrak{q}}(\gamma'_{i})>0$, so they are divisible by $\pi_{B}^{k}$ for some $k$. To find these valuations in practice, we use the classical Newton polygon theorem. %
 We now choose a root $\gamma'_{i}$ with $v_{\mathfrak{q}}(\gamma'_{i})>0$ and set $k:=%
 v_{\mathfrak{q}}(\gamma'_{i})$. %
 We then %
  consider the polynomial
  \begin{equation}
  f'_{2}:=f_{1}(\pi_{B}^{k}y).
  \end{equation}
In terms of ring homomorphisms, we have $f'_{2}=\psi(f_{1})$, where $\psi:B[y]\to{B}[y]$ is the ring homomorphism sending %
 $y$ to $\pi_{B}^{k}y$. 
 The roots of $f'_{2}$ are then given by
  \begin{equation}
  \gamma''_{i}:=\dfrac{\gamma'_{i}}{\pi_{B}^{k}}.
  \end{equation}
  We now apply the %
   two transformations $\phi$ and $\psi$ to the individual factors of $f$. %
   Define $h_{i}:=\pi_{B}^{k_{i}}y-\alpha_{i}$, so that $f=\prod{h_{i}}$. %
   We then have $\phi(h_{i})=\pi_{B}^{k_{i}}(y+c_{i})-\alpha_{i}=\pi_{B}^{k_{i}}y+\pi_{B}^{k_{i}}c_{i,0}-\alpha_{i}$.
   Similarly, if we apply $\psi(\cdot{})$ to $\phi(h_{i})$ then we obtain
   \begin{equation}
h_{i,1}:= \psi(\phi(h_{i}))=\pi_{B}^{k_{i}+k}y+\pi_{B}^{k_{i}}c_{i,0}-\alpha_{i}.
   \end{equation} 
 We now divide the resulting polynomial $f'_{2}:=\prod{\psi(\phi(h_{i}))}$ by a suitable power of $\pi_{B}$ to make it primitive. To that end, set 
 \begin{equation}
 s_{i}:=
 \begin{cases}
 \mathrm{min}\{k,v_{\mathfrak{q}}(\gamma'_{i})\} &\mbox{if } k_{i}=0, \\
 0 & \mbox{if } k_{i}\neq{0}.
 \end{cases}
 \end{equation} 
  This is the valuation of the content of the polynomial $h_{i,1}$. The polynomial %
 \begin{equation}
 h_{i,2}:=\dfrac{1}{\pi_{B}^{s_{i}}}\cdot{h_{i,1}},
 \end{equation}
 then satisfies $\mathrm{cont}(h_{i,2})=1$.  %
The polynomial $f_{2}:=\prod{h_{i,2}}$ %
then also has content $1$ since the content is multiplicative. We can find $f_{2}$ in practice by dividing $f'_{2}$ by its content.  %

  To return to Lemma \ref{ApproximationMainLemma}, we set
  \begin{align*}
  k''_{i}:=&k_{i}+k-s_{i},\\
  \alpha''_{i}:=&\dfrac{1}{\pi_{B}^{s_{i}}}(\alpha_{i}-c_{i,0}\pi_{B}^{k_{i}}),
  \end{align*}
  so that 
  \begin{equation}
  f_{2}=\prod{(\pi_{B}^{k''_{i}}y-\alpha''_{i})}
  \end{equation}
  as in Equation \ref{RootsRepresentation54}. 
 Let us %
 identify the roots with $k''_{i}=0$. If $k_{i}>0$, then $s_{i}=0$ and thus $k''_{i}>0$. If $k_{i}=0$, then $k''_{i}=0$ if and only if $s_{i}=k$, so these are the roots $\gamma'_{i}$ that satisfy $v_{\mathfrak{q}}(\gamma'_{i})\geq{k}$. %
 As before, we let $\{c_{i,k}\}\subset{B}$ be a set of pairwise distinct elements %
 that are zeroth order approximations for the roots of $f_{2}$ as in Definition \ref{ZerothOrderApproximation}. 
For every $\gamma''_{i}$ with $k''_{i}=0$, we can then find a unique $c_{i,k}$ such that 
   \begin{equation}
\gamma''_{i}-c_{i,k}\in\mathfrak{q}.
   \end{equation}
   Multiplying both sides by $\pi_{B}^{k}$, we then obtain 
   \begin{equation}
   \pi_{B}^{k}\gamma''_{i}-c_{i,k}\pi_{B}^{k}=\gamma_{i}-c_{i,0}-c_{i,k}\pi_{B}^{k}\in\mathfrak{q}^{k+1}.
   \end{equation}
   We rearrange this to obtain %
   \begin{equation}
   \gamma_{i}=c_{i,0}+c_{i,k}\pi_{B}^{k}+\mathcal{O}(\mathfrak{q}^{k+1}).
   \end{equation}
   In other words, $c_{i,0}+c_{i,k}\pi_{B}^{k}$ is a power series approximation for %
   $\gamma_{i}$ of height $k+1$. If we continue this process with $f_{2}$, then we of course obtain higher order approximations of the $\gamma_{i}$. This is the main idea that can be found in both of the upcoming %
   Newton-Puiseux algorithms. %

 \subsection{The Newton-Puiseux algorithm for discrete valuation rings}\label{NewtonDiscreteSection}
 
  In this section we give a Newton-Puiseux algorithm for tame extensions of discrete valuation rings. The algorithm is a direct generalization of the one presented in \cite{Duval} and \cite{Puiseux1}, which calculates the formal algebraic Puiseux expansions of a polynomial $f(x,y)$ over a field of characteristic zero.
  In our generalization, we consider a polynomial $f\in{R[y]}$, where $R$ is now a discrete valuation ring with uniformizer $\pi$. The algorithm calculates the $\pi$-adic expansions of the roots of $f$ up to any desired precision $n$. Throughout this section, we will assume that we have an auxiliary algorithm at our disposal that can factorize polynomials over the residue field $k$ of $K$.  %
  In this paper, the residue field will be of transcendence degree $0$ or $1$ over $\mathbf{F}_{p}$ or $\mathbf{Q}$. %
  For factoring polynomials over these fields we use the computer algebra package MAGMA. We will view our calculations as calculations over the Henselization $R^{{\mathrm{h}}}$ of the ring of integers $R$ of $K$. The results in Section \ref{GaloisSection} then allow us to %
 deduce information about the extensions of $\mathfrak{m}=(\pi)$. %
  This will be used to calculate the vertices of a skeleton of a curve in Section \ref{AlgorithmBerkovich}.

  Let $f\in{R}[x]$ be a polynomial over a discrete valuation ring $R$ with field of fractions $K$, maximal ideal $\mathfrak{m}$, uniformizer $\pi$ and residue field $k$. %
  We will assume for simplicity that the leading coefficient of $f$ is invertible. We denote the splitting field of $f$ by $\overline{L}_{f}$ and the integral closure of $R$ in $\overline{L}_{f}$ by $\overline{R}_{f}$. We assume throughout this section that $\overline{L}_{f}/K$ is separable. %
  \begin{mydef}
Let $f$ be a polynomial with a separable splitting field and let $\overline{\mathfrak{m}}_{f}$ be a prime in $\overline{R}_{f}$ lying over $\mathfrak{m}$. We say that $f$ is tame if the extension of discrete valuation rings corresponding to $\overline{\mathfrak{m}}_{f}\supset{\mathfrak{m}}$ is tame.
\end{mydef}  

Consider the field extension $M_{n}:=\overline{L}_{f}\cdot{K(\pi^{1/n})}\supset{K(\pi^{1/n})}$. For $n$ divisible by $d!$ for $d=\mathrm{deg}(f)$, we then find using Abhyankar's Lemma that $M_{n}$ is unramified over $K(\pi^{1/n})$. In the algorithm we will construct field extensions $K\subset{K(\pi^{1/n})}$ using the data we get from Newton polygons instead of $d!$. We will comment on this during the proof of the correctness of the algorithm. %
 At any rate, we now fix a $n$ with the above property and write $K'=K(\pi^{1/n})$ for the corresponding field. %
 We fix a prime $\mathfrak{m}^{\mathrm{s}}$ lying over $\mathfrak{m}'$ in the integral closure of $R$ in the separable closure $K^{{\mathrm{s}}}$, so that we obtain injections $K'^{{\mathrm{h}}}\to{K'^{{\mathrm{sh}}}}\to{K^{{\mathrm{s}}}}$. %
 The polynomial $f$ then splits completely over $K'^{{\mathrm{sh}}}$. We can now use the procedure given in Section \ref{PreliminariesAlgorithms} with  %
 $K(B)=K'^{{\mathrm{sh}}}$ to find power series expansions for the roots. The algorithm will give \'{e}tale algebras over $K'^{{\mathrm{h}}}$ and these can be embedded into $K'^{\mathrm{sh}}$ in various ways as in ordinary Galois theory. For computations however, we will only need the \'{e}tale algebras over $K'^{{\mathrm{h}}}$ so we don't need to know the full Galois group, see Remark \ref{ExtensionsMinimalPolynomialsAlgorithm}. 

\begin{algorithm}
  \caption{The discrete Newton-Puiseux algorithm.}\label{DiscreteNPAlgorithm}
    \vspace*{0.1cm}
\textbf{Input:} A tame polynomial $f\in{R[y]}$ over a discrete valuation ring, a positive integer $h$.  \\
\textbf{Output:} {The power series expansions of the roots of $f$ up to height $rh$ in $R'^{{\mathrm{sh}}}$, where $R'=R[\pi^{1/r}]$ is a finite Kummer extension of $R$. }%
  \begin{algorithmic}[1]

  \State Calculate the Newton polygon $\mathcal{N}(f)$ of $f$ with respect to the discrete valuation on $K$. 
  \vspace{0.1cm}
\State If there is no line segment of strictly positive slope, go to Step $5$ with $f_{e}:=f$. Otherwise, choose a line segment $e$ in $\mathcal{N}(f)$ with slope $n=\dfrac{a}{b}>{0}$ and $\mathrm{gcd}(a,b)=1$. If the slope is not in the value group, then extend %
$K$ to $K:=K(\pi^{1/b})$. %
Adjust the power series with respect to $\pi$ to power series with respect to $\pi^{1/b}$. Set $h:=b\cdot{h}$. %

\vspace{0.1cm}   %
\State %
Update the power series expansions of the roots corresponding to $e$ by adding zeros. %
\State If the precision of an approximation %
is greater than or equal to $h$, %
stop the algorithm. %
Otherwise, %
find the unique $m$ such that $f_{e}=1/\pi^{m}\cdot{}f(\pi^{n}y)$ is primitive.
\vspace{0.1cm}

  \State %
Calculate an irreducible factorization of the reduction $\overline{f}_{e}$: %
\begin{equation}
\overline{f}_{e}=\prod_{i}\overline{g}_{i}^{r_{i}}.
\end{equation}
\vspace{0.1cm}

\State Choose an irreducible factor $\overline{g}_{i}$ and %
construct an \'{e}tale extension $A_{i}$ %
of $R$ as in Remark \ref{ConstructionEtaleAlgebras} using a lift $g_{i}$ of $\overline{g}_{i}$.   %
\vspace{0.1cm} %
\State Update the power series expansions of the roots of $f$ using a root $c_{i}\in{A}_{i}$ of $g_{i}$. %
\vspace{0.1cm}
\State %
Calculate $f_{2}:=f(y+c_{i})$ and return to Step $1$ with %
$f:=f_{2}$ and $R:=A_{i}$. 
\vspace{0.1cm}

\end{algorithmic}
\end{algorithm}

\begin{rem}
If the slope of one of the segments in Step $1$ is infinite, then we stop the algorithm for this line segment since we have obtained an exact approximation of a root. %
\end{rem}
\begin{rem}\label{ChoiceEmbedding}
We explain Step $7$ in more detail. We write $K'$ for the discretely valued field $K(\pi^{1/d!})$, so that $f$ splits over $K'^{{\mathrm{sh}}}$. %
The \'{e}tale algebra created in Section \ref{HenselizationsSection} is isomorphic to $R'[y]/(g_{i})$, where $g_{i}$ is a lift of $\overline{g}_{i}$. %
Note however that this does not fix an embedding into $R'^{{\mathrm{sh}}}$, since we can send $y\bmod{(g_{i})}$ to any root of $g_{i}$ in $R'^{{\mathrm{sh}}}$. Consequently, we have different options for the first coefficients of the roots of $f$ reducing to roots of $\overline{g}_{i}$. We denote the different roots of $g_{i}$ in $R'^{{\mathrm{sh}}}$ by $c_{i,j}$. Then the power series of the roots of $f$ reducing to roots of $\overline{g}_{i}$ are the $c_{i,j}$. Abstractly, we can update our power series expansions using these embeddings. This is what we mean by updating the power series expansions in Step $8$. Note however that this does not add any additional computational complexity: for our calculations we only need the algebra $R'[y]/(g_{i})$ and not any embedding into $R'^{{\mathrm{sh}}}$. %
This problem is also discussed in Remark \ref{ExtensionsMinimalPolynomialsAlgorithm}.     %
\end{rem}

\begin{rem}\label{HeightStop}
Instead of working with a specific height $h$, we can also stop the algorithm whenever the reduction $\overline{f}$ has no double factors.  %
At this point, we can use Hensel's lemma and conclude that the roots lift to the Henselization or completion, so no more field extensions are needed after this point.   %
\end{rem}

We now prove that Algorithm \ref{DiscreteNPAlgorithm} is correct.
 \begin{theorem}\label{CorrectnessTheoremDiscrete}
 Let $f\in{R}[y]$ be a tame polynomial and consider the discrete Newton-Puiseux algorithm given in \ref{DiscreteNPAlgorithm}. This algorithm correctly computes the power series expansions of the roots of $f$ up to height $rh$. 
 \end{theorem}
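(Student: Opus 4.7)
The plan is to prove correctness by induction on the iteration depth of the algorithm, using the setup of Section~\ref{PreliminariesAlgorithms}. At each stage, I would view the current state of the algorithm as producing (on each branch of the recursion) a primitive polynomial $f^{(j)} \in R^{(j)}[y]$ over a tame Kummer--\'{e}tale extension $R^{(j)} \subset R'^{\mathrm{sh}}$ of $R$, together with, for each root $\gamma_i$ of the original $f$ that the branch tracks, a partial expansion
\[
\gamma_i \;=\; c_{i,0} \,+\, c_{i,1}\pi_j^{n_1} \,+\, \dots \,+\, c_{i,k}\pi_j^{n_k} \,+\, \pi_j^{N_k}\gamma_i^{(j)},
\]
where $\pi_j$ is the current uniformizer, $\gamma_i^{(j)}$ is the corresponding root of $f^{(j)}$, and $N_k$ is the current precision. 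The inductive invariant is that, after each iteration, this identity holds in $R'^{\mathrm{sh}}$ (or equivalently in $\overline{L}_f \cdot K(\pi^{1/m})$ for suitable $m$), and that $f^{(j)}$ is again tame so the recursion can continue.

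First I would check that the operations of Steps~2--4 of Algorithm~\ref{DiscreteNPAlgorithm} exactly realize the \textit{translation} and \textit{scaling} formulas from Section~\ref{HigherOrderApproximations}. The choice of a line segment $e$ with slope $n = a/b$ corresponds by the classical Newton polygon theorem to the subset of roots $\gamma_i^{(j)}$ with $v_{\mathfrak{m}_j}(\gamma_i^{(j)}) = n$; after passing from $K$ to $K(\pi^{1/b})$ the slope lies in the value group, and dividing by $\pi^m$ (with $m$ determined by primitivity) transforms these roots into roots of valuation $0$ of the new polynomial $f_e$. This is precisely the construction of the polynomial $f_2$ of Section~\ref{HigherOrderApproximations}, so Lemma~\ref{ApproximationMainLemma} applies: the reduction $\overline{f}_e$ is not constant on this branch, and each root of valuation $0$ reduces to a unique root of some irreducible factor $\overline{g}_i \in k[y]$.

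Next I would handle Steps~5--8. The factorization $\overline{f}_e = \prod \overline{g}_i^{r_i}$ is the decomposition afforded by Lemma~\ref{ApproximationMainLemma}, and Remark~\ref{ConstructionEtaleAlgebras} shows that the lift $A_i = R^{(j)}[y]/(g_i)$ is an \'{e}tale $R^{(j)}$-algebra with a single maximal ideal lying over $\mathfrak{m}_j$. Under an embedding $A_i \hookrightarrow R'^{\mathrm{sh}}$ (determined by a choice of root $c_i$ of $g_i$; cf.\ Remark~\ref{ChoiceEmbedding}), the element $c_i$ provides a zeroth-order approximation in $R'^{\mathrm{sh}}$ of precisely those roots of $f^{(j)}$ that reduce to the chosen root of $\overline{g}_i$. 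Applying the translation $y \mapsto y + c_i$ of Section~\ref{HigherOrderApproximations} and recursing with $f_2 = f^{(j)}(y+c_i)$ over $A_i$ yields a new primitive polynomial to which the inductive hypothesis applies, and the update to the power series expansion of the $\gamma_i$ is exactly the one given there. Tameness of $f^{(j)}$ is preserved because $A_i/R^{(j)}$ is \'{e}tale and the Kummer step $K \subset K(\pi^{1/b})$ is tame (being coprime to the residue characteristic by the assumption that $f$ is tame, which forces the denominators of Newton slopes to be coprime to $\mathrm{char}(k)$).

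It remains to show termination and to verify the claimed output. Termination at precision $rh$ follows because each iteration either stops (Step~4) or produces a slope $n > 0$, so the precision strictly increases; the rescaling $h \mapsto bh$ in Step~2 exactly compensates for the replacement of $\pi$ by $\pi^{1/b}$, so the target precision in the common value group $\mathbf{Q}$ is constant. Putting $r$ equal to the total denominator produced along a branch, one reads off that the output lies in $R[\pi^{1/r}]^{\mathrm{sh}}$ with precision $\ge rh$ in terms of powers of $\pi^{1/r}$, which is the claimed bound. The main obstacle I anticipate is the bookkeeping between the global $K'^{\mathrm{sh}}$ (where the roots of $f$ actually live and where Lemma~\ref{ApproximationMainLemma} is applied) and the local \'{e}tale algebras $A_i$ produced by the algorithm; making the inductive invariant precise requires fixing a compatible system of embeddings $A_i \hookrightarrow K'^{\mathrm{sh}}$ via the choices of Remark~\ref{ChoiceEmbedding}, and then invoking the material of Section~\ref{GaloisSection} to ensure that the Galois orbits of roots match the branches produced by the factorizations $\overline{f}_e = \prod \overline{g}_i^{r_i}$.
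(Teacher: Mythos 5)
Your proposal is correct and follows essentially the same route as the paper: both reduce the per-iteration validity to the translation/scaling framework of Section \ref{PreliminariesAlgorithms} (Lemma \ref{ApproximationMainLemma}, Remarks \ref{ConstructionEtaleAlgebras} and \ref{ChoiceEmbedding}) and then handle the Kummer bookkeeping by induction on the iterations. The only organizational difference is that the paper fixes the single ambient field $K^{\mathrm{sh}}(\pi^{1/d!})$ at the outset and shows every slope denominator $b$ divides $d!$, so all computations stay inside it and one may take $K(B)=K^{\mathrm{sh}}(\pi^{1/d!})$, whereas you track tameness and the growing value group inductively; both rest on the same observation that the root valuations lie in the value group of the tame splitting field.
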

 \begin{proof}

The splitting field of $f$ over $K^{{\mathrm{sh}}}$ is contained in %
$K^{{\mathrm{sh}}}(\pi^{1/d!})$ by construction. %
We now claim that the calculations in the algorithm are all contained in this field. The only non-trivial thing to check here is the part about the Newton-polygons. For any completely split polynomial over $K^{{\mathrm{sh}}}(\pi^{1/d!})$, we have that the slopes of the Newton-polygon are in the value group $\dfrac{1}{d!}\mathbf{Z}$. In Step $2$ on the first iteration, we thus find $\dfrac{a}{b}=\dfrac{a'}{d!}$ for some integer $a'$, meaning that $ba'=ad!$. Since $a$ and $b$ are coprime, we find that $b$ divides $d!$ and thus $K^{{\mathrm{sh}}}(\pi^{1/b})\subset{K^{{\mathrm{sh}}}(\pi^{1/d!})}$. Using induction on the iterations in the algorithm, we conclude that all Kummer extensions made in Step $2$ are in $K^{{\mathrm{sh}}}(\pi^{1/d!})$.  
In terms of Section \ref{PreliminariesAlgorithms}, we can now take $K(B)=K^{{\mathrm{sh}}}(\pi^{1/d!})$.  %
The validity of the intermediate steps in the algorithm was shown in Section \ref{PreliminariesAlgorithms}. We conclude that the algorithm correctly calculates the power series expansions of the roots of $f$. %
 \end{proof}

\begin{exa}
We give an example where the Newton-Puiseux algorithm does not terminate. %
Consider the polynomial %
$f=y^2-2y-2$ over the ring of $2$-adic numbers $\mathbf{Z}_{2}$. We first make the extension $\mathbf{Q}_{2}(2^{1/2})$ and consider the scaled version
 \begin{equation}
f_{1}:=1/2\cdot{} f(2^{1/2}y)=y^2-2^{1/2}y-1.
 \end{equation}
 The reduction of $f_{1}$ has the double root $1$ in $\mathbf{F}_{2}$, so we translate over the lift of this root to obtain the polynomial
 \begin{equation}
 f_{2}:=f_{1}(y+1)=y^2+(2-2^{1/2})y-2^{1/2}.
 \end{equation} 
 The Newton polygon has a line segment of slope $1/2$ and the algorithm tells us to again take a square root of $2^{1/2}$. In fact, at every step we obtain the same Newton polygon, as one %
 can easily see using induction. %
 Using this, we see that %
 this process of extracting roots goes on indefinitely. %
 Note that the extension of $\mathbf{Q}_{2}$ induced by $f$ is a wildly ramified extension of differential exponent $2$. The differential exponents of the extensions $\mathbf{Q}_{2}\subset\mathbf{Q}_{2}(2^{1/2^{k}})$ on the other hand are never divisible by $2$. %

 \end{exa}

  \begin{exa}\label{MainExampleDiscreteValuation}
 [{\bf{Main Example}}] 
 We apply Algorithm \ref{DiscreteNPAlgorithm} to Example \ref{MainExample}. The generic points of the special fiber of the semistable model $\mathcal{Y}$ of $\mathbf{P}^{1}_{K}$ considered in Example \ref{SeparatingMainExample} give rise to discrete valuation rings $\mathcal{O}_{\mathcal{Y},\eta_{v}}$. We calculate %
 the Newton-Puiseux expansions of the roots of $f$ over two of these discrete valuation rings. Here we view $f$ as a polynomial in $y$ over the function field $K(x)$, where $K=\overline{\mathbf{Q}}((t))$.  %

 \vspace{0.2cm}
 
{\underline{ {\bf{Case 1: $v(x)=0$}}}}

\vspace{0.2cm} 
 
We start with the natural valuation of $K(x)$ induced from the codimension-one prime ideal $\mathfrak{p}_{1}=(t)$ in $A=R[x]$ with residue field $\overline{\mathbf{Q}}(x)$. In terms of Section \ref{P1Models}, this valuation corresponds to the positively oriented closed disk $\mathbf{B}^{+}_{0}(0)$. We denote the Henselization of the valuation ring corresponding to $\mathfrak{p}_{1}$ by $A^{{\mathrm{h}}}$. %
 
 We first transform $f$ so that it becomes monic. That is, we set $g=t^2+1$ and calculate \begin{align}
 f_{1}:=g^3f(y/g).
 \end{align}
 Reducing $f_{1}$ modulo $(t)$ then gives 
 \begin{equation}
 \overline{f}_{1}=(y^2-x)^2.
 \end{equation}
  We therefore construct the \'{e}tale algebra $A_{1}=A^{{\mathrm{h}}}[w]/(w^2-x)$. We write $w:=w\bmod{(w^2-x)}$. %
and translate $f_{1}$ over $w$ to obtain $f_{2}:=f_{1}(y+w)$. We then find that the Newton polygon with respect to $t$ consists of a line segment of length $2$ and slope $0$, %
 and a line segment of length $2$ and slope $1$. We thus calculate $f_{2}:=1/t^2f_{1}(ty)$. This polynomial reduces modulo $(t)$ to 
 \begin{equation}
 \overline{f}_{2}=4w^2y^2+w^4.
 \end{equation}
This has the roots
 $\beta_{1}=iw/2$ and $\beta_{2}=-\beta_{1}$ (where $i$ is a square root of $-1$). We can now stop the algorithm as the roots of this polynomial are separable. Note that we can use Hensel's lemma to conclude that these roots lift to %
 $A_{1}$ (here we use that a finite local extension of a Henselian ring is Henselian). The two {{generic approximations}} in $A_{1}$ %
 of the roots of $f$ are given by %
 \begin{align*}
 r_{+}=w+\dfrac{iw}{2}t,\\
 r_{-}=w-\dfrac{iw}{2}t.
 \end{align*}
  To obtain the full set of approximations, we can consider the two embeddings of $A_{1}$ into the strict Henselization. %
We denote the images of $w$ under these two embeddings by $w_{i}$.  %
This then gives the following four approximations: %
\begin{align*}
r_{1}=&w_{1}+\dfrac{iw_{1}}{2}t,\\
r_{2}=&w_{1}-\dfrac{iw_{1}}{2}t,\\
r_{3}=&w_{2}+\dfrac{iw_{2}}{2}t,\\
r_{4}=&w_{2}-\dfrac{iw_{2}}{2}t. %
\end{align*}

Let $K^{{\mathrm{h}}}_{\mathfrak{p}_{1}}$ be the field of fractions of $A^{{\mathrm{h}}}$. Note that the Galois group of $K^{{\mathrm{h}}}_{\mathfrak{p}_{1}}(w)$ over $K^{{\mathrm{h}}}_{\mathfrak{p}_{1}}$ is $\mathbf{Z}/2\mathbf{Z}$. Using this action on the approximations, we find that $\{r_{1},r_{3}\}$ and $\{r_{2},r_{4}\}$ form an orbit. There are thus two primes lying above $\mathfrak{p}_{1}$ in the normalization of $A$ in $K(x)[y]/(f(y))$ by Proposition \ref{OrbitsFactors}. Furthermore, using Theorem \ref{Dedekind2a} we see that the Galois group of $f$ over $K(x)$ contains a product of two $2$-cycles. %

\vspace{0.2cm}

{\underline{{\bf{Case 2: $v(x)=4$}}}}

\vspace{0.2cm}

We now consider another discrete valuation, namely the one induced from the codimension one prime ideal $\mathfrak{p}_{2}=(t)$ in the algebra $A=R[x_{1},x]/(x-x_{1}t^4)$. In terms of Section \ref{P1Models}, this algebra corresponds to the positively oriented closed disk $\mathbf{B}^{+}_{4}(0)$. We denote the corresponding Henselization by $A^{{\mathrm{h}}}$. %
As before, we first make $f$ monic over this ring. %
We calculate the Newton polygon of $f$ with respect to prime ideal $(t)$ and find that it contains a single line segment of slope $2$. Consequently, we calculate $f_{1}:=1/t^8f(t^2y)$. The reduction of $f_{1}$ modulo $(t)$ is given by
\begin{equation}
(y^2-x_{1})^2.
\end{equation}
We thus again consider the extension $A^{{\mathrm{h}}}\subset{A^{{\mathrm{h}}}[w]/(w^2-x_{1})}=:A_{2,1}$. We write $w:=w\bmod{(w^2-x_{1})}$. 
Translating over $w$ gives us the polynomial $f_{2}:=f_{1}(y+w)$, whose Newton polygon has a single line segment of slope $1$. We calculate $f_{3}:=1/t^2f_{2}(ty)$, which reduces as 
\begin{equation}
\overline{f}_{3}=4w^2y^2+w^4+w^3.
\end{equation}
This polynomial is irreducible and thus squarefree over $\overline{\mathbf{Q}}(x_{1})(w)$, so we stop the algorithm, see Remark \ref{HeightStop}. Let us denote the corresponding algebra by $A_{2,2}=A_{2,1}[z]/(z^2+(w^2+w)/4)$. We again write $z$ for the image of $z$ in the quotient $A_{2,2}$. The field $K^{{\mathrm{h}}}_{\mathfrak{p}_{2}}(w,z)$ is of degree four over $K^{{\mathrm{h}}}_{\mathfrak{p}_{2}}$. %
Indeed, it is at most of degree four and the minimal polynomial of $z$ over $K^{{\mathrm{h}}}_{\mathfrak{p}_{2}}$ is the irreducible polynomial
\begin{equation}
h_{z}(y)=y^4-xy^2+x^2/4-x/4.
\end{equation}
Unlike the previous case, we now see that the field $K^{{\mathrm{h}}}_{\mathfrak{p}_{2}}(w,z)$ has four embeddings into $K_{\mathfrak{p}_{2}}^{\mathrm{sh}}$. We denote the choices for $w$ and $z$ by $w_{i}$ and $z_{i}$. We then have the approximations
\begin{align*}
r_{1}=&w_{1}+z_{1}t,\\
r_{2}=&w_{1}+z_{2}t,\\
r_{3}=&w_{2}+z_{3}t,\\
r_{4}=&w_{2}+z_{4}t %
\end{align*}
inside $K_{\mathfrak{p}_{2}}^{\mathrm{sh}}$. The Galois group of $K^{{\mathrm{h}}}_{\mathfrak{p}_{2}}\subset{K^{{\mathrm{h}}}_{\mathfrak{p}_{2}}(w,z)}$ acts transitively on the $z_{i}$ since they have the same minimal polynomial. %
Using Remark \ref{OrbitsFactors}, we then see that there is only one extension of $\mathfrak{p}_{2}$ to the normalization %
of $A$ inside $K(x)[y]/(f(y))$. Note that we didn't need the full Galois group of the extension induced by $z$ over $K^{{\mathrm{h}}}_{\mathfrak{p}_{2}}$. %
We will comment on this again in Remark \ref{ExtensionsMinimalPolynomialsAlgorithm}. We can also calculate the Galois group in this case, which gives $\mathrm{Gal}(K^{{\mathrm{h}}}_{\mathfrak{p}_{2}}(z)/K^{{\mathrm{h}}}_{\mathfrak{p}_{2}})=D_{4}$. Using a variant of Theorem \ref{Dedekind2a} for Henselizations, we conclude that $D_{4}$ injects into the Galois group of $f$ over $K(x)$. A similar calculation for the valuation corresponding to $\mathbf{B}^{+}_{8}(0)$ in fact shows that $S_{4}$ injects into the Galois group, so the Galois group is isomorphic to $S_{4}$.

\vspace{0.3cm}

\end{exa}

\begin{rem}\label{ExtensionsMinimalPolynomialsAlgorithm}{\bf{[From power series to extensions]}}
Let $f$ be a disjointly branched polynomial defined over $K(x)$, the function field of a strongly semistable model $\mathcal{Y}$ for $\mathbf{P}^{1}$.  %
In applying the discrete Newton-Puiseux algorithm at the %
generic points $\eta_{v}$ of the irreducible components in the special fiber of $\mathcal{Y}$, we find a finite extension $K'$ of $K$ such that $f$ splits completely over the strict Henselization of $K'$. We will work over this finite extension with $K=K'$, so that the normalization is \'{e}tale over $\eta_{v}$. %

   We will work in an affine patch of $\mathcal{Y}$ and write $\mathfrak{p}$ for the prime ideal corresponding to $\eta_{v}$. To calculate the extensions $\mathfrak{p}'\supset{\mathfrak{p}}$, we have to find the orbits of the roots of $f$ under the absolute decomposition group $D_{\mathfrak{p}}$. Write $A$ %
   for the Henselization of the coordinate ring %
   at %
   $\mathfrak{p}$  %
 and suppose that Algorithm \ref{DiscreteNPAlgorithm} finds a polynomial $h$ over an \'{e}tale $A$-algebra $A'$ such that $\overline{h}$ splits completely with only simple factors. We can then lift these roots to $A'$ since it is Henselian. %
Unraveling the construction that led us to $h$, we then also find roots $\alpha\in{A'}$ of the original polynomial $f$. 
For each of these, the algorithm now returns a finite approximation
\begin{equation}
\alpha=\sum_{i=0}^{r}c_{i}\pi^{i}+\mathcal{O}(\pi^{r+1}),
\end{equation}   
where $c_{i}\in{A'}$ are in the set of representatives of the residue field of $A'$. 
Consider the field extension $k\subset{k(\overline{c}_{0},...,\overline{c}_{r})}$. Under the equivalence in \cite[\href{https://stacks.math.columbia.edu/tag/04GK}{Lemma 04GK}]{stacks-project}, this corresponds to a finite \'{e}tale extension $B$ of $A$ with residue field $k(\overline{c}_{0},...,\overline{c}_{r})$ and we have $\alpha\in{B}$ by the earlier lifting argument. 
We claim that $K(B)$ is the smallest extension of $K(A)$ %
over which $\alpha$ is defined. Indeed, suppose that $\alpha$ is defined over a smaller extension $B'$ with field of fractions by $K(B')$.  This corresponds to a finite $k$-subextension $\ell$ of $k(\overline{c}_{0},...,\overline{c}_{r})$ by \cite[\href{https://stacks.math.columbia.edu/tag/04GK}{Lemma 04GK}]{stacks-project}. Let $j$ be the smallest integer such that $\overline{c}_{j}\notin\ell$. Note that $z:=1/\pi^{j}(\alpha-\sum_{i=0}^{j-1}c_{i}\pi^{i})=c_{j}+\mathcal{O}(\pi)\in{A'}\cap{K(B')}=B'$. But then $\overline{z}=\overline{c}_{j}$ and so $\overline{c}_{j}\in\ell$, a contradiction. %
We conclude that $K(A)(\alpha)=K(B)$. %
This also implies that the degree $n_{\alpha}$ of the minimal polynomial of $\alpha$ over $K$ is equal to the degree $n_{c}$ of %
$k(\overline{c}_{0},...,\overline{c}_{r})$ over $k$. %
We conclude that there are exactly $n_{\alpha}$ embeddings of $K(A)(\alpha)$ into $K(A)^{{\mathrm{sh}}}$. The images of $\alpha$ under these embeddings give a single $D_{\mathfrak{p}}$-orbit, which corresponds to an extension $\mathfrak{p}'\supset{\mathfrak{p}}$.

To decide whether two roots $\alpha$ and $\beta$ are in the same $D_{\mathfrak{p}}$-orbit, we can do the following. We start by taking a set of representatives $S$ of the residue field of $K(A)^{{\mathrm{sh}}}$ that is invariant under the action of $D_{\mathfrak{p}}/I_{\mathfrak{p}}=\mathrm{Gal}(k^{\mathrm{sep}}/k)$. This implies that the action of this group on a coefficient can be detected on the level of residue fields. The algorithm then gives the following power series expansions %
for $\alpha$ and $\beta$: %
\begin{align*}
\alpha=\sum_{i=0}^{r}c_{i}\pi^{i}+\mathcal{O}(\pi^{r+1}),\\
\beta=\sum_{i=0}^{r}d_{i}\pi^{i}+\mathcal{O}(\pi^{r+1}).
\end{align*}
Here we assume that the height is sufficiently high, in the sense that the roots of $f$ have distinct power series expansions. The two roots $\alpha$ and $\beta$ are then conjugate if and only if inductively for every $i\leq{r-1}$ we have that the minimal polynomial of $\overline{c}_{i+1}$ over $k(\overline{c}_{0},...,\overline{c}_{i})$ is the same as the minimal polynomial of $\overline{d}_{i+1}$ over $k(\overline{d}_{0},...,\overline{d}_{i})$ under the isomorphism $k(\overline{c}_{0},...,\overline{c}_{i})\simeq{k(\overline{d}_{0},...,\overline{d}_{i})}$ given by the previous step. Here the initial isomorphism is just the identity $k\to{k}$, so in the first step we should check whether the minimal polynomial of $\overline{c}_{0}$ over $k$ is the same as the minimal polynomial of $\overline{d}_{0}$ over $k$. If these are the same, then we have an isomorphism $k(\overline{c}_{0})\simeq{k(\overline{d}_{0})}$ sending $\overline{c}_{0}$ to $\overline{d}_{0}$ and we then check whether the minimal polynomial of $\overline{c}_{1}$ over $k(\overline{c}_{0})$ is the same as the minimal polynomial of $\overline{d}_{1}$ over $k(\overline{d}_{0})$ under the above isomorphism, and so on. %
The fact that this is sufficient to establish the conjugacy of $\alpha$ and $\beta$ follows as before from the equivalence in \cite[\href{https://stacks.math.columbia.edu/tag/04GK}{Lemma 04GK}]{stacks-project}. More precisely, the isomorphisms of residue fields give isomorphisms of algebras over the Henselization and the $c_{i}$ are sent to the $d_{i}$ under these since the representatives are in an invariant set. Using the given data we then directly deduce an isomorphism $A[\alpha]\to{A[\beta]}$ which can be extended to an automorphism of the Galois closure in the usual way.    %
All in all we now see that this criterion gives us a concrete way to check whether two roots are conjugate.

\end{rem}

\begin{rem}\label{GenusComponent}{\bf{[Calculating the genus]}}
Let $\mathfrak{p}'\supset{\mathfrak{p}}$ be an extension as in Remark \ref{ExtensionsMinimalPolynomialsAlgorithm}. %
The residue field of $\mathfrak{p}'$ is the function field of a unique smooth curve $\Gamma(\mathfrak{p}')$ and we are interested in its arithmetic genus. This is a birational invariant, so we can compute it using the residue field $k(\mathfrak{p}')$. %
Using the notation from Remark \ref{ExtensionsMinimalPolynomialsAlgorithm}, we find the equality
\begin{equation}
k(\mathfrak{p}')=k(\mathfrak{p})(\overline{c}_{0},...,\overline{c}_{r}).
\end{equation}
We can then for instance calculate the genus iteratively using $k(\mathfrak{p})(\overline{c}_{0},...,\overline{c}_{i})\subset{k(\mathfrak{p})(\overline{c}_{0},...,\overline{c}_{i+1})}$ and the Riemann-Hurwitz formula. Another way would be to write down a primitive element of this extension and then to calculate the genus of the corresponding plane curve.    
\end{rem}

\begin{rem}\label{RemarkGeometricallyIrreducible}
{\bf{[Geometrically irreducible curves]}} In the above examples, we had to calculate factorizations of polynomials over $\overline{\mathbf{Q}}(x)$. In practice, we start by calculating a factorization over $\mathbf{Q}(x)$ (or over $K(x)$ for a finite extension $K$ of $\mathbf{Q}$) and we then check whether the corresponding curve is {\it{geometrically irreducible}}. If it is geometrically reducible, then we extend our base field to obtain the correct factorization. %
\end{rem}

\subsection{The mixed Newton-Puiseux algorithm}\label{MixedNP1}

In this section, we describe a mixed Newton-Puiseux algorithm for tame semistable coverings. This algorithm starts with a disjointly branched polynomial $f$ defined over a strongly semistable model $\mathcal{Y}$ with intersection graph $\Sigma(\mathcal{Y})$. For any pair $(v,e)$ of a vertex and an edge in $\Sigma(\mathcal{Y})$, %
it then calculates %
two types of power series for the roots of $f$: $\mathfrak{p}_{v}$-adic power series $r_{\mathfrak{p}_{v}}$ and %
$\mathfrak{m}_{e}$-adic power series $r_{\mathfrak{m}_{e}}$. Here $\mathfrak{p}_{v}$ and $\mathfrak{m}_{e}$ are the prime ideals corresponding to $v$ and $e$ in some local chart of $\mathcal{Y}$ as given in Section \ref{P1Models}. %
These pairs of approximations $(r_{\mathfrak{p}_{v}}, r_{\mathfrak{m}_{e}})$ for the roots of $f$ furthermore satisfy a compatibility condition, which says that $r_{\mathfrak{p}_{v}}$ reduces to $r_{\mathfrak{m}_{e}}$ modulo $(u^{{h}_{\mathfrak{m}}},v^{{h}_{\mathfrak{p}}})$. %
This compatibility allows us to glue together the power series obtained for pairs $(v,e)$ of edges and vertices in $\Sigma(\mathcal{Y})$. %
To find the $r_{\mathfrak{p}_{v}}$ and $r_{\mathfrak{m}_{e}}$, we use the algorithm given in Section \ref{PreliminariesAlgorithms} on the strict Henselization of a local chart at an edge. %
By applying the discrete algorithm to the $\mathfrak{p}_{v}$-adic coefficients, we then obtain the $\mathfrak{m}_{e}$-adic coefficients. %

\subsubsection{Preliminaries}\label{PreliminariesMixedNP}

We start with an algebra of the form $A=R[u,v]/(uv-\pi^{n})$ corresponding to a closed annulus as in Section \ref{P1Models}. We write $\mathfrak{p}_{1}=(u,\pi)$ and $\mathfrak{p}_{2}=(v,\pi)$ for the primes corresponding to the irreducible components of the special fiber and $\mathfrak{m}=(u,v,\pi)$ for their intersection. Let $f$ be a disjointly branched monic polynomial over $A$. We will also assume that $f$ is already \'{e}tale over the special fiber. We would now like to calculate the extensions of the chains $\mathfrak{m}\supset{\mathfrak{p}_{i}}$ to the normalization of $A$ inside the field $K(A)[y]/(f)$. We will focus on $\mathfrak{p}:=\mathfrak{p}_{2}=(v,\pi)$ and $\mathfrak{m}$ for the remainder of these sections (the algorithm for the other pair is the same). %

We first modify our initial algebra to make it regular. As in Section \ref{KummerOrdinaryDoublePointsSection}, %
we have a set of compatible $n$-th roots of $u$ and $v$, which gives us the regularization $K(A_{\mathrm{reg}})=K(A)(u^{1/n},v^{1/n})$. We again write $K(A)$ for this field, so effectively we can now assume $n=1$ in our above algebras. The ring $A=R[u,v]/(uv-\pi)$ is then a unique factorization domain and we again write $A$ for the localization of this ring at $\mathfrak{m}$. %
This is a regular local ring with parameters $u$ and $v$. The strict of Henselization of this ring is again regular with the same parameters, see \cite[\href{https://stacks.math.columbia.edu/tag/06LN}{Lemma 06LN}]{stacks-project}. By Remark \ref{StrictHenselizationInclusion}, we now have a commutative diagram of fields 
\begin{equation}
\begin{tikzcd}
K(A)\arrow{r} \arrow{d} & K^{\mathrm{sh}}_{\mathfrak{m}}(A) \arrow{dl} \\
K^{\mathrm{sh}}_{\mathfrak{p}}(A) & %
\end{tikzcd}
\end{equation}
Since the extension induced by $f$ is assumed to be unramified over the special fiber, we find by that $f$ splits completely over $K^{\mathrm{sh}}_{\mathfrak{m}}(A)$ and thus also over $K^{\mathrm{sh}}_{\mathfrak{p}}(A)$. We now apply the algorithm in Section \ref{PreliminariesAlgorithms} on $f$ over $K^{\mathrm{sh}}_{\mathfrak{m}}(A)$ to obtain the $\mathfrak{p}$-adic expansions of the roots of $f$. Here we run into a problem: the \'{e}tale algebras obtained from irreducible polynomials over $k(\mathfrak{p})$ can be non-\'{e}tale over $\mathfrak{m}$. %
To remedy this, we introduce the  %
concept of \'{e}tale lifts.  %
If $\mathrm{char}(k)=0$ then we can find canonical lifts using a section of the residue map. If $\mathrm{char}(k)=p>0$, then we can use a modification of the discrete Newton-Puiseux algorithm together with Hensel's lemma to find the lifts. %

We now summarize the steps in the mixed Newton-Puiseux algorithm. %
We first run the $\mathfrak{p}$-adic algorithm, which gives us
\begin{equation}
\alpha=\sum_{i=0}^{r}c_{i}v^{i}+\mathcal{O}(v^{r+1})
\end{equation} 
where the $c_{i}$ are all contained in 
$K^{\mathrm{sh}}_{\mathfrak{m}}(A)$ by the construction of $\mathfrak{m}$-integral split lifts. In fact, they will be integral, so $c_{i}\in{A^{\mathrm{sh}}_{\mathfrak{m}}}$. We then write down $\mathfrak{m}$-adic power series expansions for the $c_{i}$. If $\mathrm{char}(k)=0$, this can be done by calculating the %
$k[[u]]$-expansions of the $\overline{c}_{i}$ and if $\mathrm{char}(k)=p$ then we can use Hensel's lemma. %
We then have two sets of approximations: the $\mathfrak{p}$-adic approximations and the $\mathfrak{m}$-adic approximations. %
 We use these to calculate the $D_{\mathfrak{m}}$ and $D_{\mathfrak{p}}$-orbit of a root. This in turn allows us to find extensions of the chain $\mathfrak{m}\supset{\mathfrak{p}}$.

\subsubsection{\'{E}tale lifts}

We first define the notion of an \'{e}tale lift. Let $K(A)$ be the quotient field of $A=R[u,v]/(uv-\pi)$ and let $\mathfrak{m}=(u,v,\pi)\supset{(v,\pi)}=\mathfrak{p}$.
 
\begin{mydef}\label{Lifts}
{\bf{[\'{E}tale lifts]}}
Let $\overline{g}\in{k(\mathfrak{p})}[y]=k(u)[y]$ be an irreducible polynomial. %
An \'{e}tale lift of $\overline{g}$ is a polynomial $g\in{A_{\mathfrak{p}}[y]}$ with the following properties:
\begin{enumerate}
\item The reduction of $g$ modulo $\mathfrak{p}$ is $\overline{g}$.
\item $\mathrm{deg}(g)=\mathrm{deg}(\overline{g})$.
\item The normalization $A'$ of $A$ inside the field extension $K(A)\subset{K(A)[y]/(g)}$ induced by $g$ is \'{e}tale at a prime $\mathfrak{m}'$ lying over $\mathfrak{m}$. 
\end{enumerate}  
It is said to be $\mathfrak{m}$-integral if $y\bmod{(g)}$ is in $A'_{\mathfrak{m}'}$. We similarly define $\mathfrak{m}'$-integral \'{e}tale lifts over extensions $A'\supset{A}$ that are \'{e}tale at a prime $\mathfrak{m}'$ lying over $\mathfrak{m}$. %
\end{mydef}

\begin{rem}
The algebra $A'$ injects into the (strict) Henselization of $A$ at $\mathfrak{m}$ by the third condition.  The integrality condition then says that $x\bmod{(g)}$ lies in $A^{\mathrm{sh}}_{\mathfrak{m}}$.   
\end{rem}

\begin{rem}
\label{SplitCompletelyResidue}
We will assume throughout the upcoming sections that the polynomial $\overline{g}$ splits completely over $k[[u]]$. The polynomials in the algorithm will always satisfy this condition, since they split completely over $\hat{A}_{\mathfrak{m}}=R[[u,v]]/(uv-\pi)$ and thus over the reduction $\hat{A}_{\mathfrak{m}}/\hat{\mathfrak{p}}\simeq{}k[[u]]$.%
\end{rem}
\begin{exa}
{\bf{[Non-example]}} We give an example here to show that not every lift works. 
Consider $R=\mathbf{Z}_{2}$ with $A=R[u]$. The polynomial $\overline{g}=y^3+u^2y+u^3\in\mathbf{F}_{2}(u)[y]$ is then irreducible and we can consider the lift $g=y^3+u^2y+u^3-2$. The normalization of $A$ inside $K(A)[y]/(g)$ is given by $A'=A[y]/(g)$ since the prime ideal $\mathfrak{m}'=(u,y,2)=(u,y)$ is regular (one also has to check the other primes for this; we leave this to the reader). This is furthermore the only prime lying over $\mathfrak{m}=(2,u)$ and the extension $A\subset{A'}$ is not \'{e}tale over this prime %
since the discriminant vanishes. We thus have an example of a non-\'{e}tale lift. A similar example over $R=\mathbf{Q}[[t]]$ can be given by replacing $2$ with $t$.   
\end{exa}

\subsubsection{Lifts in residue characteristic zero}\label{LiftResidueZero}

Suppose that $\mathrm{char}(k)=0$. The reader can think of %
$\overline{\mathbf{Q}}[[t]]$ here, but the uniformizer will be denoted by %
$\pi$. %
We will also work with $R[u]$ instead of $R[u,v]/(uv-\pi)$ for simplicity\footnote{The more general lifts can be obtained from this using the natural map $R[u]\to{R[u,v]/(uv-\pi)}$.}. %
Since $R$ is a complete discrete valuation ring of residue characteristic zero, we can find an isomorphism $R\simeq{k[[w]]}$, see \cite[Chapter 2, Theorem 2]{Ser1}. %
This then gives a %
ring-theoretic section $s:k\to{R}$ of the residual map $R\to{k}$. %
From this, we deduce a commutative diagram 
\begin{equation}
\begin{tikzcd}
k[u] \arrow{d} \arrow{r} & R[u] \arrow{d} \\
k[[u]] \arrow{r} & R[[u]]
\end{tikzcd}.
\end{equation}
There is a similar diagram for polynomial rings over these rings. We write $s(\cdot{})$ for any of these horizontal lifting maps. Suppose now that we are given an irreducible polynomial $\overline{g}\in{k[u][y]}$ with a root $\overline{c}$ over $k[[u]]$. The element $c:=s(\overline{c})$ is then a root of $s(\overline{g})$ by the commutativity of the above diagrams. By Theorem \ref{KummerDedekind}, we conclude that there is a prime $\mathfrak{m}'$ over $\mathfrak{m}=(u,\pi)$ at which the extension corresponding to $g$ is \'{e}tale. Since the image of $y\bmod{(g)}$ in the completion at $\mathfrak{m}'$ is integral (i.e., in $R[[u]]$), we then also easily find that $y\bmod{(g)}$ is in $A'_{\mathfrak{m}'}$. In other words, we have succeeded in finding an $\mathfrak{m}$-integral \'{e}tale lift of $\overline{g}$. 
\begin{rem}
Our construction of an $\mathfrak{m}$-integral \'{e}tale lift of $\overline{g}$ depends on the choice of a root of $\overline{g}$ over $k[[u]]$. In the upcoming algorithm the polynomial $\overline{g}$ will be completely split and one has to check each of the roots of $\overline{g}$ to obtain the $\mathfrak{m}$-adic power series for all the roots.  %
\end{rem}
\begin{rem}
To calculate $c$ up to a finite height, we can use the discrete Newton-Puiseux algorithm over $k[[u]]$. We then takes this approximation and lift it to $R[[u]]$ using our lifting map $s(\cdot{})$. 
\end{rem}

We now consider the algebra $R[u][y]/(g)\subset{A'}$, where $A'$ is the normalization of $A$ in $K(A)[y]/(g)$. Using the completion maps for the prime $\mathfrak{m}'$ %
and its pullback $\overline{\mathfrak{m}}'$ in $A'/(\pi)$, we then as before obtain a commutative diagram %
\begin{equation}
\begin{tikzcd}
k[u][y]/(\overline{g}) \arrow{d} \arrow{r} & R[u][y]/(g)\arrow{d} \\
k[[u]] \arrow{r} & R[[u]]
\end{tikzcd}.
\end{equation} 
If we now have an irreducible polynomial $\overline{g}_{1}$ over $k(\mathfrak{p}')=k(u)[y]/(g)$ with a root $\overline{c}_{1}$ in $k[[u]]$, then we can apply the same procedure to obtain an irreducible polynomial $g_{1}$, an extension $A''\supset{A'}$ and a maximal ideal $\mathfrak{m}''$ lying over $\mathfrak{m}'$ such that the extension is \'{e}tale at this maximal ideal. In other words, we obtain an $\mathfrak{m}'$-integral \'{e}tale lift of $\overline{g}_{1}$. By iterating this procedure, we obtain the lifts needed in the mixed Newton-Puiseux algorithm.

\begin{rem}
Instead of working with completions as we did in this section, we could also have worked with Henselizations. %
The corresponding commutative diagrams %
follow from \cite[\href{https://stacks.math.columbia.edu/tag/04GS}{Lemma 04GS}]{stacks-project} by localizing $k[u]\to{R[u]}$ at $\overline{\mathfrak{m}}=(u)$ and $\mathfrak{m}=(u,\pi)$. The other statements follow similarly.  %
\end{rem}

\subsubsection{Lifts in residue characteristic $p$}

\label{LiftCharacteristicp}

We now show how to obtain adjusted lifts if $R$ is of residue characteristic $p>0$. The problem we have to deal with now is that there is no canonical lift of the $c_{i}$ as in the previous section. %
We can however do the following. Let $\overline{g}$ be an irreducible polynomial over $k(\mathfrak{p})=k(u)$, where $\mathfrak{p}=(v,\pi)$ and $A=R[u,v]/(uv-\pi)$. We assume as in Remark \ref{SplitCompletelyResidue} that $\overline{g}$ splits completely over $k[[u]]$. %
We now apply the discrete Newton-Puiseux algorithm to $\overline{g}$ over $k[[u]]$ up to a separating height for some fixed root of $\overline{g}$ over $k[[u]]$. This gives a polynomial $\overline{g}_{a}$ that induces the same extension over $k(\mathfrak{p})$ as $\overline{g}$, but with a simple root modulo $(u)$. 
\begin{exa}
Consider the polynomial $\overline{g}=y^{8}-u^{4}(u+1)y^{4}+u^{8}(u+1)$ over $\mathbf{F}_{p}(u)$ with $p\neq{2},3$. We have that $\overline{g}\in{\mathbf{F}_{p}[u][y]}$ is irreducible since it is Eisenstein at $(u+1)$. It does not have a simple root modulo $(u)$ however. We scale $\overline{g}$ by setting $\overline{g}_{1}:=1/u^{8}\cdot{}\overline{g}(uy)=y^{8}-(u+1)y^{4}+(u+1)$. The discriminant of this polynomial modulo $(u)$ is nonzero for $p\neq{2,3}$, so it has no double roots. %
\end{exa}
We now take {\it{any}} lift $g_{a}$ of $\overline{g}_{a}$. By definition, $g_{a}$ has a simple root modulo $\mathfrak{m}$. The polynomial $g_{a}$ will then have a root over the Henselization or completion of $A$ at $\mathfrak{m}$, which corresponds to a prime $\mathfrak{m}'$ lying over $\mathfrak{m}$ in the normalization of $A$ in $K(A)[y]/(g_{a})$. We write $c':=y\bmod{(g_{a})}$. We now recall that $\overline{g}_{a}$ is obtained from $\overline{g}$ by a series of translations and scalings. This implies that we can find a polynomial $\overline{r}\in{k[u]}$ such that $\overline{r}\cdot{\overline{c'}}$ is a root of $\overline{g}$. We then take a lift $r$ of $\overline{r}$ and continue the algorithm with $c:=r\cdot{c'}$. The lifts over further extensions of $k(u)$ are obtained as in the previous section. %

\begin{rem}
To calculate the $\mathfrak{m}$-adic power series of $c'$ (and thus of $c$), we use Hensel's lemma as in \cite[Chapter IV, Lemma 1.2]{Silv1} with $I=\mathfrak{m}$. %
\end{rem}

\subsubsection{The mixed Newton-Puiseux algorithm}

In this section we give the mixed Newton-Puiseux algorithm. We will only write down a version for residue characteristic zero here using the technique in Section \ref{LiftResidueZero}. %
The algorithm using the lifts given in Section \ref{LiftCharacteristicp} is similar. %

As in Section \ref{PreliminariesAlgorithms}, we assume that $A$ is the localization of $R[u,v]/(uv-\pi)$ at $\mathfrak{m}$ and that $f\in{A[x]}$ splits completely over the strict Henselization of $A$ at $\mathfrak{m}$. We will only give the algorithm in this case, the general case follows from this as in the discrete algorithm. We will also assume throughout this section that the leading coefficient of $f$ is invertible in $A^{\mathrm{sh}}_{\mathfrak{m}}$. %
We now first define the output of the upcoming algorithm. 
\begin{mydef}\label{IntegralAdicApproximations}{\bf{[$(\mathfrak{m},\mathfrak{p})$-adic approximations]}} %
Let $A^{\mathrm{sh}}_{\mathfrak{m}}$ be the strict Henselization of $A$ at $\mathfrak{m}$ and let $\alpha$ be a root of $f$. A pair $(r_{\mathfrak{m}},r_{\mathfrak{p}})$ of elements in $A^{\mathrm{sh}}_{\mathfrak{m}}$ is said to be an $(\mathfrak{m},\mathfrak{p})$-adic approximation of height $(h_{\mathfrak{m}},h_{\mathfrak{p}})$ of $\alpha$ if $\alpha-r_{\mathfrak{p}}\in{(v^{{h_{\mathfrak{p}}}})}$ and $\alpha-r_{\mathfrak{m}}\in(u^{{h_{\mathfrak{m}}}},v^{h_{\mathfrak{p}}})$.
\end{mydef}

To find these approximations, we apply the algorithm in Section \ref{PreliminariesAlgorithms} with uniformizer $v$. %
We first calculate an irreducible factorization of $f$ modulo $(v)$:
\begin{equation}
\overline{f}=\prod_{i=1}^{d}\overline{g}^{d_{i}}_{i}.
\end{equation} 
The polynomials $\overline{g}_{i}$ split completely over $k[[u]]$ since $\overline{f}$ splits completely over $k[[u]]$. We can thus use the procedure in Section \ref{LiftResidueZero} to find $\mathfrak{m}$-integral \'{e}tale lifts of the $\overline{g}_{i}$. If we translate over a root of one of these lifts, then the translated polynomial $f_{1}$ together with its roots will still be defined over $A^{\mathrm{sh}}_{\mathfrak{m}}$ and similarly for the scaling operation (here we use that $A^{\mathrm{sh}}_{\mathfrak{m}}$ is a unique factorization domain and that $v$ is an irreducible element in $A^{\mathrm{sh}}_{\mathfrak{m}}$). %
We can then continue this process to obtain the desired $\mathfrak{p}$-adic power series expansions. To obtain the $\mathfrak{m}$-adic power series expansions, we write 
\begin{equation}
\alpha=\sum_{i=0}^{r}c_{i}v^{i}+\mathcal{O}(v^{r+1})
\end{equation}        
and calculate the $(u)$-adic power series expansions of the $\overline{c}_{i}$ up to a finite height using the discrete Newton-Puiseux algorithm. This also gives the $\mathfrak{m}$-adic power series expansions by the lifting maps in Section \ref{LiftResidueZero}. Combining the above techniques, we then arrive at the following algorithm.  %

\begin{algorithm}[H]
  \caption{The mixed Newton-Puiseux algorithm. }
    \label{MixedNPAlgorithm3a}
  \vspace*{0.1cm}
\textbf{Input:} A polynomial $f\in{A}[y]$ that splits completely over the strict Henselization $A^{\mathrm{sh}}_{\mathfrak{m}}$, 
 a pair of target heights $(h_{\mathfrak{m}},h_{\mathfrak{p}})$. \\
 \textbf{Output:} {The $(\mathfrak{m},\mathfrak{p})$-adic approximations of the roots of $f$ up to height $(h_{\mathfrak{m}},h_{\mathfrak{p}})$. %
  }
  \begin{algorithmic}[1]
 \State Calculate the Newton polygon $\mathcal{N}(f)$ of $f$ with respect to ${\mathfrak{p}}$. %
 
 \State   
 If there is no line segment of slope $k>0$, go to Step $4$ with $f_{1}=f$. Otherwise, choose a line segment $e$ in $\mathcal{N}(f)$ of slope $k>0$. %
 \State 
 Update the power series of the approximations $r_{\mathfrak{p}}$ and $r_{\mathfrak{m}}$ by adding zeros. 
 \State If the precision of an approximation is greater than or equal to $h_{\mathfrak{p}}$, then stop the algorithm.    %
Otherwise calculate $f_{1}=1/v^{m}f(v^{k}y)$, where $m$ is such that $f_{1}$ is primitive with respect to $\mathfrak{p}$.  
 \State Calculate an irreducible factorization  
   \begin{equation}
  \overline{f}_{1}=\prod_{i=1}^{l}\overline{g}_{i}^{r_{i}}
  \end{equation} of $\overline{f}_{1}$ over the residue field of $\mathfrak{p}$.
    \State Choose an irreducible factor $\overline{g}_{i}$ and lift it to a polynomial $g_{i}$ using the methods in Section \ref{LiftResidueZero}. Adjoin a root $\gamma_{i}$ of $g_{i}$. Update 
 $r_{\mathfrak{p}}$ using the root $\gamma_{i}$. 
  \State %
 Calculate a power series expansion %
 $r':=\sum_{i=0}^{h_{\mathfrak{m}}-1}d_{i}u_{2}^{i}$ of a root of $\overline{g}_{i}$ up to height $h_{\mathfrak{m}}$ using the discrete Newton-Puiseux algorithm. Update $r_{\mathfrak{m}}$ using the canonical lift of $r'$. %
  \State Calculate $f_{2}=f(y+\gamma_{i})$ and return to Step $1$ with %
  $f:=f_{2}$.   
     
\end{algorithmic}
\end{algorithm}

\begin{theorem}\label{MixedNPValidity}
 Let $f\in{A}[y]$ be a polynomial that splits completely over the strict Henselization $A^{\mathrm{sh}}_{\mathfrak{m}}$ and consider the mixed Newton-Puiseux algorithm. %
 This algorithm correctly computes the $(\mathfrak{m},\mathfrak{p})$-adic approximations of the roots of $f$ up to height $(h_{\mathfrak{m}},h_{\mathfrak{p}})$. %
 \end{theorem}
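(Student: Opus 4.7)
The proof will proceed by reducing the correctness of Algorithm \ref{MixedNPAlgorithm3a} to that of the discrete algorithm (Theorem \ref{CorrectnessTheoremDiscrete}) plus the properties of the $\mathfrak{m}$-integral \'{e}tale lifts constructed in Section \ref{LiftResidueZero}. The plan is to split the claim into two parts: the $\mathfrak{p}$-adic part, i.e.\ that the output $r_{\mathfrak{p}}$ satisfies $\alpha - r_{\mathfrak{p}} \in (v^{h_{\mathfrak{p}}})$ for some root $\alpha$, and the $\mathfrak{m}$-adic part, i.e.\ that $r_{\mathfrak{m}}$ satisfies $\alpha - r_{\mathfrak{m}} \in (u^{h_{\mathfrak{m}}}, v^{h_{\mathfrak{p}}})$ inside $A^{\mathrm{sh}}_{\mathfrak{m}}$. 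The compatibility condition of Definition \ref{IntegralAdicApproximations} is then essentially immediate from the construction.

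First I would argue that Steps 1--6 and Step 8 of Algorithm \ref{MixedNPAlgorithm3a} constitute a verbatim copy of Algorithm \ref{DiscreteNPAlgorithm} executed on $f$ viewed as a polynomial over the strictly Henselian discrete valuation ring $(A^{\mathrm{sh}}_{\mathfrak{m}})_{\mathfrak{p}^{\mathrm{sh}}}$ with uniformizer $v$. To invoke Theorem \ref{CorrectnessTheoremDiscrete} directly, what must be checked is that each intermediate polynomial remains integral over $A^{\mathrm{sh}}_{\mathfrak{m}}$ rather than merely over $(A^{\mathrm{sh}}_{\mathfrak{m}})_{\mathfrak{p}^{\mathrm{sh}}}$. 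This is exactly what the $\mathfrak{m}$-integral \'{e}tale lifts in Step 6 provide: by Definition \ref{Lifts} and the construction in Section \ref{LiftResidueZero}, the chosen root $\gamma_{i}$ of the lift $g_{i}$ lies in $A^{\mathrm{sh}}_{\mathfrak{m}}$ (after fixing an embedding into $A^{\mathrm{sh}}_{\mathfrak{m}}$). Since $v$ is an irreducible element in the UFD $A^{\mathrm{sh}}_{\mathfrak{m}}$, the translation $f(y+\gamma_{i})$ and the scaling $1/v^{m}\cdot f(v^{k}y)$ preserve integrality and primitivity with respect to $v$. Induction on the iterations then shows that the $\mathfrak{p}$-adic expansion produced by the algorithm has all of its coefficients $c_{i}$ in $A^{\mathrm{sh}}_{\mathfrak{m}}$, and that up to the target height $h_{\mathfrak{p}}$ it agrees with the genuine $\mathfrak{p}$-adic expansion of a root $\alpha$ of $f$ in $A^{\mathrm{sh}}_{\mathfrak{m}}$, exactly as in the proof of Theorem \ref{CorrectnessTheoremDiscrete}.

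Next I would address Step 7, which produces $r_{\mathfrak{m}}$. At the point this step is reached, we have a factor $\overline{g}_{i}$ of the current reduced polynomial $\overline{f}_{1}$ over $k(\mathfrak{p})=k(u)$, and Remark \ref{SplitCompletelyResidue} ensures $\overline{g}_{i}$ splits completely over $k[[u]]$. Applying Theorem \ref{CorrectnessTheoremDiscrete} to $\overline{g}_{i}$ over $k[[u]]$ produces a power series $r'=\sum_{i=0}^{h_{\mathfrak{m}}-1}d_{i}u^{i}$ that agrees up to $(u^{h_{\mathfrak{m}}})$ with an honest root $\overline{c}$ of $\overline{g}_{i}$ in $k[[u]]$. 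Applying the section $s:k\to R$ of Section \ref{LiftResidueZero} and the commutative diagram produced there, the lift $s(r')$ agrees with the canonical lift of $\overline{c}$ modulo $(u^{h_{\mathfrak{m}}},\pi)=(u^{h_{\mathfrak{m}}},uv)\subset(u^{h_{\mathfrak{m}}},v)$. Because $s(\overline{c})$ equals the leading coefficient $c_{0}$ of the current stage of the $\mathfrak{p}$-adic expansion (this is the content of Section \ref{LiftResidueZero}, which constructs the lift precisely so that this identification holds), updating $r_{\mathfrak{m}}$ by $s(r')\cdot v^{j}$, where $v^{j}$ is the current $\mathfrak{p}$-adic scale, adds to $r_{\mathfrak{m}}$ a term that agrees with the corresponding update to $r_{\mathfrak{p}}$ modulo $(u^{h_{\mathfrak{m}}},v^{h_{\mathfrak{p}}})$.

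Finally I would combine the two parts. Since at each iteration the update added to $r_{\mathfrak{m}}$ differs from the update added to $r_{\mathfrak{p}}$ only by an element of $(u^{h_{\mathfrak{m}}},v^{h_{\mathfrak{p}}})$, a telescoping argument yields $r_{\mathfrak{p}}-r_{\mathfrak{m}}\in (u^{h_{\mathfrak{m}}},v^{h_{\mathfrak{p}}})$. Combined with $\alpha-r_{\mathfrak{p}}\in (v^{h_{\mathfrak{p}}})$ from the $\mathfrak{p}$-adic part, this gives $\alpha-r_{\mathfrak{m}}\in (u^{h_{\mathfrak{m}}},v^{h_{\mathfrak{p}}})$, which is exactly the defining condition of Definition \ref{IntegralAdicApproximations}. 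The main obstacle I anticipate is bookkeeping in Step 7: one must track how the successive scalings $y\mapsto v^{k}y$ interact with the lifts of the $\overline{c}_{i}$ and ensure that the canonical lifts assembled into $r_{\mathfrak{m}}$ genuinely reproduce the $\mathfrak{m}$-adic expansion of the same root $\alpha$ that $r_{\mathfrak{p}}$ approximates, rather than a root in a different $D_{\mathfrak{m}}$-orbit; this is handled by fixing a compatible embedding of the tower of \'{e}tale algebras into $A^{\mathrm{sh}}_{\mathfrak{m}}$ in accordance with Remark \ref{ChoiceEmbedding}.
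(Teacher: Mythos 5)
Your proposal is correct and takes essentially the same route as the paper, whose own proof is a one-line deferral to exactly the considerations you expand: Steps 1--6 and 8 are the discrete algorithm of Theorem \ref{CorrectnessTheoremDiscrete} run with uniformizer $v$, kept integral over $A^{\mathrm{sh}}_{\mathfrak{m}}$ by the $\mathfrak{m}$-integral \'{e}tale lifts of Section \ref{LiftResidueZero} and the fact that $v$ is irreducible in the UFD $A^{\mathrm{sh}}_{\mathfrak{m}}$, while Step 7 reduces to the discrete algorithm over $k[[u]]$ followed by the section $s$. One small repair: the intermediate congruence should be $s(r')\equiv s(\overline{c})=\gamma_{i}$ modulo $(u^{h_{\mathfrak{m}}})$ -- immediate because $s$ is a ring homomorphism fixing $u$, so $s(\overline{c})-s(r')=s(\overline{c}-r')\in(u^{h_{\mathfrak{m}}})$ -- rather than merely modulo $(u^{h_{\mathfrak{m}}},\pi)\subset(u^{h_{\mathfrak{m}}},v)$, since the weaker statement multiplied by the scale $v^{j}$ gives only $(u^{h_{\mathfrak{m}}}v^{j},v^{j+1})$, which does not lie in $(u^{h_{\mathfrak{m}}},v^{h_{\mathfrak{p}}})$ at early stages; with this correction your telescoping argument is precisely the argument implicit in the paper.
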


\begin{proof}
The correctness of the algorithm follows directly from the considerations in the previous sections.
\end{proof}
\begin{rem}
 If $f$ splits completely over the strict Henselization of $A$ at $\mathfrak{m}$ after a finite Kummer extension of $A$, then we can still run the algorithm, but we have to be add $m$-th roots of $u$ and $v$ as in Algorithm \ref{DiscreteNPAlgorithm}. If we do this conservatively as in Step $2$ of Algorithm \ref{DiscreteNPAlgorithm}, then the argument in the proof of Theorem \ref{CorrectnessTheoremDiscrete} shows that the algorithm doesn't go outside a given Kummer extension over which $f$ splits.  
We conclude that the algorithm also computes the right power series in this case.  
 \end{rem}

We now illustrate Algorithm \ref{MixedNPAlgorithm3a} by applying it to our Main Example. %

\begin{exa}\label{TransferDiscreteSemistable3}

{\bf{[Main Example]}} We will study the covering Example \ref{MainExample} for the closed edge defined by $0\leqslant{v(x)}\leqslant{4}$. %
We use Algorithm \ref{MixedNPAlgorithm3a} to calculate the $\mathfrak{m}_{e}$-adic and $\mathfrak{p}_{v}$-adic power series of the roots of the defining polynomial $f$. Here $e$ is the edge $0<v(x)<4$ %
and $v$ is one of the adjacent vertices of $e$. %

\vspace{0.2cm}
{\underline{{\bf{Case I: $0\leqslant{v(x)}<4$}}}}
\vspace{0.2cm}

We start with the algebra $A=R[x_{0},x_{1}]/(x_{0}x_{1}-t^{4})$, which is embedded into $K(x)$ using 
\begin{align}
x_{0}&\mapsto{x},\\
x_{1}&\mapsto{t^4/x}.
\end{align}
The corresponding regular Kummer extension $A\subset{}A_{\mathrm{reg}}=R[u,v]/(uv-t)$ is then given by $u^4=x_{0}$ and $v^4=x_{1}$. We set $A=A_{\mathrm{reg}}$ and write $\mathfrak{p}=(v)$ and $\mathfrak{m}=(u,v,t)$. We first make the polynomial $f$ monic by setting $f:=g^3f(y/g)$, where $g=u^2v^2+1$. The mixed Newton-Puiseux algorithm is not very different from the original discrete application since the $\mathfrak{p}$-adic coefficients are polynomials in $u$ for the first two iterations. The $\mathfrak{m}$-adic (and $\mathfrak{p}$-adic) approximations are %
\begin{align*}
r_{\mathfrak{m},1,1}&=u^2+iu^3v/2, \\ %
r_{\mathfrak{m},2,1}&=%
u^2-iu^3v/2,\\ %
r_{\mathfrak{m},3,1}&=-u^2+iu^3v/2,\\
r_{\mathfrak{m},4,1}&=-u^2-iu^3v/2.%
\end{align*}

\vspace{0.2cm}
{\underline{{\bf{Case II: $0<{v(x)}\leqslant{}4$}}}}
\vspace{0.2cm}

We use the same algebra as before, but we now apply the algorithm with $\mathfrak{p}=(u)$. We take the monic variant of $f$ as above and find that its $(u)$-Newton polygon contains a line segment of slope $2$. We thus calculate $f_{1}=1/u^8f(u^2y)$, which reduces to 
\begin{equation}
\bar{f}_{1}=(y^2-1)^2.
\end{equation}
We first translate over $1$ and calculate $f_{2}:=f_{1}(y+1)$, whose Newton polygon has a line segment of slope $1$. We calculate $f_{3}=1/u^2f_{2}(uy)$ and its reduction 
\begin{equation}
\bar{f}_{3}=4y^2+v^4+v^2.
\end{equation}
This is an irreducible polynomial over $k(\mathfrak{p})=\overline{\mathbf{Q}}(v)$. We take the natural lift $g_{3,0}$ of this polynomial to $A[x]\subset{A^{\mathrm{h}}_{\mathfrak{m}}[x]}$ and add a root to obtain the algebra $A_{0}=A^{\mathrm{h}}_{\mathfrak{m}}[x]/(g_{3,0})$. %
The roots of the polynomial %
$g_{3,0}$ over $A^{\mathrm{sh}}_{\mathfrak{m}}$ can be %
 $\mathfrak{m}$-adically approximated by %
\begin{align}
\beta_{1}&=1/2iv + 1/4iv^3 - 1/16iv^5+\mathcal{O}(v^6),\\
\beta_{2}&=-1/2iv-1/4iv^3+1/16iv^5+\mathcal{O}(v^6).
\end{align}  
Here we used the discrete Newton-Puiseux algorithm over $\overline{\mathbf{Q}}[[v]]$ to calculate the approximations.

We now translate over $-1$ with $f_{2,1}:=f_{1}(y-1)$. Repeating the same process as before then gives a polynomial $f_{3,1}$ whose reduction is  
\begin{equation}
\bar{f}_{3,1}=4y^2-v^4+v^2.
\end{equation}
We write $A_{1}=A^{\mathrm{h}}_{\mathfrak{m}}[x]/(g_{3,1})$ for the corresponding \'{e}tale extension of $A^{\mathrm{h}}_{\mathfrak{m}}$. %
The roots of $g_{3,1}$ over $A^{\mathrm{sh}}_{\mathfrak{m}}$ can be $\mathfrak{m}$-adically approximated by %
\begin{align}
\beta_{3}&=1/2iv - 1/4iv^3 - 1/16iv^5 +\mathcal{O}(v^6),\\
\beta_{4}&=-1/2iv + 1/4iv^3 + 1/16iv^5+\mathcal{O}(v^6).
\end{align}
We now have the following $\mathfrak{p}$-adic approximations of the roots of $f$:
\begin{align}
r_{\mathfrak{p},1}&=u^2+\beta_{1}u^3,\\
r_{\mathfrak{p},2}&=u^2+\beta_{2}u^3,\\
r_{\mathfrak{p},3}&=-u^2+\beta_{3}u^3,\\
r_{\mathfrak{p},4}&=-u^2+\beta_{4}u^3.
\end{align}  
Here the $\beta_{i}$ are considered as abstract elements of the strict Henselization. More specifically, they are the images of $x\bmod{(g_{3,0})}$ and $x\bmod{(g_{3,1})}$ under embeddings of $A_{0}$ and $A_{1}$ in the strict Henselization.  %
Using the earlier approximations of the $\beta_{i}$, we then obtain the $\mathfrak{m}$-adic approximations %
\begin{align}
r_{\mathfrak{m},1,2}&=u^2+(1/2iv + 1/4iv^3 - 1/16iv^5)u^3,\\
r_{\mathfrak{m},2,2}&=u^2+(-1/2iv-1/4iv^3+1/16iv^5)u^3,\\
r_{\mathfrak{m},3,2}&=-u^2+(1/2iv - 1/4iv^3 - 1/16iv^5)u^3,\\
r_{\mathfrak{m},4,2}&=-u^2+(-1/2iv + 1/4iv^3 + 1/16iv^5)u^3.
\end{align}

We have $r_{\mathfrak{p},i}-\alpha_{i}\in{(u^{4})}$ and $r_{\mathfrak{m},i,2}-\alpha_{i}\in(u^4,v^6)$. We invite the reader to compare the $r_{\mathfrak{m},i,2}$ to the $r_{\mathfrak{m},i,1}$ from Case I. %

\end{exa}

Suppose that the mixed Newton-Puiseux algorithm has calculated approximations $(r_{\mathfrak{m},i},r_{\mathfrak{p},i})$ up to height $(h_{\mathfrak{m}},h_{\mathfrak{p}})$.  %
We now assume that $h_{\mathfrak{m}}$ and $h_{\mathfrak{p}}$ are large enough so that $I:=(u^{h_{\mathfrak{m}}},v^{h_{\mathfrak{p}}})$ is separating for the approximations $r_{\mathfrak{m},i}$, see Definition %
\ref{SeparatingMonomialsRegular}. The approximations $r_{\mathfrak{p},i}$ are then also $\mathfrak{p}^{h_{\mathfrak{p}}}$-adically separated since %
$\mathfrak{p}^{h_{\mathfrak{p}}}\subset{I}$. %
For every root $\alpha_{i}\in{A^{\mathrm{sh}}_{\mathfrak{m}}}$, there now is a unique pair %
$(r_{\mathfrak{m},i},r_{\mathfrak{p},i})$ with $r_{\mathfrak{m},i}, r_{\mathfrak{p},i}\in{A^{\mathrm{sh}}_{\mathfrak{m}}}$ %
satisfying $\alpha_{i}-r_{\mathfrak{m},i}\in(u^{h_{\mathfrak{m}}},v^{h_{\mathfrak{p}}})$ and $\alpha_{i}-r_{\mathfrak{p},i}\in(v^{h_{\mathfrak{p}}})$. If we can now calculate the $D_{\mathfrak{p}}$-orbit of $r_{{\mathfrak{p},i}}$ and the $D_{\mathfrak{m}}$-orbit of $r_{\mathfrak{m},i}$, then by Theorem \ref{Dedekind3} this uniquely determines the chain lying over $\mathfrak{m}\supset{\mathfrak{p}}$. %
For the $D_{\mathfrak{p}}$-orbit we use %
Remark \ref{ExtensionsMinimalPolynomialsAlgorithm}. To calculate the $D_{\mathfrak{m}}$-orbit, %
we use the following. %
\begin{rem}\label{ActionEdges}
{\bf{[Calculating $D_{\mathfrak{m}}$-orbits]}} Suppose that we have found a finite Kummer extension %
$A'$ of $A=R[[u,v]]/(uv-\pi^{n})$ such that $f$ splits completely over %
$A'$. %
By taking a large enough extension of $R$, we can assume that $A\subset{}A'$ is \'{e}tale over the special fiber. By Lemma \ref{LemmaKummer33}, this implies that $K(A')$ is a tame subextension of the regularization 
$K(A_{\mathrm{reg}})$ of $A$. In particular, it is Galois. We write $u_{1}$ and $v_{1}$ for the elements such that %
$u_{1}^{n}=u$ and $v_{1}^{n}=v$. For simplicity, we now assume that %
$K(A)$ contains a primitive $n$-th root of unity $\zeta_{n}$ and that $K(A_{\mathrm{reg}})/K(A)$ is separable, so that it is Galois with Galois group $G=\mathbf{Z}/n\mathbf{Z}$. %
We can then find a generator $\sigma$ of $G$ that acts %
on $u_{1}$ and $v_{1}$ by %
\begin{align*}
u_{1}&\mapsto{\zeta_{n}u_{1}},\\
v_{1}&\mapsto{\zeta^{-1}_{n}v_{1}}.
\end{align*}
To find the orbit of the roots $\alpha_{i}$ under this action, we calculate the action of $\sigma$ on the approximations $r_{\mathfrak{m},i}$ (which are polynomials in $u_{1}$ and $v_{1}$).
More precisely, we calculate the action modulo $I$, where %
$I=(u^{{h}_{\mathfrak{m}}},v^{{h}_{\mathfrak{p}}})$ is the separating invariant ideal for the approximations as before. %
This then completely determines the action of $D_{\mathfrak{m}}$ on the $\alpha_{i}$. %

\end{rem}

\begin{rem}\label{LengthEdge1}
{\bf{[Lengths of the edges]}} We can use the $D_{\mathfrak{m}}$-orbit calculated in Remark \ref{ActionEdges} to calculate the length of an edge. Namely, the length of the edge is the length of the original edge $\ell(e)$ divided by the degree of the field extension corresponding to an orbit, see Lemma \ref{LemmaKummer33}. This degree in turn is equal to the number of roots in an orbit, so this gives a way to calculate the lengths of the edges lying above an edge. %
\end{rem}

\begin{rem}
{\bf{[Calculating $D_{\mathfrak{p}}$-orbits]}} In order to determine the $D_{\mathfrak{p}}$-orbit as in Remark \ref{ExtensionsMinimalPolynomialsAlgorithm}, we have to transform the power series in $v$ to a power series in $\pi$ (since the action on $v$ is non-trivial). Effectively, this means that we have to divide our power series by powers of $u$. %
\end{rem}

We now use the aforementioned techniques to calculate the skeleton of the covering of curves in Example \ref{MainExample}. %

\begin{exa}\label{MainExampleFullSkeleton}
{\bf{[Main Example]}} We calculate the action of the decomposition groups on the power series given in Example \ref{TransferDiscreteSemistable3} and glue the edges and vertices using this information.

\vspace{0.2cm}
{\underline{{\bf{Case I: $0\leqslant{v(x)}<{}4$}}}}
\vspace{0.2cm}

We first calculate the $D_{\mathfrak{p}}$-orbit using Remark \ref{ExtensionsMinimalPolynomialsAlgorithm}. In terms of the uniformizer $t=uv$, we then find that the roots are all minimally defined over $\overline{\mathbf{Q}}(x_{0})(u^2)$, which is of degree $2$ over $\overline{\mathbf{Q}}(x_{0})$. There are thus two vertices lying over $v(x)=0$. %
We now calculate the action of $D_{\mathfrak{m}}$ on the approximations. %
Note that this action sends $u$ to $iu$ and $v$ to $i^3v$. We then see that $\{r_{\mathfrak{m},1,1},r_{\mathfrak{m},4,1}\}$ and $\{r_{\mathfrak{m},2,1},r_{\mathfrak{m},3,1}\}$ form the two orbits under $D_{\mathfrak{m}}$. We thus have two edges over $e:0<v(x)<4$. %

\vspace{0.2cm}
{\underline{{\bf{Case II: $0<{v(x)}\leqslant{}4$}}}}
\vspace{0.2cm}

In this case, the $\mathfrak{p}$-adic power series are not written in terms of $t$, but we can do this by using the relation $uv=t$, so that $u^3=\dfrac{t^3}{v^3}$. We consider the coefficients $\{1/v^2,\beta_{1}/v^3\}$ of the first approximation $r_{\mathfrak{p},1}$. We have that the extension $\overline{\mathbf{Q}}(x_{1})\subset\overline{\mathbf{Q}}(x_{1})(1/v^2,\beta_{1}/v^3)$ is of degree $4$, so there are four roots in the $D_{\mathfrak{p}}$-orbit of $r_{\mathfrak{p},1}$ and thus in the $D_{\mathfrak{p}}$-orbit of $\alpha_{1}$. In other words, there is only one vertex lying over $v(x)=4$. The curve corresponding to the field $\overline{\mathbf{Q}}(x_{1})(1/v^2,\beta_{1}/v^3)$ has genus $0$, so the genus of this vertex is $0$.  %
Note also that the $r_{\mathfrak{m},i,1}$ correspond exactly to the $r_{\mathfrak{m},i,2}$, so we can link the corresponding chains. %

By continuing in this way with the other annuli given in Example \ref{SeparatingMainExample} we obtain the skeleton of $X$, see Figure \ref{Eindresultaat} for the final result. Here the vertices lying over $v(x)=8$ and $v(x)=-8$ have arithmetic genus $1$. The first Betti number of the graph is $1$ and the length of the non-trivial cycle is $8$.

\end{exa}

\subsubsection{Switching between local presentations}\label{LocalPresentations1}

In the last part of this paper, we explain how to transfer various Newton-Puiseux data between two local presentations as given in Section \ref{P1Models}. We give an elaborate example using a genus three curve that illustrates this transfer. %

Consider an annulus $\mathbf{S}_{a,b}(x_{0})$ with coordinate ring $A_{1}:=R[u_{1},v_{1}]/(u_{1}v_{1}-\pi^{n})$ %
and embedding $i:A_{1}\to{K(x)}=K(\mathbf{P}^{1})$, see Section \ref{P1Models}.  %
We start the mixed Newton-Puiseux method by passing to the regularization of $A_{1}$ given by %
$A'_{1}=R[u,v]/(uv-\pi)$. %
This gives a finite Kummer extension of degree $n$ on the residue field of $\mathfrak{p}_{2}$. Note however that if we have another ring $A_{2}$ whose spectrum is glued to that of $A_{1}$ along some open affine containing $\mathfrak{p}_{2}$, then the induced Kummer extensions can be different. %
\begin{exa}\label{PresentationSwitchExample}
Consider the algebras $\mathbf{A}_{\mathbf{S}_{0,2}(0)}$ and $\mathbf{A}_{\mathbf{S}_{2,5}(\pi^{2})}$. %
These algebras are obtained by embedding $A_{1}=%
R[u_{1},v_{1}]/(u_{1}v_{1}-\pi^{2})$ and $A_{2}=%
R[u_{2},v_{2}]/(u_{2}v_{2}-\pi^{3})$ into $K(x)$ as follows: %
\begin{align*}
u_{1}&\mapsto{x}\\
v_{1}&\mapsto{\pi^{2}/x}\\
u_{2}&\mapsto{\dfrac{x-\pi^{2}}{\pi^{2}}}\\
v_{2}&\mapsto{\dfrac{\pi^{5}}{x-\pi^{2}}}
\end{align*}
These algebras are glued over the opens $D(u_{2})\subset{\mathrm{Spec}(A_{2})}$ and $D(1-v_{1})\subset{\mathrm{Spec}(A_{1})}$ by the equations $u_{2}=\dfrac{1-v_{1}}{v_{1}}$ and $v_{2}=\pi^{3}/u_{2}=\pi^{3}v_{1}/(1-v_{1})$. %
Note that the codimension one prime  $\mathfrak{p}_{2}=(v_{2},t)$ is identified with $\mathfrak{p}_{1}=(u_{1},t)$ under this isomorphism. To obtain regular algebras, we now take the regularizations of these algebras. %
For $A_{1}$, this means that we take a square root of $u_{1}$ and $v_{1}$, and for $A_{2}$ this means that we take a cube root of $u_{2}$ and $v_{2}$. These induce different extensions of $K(x)$. %
\end{exa}

In our computations, this means the following for us. We have a codimension-$1$ point $\eta_{v}\in\mathcal{Y}$ corresponding to an irreducible component of the special fiber. %
Our computations with respect to $\eta_{v}$ are then contained in $\mathcal{O}^{\mathrm{sh}}_{\mathcal{Y},\eta_{v}}$, but we use two different presentations for this ring. That is, the uniformizer $\pi$ is the same, but the %
 parameter of the residue field is different. These parameters differ by a standard M\"{o}bius transformation on $k(\eta_{v})=k(\mathbf{P}^{1})$, see Example \ref{PresentationSwitchExample} for instance. To check which power series correspond to which under this transformation, we use Remark \ref{ExtensionsMinimalPolynomialsAlgorithm}. In practice, this means that we apply the M\"{o}bius transformation to the minimal polynomials of the coefficients. %
To illustrate the above, we now use the algorithms on an example provided by Prof. Hannah Markwig. 

\begin{exa}\label{HannahExample1}
Consider the curve $X/\overline{\mathbf{Q}}((t))$ given by
\begin{equation}\label{PolynomialHannah}
f:=t_{2}^{24}x^4-x^2y^2+t_{2}^8xy^3+t_{2}^{18}y^4-2x^2y+t_{2}^3xy^2+t_{2}^{12}y^3-(t_{2}^4-1)x^2+xy+t_{2}^8y^2+t_{2}^6x+t_{2}^{11}y+t_{2}^{18}=0,
\end{equation}
where $t_{2}=t^6$. 
\begin{figure}[h]
\begin{center}
\includegraphics[width=10cm, height=4.0cm]{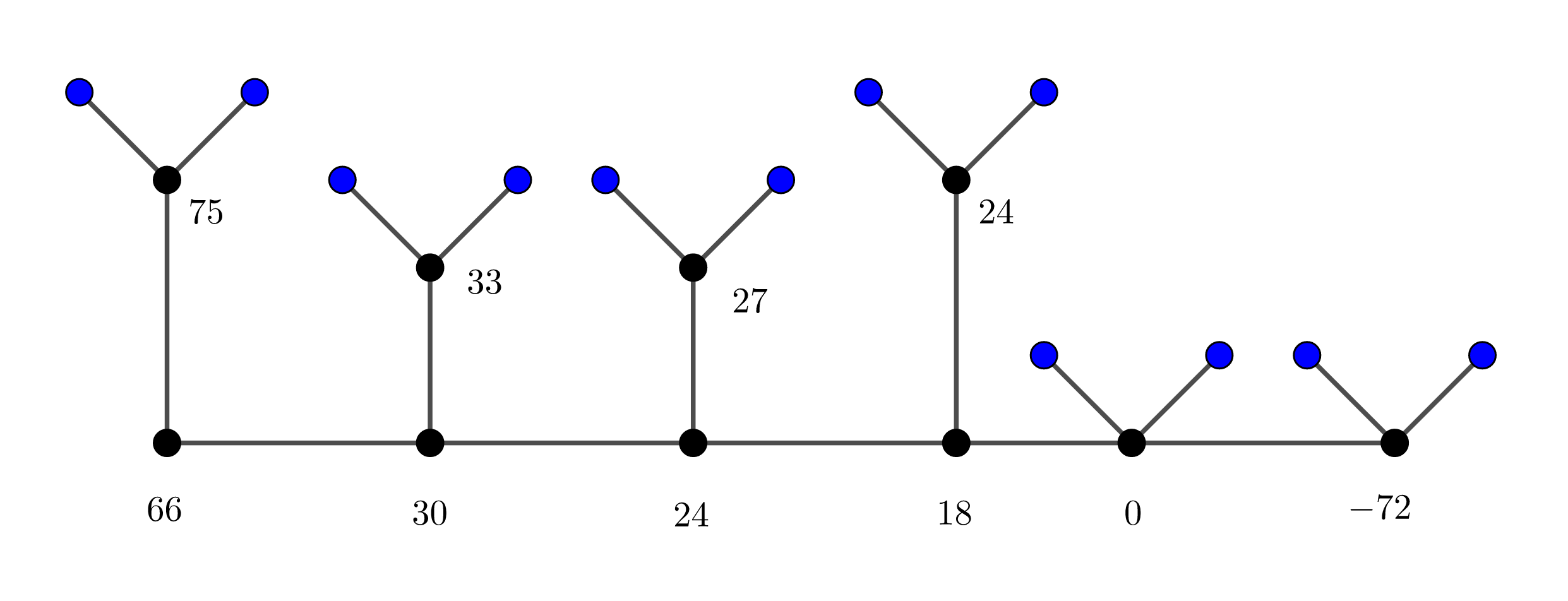}
\caption{\label{SeparatingTreeHannah1} The separating tree for the covering in Example \ref{HannahExample1}. The integers next to the vertices correspond to the boundaries of the algebras given in Equations \ref{SeparatingTreeAlgebras2a} and \ref{SeparatingTreeAlgebras3a}.}%
\end{center}
\end{figure}

As in our Main Example, this defines a smooth quartic in $\mathbf{P}^{2}$, so its genus is $3$. We consider the degree $4$ covering given by $(x,y)\mapsto{x}$, or in other words we consider $f$ as a polynomial in $K(x)[y]$, where $K=\overline{\mathbf{Q}}((t))$. The separating tree for this covering is given in Figure \ref{SeparatingTreeHannah1}. The corresponding algebras in terms of Section \ref{P1Models} are 
\begin{align}
&\mathbf{A}_{\mathbf{S}_{66,75}(-t^{66})},\,\,\mathbf{A}_{\mathbf{S}_{30,66}(0)},\,\,\mathbf{A}_{\mathbf{S}_{30,33}(-t^{30})},\,\,\mathbf{A}_{\mathbf{S}_{24,27}(0)},\,\,\mathbf{A}_{\mathbf{S}_{24,27}(-t^{24})},\label{SeparatingTreeAlgebras2a}\\
&\mathbf{A}_{\mathbf{S}_{18,24}(0)},\,\,\mathbf{A}_{\mathbf{S}_{18,24}(t^{18})},\,\,\mathbf{A}_{\mathbf{S}_{0,18}(0)},\,\,\mathbf{A}_{\mathbf{S}_{-72,0}(0)}.\label{SeparatingTreeAlgebras3a}
\end{align} Using the algorithms in this paper, we find that over each of the non-leaf edges, the covering is completely split, in the sense that there are $4$ edges lying above every edge. Over the vertices adjacent to leaves, we find that there are $3$ vertices, with two trivial coverings and one of degree $2$. Thus the local picture is as in Figure \ref{LocalCoveringHannah1}. We are now interested in connecting these local coverings to obtain the global covering.
\begin{figure}[h]
\begin{center}
\includegraphics[height=6cm]{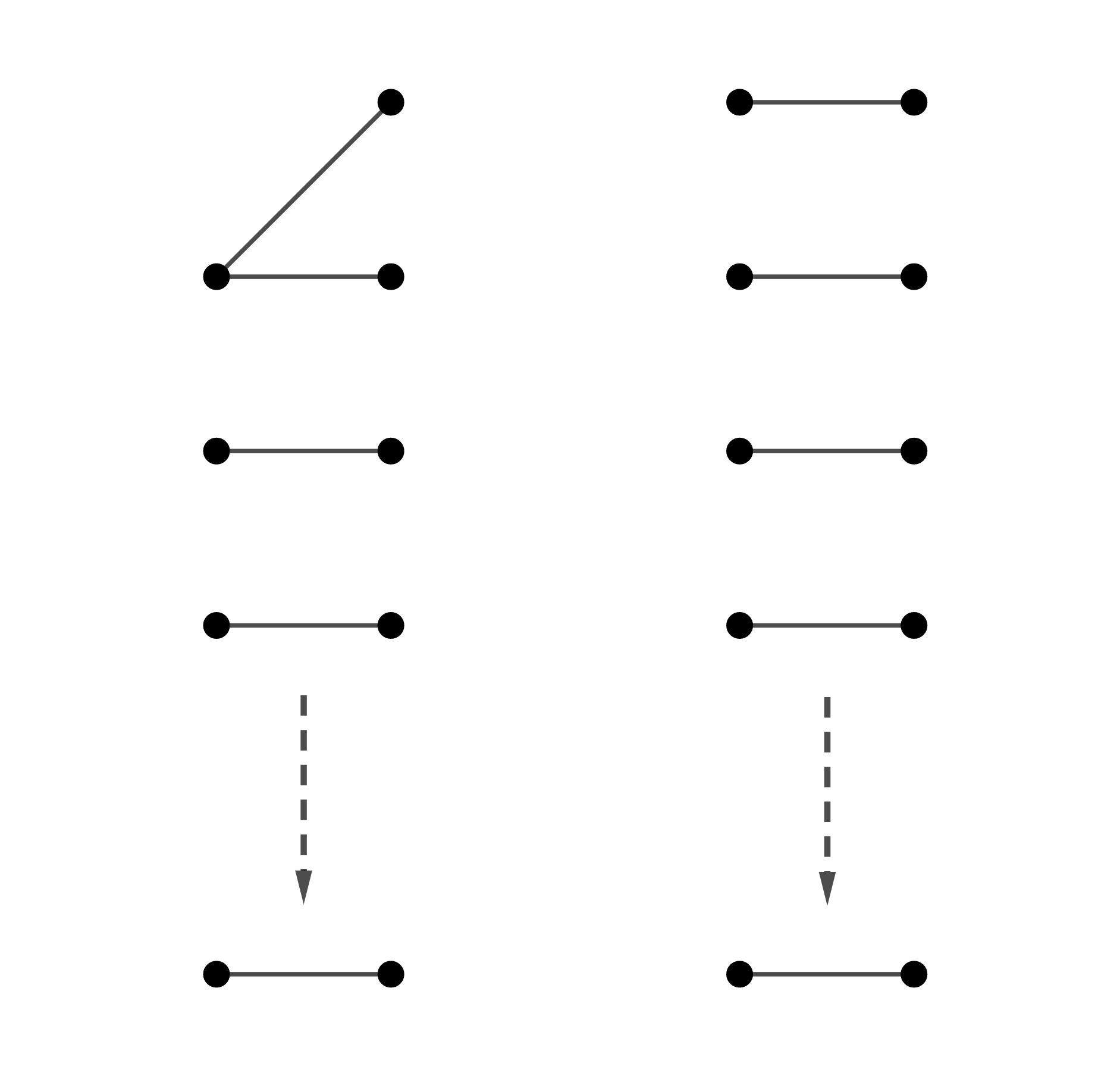}
\caption{\label{LocalCoveringHannah1} The local coverings in Example \ref{HannahExample1}.}
\end{center}
\end{figure}

We calculate the $\mathfrak{p}_{v}$-adic and $\mathfrak{m}_{e}$-adic power series approximations of $f$ for the vertices $v$ and edges $e$ in the half-closed annuli given by %
$66\leq{v(x+t^{66})}<{75}$ and $30<{v(x)}\leq{66}$. %
The algebras corresponding to these two closed annuli are $\mathbf{A}_{\mathbf{S}_{66,75}(-t^{66})}$ and $\mathbf{A}_{\mathbf{S}_{30,66}(0)}$, see Section \ref{P1Models}. The vertex $v_{2}=\{x:v(x)=66\}$ gives a codimension one prime in both of these algebras and we obtain two different expressions of the $\mathfrak{p}_{v_{2}}$-adic power series of the roots of $f$ using the two local presentations. 

\vspace{0.2cm}
{\underline{{\bf{Case I: $66\,\leq{}v(x+t^{66})<{}75$}}}}
\vspace{0.2cm}

Writing $A_{1}=R[u_{1},v_{1}]/(u_{1}v_{1}-t^{9})$ and $A_{2}=R[u_{2},v_{2}]/(u_{2}v_{2}-t^{36})$, we have that $\mathbf{A}_{\mathbf{S}_{66,75}(-t^{66})}$ and $\mathbf{A}_{\mathbf{S}_{30,66}(0)}$ are the images of the embeddings defined by  %
\begin{align*}
u_{1}&\mapsto{\dfrac{x+t^{66}}{t^{66}}},\\
v_{1}&\mapsto{\dfrac{t^{75}}{x+t^{66}}},\\
u_{2}&\mapsto{\dfrac{x}{t^{30}}},\\
v_{2}&\mapsto{\dfrac{t^{66}}{x}}.
\end{align*}

These algebras are glued over $D(u_{1}-1)$ and $D(v_{2})$ using the relations $v_{2}=1/(u_{1}-1)$ and $u_{2}=t^{33}(u_{1}-1)$. We first pass to the regularization of $A_{1}$ given by $R[u,v]/(uv-t)$. %
Here $u^9=u_{1}$ and $v^9=v_{1}$. We then transform the polynomial $f$ as given in Equation \ref{PolynomialHannah} so that it becomes monic over $\mathbf{A}_{\mathbf{S}_{66,75}(-t^{66})}$. Specifically, we calculate $f_{1}:=t_{2}^{36}f(y/t_{2}^{36})$. We now calculate approximations of the roots of $f_{1}$ using Algorithm \ref{MixedNPAlgorithm3a} with the pair $\mathfrak{p}=(v,t)\subset{(u,v,t)}=\mathfrak{m}$ with uniformizer $v$. This gives the $\mathfrak{m}$-adic and $\mathfrak{p}$-adic approximations 
\begin{align*}
r_{1,2}&=-1+\mathcal{O}(v),\\
r_{2,2}&=-u^{12}v^{12}+\mathcal{O}(v^{13}),\\
r_{3,2}&=-u^{63}v^{54}+\mathcal{O}(v^{55}),\\
r_{4,2}&=(-u^{72}+u^{63})v^{72}+\mathcal{O}(v^{73}).%
\end{align*}
We thus find that there are four vertices lying over $v_{2}$. 

\vspace{0.2cm}
{\underline{{\bf{Case II: $30<v(x){\leq}\,66$}}}}
\vspace{0.2cm}

We first pass to the regularization $R[n,m]/(nm-t)$ of the algebra $A_{2}=R[u_{2},v_{2}]/(u_{2}v_{2}-t^{36})$ using $n^{36}=u_{2}$ and $m^{36}=v_{2}$. The vertex $v(x)=66$ is given by the prime ideal $(n,t)$. We use Algorithm \ref{MixedNPAlgorithm3a} for the inclusion $(n,t)\subset{(m,n,t)}$ and obtain the approximations         

\begin{align*}
r_{1,3}&=-1+\mathcal{O}(n),\\
r_{2,3}&=-m^{12}n^{12}+\mathcal{O}(n^{13}),\\
r_{3,3}&=-(m^{18}+m^{54})n^{54}+\mathcal{O}(n^{55}),\\
r_{4,3}&=\beta_{3,1}m^{72}n^{72}+\mathcal{O}(n^{73}), %
\end{align*}
where $\beta_{3,1}=-1/(1+m^{36})=-1/(1+v_{2})$.%
We can transfer these approximations to the algebra used in Case I by comparing the valuations of the roots. To illustrate the general procedure, we also show that the coefficient for $r_{4,2}$ corresponds to the one in $r_{4,3}$. We write 
\begin{equation}
r_{4,2}=(\dfrac{1-u_{1}}{u_{1}})t^{72}+\mathcal{O}(t^{73}).
\end{equation}
Using the M\"{o}bius relation $u_{1}=\dfrac{1+v_{2}}{v_{2}}$, we then find that $\dfrac{1-u_{1}}{u_{1}}=-1+\dfrac{1}{u_{1}}=\dfrac{-1}{1+v_{2}}=\beta_{3,1}$, so $r_{4,2}$ indeed corresponds to $r_{4,3}$.

\begin{figure}[h]
\begin{center}
\includegraphics[height=5.5cm]{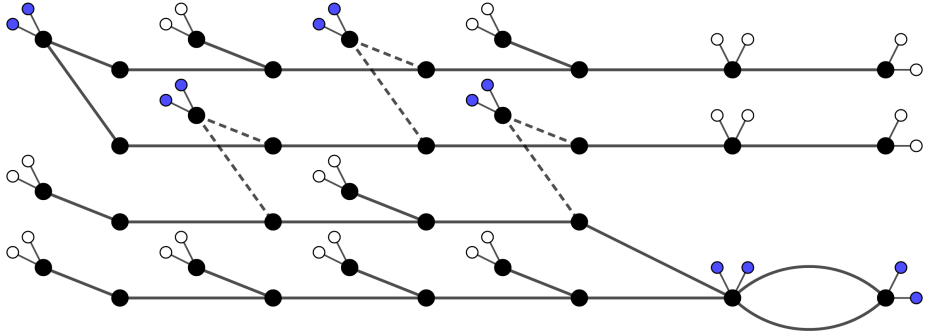}
\caption{\label{EndResultHannah1} The final graph in Example \ref{HannahExample1}. The corresponding metrized complex has a degree four covering to the tree in Figure \ref{SeparatingTreeHannah1}.}
\end{center}
\end{figure}

Continuing in this way for the other edges leads to the global skeleton as in Figure \ref{EndResultHannah1}. The skeleton has Betti number three, so we find that the curve $X/K$ is a Mumford curve. All the non-leaf edges in this picture have the same length as the corresponding edge in the separating tree, since the extension is \'{e}tale there. We note that the local covering data does not fix the global structure, as we can move around some of the loops without harming the local structure.

\end{exa}

\begin{rem}
If the induced covering is \'{e}tale over the special fiber, then we can work with algebras that are smaller than the regularization. That is, we can use subextensions $K(B)\subset{K(A_{\mathrm{reg}})}$ with $[K(B):K(A)]<n$ such that %
$f$ splits completely over the completion or Henselization of the prime lying over $\mathfrak{m}=(u,v,\pi)$. %
\end{rem}

\subsubsection{Calculating skeleta}\label{AlgorithmBerkovich}

We now combine the techniques in the previous sections and give an algorithm to calculate the skeleton of a curve $X/K$. %
For simplicity, %
we assume that $X$ is a plane curve given by $f(x,y)=0$ and that the projection map $(x,y)\mapsto{x}$ induces a finite separable covering $\phi: X\to\mathbf{P}^{1}$. The covering corresponds to the extension of function fields   %
$K(x)\subset{}K(x)[y]/(f)$. %
We now calculate a separating tree for the branch locus of $\phi$ as explained in Section \ref{P1Models} or in \cite[Chapter 6]{tropabelian}. To do this, it suffices to find a separating tree for $S=\{\infty\}\cup{Z(\Delta_{y}(f))}$, where $\Delta_{y}(f)$ is the $y$-discriminant of $f$ and $Z(\Delta_{y}(f))$ is its zero set over an algebraic closure. This gives a strongly semistable model $\mathcal{Y}$ of $\mathbf{P}^{1}$. We now assume that the normalization $\mathcal{X}$ of $\mathcal{Y}$ in $K(x)[y]/(f)$ is tame over $\mathcal{Y}$. We can then find a finite Kummer extension $K\subset{K'}$ such that %
the normalized base change of $\mathcal{X}$ has reduced special fiber. %
We again write $K=K'$ for this extension. %
We then find using Theorem \ref{MainThm1} that the normalization $\mathcal{X}$ is a strongly semistable model for $X$. %
Using Algorithm \ref{MixedNPAlgorithm3a}, we now calculate the $\eta_{v}$-adic and $\eta_{e}$-adic power series expansions of $f$ for every pair $(v,e)$ consisting of a vertex $v\in\Sigma(\mathcal{Y})$ and an adjacent edge $e$. %
We then determine the graph-theoretical structure of %
$\Sigma(\mathcal{X})$ by comparing the $\eta_{e}$-adic power series for overlapping edges to link the local coverings. %
The genera of the vertices are given by Remark \ref{GenusComponent} and the lengths of the edges are given by Remark \ref{LengthEdge1}. We summarize these steps in the following algorithm.   %

\begin{algorithm}
  \caption{An algorithm for calculating the skeleton of a covering of curves.}\label{FinalAlgorithm}
  \vspace*{0.1cm}
\textbf{Input:} A disjointly branched polynomial $f\in{K(x)}[y]$. \\
\textbf{Output:} {A tropicalization $\Sigma(\mathcal{X})\to\Sigma(\mathcal{Y})$ of the covering corresponding to $f$, together with the genera of the vertices and the lengths of the edges.}%
  \begin{algorithmic}[1]
  \State Calculate a separating tree $\Sigma(\mathcal{Y})$ of the covering using Section \ref{P1Models}. We denote the corresponding model for $\mathbf{P}^{1}_{K}$ by $\mathcal{Y}$. 
  \State For every pair $(v,e)$, where $v$ is a vertex in $\Sigma(\mathcal{Y})$ and $e$ is an adjacent edge, calculate the $\eta_{e}$-integral $\eta_{v}$-adic power series of the roots of $f$ using Algorithm \ref{MixedNPAlgorithm3a} up to a separating height. Here $\eta_{v}\subset{\eta_{e}}$ are the points corresponding to $v$ and $e$ in $\mathcal{Y}$. %
 \State  Calculate the pairs $(v',e')$ of vertices and adjacent edges lying above $(v,e)$ using   \ref{Dedekind3}, \ref{ExtensionsMinimalPolynomialsAlgorithm} and \ref{ActionEdges}.  %
\State For an edge $e$ adjacent to two vertices $v_{1}$ and $v_{2}$, identify the $\eta_{e}$-adic power series coming from the calculations for $(v_{i},e)$.
 \State Calculate the genus of a vertex $v'$ using Remark \ref{GenusComponent} and the length of an edge $e'$ using Remark \ref{LengthEdge1}. %
\State Use the data from Steps $3$, $4$ and $5$ to obtain the weighted metric graph $\Sigma(\mathcal{X})$.
  \end{algorithmic}
  \end{algorithm}
  
 The most time-consuming parts of the algorithm are Steps $2$ and $3$, where $\eta_{v}$-adic factorizations of the roots of $f$ are calculated. This problem seems unavoidable when calculating with coverings of curves. %

 \vspace*{0.1cm}

\center

\bibliographystyle{alpha}
\bibliography{bibfiles}{}

\newcommand{\etalchar}[1]{$^{#1}$}
\begin{thebibliography}{HMRT19}

\bibitem[ABBR15]{ABBR1}
Omid Amini, Matthew Baker, Erwan Brugall{\'{e}}, and Joseph Rabinoff.
\newblock Lifting harmonic morphisms {I}: {M}etrized complexes and {B}erkovich
  skeleta.
\newblock {\em Springer, Research in the Mathematical Sciences}, 2(1), June
  2015.

\bibitem[BCK{\etalchar{+}}20]{BCKKGS2020}
Irene Bouw, Nirvana Coppola, P\i{}nar K\i{}l\i{}cer, Sabrina Kunzweiler,
  Elisa~Lorenzo Garc\'{i}a, and Anna Somoza.
\newblock Reduction type of genus-3 curves in a special stratum of their moduli
  space.
\newblock {\href{https://arxiv.org/abs/2003.07633}{arXiv:2003.07633}}, 2020.

\bibitem[BH20]{supertrop}
Madeline Brandt and Paul~Alexander Helminck.
\newblock Tropical superelliptic curves.
\newblock {\em Advances in Geometry}, 20(4):527--551, October 2020.

\bibitem[BMT19]{Brezner2019}
Uri {B}rezner and {M}ichael {T}emkin.
\newblock Lifting problem for minimally wild covers of {B}erkovich curves.
\newblock {\em Journal of {A}lgebraic {G}eometry}, 29(1):123--166, September
  2019.

\bibitem[Bou89]{BourbakiCommAlg}
Nicolas Bourbaki.
\newblock {\em Commutative Algebra, Chapters 1-7}.
\newblock Springer Berlin Heidelberg, 1989.

\bibitem[BR10]{baker2010potential}
Matthew Baker and Robert Rumely.
\newblock {\em Potential theory and dynamics on the {B}erkovich projective
  line}.
\newblock American {M}athematical {S}ociety, Providence, R.I, 2010.

\bibitem[BRJ03]{BeringerJung2003}
Fr{\'e}d{\'e}ric Beringer and Fran{\c{c}}oise Richard-Jung.
\newblock Multi-variate polynomials and {N}ewton-{P}uiseux expansions.
\newblock In {\em Symbolic and Numerical Scientific Computation}, pages
  240--254, Berlin, Heidelberg, 2003. Springer Berlin Heidelberg.

\bibitem[BW17]{bouw_wewers_2017}
Irene~I. Bouw and Stefan Wewers.
\newblock Computing {L}-functions and semistable reduction of superelliptic
  curves.
\newblock {\em Glasgow Mathematical Journal}, 59(1):77--108, 2017.

\bibitem[CM16]{CuetoMarkwig2016}
Maria~Angelica Cueto and Hannah Markwig.
\newblock How to repair tropicalizations of plane curves using modifications.
\newblock {\em Experimental Mathematics}, 25(2):130--164, 2016.

\bibitem[Cox12]{coxgalois}
David Cox.
\newblock {\em Galois theory}.
\newblock John Wiley \& Sons, Hoboken, NJ, 2012.

\bibitem[CTT16]{Cohen2016}
Adina Cohen, Michael Temkin, and Dmitri Trushin.
\newblock Morphisms of {B}erkovich curves and the different function.
\newblock {\em Advances in {M}athematics}, 303:800--858, nov 2016.

\bibitem[Dok18]{TDokchitser2018}
Tim Dokchitser.
\newblock Models of curves over {D}{V}{R}s.
\newblock {\href{https://arxiv.org/abs/1807.00025}{arXiv:1807.00025}}, 2018.

\bibitem[Duv89]{Duval}
Dominique Duval.
\newblock Rational {P}uiseux expansions.
\newblock {\em Compositio {M}athematica}, 70(2):119--154, 1989.

\bibitem[GM71]{Grothendieck1971}
Alexandre Grothendieck and Jacob~P. Murre.
\newblock {\em The Tame Fundamental Group of a Formal Neighbourhood of a
  Divisor with Normal Crossings on a Scheme}.
\newblock Springer Berlin Heidelberg, 1971.

\bibitem[GMR71]{SGA1}
Alexandre {G}rothendieck and {M}ich{\`{e}}le {R}aynaud.
\newblock {\em Rev{\^{e}}tements {E}tales et {G}roupe {F}ondamental}.
\newblock Springer {B}erlin {H}eidelberg, 1971.

\bibitem[{Gro}67]{EGA4}
Alexandre {Grothendieck}.
\newblock {\'El\'ements de g\'eom\'etrie alg\'ebrique. IV: \'Etude locale des
  sch\'emas et des morphismes de sch\'emas (Quatri\`eme partie). R\'edig\'e
  avec la colloboration de Jean Dieudonn\'e.}
\newblock {\em {Publ. Math., Inst. Hautes \'Etud. Sci.}}, 32:1--361, 1967.

\bibitem[Hel17]{troptamethree}
Paul~Alexander Helminck.
\newblock Tropicalizing tame degree three coverings of the projective line.
\newblock \href{https://arxiv.org/abs/1711.07034}{arXiv:1711.07034}, 2017.

\bibitem[Hel18]{tropabelian}
Paul~Alexander Helminck.
\newblock Tropicalizing abelian covers of algebraic curves.
\newblock {\em \href{https://arxiv.org/abs/1703.03067}{arXiv:1703.03067v2}, PhD
  Thesis, Universit{\"a}t Bremen}, 11-07-2018.

\bibitem[Hel21a]{InvariantsSuper}
Paul~Alexander Helminck.
\newblock Invariants for trees of non-archimedean polynomials and skeleta of
  superelliptic curves.
\newblock \href{https://arxiv.org/abs/2101.02918}{arXiv:2101.02918}, 2021.

\bibitem[Hel21b]{DecompFund1}
Paul~Alexander Helminck.
\newblock Skeletal filtrations of the fundamental group of a non-archimedean
  curve.
\newblock \href{https://arxiv.org/abs/1808.03541}{arXiv:1808.03541}, 2021.

\bibitem[HM98]{HarrisMorrison}
Joe Harris and Ian Morrison.
\newblock {\em Moduli of Curves}.
\newblock Springer-Verlag, 1998.

\bibitem[HMRT19]{Hahn2019}
Marvin~Anas Hahn, Hannah Markwig, Yue Ren, and Ilya Tyomkin.
\newblock Tropicalized quartics and canonical embeddings for tropical curves of
  genus 3.
\newblock {\em International Mathematics Research Notices},
  2021(12):8946--8976, May 2019.

\bibitem[Kem11]{Kemper2011}
Gregor Kemper.
\newblock {\em A Course in Commutative Algebra}.
\newblock Springer Berlin Heidelberg, 2011.

\bibitem[Liu06a]{liu2}
Qing Liu.
\newblock {\em Algebraic Geometry and Arithmetic Curves}.
\newblock Oxford Graduate Texts in Mathematics (Book 6). Oxford University
  Press, 2006.

\bibitem[Liu06b]{Liu2006}
Qing Liu.
\newblock Stable reduction of finite covers of curves.
\newblock {\em Compositio Mathematica}, 142(01):101--118, jan 2006.

\bibitem[LL99]{liu1}
Qing Liu and Dino Lorenzini.
\newblock Models of curves and finite covers.
\newblock {\em Compositio Mathematica}, 118(1):61--102, 1999.

\bibitem[McD95]{MCDONALD1995}
John McDonald.
\newblock Fiber polytopes and fractional power series.
\newblock {\em Journal of Pure and Applied Algebra}, 104(2):213 -- 233, 1995.

\bibitem[Neu99]{neu}
J\"{u}rgen Neukirch.
\newblock {\em Algebraic Number Theory}.
\newblock Grundlehren der mathematischen Wissenschaften. Springer Berlin
  Heidelberg, 1999.

\bibitem[Pui50]{Puiseux1}
Victor Puiseux.
\newblock Recherches sur les fonctions alg\'{e}briques.
\newblock {\em Journal de {M}ath\'{e}matiques {P}ures et {A}ppliqu\'{e}es},
  1:365--480, 1850.

\bibitem[Ser79]{Ser1}
Jean-Pierre Serre.
\newblock {\em Local Fields}.
\newblock Springer New York, 1979.

\bibitem[Sil09]{Silv1}
Joseph~H. Silverman.
\newblock {\em The Arithmetic of Elliptic Curves}.
\newblock Springer New York, 2009.

\bibitem[{Sta}18]{stacks-project}
The {Stacks Project Authors}.
\newblock \textit{Stacks Project}.
\newblock \url{http://stacks.math.columbia.edu}, 2018.

\bibitem[Ste02]{Ste2}
Peter Stevenhagen.
\newblock Voortgezette getaltheorie.
\newblock \url{http://websites.math.leidenuniv.nl/algebra/localfields.pdf},
  2002.

\bibitem[Ste17]{Ste3}
Peter Stevenhagen.
\newblock Number rings.
\newblock \url{http://websites.math.leidenuniv.nl/algebra/ant.pdf}, 2017.

\end{thebibliography}
\end{document}